\newcommand{\sy}{\boldsymbol{\Psi}}
\newcommand{\py}{\boldsymbol{\Phi}}
\newcommand{\T}{\mathbb{T}^N}
\newcommand{\N}{\mathbb{N}}									%basic sets
\newcommand{\Z}{\mathbb{Z}}
\newcommand{\R}{\mathbb{R}}
\newcommand{\vertiii}[1]{{\left\vert\kern-0.25ex\left\vert\kern-0.25ex\left\vert #1 
    \right\vert\kern-0.25ex\right\vert\kern-0.25ex\right\vert}}
\newcommand{\overbar}[1]{\mkern 1.5mu\overline{\mkern-1.5mu#1\mkern-1.5mu}\mkern 1.5mu}
\newcommand{\inner}[2]{\langle #1, #2 \rangle}
\DeclarePairedDelimiter\abs{\lvert}{\rvert}					%absolute value
\DeclarePairedDelimiter\norm{\lVert}{\rVert}				%norm
\newtheorem{theorem}{Theorem}[section]
\newtheorem{corollary}{Corollary}[theorem]
\newtheorem{lemma}[theorem]{Lemma}
\newtheorem{proposition}[theorem]{Proposition}
\newtheorem*{remark}{Remark}
\newtheorem{definition}[theorem]{Definition}
\newtheorem{assumption}[theorem]{Assumption}
\begin{document}
	\title{On the Navier-Stokes Equations with Stochastic Lie Transport}
	\author{Daniel Goodair \qquad \qquad Dan Crisan}
	\date{\today} 
	\maketitle
\setcitestyle{numbers}	
	\begin{abstract}
	    We prove the existence and uniqueness of maximal solutions to the 3D SALT (Stochastic Advection by Lie Transport, [\cite{Holm}]) Navier-Stokes Equation in velocity and vorticity form, on the torus and the bounded domain respectively. The current work partners the paper [\cite{Goodair abs}] as an application of the abstract framework presented there, justifying the results first announced in [\cite{Goodair conference}]. In particular this represents the first well-posedness result for a fluid equation perturbed by a general transport type noise on a bounded domain.
	\end{abstract}

\tableofcontents
\thispagestyle{empty}
\newpage

\setcounter{page}{1}
\addcontentsline{toc}{section}{Introduction}

\section*{Introduction}

The theoretical analysis of Stochastic Navier-Stokes Equations dates back to the work of Bensoussan and Temam [\cite{Benny}] in 1973, where the problem of existence of solutions is addressed in the presence of a random forcing term. The well-posedness question for additive and multiplicative noise has since seen significant developments, for example through the works [\cite{HoltzZiane}, \cite{Nav1}, \cite{Nav2}, \cite{Nav3}, \cite{Nav4}, \cite{Nav5}] and references therein. The interest in this problem has seen developments into analytical properties of these solutions, particularly along the lines of ergodicity, which can be seen in [\cite{Nav6}, \cite{Nav7}, \cite{Nav8}, \cite{Nav9}, \cite{Nav10}]. In the present work our concern is the Navier-Stokes Equations with Stochastic Lie Transport, derived through the principle of Stochastic Advection by Lie Transport (SALT) introduced in [\cite{Holm}]. We consider the equation
\begin{equation} \label{number2equation}
    u_t - u_0 + \int_0^t\mathcal{L}_{u_s}u_s\,ds - \nu\int_0^t \Delta u_s\, ds + \int_0^t Bu_s \circ d\mathcal{W}_s + \int_0^t \nabla \rho_sds = 0
\end{equation}
where $u$ represents the fluid velocity, $\rho$ the pressure, $\mathcal{W}$ is a Cylindrical Brownian Motion, $\mathcal{L}$ represents the nonlinear term and $B$ is a first order differential operator (the SALT Operator) formally addressed in Subsection \ref{subsection the salt operator}. Intrinsic to this stochastic methodology is that $B$ is defined relative to a collection of functions $(\xi_i)$ which physically represent spatial correlations. These $(\xi_i)$ can be determined at coarse-grain resolutions from finely resolved numerical simulations, and mathematically are derived as eigenvectors of a velocity-velocity correlation matrix (see [\cite{Cotter1}, \cite{Cotter2}, \cite{Cotter3}]). We pose the equation (\ref{number2equation}) in $N$ dimensions for $N=2,3$ and impose the divergence free constraint on $u$. We shall consider the problem both over the torus $\T$ and a smooth bounded domain $\mathscr{O} \subset \R^N$. In the case of the torus we supplement the equation with the zero-average condition (as is classical), whilst for the bounded domain we impose the boundary condition \begin{equation}\label{boundary condition}u \cdot n = 0, \qquad w=0\end{equation} where $n$ represents the outwards unit normal at the boundary, and $w$ the fluid vorticity. These are the so called Lions boundary conditions, considered in [\cite{Lions}] and shown to be a particular case of the Navier boundary conditions in [\cite{Kelli}] (note that this is done in $2D$ and our analysis will take place in $3D$, though with some clarification it continues to hold in $2D$). The significance of such a boundary condition is well documented in that work by Kelliher, and can be seen in other works such as [\cite{Kelli2}] where the boundary layer is explicitly addressed. The precise mathematical interpretation of these conditions, and the operators of (\ref{number2equation}), are explicated in Subsection \ref{functional framework subsection}. A complete derivation of this equation can be found in [\cite{street}].\\

This work continues the theoretical development of fluid models perturbed by a transport type noise, which has seen significant developments since the seminal works [\cite{Holm}, \cite{Memin}]. This paper partners that of [\cite{Goodair abs}] where we showed the existence and uniqueness of maximal solutions to Stochastic Partial Differential Equations satisfying an abstract framework, built to cope with a general transport type noise as we see in (\ref{number2equation}). The significance of such equations in modelling, numerical schemes and data assimilation is reviewed there, along with the theoretical developments of these equations. We only draw particular attention here to the Navier-Stokes Equations, and results on a bounded domain. The Navier-Stokes Equations have been studied with transport type noise, for example in the works [\cite{Franco1}, \cite{Arnaud1}, \cite{Romeo1}], though typically solutions are analytically weak and where strong solutions are considered major concessions in the noise are made. In these cases a cancellation property is evident in the noise term, so that in energy methods the differential operator is not felt. These difficulties have been addressed on the torus, in the likes of the papers [\cite{Crisan1}, \cite{Crisan2}] and those further addressed in [\cite{Goodair abs}], but extending a control of this noise term to a bounded domain remains open. Indeed the situation becomes more complex in the presence of viscosity, as energy methods require non-standard Sobolev inner products to conduct the required integration by parts in the bounded domain, rendering control on the noise terms completely out of familiarity. The problem of analytically strong solutions to fluid equations perturbed by a transport type noise in the bounded domain has been considered in [\cite{Brez}], though the authors assume that the gradient dependency is of a small enough size to be directly controlled and that the noise terms are traceless under Leray Projection; such assumptions are designed to circumvent the technical difficulties of a first order noise operator on a bounded domain. Our results of Section \ref{section vorticity form} pertaining to a bounded domain thus represent a substantial improvement on the literature.\\

The goal of this paper is to apply the abstract framework established in [\cite{Goodair abs}], together providing a rigorous justification of the results first announced in [\cite{Goodair conference}] and extending them to the vorticity form of equation (\ref{number2equation}) on a bounded domain. In Section \ref{section preliminaries} we establish the stochastic and functional framework necessary to understand (\ref{number2equation}), along with fundamental properties of the operators involved. In Section \ref{section velocity} we make precise how equation (\ref{number2equation}) fits into the framework of [\cite{Goodair abs}], as a problem posed on the torus $\T$. In Section \ref{section vorticity form} we consider the vorticity form of equation (\ref{number2equation}) as a problem posed on a bounded domain of $\R^3$. We again justify the assumptions in [\cite{Goodair abs}] to prove existence and uniqueness of maximal solutions to this equation. Additional details for the proofs are given in the Appendices Section \ref{section appendices}, along with the results of the partnering paper [\cite{Goodair abs}].

\section{Preliminaries} \label{section preliminaries}

\subsection{Elementary Notation}

In the following $\mathcal{O}$ represents a subset of $\R^N$, and will be used throughout the paper to represent either the $N-$dimensional torus $\T$ or a smooth bounded domain $\mathscr{O} \subset \R^N$ for $N=2,3$. In the paper's duration we consider Banach Spaces as measure spaces equipped with the Borel $\sigma$-algebra, and will on occasion use $\lambda$ to represent the Lebesgue Measure.

\begin{definition} \label{definition of spaces}
Let $(\mathcal{X},\mu)$ denote a general measure space, $(\mathcal{Y},\norm{\cdot}_{\mathcal{Y}})$ and $(\mathcal{Z},\norm{\cdot}_{\mathcal{Z}})$ be Banach Spaces, and $(\mathcal{U},\inner{\cdot}{\cdot}_{\mathcal{U}})$, $(\mathcal{H},\inner{\cdot}{\cdot}_{\mathcal{H}})$ be general Hilbert spaces. $\mathcal{O}$ is equipped with Euclidean norm.
\begin{itemize}
    \item $L^p(\mathcal{X};\mathcal{Y})$ is the usual class of measurable $p-$integrable functions from $\mathcal{X}$ into $\mathcal{Y}$, $1 \leq p < \infty$, which is a Banach space with norm $$\norm{\phi}_{L^p(\mathcal{X};\mathcal{Y})}^p := \int_{\mathcal{X}}\norm{\phi(x)}^p_{\mathcal{Y}}\mu(dx).$$ The space $L^2(\mathcal{X}; \mathcal{Y})$ is a Hilbert Space when $\mathcal{Y}$ itself is Hilbert, with the standard inner product $$\inner{\phi}{\psi}_{L^2(\mathcal{X}; \mathcal{Y})} = \int_{\mathcal{X}}\inner{\phi(x)}{\psi(x)}_\mathcal{Y} \mu(dx).$$ In the case $\mathcal{X} = \mathcal{O}$ and $\mathcal{Y} = \R^N$ note that $$\norm{\phi}_{L^2(\mathcal{O};\R^N)}^2 = \sum_{l=1}^N\norm{\phi^l}^2_{L^2(\mathcal{O};\R)}$$ for the component mappings $\phi^l: \mathcal{O} \rightarrow \R$. %We denote $\norm{\cdot}_{L^p(\mathcal{O};\R^N)}$ by $\norm{\cdot}_{L^p}$ and $\norm{\cdot}_{L^2(\mathcal{O};\R^N)}$ by $\norm{\cdot}$.
    
\item $L^{\infty}(\mathcal{X};\mathcal{Y})$ is the usual class of measurable functions from $\mathcal{X}$ into $\mathcal{Y}$ which are essentially bounded, which is a Banach Space when equipped with the norm $$ \norm{\phi}_{L^{\infty}(\mathcal{X};\mathcal{Y})} := \inf\{C \geq 0: \norm{\phi(x)}_Y \leq C \textnormal{ for $\mu$-$a.e.$ }  x \in \mathcal{X}\}.$$
    
    \item $L^{\infty}(\mathcal{O};\R^N)$ is the usual class of measurable functions from $\mathcal{O}$ into $\R^N$ such that $\phi^l \in L^{\infty}(\mathcal{O};\R)$ for $l=1,\dots,N$, which is a Banach Space when equipped with the norm $$ \norm{\phi}_{L^{\infty}}:= \sup_{l \leq N}\norm{\phi^l}_{L^{\infty}(\mathcal{O};\R)}.$$
    
      \item $C(\mathcal{X};\mathcal{Y})$ is the space of continuous functions from $\mathcal{X}$ into $\mathcal{Y}$.
      
    \item $C^m(\mathcal{O};\R)$ is the space of $m \in \N$ times continuously differentiable functions from $\mathcal{O}$ to $\R$, that is $\phi \in C^m(\mathcal{O};\R)$ if and only if for every $N$ dimensional multi index $\alpha = \alpha_1, \dots, \alpha_N$ with $\abs{\alpha}\leq m$, $D^\alpha \phi \in C(\mathcal{O};\R)$ where $D^\alpha$ is the corresponding classical derivative operator $\partial_{x_1}^{\alpha_1} \dots \partial_{x_N}^{\alpha_N}$.
    
    \item $C^\infty(\mathcal{O};\R)$ is the intersection over all $m \in \N$ of the spaces $C^m(\mathcal{O};\R)$.
    
    \item $C^m_0(\mathscr{O};\R)$ for $m \in \N$ or $m = \infty$ is the subspace of $C^m(\mathscr{O};\R)$ of functions which have compact support.
    
    \item $C^m(\mathcal{O};\R^N), C^m_0(\mathscr{O};\R^N)$ for $m \in \N$ or $m = \infty$ is the space of functions from $\mathcal{O}$ to $\R^N$ whose $N$ component mappings each belong to $C^m(\mathcal{O};\R), C^m_0(\mathscr{O};\R)$.
    
        \item $W^{m,p}(\mathcal{O}; \R)$ for $1 \leq p < \infty$ is the sub-class of $L^p(\mathcal{O}, \R)$ which has all weak derivatives up to order $m \in \N$ also of class $L^p(\mathcal{O}, \R)$. This is a Banach space with norm $$\norm{\phi}^p_{W^{m,p}(\mathcal{O}, \R)} := \sum_{\abs{\alpha} \leq m}\norm{D^\alpha \phi}_{L^p(\mathcal{O}; \R)}^p$$ where $D^\alpha$ is the corresponding weak derivative operator. In the case $p=2$ the space $W^{m,2}(U, \R)$ is Hilbert with inner product $$\inner{\phi}{\psi}_{W^{m,2}(\mathcal{O}; \R)} := \sum_{\abs{\alpha} \leq m} \inner{D^\alpha \phi}{D^\alpha \psi}_{L^2(\mathcal{O}; \R)}.$$
    
    \item $W^{m,\infty}(\mathcal{O};\R)$ for $m \in \N$ is the sub-class of $L^\infty(\mathcal{O}, \R)$ which has all weak derivatives up to order $m \in \N$ also of class $L^\infty(\mathcal{O}, \R)$. This is a Banach space with norm $$\norm{\phi}_{W^{m,\infty}(\mathcal{O}, \R)} := \sup_{\abs{\alpha} \leq m}\norm{D^{\alpha}\phi}_{L^{\infty}(\mathcal{O};\R^N)}.$$
    
        \item $W^{m,p}(\mathcal{O}; \R^N)$ for $1 \leq p < \infty$ is the sub-class of $L^p(\mathcal{O}, \R^N)$ which has all weak derivatives up to order $m \in \N$ also of class $L^p(\mathcal{O}, \R^N)$. This is a Banach space with norm $$\norm{\phi}^p_{W^{m,p}(\mathcal{O}, \R^N)} := \sum_{l=1}^N\norm{\phi^l}_{W^{m,p}(\mathcal{O}; \R)}^p.$$ In the case $p=2$ the space $W^{m,2}(\mathcal{O}, \R^N)$ is Hilbertian with inner product $$\inner{\phi}{\psi}_{W^{m,2}(\mathcal{O}; \R^N)} := \sum_{l=1}^N \inner{\phi^l}{\psi^l}_{W^{m,2}(\mathcal{O}; \R)}.$$
    
          \item $W^{m,\infty}(\mathcal{O}; \R^N)$ for is the sub-class of $L^\infty(\mathcal{O}, \R^N)$ which has all weak derivatives up to order $m \in \N$ also of class $L^\infty(\mathcal{O}, \R^N)$. This is a Banach space with norm $$\norm{\phi}_{W^{m,\infty}(\mathcal{O}, \R^N)} := \sup_{l \leq N}\norm{\phi^l}_{W^{m,\infty}(\mathcal{O}; \R)}.$$
          
    \item  $\dot{L}^2(\T;\R^N)$ is the subset of $L^2(\T;\R^N)$ of functions $\phi$ such that $$\int_{\T}\phi \ d\lambda = 0.$$    
    
    \item $\dot{W}^{m,2}(\T;\R^N)$ is simply the intersection $W^{m,2}(\T;\R^N) \cap \dot{L}^2(\T;\R^N)$.
          
    \item $W^{m,p}_0(\mathscr{O};\R), W^{m,p}_0(\mathscr{O};\R^N)$ for $m \in N$ and $1 \leq p \leq \infty$ is the closure of $C^\infty_0(\mathscr{O};\R),C^\infty_0(\mathscr{O};\R^N)$ in $W^{m,p}(\mathscr{O};\R), W^{m,p}(\mathscr{O};\R^N)$.
    
    \item $\mathscr{L}(\mathcal{Y};\mathcal{Z})$ is the space of bounded linear operators from $\mathcal{Y}$ to $\mathcal{Z}$. This is a Banach Space when equipped with the norm $$\norm{F}_{\mathscr{L}(\mathcal{Y};\mathcal{Z})} = \sup_{\norm{y}_{\mathcal{Y}}=1}\norm{Fy}_{\mathcal{Z}}$$ and is simply the dual space $\mathcal{Y}^*$ when $\mathcal{Z}=\R$, with operator norm $\norm{\cdot}_{\mathcal{Y}^*}.$
    
     \item $\mathscr{L}^2(\mathcal{U};\mathcal{H})$ is the space of Hilbert-Schmidt operators from $\mathcal{U}$ to $\mathcal{H}$, defined as the elements $F \in \mathscr{L}(\mathcal{U};\mathcal{H})$ such that for some basis $(e_i)$ of $\mathcal{U}$, $$\sum_{i=1}^\infty \norm{Fe_i}_{\mathcal{H}}^2 < \infty.$$ This is a Hilbert space with inner product $$\inner{F}{G}_{\mathscr{L}^2(\mathcal{U};\mathcal{H})} = \sum_{i=1}^\infty \inner{Fe_i}{Ge_i}_{\mathcal{H}}$$ which is independent of the choice of basis.

\end{itemize}

\end{definition}
We will consider a partial ordering on the $N-$dimensional multi-indices by $\alpha \leq \beta$ if and only if for all $l =1, \cdots \N$ we have that $\alpha_l \leq \beta_l$. We extend this to notation $<$ by 
$\alpha < \beta$ if and only if $\alpha \leq \beta$ and for some $l = 1, \dots, N$, $\alpha_l < \beta_l$.

\subsection{Stochastic Framework}

We work with a fixed filtered probability space $(\Omega,\mathcal{F},(\mathcal{F}_t), \mathbb{P})$ satisfying the usual conditions of completeness and right continuity. We take $\mathcal{W}$ to be a cylindrical Brownian Motion over some Hilbert Space $\mathfrak{U}$ with orthonormal basis $(e_i)$. Recall ([\cite{goodair2022stochastic}], Subsection 1.4) that $\mathcal{W}$ admits the representation $\mathcal{W}_t = \sum_{i=1}^\infty e_iW^i_t$ as a limit in $L^2(\Omega;\mathfrak{U}')$ whereby the $(W^i)$ are a collection of i.i.d. standard real valued Brownian Motions and $\mathfrak{U}'$ is an enlargement of the Hilbert Space $\mathfrak{U}$ such that the embedding $J: \mathfrak{U} \rightarrow \mathfrak{U}'$ is Hilbert-Schmidt and $\mathcal{W}$ is a $JJ^*-$cylindrical Brownian Motion over $\mathfrak{U}'$. Given a process $F:[0,T] \times \Omega \rightarrow \mathscr{L}^2(\mathfrak{U};\mathscr{H})$ progressively measurable and such that $F \in L^2\left(\Omega \times [0,T];\mathscr{L}^2(\mathfrak{U};\mathscr{H})\right)$, for any $0 \leq t \leq T$ we define the stochastic integral $$\int_0^tF_sd\mathcal{W}_s:=\sum_{i=1}^\infty \int_0^tF_s(e_i)dW^i_s$$ where the infinite sum is taken in $L^2(\Omega;\mathscr{H})$. We can extend this notion to processes $F$ which are such that $F(\omega) \in L^2\left( [0,T];\mathscr{L}^2(\mathfrak{U};\mathscr{H})\right)$ for $\mathbb{P}-a.e.$ $\omega$ via the traditional localisation procedure. In this case the stochastic integral is a local martingale in $\mathscr{H}$. \footnote{A complete, direct construction of this integral, a treatment of its properties and the fundamentals of stochastic calculus in infinite dimensions can be found in [\cite{goodair2022stochastic}] Section 1.}

\subsection{Functional Framework} \label{functional framework subsection}

We now recap the classical functional framework for the study of the deterministic Navier-Stokes Equation, using freely the notation introduced in \ref{definition of spaces}. As promised we now formally define the operator $\mathcal{L}$ as well as the divergence-free and Lions boundary conditions. Firstly though we briefly comment on the pressure term $\nabla \rho$, which will not play any role in our analysis. $\rho$ does not come with an evolution equation and is simply chosen to ensure the incompressibility (divergence-free) condition; moreover we will ignore this term via a suitable projection (in Section 3 we even consider a different form of the equation) and treat the projected equation, with the understanding to append a pressure to it later. This procedure is well discussed in [\cite{Robinson}] Section 5 and [\cite{vorticity}], and an explicit form for the pressure for the SALT Euler Equation is given in [\cite{street}] Subsection 3.3.\\ 

The mapping $\mathcal{L}$ is defined for sufficiently regular functions $f,g:\mathcal{O} \rightarrow \R^N$ by $$\mathcal{L}_fg:= \sum_{j=1}^Nf^j\partial_jg.$$ Here and throughout the text we make no notational distinction between differential operators acting on a vector valued function or a scalar valued one; that is, we understand $\partial_jg$ by its component mappings \begin{equation} \label{distinction} (\partial_lg)^l := \partial_jg^l.\end{equation} We can immediately give some clarification as to 'sufficiently regular'.

\begin{lemma} \label{continuityofnonlinear}
    For any $m \in \N$, the mapping $\mathcal{L}: W^{m+1,2}(\mathcal{O};\R^N) \rightarrow W^{m,2}(\mathcal{O};\R^N)$ defined by $$f \mapsto \mathcal{L}_ff$$ is continuous.
\end{lemma}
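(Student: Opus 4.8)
The plan is to exploit that $f \mapsto \mathcal{L}_f f$ is the restriction to the diagonal of the bilinear map $(f,g) \mapsto \mathcal{L}_f g = \sum_{j=1}^N f^j \partial_j g$, and to prove that this bilinear map is bounded from $W^{m+1,2}(\mathcal{O};\R^N) \times W^{m+1,2}(\mathcal{O};\R^N)$ into $W^{m,2}(\mathcal{O};\R^N)$, with an estimate $\norm{\mathcal{L}_f g}_{W^{m,2}(\mathcal{O};\R^N)} \le C \norm{f}_{W^{m+1,2}(\mathcal{O};\R^N)}\norm{g}_{W^{m+1,2}(\mathcal{O};\R^N)}$. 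Granting this, continuity --- in fact local Lipschitz continuity on bounded sets --- of $f \mapsto \mathcal{L}_f f$ follows from the algebraic identity $\mathcal{L}_f f - \mathcal{L}_g g = \mathcal{L}_{f-g} f + \mathcal{L}_g(f-g)$ together with the bilinear bound, which give $\norm{\mathcal{L}_f f - \mathcal{L}_g g}_{W^{m,2}} \le C\big(\norm{f}_{W^{m+1,2}} + \norm{g}_{W^{m+1,2}}\big)\norm{f-g}_{W^{m+1,2}}$.

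It remains to establish the bilinear estimate. Working componentwise, $(\mathcal{L}_f g)^l = \sum_{j=1}^N f^j \partial_j g^l$, so it suffices to control $\norm{D^\alpha (f^j \partial_j g^l)}_{L^2(\mathcal{O};\R)}$ for each multi-index $\alpha$ with $\abs\alpha \le m$ and each pair $j,l$. By the Leibniz rule $D^\alpha(f^j \partial_j g^l) = \sum_{\beta \le \alpha}\binom{\alpha}{\beta}\, D^\beta f^j\, D^{\alpha - \beta}\partial_j g^l$, each summand being a product of a derivative of $f$ of order at most $m$ with a derivative of $g$ of order at most $m+1$. Each resulting $L^2$ norm is then bounded by H\"older's inequality together with the Gagliardo--Nirenberg interpolation inequalities; the key structural input is that, since $N \le 3$, $W^{m+1,2}(\mathcal{O};\R) \hookrightarrow L^\infty(\mathcal{O};\R)$ (as $m+1 \ge 2 > N/2$), which handles the extreme term $f^j D^\alpha \partial_j g^l$ via $\norm{f^j D^\alpha \partial_j g^l}_{L^2} \le \norm{f^j}_{L^\infty}\norm{g^l}_{W^{m+1,2}} \lesssim \norm{f}_{W^{m+1,2}}\norm{g}_{W^{m+1,2}}$, while the intermediate terms are estimated by distributing integrability exponents through the embeddings $W^{k,2}\hookrightarrow L^{p_k}$. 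Alternatively one may invoke directly the standard Sobolev multiplication theorem, that pointwise multiplication is bounded $W^{s_1,2}\times W^{s_2,2} \to W^{s,2}$ whenever $s_1 + s_2 \ge s + N/2$ with $s_i \ge s \ge 0$: here $(s_1,s_2,s) = (m+1, m, m)$ satisfies the hypotheses precisely because $m+1 \ge N/2$ for $N \le 3$, and one applies it to $f^j$ and $\partial_j g^l$.

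The only genuine subtlety --- and the reason the lemma is stated with a gain of one derivative on the argument --- is this dimensional bookkeeping: one cannot treat $\mathcal{L}_f f$ as a product of two $W^{m,2}$ functions, since $W^{m,2}$ need not be an algebra when $2m \le N$; having $f \in W^{m+1,2}$ rather than merely $W^{m,2}$ is exactly what closes the estimate, and it is consumed most sharply on the top-order term $f^j D^\alpha\partial_j g^l$ (which also forces $m+1 \ge 2$, i.e. $m \ge 1$, which is the relevant range). I therefore expect the only non-routine part of a full write-up to be the verification of the Gagliardo--Nirenberg exponents in the worst cases, or, with the multiplication theorem invoked as a black box, essentially nothing beyond the bilinear reduction above.
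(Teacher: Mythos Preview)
Your proposal is correct and follows essentially the same route as the paper: both use the bilinear decomposition $\mathcal{L}_ff - \mathcal{L}_gg = \mathcal{L}_{f-g}f + \mathcal{L}_g(f-g)$, expand via the Leibniz rule, and control the resulting products with H\"older together with the Sobolev embeddings $W^{2,2}\hookrightarrow L^\infty$ (for the top-order term) and $W^{1,2}\hookrightarrow L^4$ (for the intermediate terms). The only difference is packaging: the paper argues sequential continuity directly, whereas you first establish the bilinear bound $\norm{\mathcal{L}_fg}_{W^{m,2}}\le C\norm{f}_{W^{m+1,2}}\norm{g}_{W^{m+1,2}}$ and deduce local Lipschitz continuity, which is slightly stronger and in fact closer to what the paper needs later (cf.\ Lemma~\ref{basic nonlinear bound}).
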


\begin{proof}
See Appendix I, \ref{Appendix I}.
\end{proof}

\begin{lemma} \label{basic nonlinear bound}
    There exists a constant $c$ such that for any $f\in W^{2,2}(\mathcal{O};\R^N)$ and $g\in W^{1,2}(\mathcal{O};\R^N)$, we have the bounds
    \begin{align} \label{first begin align}
        \norm{\mathcal{L}_{f}{g}} + \norm{\mathcal{L}_{g}{f}}&\leq c\norm{g}_{W^{1,2}}\norm{f}_{W^{2,2}}.
        \end{align}
        If $f$ has the additional regularity $f \in W^{3,2}(\mathcal{O};\R^N)$ then
        \begin{equation} \label{second begin align}
            \norm{\mathcal{L}_{g}{f}}_{W^{1,2}} \leq c\norm{g}_{W^{1,2}}\norm{f}_{W^{3,2}}
        \end{equation}
        whilst for $g \in W^{2,2}(\mathcal{O};\R^N)$ we have that 
         \begin{equation} \label{third begin align}
            \norm{\mathcal{L}_{g}{f}}_{W^{1,2}} \leq c\norm{g}_{W^{2,2}}\norm{f}_{W^{2,2}}.
        \end{equation}
\end{lemma}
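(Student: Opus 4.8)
The plan is to reduce each inequality to the elementary pointwise bound $\abs{\mathcal{L}_h k} \le \abs{h}\,\abs{\nabla k}$ (understood componentwise, for sufficiently regular vector fields $h,k$), followed by H\"older's inequality and the Sobolev embeddings available in dimension $N\in\{2,3\}$ on the torus and on a smooth bounded domain: $W^{1,2}\hookrightarrow L^6$, $W^{1,2}\hookrightarrow L^3$ and $W^{2,2}\hookrightarrow L^\infty$. Throughout, $\norm{\cdot}$ denotes the $L^2(\mathcal{O};\R^N)$ norm.

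For (\ref{first begin align}) I would estimate, on the one hand, $\norm{\mathcal{L}_{f}g} \le \norm{\,\abs{f}\,\abs{\nabla g}\,} \le \norm{f}_{L^\infty}\norm{g}_{W^{1,2}} \le c\norm{f}_{W^{2,2}}\norm{g}_{W^{1,2}}$ using $W^{2,2}\hookrightarrow L^\infty$; and on the other hand $\norm{\mathcal{L}_{g}f} \le \norm{g}_{L^6}\norm{\nabla f}_{L^3} \le c\norm{g}_{W^{1,2}}\norm{f}_{W^{2,2}}$, using $W^{1,2}\hookrightarrow L^6$ for $g$ and $W^{1,2}\hookrightarrow L^3$ applied to $\nabla f$ (which is where $f\in W^{2,2}$ is consumed). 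Adding the two gives the claim.

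For (\ref{second begin align}) and (\ref{third begin align}) the $L^2$-part of $\norm{\mathcal{L}_{g}f}_{W^{1,2}}$ is already controlled by the estimate just obtained (bounding $\norm{f}_{W^{2,2}}\le\norm{f}_{W^{3,2}}$ in the first case). For the first-order part I would differentiate inside the finite sum, $\partial_k(\mathcal{L}_{g}f)=\mathcal{L}_{\partial_k g}f+\mathcal{L}_{g}\partial_k f$, and bound each term by H\"older with the integrability split chosen to match the available regularity. In (\ref{second begin align}) I keep all derivatives on $f$: $\norm{\mathcal{L}_{\partial_k g}f}\le \norm{\partial_k g}\,\norm{\nabla f}_{L^\infty}\le \norm{g}_{W^{1,2}}\norm{f}_{W^{3,2}}$ (using $W^{2,2}\hookrightarrow L^\infty$ on $\nabla f$, hence $f\in W^{3,2}$) and $\norm{\mathcal{L}_{g}\partial_k f}\le\norm{g}_{L^6}\norm{\nabla\partial_k f}_{L^3}\le c\norm{g}_{W^{1,2}}\norm{f}_{W^{3,2}}$. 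In (\ref{third begin align}) I instead split symmetrically: $\norm{\mathcal{L}_{\partial_k g}f}\le\norm{\partial_k g}_{L^6}\norm{\nabla f}_{L^3}\le c\norm{g}_{W^{2,2}}\norm{f}_{W^{2,2}}$ and $\norm{\mathcal{L}_{g}\partial_k f}\le\norm{g}_{L^\infty}\norm{\nabla\partial_k f}\le c\norm{g}_{W^{2,2}}\norm{f}_{W^{2,2}}$. Summing over $k$ and combining with the $L^2$ contribution yields the two remaining bounds.

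I do not expect a genuine obstacle; the proof is a routine chain of H\"older and Sobolev estimates. The one point requiring care is the bookkeeping of which factor absorbs which embedding, so that the number of derivatives used never exceeds the stated regularity of $f$ and $g$ (in particular, in (\ref{second begin align}) two derivatives must never fall on $g$, which forces the $L^\infty$/$L^3$ control to be placed on $\nabla f$ and $\nabla\partial_k f$). A detailed write-up is deferred to Appendix I.
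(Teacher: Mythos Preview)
Your proposal is correct and follows essentially the same route as the paper's own proof: H\"older's inequality componentwise together with the Sobolev embeddings $W^{2,2}\hookrightarrow L^\infty$ and $W^{1,2}\hookrightarrow L^p$, plus the Leibniz decomposition $\partial_k(\mathcal{L}_g f)=\mathcal{L}_{\partial_k g}f+\mathcal{L}_g\partial_k f$ for the higher-order estimates. The only cosmetic difference is that the paper sometimes uses the $L^4\times L^4$ split where you use $L^6\times L^3$, but both choices work in dimension $N\le 3$ and yield the same conclusion.
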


\begin{proof}
See Appendix I, \ref{Appendix I}.
\end{proof}

As for the divergence-free condition we mean a function $f$ such that the property $$\textnormal{div}f := \sum_{j=1}^N \partial_jf^j = 0$$ holds. We require this property and the boundary condition to hold for our solution $u$ at all times for which it is a solution, though there is some ambiguity in how we understand these conditions for a solution $u$ which need not be defined pointwise everywhere on $\bar{\mathcal{O}}$. Moreover we shall understand these conditions in their traditional weak sense, that is for weak derivatives $\partial_j$ so $\sum_{j=1}^N \partial_jf^j = 0$ holds as an identity in $L^2(\mathcal{O};\R)$ whilst the boundary condition $w=0$ is understood as each component mapping $w^j$ having zero trace (recall e.g. [\cite{evans}] that $f^j \in W^{1,2}(\mathscr{O};\R) \cap C(\bar{\mathscr{O}};\R)$ has zero trace if and only if $f^j(x) = 0$ for all $x \in \partial \mathscr{O}$). The boundary condition $u \cdot n=0$ is a little more technical and not particularly relevant for our analysis, so we defer to [\cite{Robinson}] Lemma 2.12 for a precise formulation in the weak sense. We impose these conditions by incorporating them into the function spaces where our solution takes value, and give separate definitions for the cases of the Torus and the bounded domain.

\begin{definition}
We define $C^{\infty}_{0,\sigma}(\mathscr{O};\R^N)$ as the subset of $C^{\infty}_0(\mathscr{O};\R^N)$ of functions which are divergence-free. $L^2_\sigma(\mathcal{O};\R^N)$ is defined as the completion of $C^{\infty}_{0,\sigma}(\mathscr{O};\R^N)$ in $L^2(\mathscr{O};\R^N)$, whilst we introduce $W^{1,2}_\sigma (\mathscr{O};\R^N)$ as the intersection of $W^{1,2}_0(\mathscr{O};\R^N)$ with $L^2_\sigma(\mathscr{O};\R^N)$ and $W^{2,2}_{\sigma} (\mathscr{O};\R^N)$ as the intersection of $W^{2,2}(\mathscr{O};\R^N)$ with $W^{1,2}_\sigma(\mathscr{O};\R^N)$.
\end{definition}

For the definitions on the Torus we make explicit reference to the Fourier decomposition. We do this just to define the spaces, after which we make an analogy with the corresponding spaces on the bounded domain and need not rely on any techniques exploiting the Fourier decomposition. To this end recall that any function $f \in L^2(\T;\R^N)$ admits the representation \begin{equation} \label{fourier rep}f(x) = \sum_{k \in \mathbb{Z}^N}f_ke^{ik\cdot x}\end{equation} whereby each $f_k \in \mathbb{C}^N$ is such that $f_k = \overbar{f_{-k}}$ and the infinite sum is defined as a limit in $L^2(\T;\R^N)$, see e.g. [\cite{Robinson}] Subsection 1.5 for details.

\begin{definition}
We define $L^2_{\sigma}(\T;\R^N)$ as the subset of $\dot{L}^2(\T;\R^N)$ of functions $f$ whereby for all $k \in \mathbbm{Z}^N$, $k \cdot f_k = 0$ with $f_k$ as in (\ref{fourier rep}). For general $m \in \N$ we introduce $W^{m,2}_{\sigma}(\T;\R^N)$ as the intersection of $W^{m,2}(\T;\R^N)$ respectively with $L^2_{\sigma}(\T;\R^N)$.
\end{definition}

\begin{remark}
$L^2_{\sigma}(\T;\R^N)$ is closed in the $L^2(\T;\R^N)$ norm. The same is trivially true of $L^2_{\sigma}(\mathscr{O};\R^N)$. Furthermore as the $W^{1,2}(\mathcal{O};\R^N)$ norm induces a coarser topology, the space $W^{1,2}_{\sigma}(\mathcal{O};\R^N)$ is closed in the $W^{1,2}(\mathcal{O};\R^N)$ norm and similarly for $W^{2,2}_{\sigma}(\mathcal{O};\R^N)$. Thus $L^2_{\sigma}(\mathcal{O};\R^N)$, $W^{1,2}_{\sigma}(\mathcal{O};\R^N)$ and $W^{2,2}_{\sigma}(\mathcal{O};\R^N)$ are Hilbert Spaces in their respective inner products.
\end{remark}

\begin{lemma}
$W^{1,2}_{\sigma}(\T;\R^N)$ is precisely the subspace of $W^{1,2}(\T;\R^N)$ consisting of zero-average divergence free functions.
\end{lemma}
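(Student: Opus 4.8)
The plan is to translate both the divergence-free and the zero-average constraints into conditions on Fourier coefficients and then compare directly with the definition $W^{1,2}_{\sigma}(\T;\R^N) = W^{1,2}(\T;\R^N)\cap L^2_{\sigma}(\T;\R^N)$.

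First I would recall that for $f\in W^{1,2}(\T;\R^N)$ with Fourier representation $f = \sum_{k\in\Z^N}f_ke^{ik\cdot x}$ as in (\ref{fourier rep}), the weak partial derivative $\partial_jf^l$ admits the Fourier representation $\sum_{k\in\Z^N}ik_jf_k^le^{ik\cdot x}$; this follows by testing the definition of the weak derivative against the smooth functions $e^{-ik\cdot x}$, or may be quoted from [\cite{Robinson}] Subsection 1.5. Summing the diagonal terms $j=l$ then shows that $\textnormal{div}f = \sum_{j=1}^N\partial_jf^j$ has Fourier coefficients $i(k\cdot f_k)$.

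Next, since the map sending a function in $L^2(\T;\R)$ to its sequence of Fourier coefficients is injective, the identity $\textnormal{div}f = 0$ in $L^2(\T;\R)$ --- which is precisely the weak sense in which the divergence-free condition is imposed --- is equivalent to $k\cdot f_k = 0$ for every $k\in\Z^N$. Likewise, using $\int_{\T}e^{ik\cdot x}\,d\lambda = 0$ for $k\neq 0$, the zero-average condition $\int_{\T}f\,d\lambda = 0$ is equivalent to $f_0 = 0$, i.e.\ to $f\in\dot{L}^2(\T;\R^N)$.

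Finally I would assemble the pieces: by definition $W^{1,2}_{\sigma}(\T;\R^N) = W^{1,2}(\T;\R^N)\cap L^2_{\sigma}(\T;\R^N)$, and $L^2_{\sigma}(\T;\R^N)$ consists of those $f\in\dot{L}^2(\T;\R^N)$ with $k\cdot f_k = 0$ for all $k\in\Z^N$. By the two equivalences just established, an $f\in W^{1,2}(\T;\R^N)$ lies in this set if and only if it is zero-average and divergence-free in the weak sense, which is the claim. There is no serious obstacle here; the only point requiring a little care is the identification of the Fourier coefficients of the weak derivative, and the consequent equivalence between ``$\textnormal{div}f = 0$ in $L^2$'' and the coefficient condition $k\cdot f_k = 0$, which is exactly where the weak interpretation of the divergence-free constraint (as discussed before Definition \ref{definition of spaces} onwards) comes into play.
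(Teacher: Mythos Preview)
Your proposal is correct and follows essentially the same route as the paper: both arguments compute the Fourier coefficients of $\textnormal{div}f$ as $i(k\cdot f_k)$ and use injectivity of the Fourier transform to conclude that $\textnormal{div}f=0$ in $L^2$ is equivalent to $k\cdot f_k=0$ for all $k$. You are slightly more explicit than the paper in separately handling the zero-average condition via $f_0=0$, but this is a cosmetic difference rather than a different method.
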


\begin{proof}
    Note that every $f \in W^{1,2}(\T;\R^N)$ admits the representation (\ref{fourier rep}) but for convergence holding in $W^{1,2}(\T;\R^N)$. Moreover $$f^j(x) = \sum_{k \in \mathbb{Z}^N}f^j_ke^{ik\cdot x}$$ with convergence in $W^{1,2}(\T;\R)$ so $$\partial_j f^j(x) = \sum_{k \in \mathbb{Z}^N}\partial_j\left(f^j_ke^{ik\cdot x}\right) = \sum_{k \in \mathbb{Z}^N}if^j_kk^je^{ik\cdot x}$$ where the infinite sum is in $L^2(\T;\R)$; with the property $f_k^j = \overbar{f^j_{-k}}$, a direct computation shows that $$if_k^j k^j = \overbar{if^j_{-k}(-k^j)}$$ verifying once more that this is indeed real valued. Furthermore the divergence of $f$ is given by $$\sum_{k \in \mathbb{Z}^N}i(f_k\cdot k)e^{ik\cdot x}$$ so we see that this is zero if and only if $f_k \cdot k = 0$ for all $k \in \Z^N$. The result follows.
\end{proof}

\begin{lemma} \label{W12sigmacharacter}
$W^{1,2}_{\sigma}(\mathscr{O};\R^N)$ is precisely the subspace of $W^{1,2}_0(\mathscr{O};\R^N)$ consisting of divergence free functions. Moreover, $W^{1,2}_{\sigma}(\mathscr{O};\R^N)$ is the completion of $C^{\infty}_{0,\sigma}(\mathscr{O};\R^N)$ in $W^{1,2}(\mathscr{O};\R^N)$. 
\end{lemma}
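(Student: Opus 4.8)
The plan is to prove both assertions at once via a circular chain of inclusions among three subspaces of $W^{1,2}(\mathscr{O};\R^N)$: write $X$ for the closure of $C^{\infty}_{0,\sigma}(\mathscr{O};\R^N)$ in the $W^{1,2}$-norm (the space named in the ``moreover'' clause), $Y := W^{1,2}_{\sigma}(\mathscr{O};\R^N) = W^{1,2}_0(\mathscr{O};\R^N) \cap L^2_{\sigma}(\mathscr{O};\R^N)$, and $Z$ for the set of divergence-free elements of $W^{1,2}_0(\mathscr{O};\R^N)$. I would show $X \subseteq Y$, then $Y \subseteq Z$, then $Z \subseteq X$; since $Y$ is squeezed between the other two, all three coincide, which is precisely the claim.

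The inclusion $X \subseteq Y$ is soft. Each element of $C^{\infty}_{0,\sigma}(\mathscr{O};\R^N)$ lies in $C^{\infty}_0(\mathscr{O};\R^N) \subseteq W^{1,2}_0(\mathscr{O};\R^N)$ and in $L^2_{\sigma}(\mathscr{O};\R^N)$ by definition; since $W^{1,2}_0(\mathscr{O};\R^N)$ is closed in $W^{1,2}(\mathscr{O};\R^N)$ and the $W^{1,2}$-topology is finer than the $L^2$-topology, a $W^{1,2}$-limit of such fields lies in $W^{1,2}_0(\mathscr{O};\R^N) \cap L^2_{\sigma}(\mathscr{O};\R^N) = Y$. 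For $Y \subseteq Z$: if $f \in Y$ then $f$ is an $L^2$-limit of a sequence $f_n \in C^{\infty}_{0,\sigma}(\mathscr{O};\R^N)$; as the divergence is a bounded operator from $L^2(\mathscr{O};\R^N)$ into the dual of $W^{1,2}_0(\mathscr{O};\R)$ and $\textnormal{div}\, f_n = 0$ for every $n$, we get $\textnormal{div}\, f = 0$ in the distributional sense, and since $f \in W^{1,2}(\mathscr{O};\R^N)$ this identity in fact holds in $L^2(\mathscr{O};\R)$; thus $f \in Z$.

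The remaining inclusion $Z \subseteq X$ is the substantive one, and I expect it to be the main obstacle: one must show that every divergence-free $f \in W^{1,2}_0(\mathscr{O};\R^N)$ can be approximated in the $W^{1,2}$-norm by smooth, compactly supported, divergence-free vector fields. This is the classical density theorem for solenoidal fields on a smooth bounded domain; rather than reprove it I would invoke it directly (see e.g. [\cite{Robinson}]), the standard route being to combine a partition of unity subordinate to a cover of $\overline{\mathscr{O}}$ with mollification, using a Bogovskii-type right inverse of the divergence on star-shaped pieces to correct the solenoidal error introduced by the cut-offs and near the boundary. Once this inclusion is available the chain $X \subseteq Y \subseteq Z \subseteq X$ closes, so $X = Y = Z$, and both parts of the lemma follow.
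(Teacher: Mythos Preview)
Your proposal is correct and follows essentially the same route as the paper: the soft inclusions $X\subseteq Y\subseteq Z$ are handled directly, and the substantive step $Z\subseteq X$ (density of $C^{\infty}_{0,\sigma}$ in the divergence-free part of $W^{1,2}_0$) is deferred to the classical literature, exactly as the paper does by citing [\cite{temam NS}] Theorem 1.6. The only difference is organizational---you package the argument as a single cycle $X\subseteq Y\subseteq Z\subseteq X$, whereas the paper first establishes $Y=Z$ (citing [\cite{Robinson}] Lemmas 2.11 and 2.14 for the two directions) and then invokes Temam for $Z=X$; the mathematical content is the same.
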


\begin{proof}
We claim at first that any given $f \in W^{1,2}_{\sigma}(\mathscr{O};\R^N)$ is divergence-free. In [\cite{Robinson}] Lemma 2.11 it is shown that $f$ satisfies the property $$\inner{f}{\nabla \phi}=0$$ for every $\phi \in C^{\infty}_0(\mathscr{O};\R)$, or equivalently that $$\sum_{j=1}^N \inner{f^j}{\partial_j \phi}_{L^2(\mathscr{O};\R)} = 0$$ for such $\phi$. We can use the compact support of $\phi$ and $W^{1,2}(\mathscr{O};\R^N)$ regularity of $f$ to carry out an integration by parts, taking the sum through the inner product to see that $$\left\langle \sum_{j=1}^N \partial_jf^j , \phi \right\rangle_{L^2(\mathscr{O};\R)}= 0.$$ Observing that $\phi \in C^{\infty}_0(\mathscr{O};\R)$ was arbitrary and using the density of this space in $L^2(\mathscr{O};\R)$, we deduce that the divergence of $f$ is zero as an element of $L^2(\mathscr{O};\R)$ and so $f$ is divergence-free. Working in the reverse direction now, we suppose that $g \in W^{1,2}_0(\mathscr{O};\R^N)$ is divergence-free with the intent to show that $g \in W^{1,2}_{\sigma}(\mathscr{O};\R^N)$, which is to say $g \in L^2_{\sigma}(\mathscr{O};\R^N)$. For this we simply refer to Lemma 2.14 again of [\cite{Robinson}], noting that $g$ is of zero trace so satisfies the boundary condition and the divergence-free property implies all 'weak divergence-free' notions of [\cite{Robinson}].\\
The final statement to prove is the characterisation of $W^{1,2}_{\sigma}(\mathscr{O};\R^N)$ as the completion of $C^{\infty}_{0,\sigma}(\mathscr{O};\R^N)$ in $W^{1,2}(\mathscr{O};\R^N)$. We defer this to [\cite{temam NS}] Theorem 1.6, using the representation of $W^{1,2}_{\sigma}(\mathscr{O};\R^N)$ just proved.
\end{proof}

\begin{remark}
The space $W^{1,2}_{\sigma}(\mathcal{O};\R^N)$ thus incorporates the divergence-free and zero-average/zero-trace condition. 
\end{remark}

With these spaces in place we can define the Leray Projector, which is the projection alluded to when discussing the pressure term.

\begin{definition}
The Leray Projector $\mathcal{P}$ is defined as the orthogonal projection in $L^2(\mathcal{O};\R^N)$ onto $L^2_{\sigma}(\mathcal{O};\R^N)$.
\end{definition}

We immediately give two important properties of this projection.

\begin{proposition} \label{continuityofP}
    For any $m \in \N$, $\mathcal{P}$ is continuous as a mapping $\mathcal{P}: W^{m,2}(\mathcal{O};\R^N) \rightarrow W^{m,2}(\mathcal{O};\R^N)$.
\end{proposition}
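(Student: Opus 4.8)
The plan is to treat the two geometries separately, since the Leray projector is defined abstractly as an $L^2$-orthogonal projection and the Sobolev bound is not automatic. On the torus $\T$ the cleanest route is via the Fourier representation (\ref{fourier rep}): for $f = \sum_k f_k e^{ik\cdot x}$ one has the explicit formula $(\mathcal{P}f)_k = f_k - \frac{(f_k\cdot k)k}{|k|^2}$ for $k\neq 0$ and $(\mathcal{P}f)_0 = 0$, which is just multiplication of each Fourier coefficient by a matrix of operator norm $\leq 1$ independent of $k$. Since the $W^{m,2}(\T;\R^N)$ norm is equivalent to $\big(\sum_k (1+|k|^2)^m |f_k|^2\big)^{1/2}$, the bound $\|\mathcal{P}f\|_{W^{m,2}} \leq \|f\|_{W^{m,2}}$ follows immediately, and in fact $\mathcal{P}$ commutes with every $D^\alpha$ on the torus. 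So the torus case is essentially a one-line Fourier multiplier argument.

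The bounded-domain case $\mathscr{O}$ is the main obstacle, because there is no Fourier multiplier and $\mathcal{P}$ does not commute with differentiation near the boundary. The standard approach I would take is through the Helmholtz--Hodge decomposition: for $f \in L^2(\mathscr{O};\R^N)$ one writes $f = \mathcal{P}f + \nabla q$, where $q \in W^{1,2}(\mathscr{O};\R)$ solves the Neumann problem $\Delta q = \operatorname{div} f$ in $\mathscr{O}$, $\partial_n q = f\cdot n$ on $\partial\mathscr{O}$ (interpreted weakly). If $f \in W^{m,2}(\mathscr{O};\R^N)$ then $\operatorname{div} f \in W^{m-1,2}(\mathscr{O};\R)$ and the boundary data $f\cdot n$ lies in $W^{m-1/2,2}(\partial\mathscr{O};\R)$; elliptic regularity for the Neumann problem on the smooth bounded domain $\mathscr{O}$ then gives $q \in W^{m+1,2}(\mathscr{O};\R)$ with $\|q\|_{W^{m+1,2}} \leq C(\|\operatorname{div} f\|_{W^{m-1,2}} + \|f\cdot n\|_{W^{m-1/2,2}(\partial\mathscr{O})}) \leq C\|f\|_{W^{m,2}}$, using the trace theorem for the boundary term. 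Hence $\nabla q \in W^{m,2}$ with the desired bound, and $\mathcal{P}f = f - \nabla q$ inherits it. I would cite a standard reference (e.g. Temam or Robinson) for the Helmholtz decomposition and its elliptic regularity rather than reprove it, and note the compatibility condition $\int_{\mathscr{O}}\operatorname{div} f \, d\lambda = \int_{\partial\mathscr{O}} f\cdot n$ is automatically satisfied by the divergence theorem so the Neumann problem is solvable.

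The one technical point to be careful about is the case $m=0$, which is trivial since $\mathcal{P}$ is an orthogonal projection and hence a contraction on $L^2$; the elliptic-regularity argument is only needed for $m\geq 1$. I expect the write-up to be short: state the Helmholtz decomposition with the regularity estimate as a cited fact, apply the trace theorem to bound the Neumann data, and conclude. The genuinely substantive content — the elliptic estimate for the Neumann problem on a smooth bounded domain — is classical, so the proof in the paper will almost certainly just invoke it, and I would do the same, deferring details to Appendix I or to \cite{Robinson}, \cite{temam NS}.
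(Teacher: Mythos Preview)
Your proposal is correct and in fact gives considerably more detail than the paper does: the paper's entire proof is the single line ``See [\cite{temam NS}] Remark 1.6.'' Your Fourier-multiplier argument on the torus and Helmholtz--Neumann elliptic regularity argument on the bounded domain are precisely the standard route underlying that citation, and you even anticipated that the paper would simply invoke a reference.
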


\begin{proof}
    See [\cite{temam NS}] Remark 1.6.
\end{proof}

In fact, the complement space of $L^2_{\sigma}(\mathcal{O};\R^N)$ can be characterised. This is the so called Helmholtz-Weyl decomposition.

\begin{proposition} \label{HWDecomp}
    Define the space $$L^{2, \perp}_{\sigma}(\mathcal{O};\R^N):= \{\psi \in L^{2}(\mathcal{O};\R^N): \psi = \nabla g \textnormal{ for some } g \in W^{1,2}(\mathcal{O};\R)   \}.$$ Then indeed $L^{2, \perp}_{\sigma}(\mathcal{O};\R^N)$ is orthogonal to $L^{2}_{\sigma}(\mathcal{O};\R^N)$ in $L^{2}(\mathcal{O};\R^N)$, i.e. for any $\phi \in L^{2}_{\sigma}(\mathcal{O};\R^N)$ and $\psi \in L^{2, \perp}_{\sigma}(\mathcal{O};\R^N)$ we have that $$\inner{\phi}{\psi}=0.$$ Moreover, every $f \in L^{2}(\mathscr{O};\R^N)$ has the unique decomposition \begin{equation}\label{first unique}f = \phi + \psi\end{equation} for some $\phi \in L^{2}_{\sigma}(\mathcal{O};\R^N)$, $\psi \in L^{2, \perp}_{\sigma}(\mathcal{O};\R^N)$ and every $f \in L^{2}(\T;\R^N)$ has the unique decomposition \begin{equation} \label{second unique}
        f = \phi + \psi + c
    \end{equation}
    where $\phi \in L^{2}_{\sigma}(\T;\R^N)$, $\psi \in L^{2, \perp}_{\sigma}(\T;\R^N)$ and $c$ is a constant function: that is, there exists $k \in \R^N$ such that each component mapping $c^j$ is identically equal to $k^j$, $j=1, \dots, N$.
\end{proposition}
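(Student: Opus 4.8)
The plan is to prove the three assertions in turn: orthogonality of the two subspaces, existence-and-uniqueness of the decomposition on $\mathscr{O}$, and then existence-and-uniqueness of the three-term decomposition on $\T$. For orthogonality, I would take $\phi \in L^2_\sigma(\mathcal{O};\R^N)$ and $\psi = \nabla g \in L^{2,\perp}_\sigma(\mathcal{O};\R^N)$. Since $\phi$ is a limit in $L^2$ of functions in $C^\infty_{0,\sigma}(\mathscr{O};\R^N)$ (using the definition of $L^2_\sigma$ on the bounded domain and, on the torus, a similar density statement via truncated Fourier series of divergence-free functions), it suffices to prove $\inner{\varphi}{\nabla g} = 0$ for $\varphi \in C^\infty_{0,\sigma}$ and then pass to the limit by continuity of the inner product. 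For such $\varphi$, integration by parts moves the gradient onto $\varphi$ — the boundary term vanishes because $\varphi$ has compact support (on $\mathscr{O}$) or by periodicity (on $\T$) — giving $\inner{\varphi}{\nabla g} = -\inner{\operatorname{div}\varphi}{g}_{L^2(\mathcal{O};\R)} = 0$ since $\varphi$ is divergence-free. One subtlety: on $\T$ one must check $g$ can be taken with enough regularity, or simply argue componentwise with $g \in W^{1,2}$ and $\varphi$ smooth, which is fine.

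For the decomposition on $\mathscr{O}$, the cleanest route is to invoke that $L^2_\sigma(\mathscr{O};\R^N)$ is a closed subspace of the Hilbert space $L^2(\mathscr{O};\R^N)$ (noted in the preceding remark), so every $f$ decomposes uniquely as $f = \phi + \psi$ with $\phi = \mathcal{P}f \in L^2_\sigma$ and $\psi = f - \mathcal{P}f$ in the orthogonal complement $\big(L^2_\sigma(\mathscr{O};\R^N)\big)^\perp$. The real content is identifying this orthogonal complement with $L^{2,\perp}_\sigma(\mathscr{O};\R^N)$. One inclusion is exactly the orthogonality just proved. For the reverse, I would take $\psi \perp L^2_\sigma(\mathscr{O};\R^N)$; in particular $\psi \perp C^\infty_{0,\sigma}(\mathscr{O};\R^N)$, i.e. $\inner{\psi}{\varphi} = 0$ for all smooth compactly supported divergence-free $\varphi$, and then appeal to the classical de Rham / Nečas-type result — precisely the characterisation behind [\cite{temam NS}] Theorem 1.6 and [\cite{Robinson}] — that such a $\psi$ must be a gradient $\nabla g$ with $g \in W^{1,2}(\mathscr{O};\R)$. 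Uniqueness of $\phi,\psi$ is immediate from orthogonality: if $\phi_1 + \psi_1 = \phi_2 + \psi_2$ then $\phi_1 - \phi_2 = \psi_2 - \psi_1$ lies in $L^2_\sigma \cap L^{2,\perp}_\sigma = \{0\}$.

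For the torus, the difference is the presence of constants: the key point is that on $\T$ the space $L^2(\T;\R^N)$ splits as $\dot L^2(\T;\R^N) \oplus \{\text{constants}\}$, since the constant $c$ is simply the zero-Fourier-mode $f_0$ and $f - f_0 \in \dot L^2$. So I would first write $f = (f - c) + c$ with $c^j \equiv \widehat{f^j}(0)$, note $c$ is itself of the form $\nabla(\text{linear})$ is \emph{not} periodic so $c$ genuinely needs its own slot, then apply the decomposition of $f - c \in \dot L^2(\T;\R^N)$ into $\phi \in L^2_\sigma(\T;\R^N)$ (which already sits inside $\dot L^2$ by definition) and $\psi \in L^{2,\perp}_\sigma(\T;\R^N)$, either by repeating the Fourier-mode argument (for $k \neq 0$ write $f_k = (f_k - (f_k\cdot \hat k)\hat k) + (f_k \cdot \hat k)\hat k$, the first being the divergence-free part, the second being $ik$-proportional hence a gradient mode) or by the same projection argument as on $\mathscr{O}$. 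Uniqueness follows by the same orthogonality argument together with the observation that a nonzero constant is neither in $L^2_\sigma(\T;\R^N)$ (it has nonzero average unless it is zero) nor a nonconstant gradient, so the three summands are mutually non-overlapping.

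I expect the main obstacle to be the reverse inclusion in identifying the orthogonal complement on the bounded domain, namely showing that $\psi \perp C^\infty_{0,\sigma}(\mathscr{O};\R^N)$ forces $\psi = \nabla g$ for some $g \in W^{1,2}(\mathscr{O};\R)$ — this is the genuinely nontrivial analytic fact (a version of de Rham's theorem / the Nečas inequality on Lipschitz or smooth domains), and I would handle it purely by citation to [\cite{temam NS}] and [\cite{Robinson}] rather than reproving it. Everything else — orthogonality via integration by parts, uniqueness via the trivial intersection of the subspaces, and the constant-mode bookkeeping on the torus — is routine.
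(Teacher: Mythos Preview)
Your proposal is correct and in fact considerably more detailed than what the paper does: the paper's own proof of this proposition is a one-line citation to [\cite{temam NS}] Theorems 1.4, 1.5 and [\cite{Robinson}] Theorem 2.6, with no argument given. Your sketch --- orthogonality by density and integration by parts, the decomposition on $\mathscr{O}$ via closedness of $L^2_\sigma$ plus the de Rham/Ne\v{c}as characterisation of the orthogonal complement (which you rightly flag as the nontrivial step to be handled by citation), and the Fourier-mode bookkeeping on $\T$ to isolate the constant --- is the standard route and is exactly what those references contain, so the approaches coincide in substance; you have simply unpacked what the paper leaves wrapped.
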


\begin{proof}
    See [\cite{temam NS}] Theorems 1.4, 1.5 and [\cite{Robinson}] Theorem 2.6.
\end{proof}

\begin{corollary} \label{HWDecomp2}
Every $f \in L^{2}(\mathscr{O};\R^N)$ admits the representation \begin{equation}\label{decomp}f = \mathcal{P}f + \nabla g\end{equation} for some $g\in W^{1,2} (\mathscr{O};\R)$.
\end{corollary}
\begin{corollary}\label{HWDecomp torus}
Every $f \in L^{2}(\T;\R^N)$ admits the representation \begin{equation}\label{decomp torus}f = \mathcal{P}f + \nabla g + c\end{equation} for some $g\in W^{1,2} (\T;\R)$ and constant function $c$.

\end{corollary}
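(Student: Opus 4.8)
The plan is to deduce the statement directly from the torus Helmholtz--Weyl decomposition (\ref{second unique}) in Proposition \ref{HWDecomp}, once the divergence-free component appearing there is identified with $\mathcal{P}f$. First I would fix $f \in L^2(\T;\R^N)$ and apply Proposition \ref{HWDecomp} to obtain the unique decomposition $f = \phi + \psi + c$ with $\phi \in L^2_\sigma(\T;\R^N)$, $\psi \in L^{2,\perp}_\sigma(\T;\R^N)$, and $c$ a constant function, say $c^j \equiv k^j$ for some $k \in \R^N$ and all $j = 1,\dots,N$. By the definition of $L^{2,\perp}_\sigma(\T;\R^N)$ there is $g \in W^{1,2}(\T;\R)$ with $\psi = \nabla g$, so it remains only to prove $\phi = \mathcal{P}f$; then (\ref{decomp torus}) holds with exactly this $g$ and $c$.

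Since $\mathcal{P}$ is the orthogonal projection in $L^2(\T;\R^N)$ onto the closed subspace $L^2_\sigma(\T;\R^N)$ (closedness being recorded in the Remark above), it suffices to check that $\phi \in L^2_\sigma(\T;\R^N)$ — which we already have — and that $f - \phi = \psi + c$ is orthogonal to $L^2_\sigma(\T;\R^N)$. This orthogonality splits into two pieces. Orthogonality of $\psi$ to $L^2_\sigma(\T;\R^N)$ is precisely the first assertion of Proposition \ref{HWDecomp}. For the constant part, let $\varphi \in L^2_\sigma(\T;\R^N) \subseteq \dot{L}^2(\T;\R^N)$; then $\inner{c}{\varphi}_{L^2(\T;\R^N)} = \sum_{j=1}^N k^j \int_{\T}\varphi^j\, d\lambda = 0$ because $\varphi$ has zero average. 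Hence $\inner{\psi + c}{\varphi} = 0$ for every $\varphi \in L^2_\sigma(\T;\R^N)$, so by uniqueness of the orthogonal projection $\phi = \mathcal{P}f$, and the representation follows.

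I do not expect a genuine obstacle here: the whole content is contained in the already-established Proposition \ref{HWDecomp}, and the only subtlety — that on the torus a nonzero constant function is \emph{not} itself the gradient of a $W^{1,2}(\T;\R)$ function, so it cannot be absorbed into the $\nabla g$ term and the third summand is genuinely needed — is already accounted for in the statement of (\ref{second unique}). The one small computation, the orthogonality of constants to $L^2_\sigma(\T;\R^N)$, is immediate from the zero-average constraint built into that space. (The analogous Corollary \ref{HWDecomp2} on the bounded domain follows by the same argument with the constant term simply absent, using (\ref{first unique}).)
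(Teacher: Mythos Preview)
Your proposal is correct and follows essentially the same approach as the paper: the paper's proof consists of the single remark that ``it is an immediate property of the orthogonal projection that $\mathcal{P}f$ is the unique element $\phi \in L^2_\sigma(\mathcal{O};\R^N)$ of (\ref{first unique}), (\ref{second unique}),'' and your argument simply unpacks this by explicitly verifying that $\psi + c$ is orthogonal to $L^2_\sigma(\T;\R^N)$ via the zero-average condition.
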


\begin{proof}
   It is an immediate property of the orthogonal projection that $\mathcal{P}f$ is the unique element $\phi \in L^{2}_{\sigma}(\mathcal{O};\R^N)$ of (\ref{first unique}),(\ref{second unique}).
\end{proof}

This representation allows us to establish important relations between the Leray Projector and the operators involved in (\ref{number2equation}), and gives rise to a new operator that will be fundamental in defining the spaces used in the abstract formulations of [\cite{goodair2022stochastic}, \cite{Goodair abs}, \cite{Goodair conference}].

\begin{definition}
The Stokes Operator $A: W^{2,2}(\mathcal{O};\R^N) \rightarrow L^2_{\sigma}(\mathcal{O};\R^N)$ is defined by $A:= -\mathcal{P}\Delta$.
\end{definition}

\begin{remark}
Once more we understand the Laplacian as an operator on vector valued functions in the sense (\ref{distinction}).
\end{remark}

\begin{remark}
From Proposition \ref{continuityofP} we have immediately that for $m \in \{0\} \cup \N$, $A: W^{m+2,2}(\mathcal{O};\R^N) \rightarrow W^{m,2}(\mathcal{O};\R^N)$ is continuous. 
\end{remark}

\begin{lemma} \label{APequalsA}
    $A\mathcal{P}$ is equal to $A$ on $W^{2,2}(\mathcal{O};\R^N)$.
\end{lemma}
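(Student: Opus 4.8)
The plan is to reduce the identity to the orthogonality half of the Helmholtz--Weyl decomposition. Fix $f \in W^{2,2}(\mathcal{O};\R^N)$. By Corollary \ref{HWDecomp2} on $\mathscr{O}$, or Corollary \ref{HWDecomp torus} on $\T$, I would write $f = \mathcal{P}f + \nabla g$ with $g \in W^{1,2}(\mathcal{O};\R)$, in the toroidal case modulo an additive constant function $c$ which will play no role since $\Delta c = 0$. The key observation is that $\nabla g = f - \mathcal{P}f$ inherits $W^{2,2}$ regularity from $f$ and from $\mathcal{P}f$ (the latter by Proposition \ref{continuityofP}), so each component $\partial_l g$ lies in $W^{2,2}(\mathcal{O};\R)$ and hence $\Delta g = \sum_{j=1}^N \partial_j(\partial_j g) \in W^{1,2}(\mathcal{O};\R)$.

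Next I would apply the Laplacian and use commutativity of weak derivatives: componentwise, $\Delta(\nabla g)$ has $l$-th entry $\Delta \partial_l g = \partial_l \Delta g$, so that $\Delta(\nabla g) = \nabla(\Delta g)$. Since $\Delta g \in W^{1,2}(\mathcal{O};\R)$, this exhibits $\Delta(\nabla g)$ as an element of $L^{2,\perp}_{\sigma}(\mathcal{O};\R^N)$ in the notation of Proposition \ref{HWDecomp}. Therefore $\Delta f = \Delta(\mathcal{P}f) + \nabla(\Delta g)$, the constant term on the torus contributing nothing, where $\Delta(\mathcal{P}f) \in L^2(\mathcal{O};\R^N)$ because $\mathcal{P}f \in W^{2,2}(\mathcal{O};\R^N)$.

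Finally, applying $\mathcal{P}$ and invoking the orthogonality statement of Proposition \ref{HWDecomp} --- $\mathcal{P}$ annihilates $L^{2,\perp}_{\sigma}(\mathcal{O};\R^N)$ since this space is orthogonal to the range $L^2_{\sigma}(\mathcal{O};\R^N)$ of the orthogonal projection $\mathcal{P}$ --- I obtain $\mathcal{P}\Delta f = \mathcal{P}\Delta(\mathcal{P}f)$, and multiplying by $-1$ gives $Af = A\mathcal{P}f$; note that $A\mathcal{P}f$ is well defined precisely because $\mathcal{P}f \in W^{2,2}(\mathcal{O};\R^N)$. The only genuinely delicate point is the regularity bookkeeping in the first step: one must be sure that $\Delta g$, being two derivatives of a merely $W^{1,2}$ scalar potential, is in fact an honest $L^2$ (indeed $W^{1,2}$) function, so that $\nabla(\Delta g)$ is a bona fide element of $L^{2,\perp}_{\sigma}(\mathcal{O};\R^N)$ and the orthogonality argument is legitimate. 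This is exactly where the continuity of $\mathcal{P}$ on $W^{2,2}(\mathcal{O};\R^N)$ is essential, and it is what allows the otherwise purely formal computation to go through.
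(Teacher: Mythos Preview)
Your proof is correct and follows essentially the same route as the paper: both use the Helmholtz--Weyl decomposition, invoke Proposition \ref{continuityofP} to ensure $\nabla g \in W^{2,2}$, commute $\Delta$ and $\nabla$ to land $\Delta(\nabla g)$ in $L^{2,\perp}_{\sigma}$, and then kill that term with the orthogonality of $\mathcal{P}$. The only cosmetic difference is that the paper applies $A$ termwise from the outset, whereas you apply $\Delta$ first and multiply by $-1$ at the end.
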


\begin{proof}
     We call upon the representations (\ref{decomp}) and (\ref{decomp torus}), so briefly distinguish between the two domains. In the case of $\T$ with representation (\ref{decomp torus}), Proposition \ref{continuityofP} ensures that for $f \in W^{2,2}(\T;\R^N)$, $\mathcal{P}f \in W^{2,2}(\T;\R^N)$ and hence so too is $\nabla g$. We then have $$Af = A\mathcal{P}f + A\nabla g + Ac = A\mathcal{P}f + A\nabla g$$ where the Laplacian of the constant is trivially zero, and this same expression is achieved directly for $f \in W^{2,2}(\mathscr{O};\R^N)$ from (\ref{decomp}) so we work now with general $f \in W^{2,2}(\mathcal{O};\R^N)$. The result would thus be true if $A\nabla g = 0$. This is not difficult to see however, as $$A\nabla g = -\mathcal{P}\Delta \nabla g = -\mathcal{P}\nabla \Delta g = 0$$ owing to the fact that $\Delta g \in W^{1,2}(\mathcal{O};\R^N)$ from $\nabla g \in W^{2,2}(\mathcal{O};\R^N)$ so $\nabla \Delta g \in L^{2, \perp}_{\sigma}(\mathcal{O};\R^N)$ and $\mathcal{P}$ projects to an orthogonal space. 
\end{proof}

As we look to exploit properties of the Stokes Operator, we will rely heavily on the following Proposition. 

\begin{proposition} \label{basis of stokes}
    There exists a collection of functions $(a_k)$, $a_k \in W^{1,2}_{\sigma}(\mathcal{O};\R^N) \cap C^{\infty}(\overbar{\mathcal{O}};\R^N)$ such that the $(a_k)$ are eigenfunctions of $A$, are an orthonormal basis in $L^2_{\sigma}(\mathcal{O};\R^N)$ and an orthogonal basis in $W^{1,2}_{\sigma}(\mathcal{O};\R^N)$. The eigenvalues $(\lambda_k)$ are strictly positive and approach infinity as $k \rightarrow \infty$.
\end{proposition}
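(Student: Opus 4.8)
The plan is to realise $A$ as a positive, self-adjoint operator with compact resolvent on the Hilbert space $L^2_\sigma(\mathcal{O};\R^N)$ and then invoke the spectral theorem for such operators. First I would set up the variational (weak) formulation: on $W^{1,2}_\sigma(\mathcal{O};\R^N)$ define the bilinear form $a(u,v):=\inner{\nabla u}{\nabla v}_{L^2(\mathcal{O};\R^N)}$ (summing componentwise), which on $\T$ is coercive by the Poincaré inequality available because of the zero-average condition built into $W^{1,2}_\sigma(\T;\R^N)$, and on $\mathscr O$ is coercive by the Poincaré inequality on $W^{1,2}_0$. Boundedness of $a$ is immediate. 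By the Lax--Milgram theorem the map $u\mapsto a(u,\cdot)$ is an isomorphism from $W^{1,2}_\sigma(\mathcal{O};\R^N)$ onto its dual, and composing with the (compact, by Rellich--Kondrachov on the bounded/periodic domain) embedding $L^2_\sigma(\mathcal{O};\R^N)\hookrightarrow W^{1,2}_\sigma(\mathcal{O};\R^N)^*$ produces a solution operator $T:L^2_\sigma(\mathcal{O};\R^N)\to L^2_\sigma(\mathcal{O};\R^N)$ that is compact, self-adjoint (since $a$ is symmetric), and positive definite.

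Next I would apply the Hilbert--Schmidt spectral theorem to $T$: there is an orthonormal basis $(a_k)$ of $L^2_\sigma(\mathcal{O};\R^N)$ consisting of eigenfunctions of $T$, with eigenvalues $\mu_k>0$ decreasing to $0$. Setting $\lambda_k:=\mu_k^{-1}$ gives $\lambda_k>0$ with $\lambda_k\to\infty$, and unwinding the definition of $T$ shows that each $a_k\in W^{1,2}_\sigma(\mathcal{O};\R^N)$ solves $a(a_k,v)=\lambda_k\inner{a_k}{v}$ for all $v\in W^{1,2}_\sigma(\mathcal{O};\R^N)$; that is, $a_k$ is a weak solution of $-\Delta a_k = \lambda_k a_k + \nabla p_k$ for some pressure, i.e. $Aa_k=\lambda_k a_k$ in the sense of the Stokes operator once one checks $a_k\in W^{2,2}$. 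The orthogonality in $W^{1,2}_\sigma(\mathcal{O};\R^N)$ is then automatic: $\inner{\nabla a_j}{\nabla a_k}=a(a_j,a_k)=\lambda_k\inner{a_j}{a_k}_{L^2}=0$ for $j\neq k$, and adding $\inner{a_j}{a_k}_{L^2}$ shows the $(a_k)$ are also $W^{1,2}$-orthogonal; density of finite linear combinations in $W^{1,2}_\sigma(\mathcal{O};\R^N)$ follows from density in $L^2_\sigma(\mathcal{O};\R^N)$ together with the eigen-relation bounding the $W^{1,2}$-norm by $\lambda_k^{1/2}$ times the $L^2$-norm on each eigenspace.

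The two points requiring genuine work — and where I would lean on the literature rather than reprove — are the elliptic regularity statement, namely that the weak eigenfunctions actually lie in $W^{2,2}(\mathcal{O};\R^N)$ (so that $A a_k$ makes classical sense) and indeed in $C^\infty(\overbar{\mathcal{O}};\R^N)$, and the compatibility of the variational eigenproblem with the operator $A=-\mathcal P\Delta$. For the smoothness one bootstraps: $a_k\in W^{1,2}$ and $-\Delta a_k=\lambda_k a_k-\nabla p_k$ gives, via the regularity theory for the stationary Stokes system on a smooth bounded domain (or trivially via Fourier series on $\T$), successively higher Sobolev regularity for both $a_k$ and $p_k$, hence $C^\infty$ up to the boundary by Sobolev embedding; this is standard and I would cite the Stokes regularity theory (e.g. in Temam or Constantin--Foias) and the references already invoked in the excerpt ([\cite{temam NS}, \cite{Robinson}]). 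The identification $Aa_k=\lambda_k a_k$ then comes from applying $\mathcal P$ to $-\Delta a_k=\lambda_k a_k-\nabla p_k$ and using $\mathcal P\nabla p_k=0$ (as in Lemma \ref{APequalsA}) together with $\mathcal P a_k=a_k$. I expect the elliptic/Stokes regularity on the bounded domain to be the main technical obstacle in a fully self-contained treatment, but since it is entirely classical the honest course is to quote it.
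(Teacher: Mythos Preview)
Your outline is correct and is precisely the standard argument (Lax--Milgram for the weak Stokes problem, compactness of the inverse via Rellich--Kondrachov, the spectral theorem for compact self-adjoint operators, then Stokes elliptic regularity to bootstrap to $C^\infty(\overbar{\mathcal{O}};\R^N)$). The paper does not give its own proof here but simply cites [\cite{Robinson}] Theorem 2.24, and what you have sketched is exactly the proof one finds in that reference, so there is no discrepancy to discuss.
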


\begin{proof}
    See [\cite{Robinson}] Theorem 2.24.
\end{proof}

\begin{remark}
Every $f \in L^2_{\sigma}(\mathcal{O};\R^N)$ admits the representation \begin{equation}\label{L2sigrep}
    f = \sum_{k=1}^\infty f_ka_k
\end{equation}
where $f_k = \inner{f}{a_k}$, as a limit in $L^2(\mathcal{O};\R^N)$.
\end{remark}

\begin{lemma}
    For $f \in L^2(\mathcal{O};\R^N)$, $$\mathcal{P}f = \sum_{k=1}^\infty \inner{f}{a_k}a_k.$$
\end{lemma}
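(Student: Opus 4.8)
The plan is to combine the orthogonal-basis representation of Proposition \ref{basis of stokes} with the defining properties of $\mathcal{P}$ as an orthogonal projection onto $L^2_{\sigma}(\mathcal{O};\R^N)$, so the argument is essentially a coefficient-matching exercise.

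First I would observe that, by definition, $\mathcal{P}f \in L^2_{\sigma}(\mathcal{O};\R^N)$ for every $f \in L^2(\mathcal{O};\R^N)$, so the expansion (\ref{L2sigrep}) applies to the element $\mathcal{P}f$ and yields
\[
\mathcal{P}f = \sum_{k=1}^\infty \inner{\mathcal{P}f}{a_k}a_k
\]
as a limit of partial sums in $L^2(\mathcal{O};\R^N)$. It therefore remains only to identify the coefficients $\inner{\mathcal{P}f}{a_k}$ with $\inner{f}{a_k}$.

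For this I would use that an orthogonal projection is self-adjoint and fixes its range: since $a_k \in L^2_{\sigma}(\mathcal{O};\R^N)$ we have $\mathcal{P}a_k = a_k$, hence $\inner{\mathcal{P}f}{a_k} = \inner{f}{\mathcal{P}a_k} = \inner{f}{a_k}$, and substituting into the display above gives the claim. Alternatively one may invoke the Helmholtz--Weyl decomposition of Corollaries \ref{HWDecomp2}--\ref{HWDecomp torus}, writing $f = \mathcal{P}f + \nabla g$ on $\mathscr{O}$, respectively $f = \mathcal{P}f + \nabla g + c$ on $\T$, and noting that $\inner{\nabla g}{a_k} = 0$ because $\nabla g \in L^{2,\perp}_{\sigma}(\mathcal{O};\R^N)$ is orthogonal to $a_k \in L^2_{\sigma}(\mathcal{O};\R^N)$, while $\inner{c}{a_k}=0$ since $a_k$ has zero average; this again forces $\inner{f}{a_k} = \inner{\mathcal{P}f}{a_k}$.

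I do not expect a genuine obstacle here. The only points requiring a modicum of care are that the asserted identity holds in $L^2(\mathcal{O};\R^N)$ as the limit of partial sums (not pointwise), and that the coefficient computation is legitimate precisely because $(a_k)$ is an honest orthonormal basis of the closed subspace $L^2_{\sigma}(\mathcal{O};\R^N)$ — which is exactly the content of Proposition \ref{basis of stokes}.
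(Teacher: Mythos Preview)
Your proposal is correct and essentially identical to the paper's argument: the paper also expands $\mathcal{P}f$ in the orthonormal basis $(a_k)$ of $L^2_\sigma(\mathcal{O};\R^N)$ and then replaces $\inner{\mathcal{P}f}{a_k}$ by $\inner{f}{a_k}$ using that $\mathcal{P}$ is an orthogonal projection. Your alternative via the Helmholtz--Weyl decomposition is a harmless extra route, but the primary argument matches the paper exactly.
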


\begin{proof}
    This is immediate from the fact that the $(a_k)$ form an orthogonal basis of $L^2_{\sigma}(\mathcal{O};\R^N)$, $\mathcal{P}f \in L^2_{\sigma}(\mathcal{O};\R^N)$ and $\mathcal{P}$ is an orthogonal projection: $$\mathcal{P}f = \sum_{k=1}^\infty \inner{\mathcal{P}f}{a_k}a_k = \sum_{k=1}^\infty \inner{f}{a_k}a_k.$$
\end{proof}

\begin{definition}
For $m \in \N$ we introduce the spaces $D(A^{m/2})$ as the subspaces of $L^2_{\sigma}(\mathcal{O};\R^N)$ consisting of functions $f$ such that $$\sum_{k=1}^\infty \lambda_k^m f_k^2 < \infty $$ for $f_k$ as in (\ref{L2sigrep}). Then $A^{m/2}:D(A^{m/2}) \rightarrow L^2_{\sigma}(\mathcal{O};\R^N)$ is defined by $$A^{m/2}:f \mapsto \sum_{k=1}^\infty \lambda_k^{m/2}f_ka_k.$$
\end{definition}

\begin{proposition} \label{A1}
    $D(A^{m/2}) \subset W^{m,2}(\mathcal{O};\R^N) \cap W^{1,2}_{\sigma}(\mathcal{O};\R^N)$ and the bilinear form $$\inner{f}{g}_m:= \inner{A^{m/2}f}{A^{m/2}g}$$ is an inner product on $D(A^{m/2})$. For $m$ even the induced norm is equivalent to the $W^{m,2}(\mathcal{O};\R^N)$ norm, and for $m$ odd we have the relation $$\norm{\cdot}_{W^{m,2}} \leq  c\norm{\cdot}_m$$ for some constant $c$.
\end{proposition}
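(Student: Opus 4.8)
The plan is to dispatch the three assertions in turn: the inner-product claim is immediate from the spectral definition, while the Sobolev inclusions and the norm estimates come from an induction that feeds the definition of $D(A^{m/2})$ into classical elliptic regularity for the stationary Stokes system.

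First the inner product. Writing $f=\sum_k f_k a_k$, $g = \sum_k g_k a_k$ as in (\ref{L2sigrep}), one has $\inner{f}{g}_m = \inner{A^{m/2}f}{A^{m/2}g} = \sum_k \lambda_k^m f_k g_k$, the series converging absolutely by Cauchy--Schwarz and $f,g\in D(A^{m/2})$. Bilinearity and symmetry are then clear, and since all $\lambda_k>0$ (Proposition \ref{basis of stokes}) we get $\inner{f}{f}_m = \sum_k\lambda_k^m f_k^2 \geq 0$ with equality forcing every $f_k=0$, i.e. $f=0$; so $\inner{\cdot}{\cdot}_m$ is an inner product. I also record $\norm{f}_{L^2}^2 = \sum_k f_k^2 \leq \lambda_1^{-m}\inner{f}{f}_m$, so $\norm{\cdot}_{L^2}\leq \lambda_1^{-m/2}\norm{\cdot}_m$; this lets me absorb lower-order contributions to any Sobolev norm into $\norm{\cdot}_m$ throughout.

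Next I handle $m=1$ and the inclusion $D(A^{m/2})\subset W^{1,2}_\sigma(\mathcal{O};\R^N)$. Using $Aa_k=\lambda_k a_k$ and $a_k\in W^{1,2}_\sigma(\mathcal{O};\R^N)$ (periodicity on $\T$, zero trace on $\mathscr{O}$), integration by parts gives $\sum_l\inner{\nabla a_j^l}{\nabla a_k^l}_{L^2} = -\inner{a_j}{\Delta a_k}_{L^2} = -\inner{a_j}{\mathcal{P}\Delta a_k}_{L^2} = \inner{a_j}{Aa_k}_{L^2} = \lambda_k\delta_{jk}$, where the second equality uses $a_j\in L^2_\sigma(\mathcal{O};\R^N)\perp L^{2,\perp}_\sigma(\mathcal{O};\R^N)$. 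Hence $(a_k)$ is orthogonal in $W^{1,2}_\sigma(\mathcal{O};\R^N)$ with $\norm{a_k}_{W^{1,2}}^2 = 1+\lambda_k$, so for $f\in D(A^{1/2})$ the partial sums $\sum_{k\leq K}f_k a_k$ are Cauchy in $W^{1,2}$; their limit lies in the closed subspace $W^{1,2}_\sigma(\mathcal{O};\R^N)$ and equals $f$ in $L^2$, so $f\in W^{1,2}_\sigma(\mathcal{O};\R^N)$ with $\norm{f}_{W^{1,2}}^2 = \sum_k(1+\lambda_k)f_k^2 \leq (1+\lambda_1^{-1})\norm{f}_1^2$, settling the odd case $m=1$. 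Since $\lambda_k\leq\lambda_1^{1-m}\lambda_k^m$ for $m\geq1$, we have $D(A^{m/2})\subset D(A^{1/2})$, so $D(A^{m/2})\subset W^{1,2}_\sigma(\mathcal{O};\R^N)$ for all $m$.

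Finally the $W^{m,2}$ inclusion and the norm estimates, by induction on $m$ in steps of two. The easy direction $\norm{A^{m/2}f}_{L^2}\leq c\norm{f}_{W^{m,2}}$ (needed for $m$ even) follows by iterating the continuity $A\colon W^{k+2,2}(\mathcal{O};\R^N)\to W^{k,2}(\mathcal{O};\R^N)$ noted earlier. For the reverse bound and the inclusion, the base cases are $m=1$ (done) and $m=2$, where $D(A)\subset W^{2,2}(\mathcal{O};\R^N)$ with $\norm{f}_{W^{2,2}}\leq c\norm{Af}_{L^2}$ is the standard Stokes elliptic regularity estimate, which I would cite from [\cite{Robinson}, \cite{temam NS}] (on $\T$ it is an elementary Fourier-multiplier bound), the converse being $\norm{Af}_{L^2}=\norm{\mathcal{P}\Delta f}_{L^2}\leq\norm{\Delta f}_{L^2}\leq\norm{f}_{W^{2,2}}$ by Proposition \ref{continuityofP}. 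For the step $m\to m+2$: given $f\in D(A^{(m+2)/2})$, one has $Af\in D(A^{m/2})$ with $\norm{A^{m/2}(Af)}_{L^2}=\norm{A^{(m+2)/2}f}_{L^2}$, so by the inductive hypothesis $Af\in W^{m,2}(\mathcal{O};\R^N)$ with $\norm{Af}_{W^{m,2}}\leq c\norm{A^{(m+2)/2}f}_{L^2}$; then $f\in D(A)$ solves $Af=h$ with $h\in W^{m,2}$, and the higher-order Stokes regularity estimate upgrades this to $f\in W^{m+2,2}(\mathcal{O};\R^N)$ with $\norm{f}_{W^{m+2,2}}\leq c(\norm{h}_{W^{m,2}}+\norm{f}_{L^2})\leq c'\norm{f}_{m+2}$ after absorbing $\norm{f}_{L^2}$. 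Combining with the inclusion into $W^{1,2}_\sigma(\mathcal{O};\R^N)$ gives $D(A^{m/2})\subset W^{m,2}(\mathcal{O};\R^N)\cap W^{1,2}_\sigma(\mathcal{O};\R^N)$, and assembling the inequalities yields norm equivalence for $m$ even and $\norm{\cdot}_{W^{m,2}}\leq c\norm{\cdot}_m$ for $m$ odd.

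The only genuinely non-bookkeeping ingredient is the elliptic regularity theory for the stationary Stokes system on the smooth bounded domain $\mathscr{O}$ — both the base estimate $D(A)\subset W^{2,2}$ and its higher-order iterate $Au\in W^{m,2}\Rightarrow u\in W^{m+2,2}$ — which is precisely where the smoothness of $\partial\mathscr{O}$ and the boundary behaviour of the $a_k$ enter. I expect the residual difficulty to be organisational: tracking which boundary conditions elements of $D(A^{m/2})$ actually inherit (the $a_k$ need not satisfy higher-order compatibility conditions, which is exactly why only the one-sided inequality is claimed for odd $m$) and justifying the term-by-term identities through convergence of the partial sums in the relevant Sobolev norm.
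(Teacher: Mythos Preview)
Your proposal is correct and reconstructs precisely the standard argument that the paper defers to its references for (the paper's own proof of this proposition is simply a citation to \cite{Constantin} Proposition~4.12 and \cite{Robinson} Exercises~2.12--2.13). The spectral verification of the inner product, the $m=1$ computation via integration by parts on the eigenbasis, and the induction $m\to m+2$ driven by Stokes elliptic regularity is exactly the route those sources take, and your closing remarks correctly flag that the smoothness of $\partial\mathscr{O}$ and the boundary behaviour of the $a_k$ are where the substance lies.
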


\begin{proposition} \label{A2}
    $D(A) = W^{2,2}_{\sigma}(\mathcal{O};\R^N)$ and $D(A^{1/2}) = W^{1,2}_{\sigma}(\mathcal{O};\R^N)$ with the additional property that $\norm{\cdot}_1$ is equivalent to $\norm{\cdot}_{W^{1,2}}$ on this space.
\end{proposition}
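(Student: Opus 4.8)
The plan is to argue throughout with the eigenbasis $(a_k)$ supplied by Proposition \ref{basis of stokes} and the spectral description of the domains $D(A^{m/2})$. The inclusions $D(A^{1/2}) \subseteq W^{1,2}_{\sigma}(\mathcal{O};\R^N)$ and $D(A) \subseteq W^{2,2}_{\sigma}(\mathcal{O};\R^N)$, as well as the estimate $\norm{\cdot}_{W^{1,2}} \leq c\norm{\cdot}_{1}$, are already part of Proposition \ref{A1} — recalling that $W^{2,2}_{\sigma}(\mathcal{O};\R^N)$ is by definition $W^{2,2}(\mathcal{O};\R^N) \cap W^{1,2}_{\sigma}(\mathcal{O};\R^N)$ in both geometries. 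So the content of the proof is the two reverse inclusions together with the reverse estimate $\norm{\cdot}_{1} \leq c\norm{\cdot}_{W^{1,2}}$ on $W^{1,2}_{\sigma}(\mathcal{O};\R^N)$.

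First I would record the identity $\inner{\nabla a_j}{\nabla a_k} = \lambda_j \delta_{jk}$. Each $a_k$ lies in $C^{\infty}(\overbar{\mathcal{O}};\R^N)$ and, on the bounded domain, in $W^{1,2}_{0}(\mathscr{O};\R^N)$, so integrating by parts componentwise and using that $a_k$ has zero trace removes the boundary term; then, since $a_k \in L^2_{\sigma}(\mathcal{O};\R^N)$ and $\mathcal{P}$ is the self-adjoint orthogonal projection onto that space, $\inner{\nabla a_j}{\nabla a_k} = \inner{-\Delta a_j}{a_k} = \inner{-\mathcal{P}\Delta a_j}{a_k} = \inner{A a_j}{a_k} = \lambda_j \delta_{jk}$ (on $\T$ there is no boundary and the computation is immediate). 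In particular $\norm{\nabla a_k}_{L^2}^2 = \lambda_k$, and since $\norm{a_k}_{L^2} = 1$ we get $\norm{a_k}_{W^{1,2}}^2 = 1 + \lambda_k$.

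For $D(A^{1/2}) = W^{1,2}_{\sigma}(\mathcal{O};\R^N)$ take $f \in W^{1,2}_{\sigma}(\mathcal{O};\R^N)$. As $(a_k)$ is an orthogonal basis of $W^{1,2}_{\sigma}(\mathcal{O};\R^N)$ we may write $f = \sum_k c_k a_k$ with convergence in $W^{1,2}$; this also converges in $L^2$, and pairing with $a_j$ in $L^2$ identifies $c_j = \inner{f}{a_j} = f_j$. Parseval in $W^{1,2}_{\sigma}(\mathcal{O};\R^N)$ then gives $\norm{f}_{W^{1,2}}^2 = \sum_k f_k^2 \norm{a_k}_{W^{1,2}}^2 = \sum_k (1 + \lambda_k) f_k^2$, so finiteness of the left-hand side forces $\sum_k \lambda_k f_k^2 < \infty$, i.e. $f \in D(A^{1/2})$, and moreover $\norm{f}_{1}^2 = \sum_k \lambda_k f_k^2 \leq \norm{f}_{W^{1,2}}^2$. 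This last estimate combined with Proposition \ref{A1} already yields the claimed equivalence; alternatively the same Parseval identity plus the Poincaré inequality on $W^{1,2}_{0}(\mathscr{O};\R^N)$, respectively on zero-average functions on $\T$, delivers both directions directly, via $\norm{\nabla f}_{L^2}^2 = \sum_k \lambda_k f_k^2 = \norm{f}_{1}^2$.

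For $D(A) = W^{2,2}_{\sigma}(\mathcal{O};\R^N)$ take $f \in W^{2,2}_{\sigma}(\mathcal{O};\R^N) = W^{2,2}(\mathcal{O};\R^N) \cap W^{1,2}_{\sigma}(\mathcal{O};\R^N)$; then $A f = -\mathcal{P}\Delta f \in L^2_{\sigma}(\mathcal{O};\R^N)$ and, on the bounded domain, $f$ has zero trace. Since both $a_k$ and $f$ are fixed by the self-adjoint projection $\mathcal{P}$, and using a double integration by parts in which the first boundary term vanishes by the zero trace of $a_k$ and the second by the zero trace of $f$, one finds $\inner{A f}{a_k} = \inner{-\Delta f}{a_k} = \inner{f}{-\Delta a_k} = \inner{f}{-\mathcal{P}\Delta a_k} = \inner{f}{A a_k} = \lambda_k f_k$; hence $\sum_k \lambda_k^2 f_k^2 = \norm{A f}_{L^2}^2 < \infty$, so $f \in D(A)$, and the proof is complete. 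I expect this double integration by parts to be the only delicate point: an element of $W^{2,2}_{\sigma}(\mathscr{O};\R^N)$ need only vanish in trace to first order — it lies in $W^{1,2}_{0}$, not in $W^{2,2}_{0}$ — so the normal-derivative boundary term has to be disposed of through the vanishing trace of the smooth eigenfunction $a_k$ onto which the derivatives are transferred rather than through $f$, and the argument should be phrased so that the periodic case is handled simultaneously, the boundary contributions there being simply absent. The remaining steps are routine manipulations with the eigenexpansion.
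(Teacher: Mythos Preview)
Your argument is correct. The paper itself does not give a proof of this proposition at all: it simply cites Constantin--Foias Proposition~4.12 and Robinson Exercises~2.12--2.13 (jointly with Proposition~\ref{A1}). So there is no ``paper's approach'' to compare against beyond the fact that the result is standard.

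What you have written is essentially the classical spectral argument those references contain. The key identities $\inner{\nabla a_j}{\nabla a_k}=\lambda_j\delta_{jk}$ and $\inner{Af}{a_k}=\lambda_k f_k$ are exactly right, and your handling of the boundary terms is correct: in the double integration by parts for $\inner{-\Delta f}{a_k}$ the first boundary contribution carries $\partial_n f$ against the trace of $a_k$ (killed because $a_k\in W^{1,2}_0$), and the second carries the trace of $f$ against $\partial_n a_k$ (killed because $f\in W^{1,2}_0$). Your remark that $f\in W^{2,2}_\sigma$ need not lie in $W^{2,2}_0$, so the normal derivative of $f$ cannot be assumed to vanish, is the right observation and you dispose of it in the right way. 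The Parseval step for $D(A^{1/2})$ is clean once you have identified the $W^{1,2}$-expansion coefficients with the $L^2$ ones via $c_k=\inner{f}{a_k}$. Nothing is missing.
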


\begin{proof}[Proof of \ref{A1}, \ref{A2}:]
    See [\cite{Constantin}] Proposition 4.12, [\cite{Robinson}] Exercises 2.12, 2.13 and the discussion in Subsection 2.3.
\end{proof}

\begin{proposition}  \label{prop for moving stokes}
For any $p,q \in \N$ with $p \leq q$, $p + q = 2m$ and $f \in D(A^{m/2})$, $g \in D(A^{q/2})$ we have that $$\inner{f}{g}_m = \inner{A^{p/2}f}{A^{q/2}g}.$$
\end{proposition}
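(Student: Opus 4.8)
The plan is to pass to the spectral representation with respect to the orthonormal basis $(a_k)$ of $L^2_{\sigma}(\mathcal{O};\R^N)$ furnished by Proposition \ref{basis of stokes}, and to reduce the claimed identity to a trivial manipulation of the exponents on the eigenvalues.

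First I would record the nesting of the domains. Since $p \leq q$ and $p+q = 2m$ we have $p \leq m \leq q$, and because $(\lambda_k)$ is strictly positive and tends to infinity, only finitely many $\lambda_k$ are smaller than $1$; hence convergence of $\sum_{k} \lambda_k^r f_k^2$ for one value of $r$ forces it for every smaller value of $r$, so that $D(A^{q/2}) \subset D(A^{m/2}) \subset D(A^{p/2})$. In particular, for $f \in D(A^{m/2})$ the element $A^{p/2}f = \sum_{k} \lambda_k^{p/2} f_k a_k$ is a well-defined member of $L^2_{\sigma}(\mathcal{O};\R^N)$, and likewise $A^{q/2}g \in L^2_{\sigma}(\mathcal{O};\R^N)$ for $g \in D(A^{q/2})$, while $A^{m/2}f, A^{m/2}g \in L^2_\sigma(\mathcal{O};\R^N)$ since $g \in D(A^{q/2}) \subset D(A^{m/2})$. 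Thus both sides of the asserted identity are meaningful.

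Next I would use that $(a_k)$ is an \emph{orthonormal basis} of $L^2_{\sigma}(\mathcal{O};\R^N)$ to evaluate inner products coordinate-wise via Parseval's identity: if $\phi = \sum_{k} \phi_k a_k$ and $\psi = \sum_{k} \psi_k a_k$ converge in $L^2(\mathcal{O};\R^N)$ then $\inner{\phi}{\psi} = \sum_{k} \phi_k \psi_k$. Applying this with $\phi = A^{m/2}f$ and $\psi = A^{m/2}g$ gives $\inner{f}{g}_m = \inner{A^{m/2}f}{A^{m/2}g} = \sum_{k} \lambda_k^{m} f_k g_k$, whereas applying it with $\phi = A^{p/2}f$ and $\psi = A^{q/2}g$ gives $\inner{A^{p/2}f}{A^{q/2}g} = \sum_{k} \lambda_k^{p/2}\lambda_k^{q/2} f_k g_k = \sum_{k} \lambda_k^{m} f_k g_k$. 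The two series are identical, which is the claim. Absolute convergence of this common series, which legitimises the term-by-term computation, follows from Cauchy–Schwarz: $\sum_{k} \lambda_k^{m}\abs{f_k}\abs{g_k} = \sum_{k} \left(\lambda_k^{p/2}\abs{f_k}\right)\left(\lambda_k^{q/2}\abs{g_k}\right) \leq \norm{A^{p/2}f}\,\norm{A^{q/2}g} < \infty$.

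There is no genuine obstacle here beyond bookkeeping; the only points demanding care are the nesting $D(A^{q/2}) \subset D(A^{m/2}) \subset D(A^{p/2})$, which guarantees that every expression written down is well-defined, and the justification that the $L^2_\sigma$ inner product of two basis expansions may be computed term by term, which is precisely Parseval's identity for the orthonormal basis $(a_k)$.
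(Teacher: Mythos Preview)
Your proof is correct and follows essentially the same approach as the paper: both reduce to the spectral expansion in the orthonormal basis $(a_k)$ and observe that $\lambda_k^{p/2}\lambda_k^{q/2} = \lambda_k^m$. If anything, you are more careful than the paper in recording the domain nesting $D(A^{q/2}) \subset D(A^{m/2}) \subset D(A^{p/2})$ and in justifying the term-by-term evaluation via Parseval and Cauchy--Schwarz.
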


\begin{proof}
The proof is direct:
   \begin{align*}
    \inner{f}{g}_m &= \inner{A^{m/2}f}{A^{m/2}g}\\
    & = \left \langle \sum_{k=1}^\infty \lambda_k^{m/2}f_ka_k, \sum_{j=1}^\infty \lambda_j^{m/2} g_j a_j  \right \rangle\\
    &= \sum_{k=1}^\infty\sum_{j=1}^\infty  \lambda_k^{m/2}f_k\lambda_j^{m/2} g_j\inner{a_k}{a_j}\\
    &= \sum_{k=1}^\infty \lambda_k^m f_kg_k\\
       &= \sum_{k=1}^\infty\sum_{j=1}^\infty  \lambda_k^{p/2}f_k\lambda_j^{q/2} g_j\inner{a_k}{a_j}\\
    &= \left \langle \sum_{k=1}^\infty \lambda_k^{p/2}f_ka_k, \sum_{j=1}^\infty \lambda_j^{q/2} g_j a_j  \right \rangle\\
    &= \inner{A^{p/2}f}{A^{q/2}g}.
\end{align*}
\end{proof}

\begin{lemma}
    The collection of functions $(a_k)$ form an orthogonal basis of $W^{1,2}_{\sigma}(\mathcal{O};\R^N)$ equipped with the $\inner{\cdot}{\cdot}_1$ inner product. 
\end{lemma}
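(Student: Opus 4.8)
The plan is to read the statement off from Proposition \ref{basis of stokes}, Proposition \ref{A2} and Proposition \ref{prop for moving stokes}, splitting it into orthogonality and completeness.

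\textbf{Orthogonality.} Each $a_k$ is an eigenfunction of $A$ with eigenvalue $\lambda_k$, hence $a_k \in D(A) \subset D(A^{1/2})$ and its coefficients in the expansion (\ref{L2sigrep}) are $\delta_{kj}$, so $A^{1/2}a_k = \lambda_k^{1/2}a_k$. Applying Proposition \ref{prop for moving stokes} with $m=1$, $p=0$, $q=2$ (or computing directly with $A^{1/2}$) gives $\inner{a_k}{a_j}_1 = \inner{a_k}{Aa_j} = \lambda_j\inner{a_k}{a_j} = \lambda_j\delta_{kj}$, using that $(a_k)$ is orthonormal in $L^2_\sigma(\mathcal{O};\R^N)$. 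In particular $\norm{a_k}_1^2 = \lambda_k > 0$ by Proposition \ref{basis of stokes}, so the family is genuinely orthogonal with no null element.

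\textbf{Completeness.} By Proposition \ref{A2}, $D(A^{1/2}) = W^{1,2}_{\sigma}(\mathcal{O};\R^N)$ and $\norm{\cdot}_1$ is equivalent to $\norm{\cdot}_{W^{1,2}}$ on this space; hence the two norms define the same topology and the same closed linear subspaces. Proposition \ref{basis of stokes} already states that $(a_k)$ is an orthogonal basis of $W^{1,2}_{\sigma}(\mathcal{O};\R^N)$ for the $W^{1,2}$ inner product, so its linear span is dense in the $\norm{\cdot}_{W^{1,2}}$ norm, therefore also in the $\norm{\cdot}_1$ norm. Combined with the orthogonality above, this is exactly the claim.

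Alternatively one can argue self-containedly from the series expansion: for $f \in W^{1,2}_{\sigma}(\mathcal{O};\R^N) = D(A^{1/2})$ with $f_k = \inner{f}{a_k}$ one has $\sum_k \lambda_k f_k^2 < \infty$ by definition of $D(A^{1/2})$, and the partial sums $S_n = \sum_{k=1}^n f_k a_k$ obey $\norm{f - S_n}_1^2 = \norm{A^{1/2}(f - S_n)}^2 = \sum_{k>n}\lambda_k f_k^2 \to 0$; since $\norm{\cdot}_1$ controls a multiple of $\norm{\cdot}_{L^2}$ (Propositions \ref{A1}--\ref{A2}) and $S_n \to f$ in $L^2$, the $\norm{\cdot}_1$-limit of $S_n$ is indeed $f$. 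I do not expect any real obstacle here: the only points requiring a moment of care are that the $a_k$ have strictly positive $\norm{\cdot}_1$-norm and that $\norm{\cdot}_1$-convergence is consistent with the already-known $L^2$ expansion, both of which are immediate from the cited propositions.
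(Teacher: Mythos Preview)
Your proof is correct and follows essentially the same approach as the paper: orthogonality via the eigenfunction relation $A^{1/2}a_k=\lambda_k^{1/2}a_k$ together with $L^2$-orthonormality, and completeness via the equivalence of $\norm{\cdot}_1$ with $\norm{\cdot}_{W^{1,2}}$ on $W^{1,2}_\sigma$ where the basis property is already given by Proposition \ref{basis of stokes}. The only cosmetic difference is that the paper computes $\inner{a_j}{a_k}_1$ as $\inner{A^{1/2}a_j}{A^{1/2}a_k}$ directly rather than via $\inner{a_k}{Aa_j}$, and does not include your (correct but unnecessary) alternative series argument.
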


\begin{proof}
    The completeness follows from that in $W^{1,2}_{\sigma}(\mathcal{O};\R^N)$ for the equivalent $\norm{\cdot}_{W^{1,2}}$. As for orthogonality, observe that $$\inner{a_j}{a_k}_1 = \inner{A^{1/2}a_j}{A^{1/2}a_k} = \lambda_j^{1/2}\lambda_k^{1/2}\inner{a_j}{a_k} = 0$$ from the orthogonality in $L^2(\mathcal{O};\R^N)$. 
\end{proof}

In addition to using these spaces defined by powers of the Stokes Operator, we also use the basis provided in Proposition \ref{basis of stokes} to consider finite dimensional approximations of these spaces.

\begin{definition}
We define $\mathcal{P}_n$ as the orthogonal projection onto $\textnormal{span}\{a_1, \dots, a_n\}$ in $L^2(\mathcal{O};\R^N)$. That is $\mathcal{P}_n$ is given by $$\mathcal{P}_n:f \mapsto \sum_{k=1}^n\inner{f}{a_k}a_k$$ for $f \in L^2(\mathcal{O};\R^N)$.
\end{definition}

\begin{lemma} \label{projectionsboundedlemma}
    The restriction of $\mathcal{P}_n$ to $D(A^{m/2})$ is self-adjoint for the $\inner{\cdot}{\cdot}_m$ inner product, and there exists a constant $c$ independent of $n$ such that for all $f\in D(A^{m/2})$, \begin{equation}\label{P_nbounds}
        \norm{\mathcal{P}_nf}_{W^{m,2}} \leq c\norm{f}_{W^{m,2}}.
    \end{equation}
\end{lemma}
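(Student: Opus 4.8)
The plan is to reduce both assertions to the spectral representation $f=\sum_{k}f_ka_k$ of Proposition \ref{basis of stokes} together with the norm comparisons in Propositions \ref{A1} and \ref{A2}.

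For the self-adjointness I would first note that $\mathcal{P}_n$ preserves $D(A^{m/2})$: indeed $\mathcal{P}_nf$ lies in $\textnormal{span}\{a_1,\dots,a_n\}\subset C^{\infty}(\overbar{\mathcal{O}};\R^N)$ and its coefficients $\inner{\mathcal{P}_nf}{a_k}$ equal $f_k$ for $k\leq n$ and $0$ otherwise, whence $\sum_k\lambda_k^m\inner{\mathcal{P}_nf}{a_k}^2\leq\norm{f}_m^2<\infty$. Expanding $\inner{\mathcal{P}_nf}{g}_m=\inner{A^{m/2}\mathcal{P}_nf}{A^{m/2}g}$ against the $(a_k)$ and using their $L^2_\sigma(\mathcal{O};\R^N)$-orthonormality collapses the double sum to $\sum_{k=1}^n\lambda_k^mf_kg_k$, which is visibly unchanged if instead $g$ is the projected argument; hence $\inner{\mathcal{P}_nf}{g}_m=\inner{f}{\mathcal{P}_ng}_m$.

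For the bound (\ref{P_nbounds}), the first step is the easy observation that $\mathcal{P}_nf\in D(A^{m/2})$ with $\norm{\mathcal{P}_nf}_m^2=\sum_{k=1}^n\lambda_k^mf_k^2\leq\sum_{k=1}^\infty\lambda_k^mf_k^2=\norm{f}_m^2$, so by Proposition \ref{A1} one has $\norm{\mathcal{P}_nf}_{W^{m,2}}\leq c\norm{\mathcal{P}_nf}_m\leq c\norm{f}_m$ with $c$ depending only on $m$. It then remains to establish the reverse comparison $\norm{f}_m\leq c\norm{f}_{W^{m,2}}$ for $f\in D(A^{m/2})$. For $m$ even this is already contained in Proposition \ref{A1}. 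For $m$ odd I would split off one half-power of $A$: writing $\norm{f}_m=\norm{A^{1/2}\big(A^{(m-1)/2}f\big)}=\norm{A^{(m-1)/2}f}_1$ and noting that $f\in D(A^{m/2})$ forces $A^{(m-1)/2}f\in D(A^{1/2})=W^{1,2}_\sigma(\mathcal{O};\R^N)$, Proposition \ref{A2} gives $\norm{A^{(m-1)/2}f}_1\leq c\norm{A^{(m-1)/2}f}_{W^{1,2}}$; finally, iterating $(m-1)/2$ times the continuity of $A:W^{j+2,2}(\mathcal{O};\R^N)\to W^{j,2}(\mathcal{O};\R^N)$ recorded just after the definition of the Stokes operator, starting from $f\in W^{m,2}(\mathcal{O};\R^N)$, yields $\norm{A^{(m-1)/2}f}_{W^{1,2}}\leq c\norm{f}_{W^{m,2}}$. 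Chaining these estimates completes the bound.

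The only genuine obstacle is the odd case. Proposition \ref{A1} compares $\norm{\cdot}_m$ and $\norm{\cdot}_{W^{m,2}}$ only in the direction that is useless here, since what is needed is to dominate $\norm{f}_m$ — which weights the $k$-th mode by $\lambda_k^m$ — by a plain Sobolev norm. The point is that we are working on the smaller space $D(A^{m/2})$, not all of $W^{m,2}(\mathcal{O};\R^N)\cap W^{1,2}_\sigma(\mathcal{O};\R^N)$, and removing one factor $A^{1/2}$ reduces matters to the $m=1$ equivalence of Proposition \ref{A2}, after which only the elementary integer-order mapping property of $A$ is invoked. The one thing to watch is that each intermediate function lands in the space for which the quoted result is stated; this is exactly what the hypothesis $f\in D(A^{m/2})$ secures.
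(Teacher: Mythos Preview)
Your self-adjointness argument is essentially identical to the paper's: both expand $\mathcal{P}_nf$ and $g$ against the Stokes eigenbasis, apply $A^{m/2}$ termwise, and use the $L^2_\sigma$-orthonormality of $(a_k)$ to collapse to the symmetric finite sum $\sum_{k=1}^n\lambda_k^mf_kg_k$.

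For the bound (\ref{P_nbounds}) the paper gives no argument at all and simply cites [\cite{Robinson}] Lemma 4.1. You instead supply a self-contained proof via the chain $\norm{\mathcal{P}_nf}_{W^{m,2}}\leq c\norm{\mathcal{P}_nf}_m\leq c\norm{f}_m\leq c\norm{f}_{W^{m,2}}$, the last inequality being handled for odd $m$ by peeling off one factor $A^{1/2}$, invoking the $m=1$ equivalence of Proposition~\ref{A2}, and then iterating the integer-order mapping property of $A$. This is correct: the membership $A^{(m-1)/2}f\in D(A^{1/2})$ follows from the spectral definition since $\sum_k\lambda_k(\lambda_k^{(m-1)/2}f_k)^2=\norm{f}_m^2<\infty$, and the $j$-fold iteration of $A:W^{k+2,2}\to W^{k,2}$ indeed lands $A^{(m-1)/2}f$ in $W^{1,2}$ when starting from $W^{m,2}$. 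What you gain over the paper is a proof internal to the document rather than a bare citation; the trade-off is that the cited result in Robinson presumably handles the odd case by the same or a closely related mechanism, so there is no substantive mathematical divergence.
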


\begin{proof}
    For $f,g \in D(A^{m/2})$ (and thus admitting the representation (\ref{L2sigrep})), we have that
    \begin{align*}
        \inner{\mathcal{P}_nf}{g}_m &= \left\langle \sum_{j=1}^n\inner{f}{a_j}a_j, \sum_{k=1}^\infty \inner{g}{a_k}a_k \right\rangle_m\\
        &= \left\langle \sum_{j=1}^n\inner{f}{a_j}\lambda_j^{m/2}a_j, \sum_{k=1}^\infty \inner{g}{a_k}\lambda_k^{m/2}a_k \right\rangle\\
        &= \left\langle \sum_{j=1}^n\inner{f}{a_j}\lambda_j^{m/2}a_j, \sum_{k=1}^n \inner{g}{a_k}\lambda_k^{m/2}a_k \right\rangle\\
        &= \left\langle \sum_{j=1}^\infty\inner{f}{a_j}\lambda_j^{m/2}a_j, \sum_{k=1}^n \inner{g}{a_k}\lambda_k^{m/2}a_k \right\rangle\\
        &= \inner{f}{\mathcal{P}_n g}_m
    \end{align*}
    as required for the first statement. For the second see [\cite{Robinson}] Lemma 4.1. 
\end{proof}

\begin{lemma} \label{one over n projection lemma}
    There exists a constant $c$ such that for all $f \in W^{1,2}_{\sigma}(\mathcal{O};\R^N)$, $g \in W^{2,2}_{\sigma}(\mathcal{O};\R^N)$ we have that 
    \begin{align*}
        \norm{(\mathcal{I}-\mathcal{P}_n)f}^2 &\leq \frac{1}{\lambda_n}\norm{f}_1^2\\
        \norm{(\mathcal{I}-\mathcal{P}_n)g}_1^2 &\leq \frac{1}{\lambda_n}\norm{g}_2^2
    \end{align*}
    where $\mathcal{I}$ represents the identity operator in the relevant spaces.
\end{lemma}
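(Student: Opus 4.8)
The plan is to work entirely within the spectral representation afforded by Proposition \ref{basis of stokes}, so that both inequalities reduce to elementary tail estimates for weighted $\ell^2$ sums.

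First I would record the representations. Since $W^{1,2}_{\sigma}(\mathcal{O};\R^N) = D(A^{1/2})$ by Proposition \ref{A2}, any $f$ in this space expands as $f = \sum_{k=1}^\infty f_k a_k$ with $f_k = \inner{f}{a_k}$, and by definition of $D(A^{1/2})$ one has $\sum_{k=1}^\infty \lambda_k f_k^2 < \infty$; moreover $\norm{f}_1^2 = \norm{A^{1/2}f}^2 = \sum_{k=1}^\infty \lambda_k f_k^2$ by orthonormality of the $(a_k)$ in $L^2_{\sigma}(\mathcal{O};\R^N)$. As $\mathcal{P}_n f = \sum_{k=1}^n f_k a_k$, we obtain $(\mathcal{I}-\mathcal{P}_n)f = \sum_{k=n+1}^\infty f_k a_k$, the series converging in $L^2_{\sigma}(\mathcal{O};\R^N)$, and hence $\norm{(\mathcal{I}-\mathcal{P}_n)f}^2 = \sum_{k=n+1}^\infty f_k^2$.

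Next, using that the eigenvalues are arranged in non-decreasing order and tend to infinity (so $\lambda_k \geq \lambda_n$ for every $k \geq n$), I would bound
$$\norm{f}_1^2 = \sum_{k=1}^\infty \lambda_k f_k^2 \geq \sum_{k=n+1}^\infty \lambda_k f_k^2 \geq \lambda_n \sum_{k=n+1}^\infty f_k^2 = \lambda_n \norm{(\mathcal{I}-\mathcal{P}_n)f}^2,$$
which is the first claimed inequality. The second is structurally identical: for $g \in W^{2,2}_{\sigma}(\mathcal{O};\R^N) = D(A)$ with expansion $g = \sum_{k} g_k a_k$, one has $(\mathcal{I}-\mathcal{P}_n)g = \sum_{k=n+1}^\infty g_k a_k \in D(A^{1/2})$ with $\norm{(\mathcal{I}-\mathcal{P}_n)g}_1^2 = \sum_{k=n+1}^\infty \lambda_k g_k^2$, while $\norm{g}_2^2 = \norm{Ag}^2 = \sum_{k=1}^\infty \lambda_k^2 g_k^2 \geq \lambda_n \sum_{k=n+1}^\infty \lambda_k g_k^2 = \lambda_n \norm{(\mathcal{I}-\mathcal{P}_n)g}_1^2$.

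There is no serious obstacle here; the only points deserving a word of care are that the tail series $\sum_{k=n+1}^\infty f_k a_k$ converges in the $\norm{\cdot}_1$ topology (and $\sum_{k=n+1}^\infty g_k a_k$ in the $\norm{\cdot}_2$ topology), so that the relevant norms of $(\mathcal{I}-\mathcal{P}_n)f$ and $(\mathcal{I}-\mathcal{P}_n)g$ are genuinely computed by Parseval, and that the $(\lambda_k)$ are ordered so that $\lambda_n$ (rather than only $\lambda_{n+1}$, which would merely sharpen the estimate) may be extracted from the tail.
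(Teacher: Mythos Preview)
Your argument is correct and is essentially identical to the paper's own proof: both work in the spectral basis $(a_k)$, write the tail $(\mathcal{I}-\mathcal{P}_n)f$ as $\sum_{k=n+1}^\infty f_k a_k$, and use $\lambda_k \geq \lambda_n$ for $k>n$ to extract the factor $\lambda_n^{-1}$ from the weighted tail sum. The only cosmetic difference is that the paper phrases the chain as an upper bound on $\norm{(\mathcal{I}-\mathcal{P}_n)f}^2$ while you phrase it as a lower bound on $\norm{f}_1^2$.
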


\begin{proof}
    For the first result, note that
    \begin{align*}
        \norm{(\mathcal{I}-\mathcal{P}_n)f}^2 &= \sum_{k=n+1}^\infty \inner{f}{a_k}^2\\
        &= \sum_{k=n+1}^\infty \frac{\lambda_k}{\lambda_k} \inner{f}{a_k}^2\\
        &\leq \frac{1}{\lambda_n}\sum_{k=n+1}^\infty \lambda_k\inner{f}{a_k}^2\\
        &\leq \frac{1}{\lambda_n}\sum_{k=1}^\infty \lambda_k\inner{f}{a_k}^2\\
        &= \frac{1}{\lambda_n}\norm{f}_1^2
    \end{align*}
    and the second follows identically.
\end{proof}

\begin{remark}
Evidently from the proof, the stronger property
\begin{align*}
        \norm{(\mathcal{I}-\mathcal{P}_n)f}^2 &\leq \frac{1}{\lambda_n}\norm{(\mathcal{I}-\mathcal{P}_n)f}_1^2\\
        \norm{(\mathcal{I}-\mathcal{P}_n)g}_1^2 &\leq \frac{1}{\lambda_n}\norm{(\mathcal{I}-\mathcal{P}_n)g}_2^2
    \end{align*}
    is true.
\end{remark}

To conclude this subsection we discuss briefly bounds related to the nonlinear term, which will be used in our analysis.

\begin{lemma} \label{biggglemma}
    For every $\phi \in W^{1,2}_{\sigma}(\mathcal{O};\R^N)$ and $f, g \in W^{1,2}(\mathcal{O};\R^N)$, we have that \begin{equation}\label{wloglhs}\inner{\mathcal{L}_{\phi}f}{g}= -\inner{f}{\mathcal{L}_{\phi}g}.\end{equation} %In particular $\mathcal{L}_{\xi_i}^* = -\mathcal{L}_{\xi_i}$ for the adjoint in $L^2_\sigma(\mathcal{O};\R^N)$ of the densely defined operators. 
\end{lemma}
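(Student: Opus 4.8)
The plan is to establish the identity first for smooth divergence-free $\phi$ and smooth $f,g$ via integration by parts, then pass to the general case by density. Concretely, write $\mathcal{L}_\phi f = \sum_{j=1}^N \phi^j \partial_j f$ and expand the inner product componentwise, so that
\[
\inner{\mathcal{L}_\phi f}{g} = \sum_{l=1}^N \sum_{j=1}^N \int_{\mathcal{O}} \phi^j (\partial_j f^l) g^l \, d\lambda.
\]
Integrating by parts in the $x_j$ variable moves the derivative off $f^l$, producing $-\int \partial_j(\phi^j g^l) f^l = -\int \phi^j (\partial_j g^l) f^l - \int (\partial_j \phi^j) g^l f^l$. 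Summing the second term over $j$ gives $(\textnormal{div}\,\phi)\, g^l f^l = 0$ since $\phi$ is divergence-free, and summing the first term over $j$ and $l$ recovers exactly $-\inner{f}{\mathcal{L}_\phi g}$. The boundary terms vanish: on $\T$ by periodicity, and on $\mathscr{O}$ because the test functions are compactly supported (and $\phi \in C^\infty_{0,\sigma}(\mathscr{O};\R^N)$, or one uses that the normal trace of $\phi$ vanishes).

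The second step is the density argument. Using Lemma \ref{W12sigmacharacter}, $W^{1,2}_{\sigma}(\mathcal{O};\R^N)$ is the $W^{1,2}$-closure of $C^\infty_{0,\sigma}$ (on the torus the analogous statement holds, with $C^\infty$ zero-average divergence-free functions dense in $W^{1,2}_\sigma(\T;\R^N)$), and $C^\infty$ functions are dense in $W^{1,2}(\mathcal{O};\R^N)$. So take sequences $\phi_n \to \phi$ in $W^{1,2}_\sigma$, $f_n \to f$, $g_n \to g$ in $W^{1,2}$, each $C^\infty$ (and $\phi_n$ divergence-free), and pass to the limit in the identity $\inner{\mathcal{L}_{\phi_n} f_n}{g_n} = -\inner{f_n}{\mathcal{L}_{\phi_n} g_n}$. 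To justify the passage one needs the trilinear form $(\phi, f, g) \mapsto \inner{\mathcal{L}_\phi f}{g}$ to be continuous on $W^{1,2}_\sigma \times W^{1,2} \times W^{1,2}$; this follows from Hölder with the Sobolev embedding $W^{1,2}(\mathcal{O};\R^N) \hookrightarrow L^4(\mathcal{O};\R^N)$ valid for $N \le 4$ (in particular $N=2,3$), estimating $\abs{\inner{\mathcal{L}_\phi f}{g}} \le \norm{\phi}_{L^4}\norm{\nabla f}_{L^2}\norm{g}_{L^4} \lesssim \norm{\phi}_{W^{1,2}}\norm{f}_{W^{1,2}}\norm{g}_{W^{1,2}}$, and a symmetric bound handles the right-hand side. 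Multilinearity then gives convergence of both sides.

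The only mild subtlety — and the step I would be most careful about — is the vanishing of the boundary term on the bounded domain at the level of smooth approximants: one wants the approximating $\phi_n$ to lie in $C^\infty_{0,\sigma}(\mathscr{O};\R^N)$ (compact support), which is exactly what Lemma \ref{W12sigmacharacter} provides, so no boundary contribution ever appears and the limiting identity is clean. Alternatively, one can approximate $\phi$ by divergence-free fields with vanishing normal trace and invoke that $\int_{\partial\mathscr{O}} (\phi\cdot n)(f\cdot g)\,dS$ depends only on the normal component of $\phi$; but using compactly supported approximants is the most economical route and sidesteps any trace-theoretic bookkeeping. Everything else is routine integration by parts and continuity of multilinear forms.
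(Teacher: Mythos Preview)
Your proof is correct and follows essentially the same approach as the paper: integration by parts on smooth compactly supported divergence-free approximants of $\phi$, with the divergence-free condition killing the extra term, followed by a density argument. The paper differs only in minor technical choices --- it approximates $\phi$ alone (keeping $f,g$ fixed in $W^{1,2}$) and interprets the pairing via the $L^{6/5}$--$L^6$ duality using $W^{1,2}\hookrightarrow L^3, L^6$, rather than your $L^4$--$L^2$--$L^4$ H\"older split and full trilinear continuity.
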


\begin{proof}
    See Appendix I, \ref{Appendix I}.
\end{proof}

\begin{corollary} \label{cancellationproperty}
    For every $\phi \in W^{1,2}_{\sigma}(\mathcal{O};\R^N)$ and $f \in W^{1,2}(\mathcal{O};\R^N)$, we have that \begin{equation}\nonumber \inner{\mathcal{L}_{\phi}f}{f}= 0.\end{equation} %In particular $\mathcal{L}_{\xi_i}^* = -\mathcal{L}_{\xi_i}$ for the adjoint in $L^2_\sigma(\mathcal{O};\R^N)$ of the densely defined operators. 
\end{corollary}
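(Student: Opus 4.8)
The plan is to deduce this directly from Lemma \ref{biggglemma}, which is the antisymmetry identity $\inner{\mathcal{L}_{\phi}f}{g} = -\inner{f}{\mathcal{L}_{\phi}g}$ valid for every $\phi \in W^{1,2}_{\sigma}(\mathcal{O};\R^N)$ and $f,g \in W^{1,2}(\mathcal{O};\R^N)$. Taking $g = f$ in that identity immediately gives $\inner{\mathcal{L}_{\phi}f}{f} = -\inner{\mathcal{L}_{\phi}f}{f}$, so $2\inner{\mathcal{L}_{\phi}f}{f} = 0$ and hence $\inner{\mathcal{L}_{\phi}f}{f} = 0$. The only thing to check is that the hypotheses of Lemma \ref{biggglemma} are met, which is automatic: $\phi$ lies in $W^{1,2}_{\sigma}(\mathcal{O};\R^N)$ and $f \in W^{1,2}(\mathcal{O};\R^N)$ plays the role of both $f$ and $g$.

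Conceptually, this just records that for divergence-free $\phi$ the operator $\mathcal{L}_{\phi} = \sum_j \phi^j \partial_j$ is skew-adjoint on $L^2(\mathcal{O};\R^N)$ (with the relevant boundary/periodicity data already encoded in $W^{1,2}_{\sigma}$), and a skew-adjoint operator annihilates the quadratic form $f \mapsto \inner{\mathcal{L}_\phi f}{f}$. No genuine obstacle arises here; all the analytic work — the integration by parts, the handling of the boundary term via $\phi \cdot n = 0$, and the use of $\mathrm{div}\,\phi = 0$ to remove the term $\inner{(\mathrm{div}\,\phi) f}{f}$ — has already been carried out in the proof of Lemma \ref{biggglemma} in Appendix I. The corollary is therefore a one-line specialisation.

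If one wished to make the statement self-contained without invoking Lemma \ref{biggglemma}, the alternative would be to repeat the computation: write $\inner{\mathcal{L}_{\phi}f}{f} = \sum_{j,l} \inner{\phi^j \partial_j f^l}{f^l}$, observe that $\phi^j \partial_j f^l \, f^l = \tfrac12 \phi^j \partial_j\big((f^l)^2\big)$, integrate by parts using $\partial_j \phi^j$ summed to zero and the vanishing boundary contribution, and conclude. But since Lemma \ref{biggglemma} is already available in the text, the clean route is simply to set $g = f$ there.
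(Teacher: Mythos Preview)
Your proposal is correct and is precisely the intended argument: the paper states this as a corollary of Lemma \ref{biggglemma} with no separate proof, so setting $g=f$ in that lemma is exactly the one-line deduction expected.
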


\begin{proposition} \label{inequalityforcauchynonlinear}
    There exists a constant $c$ such that for any $f \in W^{1,2}_{\sigma}(\T;\R^N)$ and $g \in W^{2,2}_{\sigma}(\T;\R^N)$, \begin{equation} \label{nonlinear eq1} \norm{\mathcal{L}_fg} \leq c\norm{f}_{1}\norm{g}_{1}^{1/2}\norm{g}_{2}^{1/2}.\end{equation} Meanwhile for $f \in W^{1,2}_{\sigma}(\mathscr{O};\R^N)$ and $g \in W^{2,2}_{\sigma}(\mathscr{O};\R^N)$ we have that \begin{equation} \label{nonlinear eq2} \norm{\mathcal{L}_fg} \leq c\norm{f}_{1}\left(\norm{g}_{1}^{1/2}\norm{g}_{2}^{1/2} + \norm{g}_1\right).\end{equation}
\end{proposition}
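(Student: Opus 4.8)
The plan is to estimate $\mathcal{L}_f g = \sum_{j=1}^N f^j\partial_j g$ directly in $L^2$ via a single application of Hölder's inequality, after which the two bounds will diverge only because $\nabla g$ respects no boundary condition on $\mathscr{O}$. First, by the triangle inequality and Hölder with exponents $6$ and $3$, one has $\norm{\mathcal{L}_f g} \leq \sum_{j=1}^N \norm{f^j\partial_j g} \leq c\,\norm{f}_{L^6(\mathcal{O};\R^N)}\norm{\nabla g}_{L^3(\mathcal{O};\R^{N\times N})}$; in particular $\mathcal{L}_f g \in L^2(\mathcal{O};\R^N)$ once the right-hand side is seen to be finite. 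The factor involving $f$ is handled identically on both domains: the Sobolev embedding $W^{1,2}(\mathcal{O};\R^N)\hookrightarrow L^6(\mathcal{O};\R^N)$ (valid for $N\leq 3$) together with Proposition \ref{A2} gives $\norm{f}_{L^6(\mathcal{O};\R^N)}\leq c\norm{f}_{W^{1,2}}\leq c\norm{f}_1$. For the factor involving $g$ I would interpolate the Lebesgue spaces, $\norm{\nabla g}_{L^3}\leq \norm{\nabla g}_{L^2}^{1/2}\norm{\nabla g}_{L^6}^{1/2}$, and then bound $\norm{\nabla g}_{L^6}$ by a Sobolev embedding.

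On $\T$ every component of $\nabla g$ has zero average, so the homogeneous Sobolev inequality gives $\norm{\nabla g}_{L^6}\leq c\norm{\nabla^2 g}_{L^2}\leq c\norm{g}_{W^{2,2}}$; combined with $\norm{\nabla g}_{L^2}\leq \norm{g}_{W^{1,2}}$ and the norm equivalences of Propositions \ref{A1} (applied with $m=2$, which is even) and \ref{A2}, this yields $\norm{\nabla g}_{L^3}\leq c\norm{g}_1^{1/2}\norm{g}_2^{1/2}$ and hence \eqref{nonlinear eq1}. On $\mathscr{O}$ one cannot discard the lower-order term: $\nabla g$ lies only in $W^{1,2}$, not in $W^{1,2}_0$, so only the inhomogeneous estimate $\norm{\nabla g}_{L^6}\leq c\norm{\nabla g}_{W^{1,2}}$ is available, whence $\norm{\nabla g}_{L^6}^{1/2}\leq c\big(\norm{\nabla g}_{L^2}^{1/2}+\norm{\nabla^2 g}_{L^2}^{1/2}\big)$ and therefore $\norm{\nabla g}_{L^3}\leq c\big(\norm{g}_{W^{1,2}}+\norm{g}_{W^{1,2}}^{1/2}\norm{g}_{W^{2,2}}^{1/2}\big)$; applying Propositions \ref{A1} and \ref{A2} once more converts this into \eqref{nonlinear eq2}.

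The only genuine subtlety — and the sole reason for the extra summand $\norm{g}_1$ on the bounded domain — is exactly this last point: on the torus a full derivative of $g$ has zero mean, so $\norm{\nabla^2 g}_{L^2}$ by itself controls $\norm{\nabla g}_{L^6}$, whereas on $\mathscr{O}$ one is forced to use the full $W^{1,2}$-norm of $\nabla g$. Everything else is a routine assembly of Hölder's inequality, classical embedding and interpolation theorems in dimension $N\leq 3$, and the norm equivalences already recorded in Propositions \ref{A1} and \ref{A2}; note in particular that the divergence-free hypotheses on $f$ and $g$ enter only through those equivalences, not through the analytic estimates themselves.
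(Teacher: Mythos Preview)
Your proposal is correct and follows essentially the same route as the paper: the same H\"older split $\norm{\mathcal{L}_f g}\leq c\norm{f}_{L^6}\norm{\nabla g}_{L^3}$, the same Sobolev embedding $W^{1,2}\hookrightarrow L^6$ for the $f$ factor, and the same distinction between $\T$ and $\mathscr{O}$ in handling $\norm{\nabla g}_{L^3}$. The only cosmetic difference is that the paper quotes the Gagliardo--Nirenberg inequality (Theorem~\ref{gagliardonirenberginequality}) as a black box for $\norm{\partial_j g^l}_{L^3}$, whereas you unpack that step as Lebesgue interpolation $\norm{\nabla g}_{L^3}\leq\norm{\nabla g}_{L^2}^{1/2}\norm{\nabla g}_{L^6}^{1/2}$ followed by the $W^{1,2}\hookrightarrow L^6$ embedding; your explanation of the extra $\norm{g}_1$ on $\mathscr{O}$ via the failure of a homogeneous Sobolev estimate for $\nabla g$ is the mechanism underlying the paper's two forms of Gagliardo--Nirenberg.
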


\begin{proof}
See Appendix I, \ref{Appendix I}.
\end{proof}

\subsection{The SALT Operator} \label{subsection the salt operator}

Having established the relevant function spaces and some fundamental properties of the operators involved in the deterministic Navier-Stokes Equation, we now address the operator $B$ appearing in the Stratonovich integral of (\ref{number2equation}). As in [\cite{goodair2022stochastic}] Subsection 2.2, the operator $B$ is defined by its action on the basis vectors $(e_i)$ of $\mathfrak{U}$. We shall show in Subsection \ref{subsection ito} that $B$ does indeed satisfy Assumption 2.2.2 of [\cite{goodair2022stochastic}] for the spaces to $V,H,U,X$ to be defined. With the notation of [\cite{goodair2022stochastic}], each $B_i$ is defined relative to the correlations $\xi_i$ for sufficiently regular $f$ by the mapping $$B_i:f \mapsto \mathcal{L}_{\xi_i}f + \mathcal{T}_{\xi_i}f$$ where $\mathcal{L}$ is as before, and $\mathcal{T}$ is a new operator that we introduce defined by $$\mathcal{T}_{g}f := \sum_{j=1}^N f^j\nabla g^j.$$ We shall assume throughout that each $\xi_i$ belongs to the space $W^{1,2}_{\sigma}(\mathcal{O};\R^N)$, and for the meantime that there is some fixed $m \in \N$ with $\xi_i \in W^{m+2,\infty}(\mathcal{O};\R^N)$.

\begin{lemma} \label{biggestcorcor}
     There exists a constant $c$ such that for each $i$ and for all $f \in W^{k,2}(\mathcal{O};\R^N)$ with $k=0, \dots, m+1$, we have the bound \begin{equation}\label{T_ibound}\norm{\mathcal{T}_{\xi_i}f}_{W^{k,2}}^2 \leq  c \norm{\xi_i}^2_{W^{k+1,\infty}}\norm{f}^2_{W^{k,2}}.\end{equation} Therefore $\mathcal{T}_{\xi_i}$ is a bounded linear operator on $L^2(\mathcal{O};\R^N)$ so has an everywhere defined adjoint $\mathcal{T}_{\xi_i}^*$ in this space. 
\end{lemma}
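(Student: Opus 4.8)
The plan is to bound $\norm{\mathcal{T}_{\xi_i}f}_{W^{k,2}}$ directly via the Leibniz rule, reducing everything to a finite sum of products in which one factor is an $L^\infty$ norm of a derivative of $\xi_i$ of order at most $k+1$ and the other is an $L^2$ norm of a derivative of $f$ of order at most $k$.

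First I would fix $k \in \{0,\dots,m+1\}$ and $f \in W^{k,2}(\mathcal{O};\R^N)$, and recall that componentwise $(\mathcal{T}_{\xi_i}f)^l = \sum_{j=1}^N f^j \partial_l \xi_i^j$ in the sense of (\ref{distinction}). For a multi-index $\alpha$ with $\abs{\alpha}\le k$ the Leibniz rule gives
$$D^\alpha (\mathcal{T}_{\xi_i}f)^l = \sum_{j=1}^N \sum_{\beta \le \alpha} \binom{\alpha}{\beta} (D^\beta f^j)(D^{\alpha-\beta}\partial_l \xi_i^j),$$
which is legitimate since $f \in W^{k,2}(\mathcal{O};\R^N)$ ensures $D^\beta f^j \in L^2(\mathcal{O};\R)$ for $\abs{\beta}\le k$ and the standing assumption $\xi_i \in W^{m+2,\infty}(\mathcal{O};\R^N)$ ensures $D^{\alpha-\beta}\partial_l\xi_i^j \in L^\infty(\mathcal{O};\R)$. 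Taking $L^2(\mathcal{O};\R)$ norms and applying Hölder's inequality term by term, each summand is bounded by $\binom{\alpha}{\beta}\norm{D^{\alpha-\beta}\partial_l\xi_i^j}_{L^\infty}\norm{D^\beta f^j}_{L^2}$; since $\abs{\alpha-\beta}+1 \le k+1$ and $\abs{\beta}\le k$, these two factors are controlled by $\norm{\xi_i}_{W^{k+1,\infty}}$ and $\norm{f}_{W^{k,2}}$ respectively. Summing over $l = 1,\dots,N$, over the finitely many $\alpha$ with $\abs{\alpha}\le k$, over $\beta \le \alpha$ and over $j = 1,\dots,N$, and absorbing the (finitely many, bounded) binomial coefficients and the cardinality of the index set — all depending only on $N$ and $m$ — into a single constant, one obtains
$$\norm{\mathcal{T}_{\xi_i}f}_{W^{k,2}}^2 = \sum_{l=1}^N\sum_{\abs{\alpha}\le k}\norm{D^\alpha(\mathcal{T}_{\xi_i}f)^l}_{L^2}^2 \le c\norm{\xi_i}_{W^{k+1,\infty}}^2\norm{f}_{W^{k,2}}^2,$$
with $c$ independent of $i$, of $f$ and of $k \in \{0,\dots,m+1\}$; this is (\ref{T_ibound}). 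In particular the case $k=0$ shows that $\mathcal{T}_{\xi_i}$ maps $L^2(\mathcal{O};\R^N)$ boundedly into itself, and since it is manifestly linear it is a bounded linear operator on the Hilbert space $L^2(\mathcal{O};\R^N)$; by the standard Hilbert-space duality it therefore admits a unique, everywhere-defined bounded adjoint $\mathcal{T}_{\xi_i}^*$ on that space.

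There is no genuine analytical difficulty here; the only care required is bookkeeping — respecting the componentwise convention (\ref{distinction}) for differential operators on vector-valued functions, ensuring that the derivatives falling on $\xi_i$ never exceed order $k+1$ while those on $f$ never exceed order $k$, and checking that the combinatorial constant can be taken uniform over the finite range $k \in \{0,\dots,m+1\}$, which holds because $m$ is fixed.
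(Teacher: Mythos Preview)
Your proof is correct and follows essentially the same approach as the paper's own argument: apply the Leibniz rule to $D^\alpha(\mathcal{T}_{\xi_i}f)$, split each product via H\"older with the $\xi_i$-derivative in $L^\infty$ and the $f$-derivative in $L^2$, and absorb all combinatorics into the constant. The only cosmetic difference is that the paper organises the Leibniz expansion operator-wise as $D^\alpha \mathcal{T}_{\xi_i}f = \sum_{\alpha'\le\alpha}\mathcal{T}_{D^{\alpha-\alpha'}\xi_i}D^{\alpha'}f$ rather than componentwise, but the content is identical.
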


\begin{proof}
See Appendix II, \ref{appendix ii}.
\end{proof}

\begin{lemma} \label{boundsonL_xi}
    There exists a constant $c$ such that for each $i$ and for all $f \in W^{k+1,2}(\mathcal{O};\R^N)$ with $k=0, \dots, m+1$, we have the bound \begin{equation} \label{L_ibound} \norm{\mathcal{L}_{\xi_i}f}_{W^{k,2}}^2 \leq c\norm{\xi_i}^2_{W^{k,\infty}}\norm{f}^2_{W^{k+1,2}}.\end{equation}
    Therefore $\mathcal{L}_{\xi_i}$ is a densely defined operator in $L^2(\mathcal{O};\R^N)$ with domain of definition $W^{1,2}(\mathcal{O};\R^N)$, and has adjoint $\mathcal{L}_{\xi_i}^*$ in this space given by $-\mathcal{L}_{\xi_i}$ with same dense domain of definition.
\end{lemma}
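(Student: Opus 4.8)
\emph{Proof proposal.} The statement bundles three claims: the Sobolev estimate (\ref{L_ibound}), the dense definedness of $\mathcal{L}_{\xi_i}$ in $L^2(\mathcal{O};\R^N)$, and the identification of its adjoint with $-\mathcal{L}_{\xi_i}$. Only the first requires any computation, and that computation is just the Leibniz rule.

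For (\ref{L_ibound}), write $\mathcal{L}_{\xi_i}f$ componentwise as $(\mathcal{L}_{\xi_i}f)^l=\sum_{j=1}^N\xi_i^j\partial_jf^l$. For a multi-index $\alpha$ with $\abs{\alpha}\le k$, the Leibniz rule expands $D^\alpha\!\big(\xi_i^j\partial_jf^l\big)$ into a finite sum of terms of the form $\binom{\alpha}{\beta}\,D^\beta\xi_i^j\,D^{\alpha-\beta}\partial_jf^l$ with $\beta\le\alpha$, and each is bounded in $L^2(\mathcal{O};\R)$ by $\norm{D^\beta\xi_i^j}_{L^\infty}\norm{D^{\alpha-\beta}\partial_jf^l}_{L^2}$. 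Since $\abs{\beta}\le k$ and $\abs{\alpha-\beta}+1\le k+1$, these factors are dominated by $\norm{\xi_i}_{W^{k,\infty}}$ and $\norm{f}_{W^{k+1,2}}$ respectively; summing the finitely many contributions over $\beta\le\alpha$, over $\abs{\alpha}\le k$, and over $l,j\le N$ (the number of which depends only on $N$ and $k\le m+1$) yields (\ref{L_ibound}) with a constant $c=c(N,m)$ independent of $i$. The standing hypothesis $\xi_i\in W^{m+2,\infty}(\mathcal{O};\R^N)$ together with $k\le m+1$ guarantees every norm of $\xi_i$ appearing is finite. This is the computation I would relegate to the appendix.

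The dense definedness is then immediate: taking $k=0$, (\ref{L_ibound}) shows $\mathcal{L}_{\xi_i}\colon W^{1,2}(\mathcal{O};\R^N)\to L^2(\mathcal{O};\R^N)$ is well defined and $W^{1,2}$-to-$L^2$ bounded, while $W^{1,2}(\mathcal{O};\R^N)\supset C^\infty_0(\mathcal{O};\R^N)$ is dense in $L^2(\mathcal{O};\R^N)$. For the adjoint I would invoke Lemma \ref{biggglemma} with $\phi=\xi_i$, which is legitimate since $\xi_i\in W^{1,2}_{\sigma}(\mathcal{O};\R^N)$, to obtain $\inner{\mathcal{L}_{\xi_i}f}{g}=-\inner{f}{\mathcal{L}_{\xi_i}g}$ for all $f,g\in W^{1,2}(\mathcal{O};\R^N)$; that is, $\mathcal{L}_{\xi_i}$ is skew-symmetric on $W^{1,2}(\mathcal{O};\R^N)$, so $-\mathcal{L}_{\xi_i}$ with domain $W^{1,2}(\mathcal{O};\R^N)$ agrees with the $L^2$-adjoint $\mathcal{L}_{\xi_i}^*$ there. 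That no $g$ outside $W^{1,2}$ need be considered is seen by testing the defining relation of $\mathcal{L}_{\xi_i}^*$ against $C^\infty_0(\mathcal{O};\R^N)$ and using $\operatorname{div}\xi_i=0$ to reverse the integration by parts. There is no genuine obstacle here: the estimate is pure Leibniz-rule bookkeeping and the adjoint identity is a direct corollary of the already-established Lemma \ref{biggglemma}; the only point deserving a moment's care is the role of the divergence-free condition on $\xi_i$ in identifying the adjoint.
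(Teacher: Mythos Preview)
Your proposal is correct and mirrors the paper's own argument: the paper expands $D^\alpha\mathcal{L}_{\xi_i}f=\sum_{\alpha'\le\alpha}\mathcal{L}_{D^{\alpha-\alpha'}\xi_i}D^{\alpha'}f$ via the Leibniz rule, bounds each summand by pulling out $\norm{D^{\alpha-\alpha'}\xi_i}_{L^\infty}\le\norm{\xi_i}_{W^{k,\infty}}$ against $\norm{\partial_jD^{\alpha'}f}\le\norm{f}_{W^{k+1,2}}$, and sums, while the adjoint claim is dispatched in one line by invoking Lemma \ref{biggglemma} exactly as you do. Your extra sentence about the domain of $\mathcal{L}_{\xi_i}^*$ being no larger than $W^{1,2}$ is a point the paper does not address (it treats the adjoint statement informally), but otherwise there is no substantive difference.
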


\begin{proof}
    See Appendix II, \ref{appendix ii} for the proof of (\ref{L_ibound}). The fact that $-\mathcal{L}_{\xi_i}$ is the adjoint for $\mathcal{L}_{\xi_i}$ on $W^{1,2}(\mathcal{O};\R^N)$ follows immediately from Lemma \ref{biggglemma}.
\end{proof}

\begin{corollary} \label{corollary for B_i adjoint}
    There exists a constant $c$ such that for each $i$ and for all $f \in W^{k+1,2}(\mathcal{O};\R^N)$ with $k=0, \dots, m+1$, we have the bound \begin{equation} \label{boundsonB_i} \norm{B_if}_{W^{k,2}}^2 \leq c\norm{\xi_i}^2_{W^{k+1,\infty}}\norm{f}^2_{W^{k+1,2}}.\end{equation}
    Moreover $B_{i}$ is a densely defined operator in $L^2(\mathcal{O};\R^N)$ with domain of definition $W^{1,2}(\mathcal{O};\R^N)$, and has adjoint $B_i^*$ in this space given by $-\mathcal{L}_{\xi_i} + \mathcal{T}_{\xi_i}^*$ with same dense domain of definition.
\end{corollary}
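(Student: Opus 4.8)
The plan is to read everything off the splitting $B_i = \mathcal{L}_{\xi_i} + \mathcal{T}_{\xi_i}$ together with Lemmas \ref{biggestcorcor}, \ref{boundsonL_xi} and \ref{biggglemma}. For the quantitative estimate (\ref{boundsonB_i}), fix $k \in \{0,\dots,m+1\}$ and $f \in W^{k+1,2}(\mathcal{O};\R^N)$; by the triangle inequality in $W^{k,2}(\mathcal{O};\R^N)$ and the elementary bound $(a+b)^2 \le 2a^2 + 2b^2$,
$$\norm{B_i f}_{W^{k,2}}^2 \le 2\norm{\mathcal{L}_{\xi_i} f}_{W^{k,2}}^2 + 2\norm{\mathcal{T}_{\xi_i} f}_{W^{k,2}}^2.$$
Inserting (\ref{L_ibound}) and (\ref{T_ibound}), and using the monotonicities $\norm{\xi_i}_{W^{k,\infty}} \le \norm{\xi_i}_{W^{k+1,\infty}}$ and $\norm{f}_{W^{k,2}} \le \norm{f}_{W^{k+1,2}}$ to bring both right-hand sides into the common form $\norm{\xi_i}^2_{W^{k+1,\infty}}\norm{f}^2_{W^{k+1,2}}$, gives the claimed inequality after relabelling the constant; uniformity in $i$ is inherited from the source lemmas.

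For the operator-theoretic part, note that $\mathcal{T}_{\xi_i}$ is bounded and everywhere defined on $L^2(\mathcal{O};\R^N)$ by Lemma \ref{biggestcorcor}, while $\mathcal{L}_{\xi_i}$ is densely defined with domain $W^{1,2}(\mathcal{O};\R^N)$ by Lemma \ref{boundsonL_xi}; hence $B_i$ is densely defined with domain exactly $W^{1,2}(\mathcal{O};\R^N)$. To identify the adjoint I would first check the inclusion $W^{1,2}(\mathcal{O};\R^N) \subseteq D(B_i^*)$ together with the claimed action: for $f,g \in W^{1,2}(\mathcal{O};\R^N)$,
$$\inner{B_i f}{g} = \inner{\mathcal{L}_{\xi_i} f}{g} + \inner{\mathcal{T}_{\xi_i} f}{g} = -\inner{f}{\mathcal{L}_{\xi_i} g} + \inner{f}{\mathcal{T}_{\xi_i}^* g} = \inner{f}{\left(-\mathcal{L}_{\xi_i} + \mathcal{T}_{\xi_i}^*\right)g},$$
where the first term is handled by Lemma \ref{biggglemma} (applicable since $\xi_i \in W^{1,2}_{\sigma}(\mathcal{O};\R^N)$) and the second by the definition of the $L^2$-adjoint $\mathcal{T}_{\xi_i}^*$. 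For the reverse inclusion, take $g \in D(B_i^*)$ with $B_i^* g = h$, so $\inner{B_i f}{g} = \inner{f}{h}$ for all $f \in W^{1,2}(\mathcal{O};\R^N)$; subtracting the identity $\inner{\mathcal{T}_{\xi_i} f}{g} = \inner{f}{\mathcal{T}_{\xi_i}^* g}$ (valid for all $f \in L^2$, as $\mathcal{T}_{\xi_i}$ is bounded) shows $\inner{\mathcal{L}_{\xi_i} f}{g} = \inner{f}{h - \mathcal{T}_{\xi_i}^* g}$ for all such $f$, i.e. $g \in D(\mathcal{L}_{\xi_i}^*) = W^{1,2}(\mathcal{O};\R^N)$ by Lemma \ref{boundsonL_xi}, and then necessarily $h = -\mathcal{L}_{\xi_i} g + \mathcal{T}_{\xi_i}^* g$. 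This yields $B_i^* = -\mathcal{L}_{\xi_i} + \mathcal{T}_{\xi_i}^*$ with domain $W^{1,2}(\mathcal{O};\R^N)$.

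There is no genuine obstacle: the statement is a bookkeeping consequence of the two preceding lemmas together with Lemma \ref{biggglemma}, and amounts to the standard fact that adding a bounded, everywhere-defined perturbation $T$ to a densely defined operator $S$ gives $(S+T)^* = S^* + T^*$ with $D((S+T)^*) = D(S^*)$. The only point worth a line of care is that this is a two-sided statement about domains, so one should spell out both inclusions for $D(B_i^*)$ as above rather than merely exhibiting a formula for $B_i^*$ on $W^{1,2}(\mathcal{O};\R^N)$.
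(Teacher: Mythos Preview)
Your proposal is correct and matches the paper's intended approach: the paper presents this result as a corollary without proof, precisely because it is meant to be read off immediately from the splitting $B_i = \mathcal{L}_{\xi_i} + \mathcal{T}_{\xi_i}$ together with Lemmas \ref{biggestcorcor} and \ref{boundsonL_xi}. Your write-up is in fact more careful than the paper's, in that you verify both inclusions for $D(B_i^*)$ rather than merely exhibiting the action of $B_i^*$ on $W^{1,2}(\mathcal{O};\R^N)$.
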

Our techniques all centre around energy estimates, where the key idea as to how we preserve these estimates in the case of a transport type noise owes to the following proposition.

\begin{proposition} \label{prop for conservation B_i}
    There exists a constant $c$ such that for each $i$ and for all $f \in W^{k+2,2}(\mathcal{O};\R^N)$ with $k=0, \dots, m$, we have the bounds
    \begin{align}
    \inner{B_i^2f}{f}_{W^{k,2}} +  \norm{B_if}_{W^{k,2}}^2 &\leq c\norm{\xi_i}_{W^{k+2,\infty}}^2\norm{f}_{W^{k,2}}^2 \label{combinedterminenergyinequality},\\
    \inner{B_if}{f}_{W^{k,2}}^2 &\leq c\norm{\xi_i}^2_{W^{k+1,\infty}}\norm{f}^4_{W^{k,2}}. \label{finalboundinderivativeproof}
\end{align}
    
\end{proposition}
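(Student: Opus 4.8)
The plan is to establish (\ref{combinedterminenergyinequality}) first for $k=0$ and then bootstrap to general $k$ by differentiating and controlling commutators, while (\ref{finalboundinderivativeproof}) will come from a direct energy computation exploiting the cancellation of Corollary \ref{cancellationproperty}. Throughout, note that divergence-freeness is only ever used for $\xi_i$, not for the argument $f$, which is consistent with the hypotheses.

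For (\ref{combinedterminenergyinequality}) with $k=0$ the key observation is that, by the adjoint identification of Corollary \ref{corollary for B_i adjoint} (applied with $f\in W^{2,2}(\mathcal{O};\R^N)$, so that $B_if\in W^{1,2}(\mathcal{O};\R^N)$ lies in the domain),
$$\inner{B_i^2 f}{f} + \norm{B_if}^2 = \inner{B_if}{(B_i + B_i^*)f} = \inner{B_if}{(\mathcal{T}_{\xi_i}+\mathcal{T}_{\xi_i}^*)f},$$
since the first-order parts $\pm\mathcal{L}_{\xi_i}$ cancel in $B_i + B_i^*$. Writing $S:=\mathcal{T}_{\xi_i}+\mathcal{T}_{\xi_i}^*$ and $B_if = \mathcal{L}_{\xi_i}f + \mathcal{T}_{\xi_i}f$, the contribution $\inner{\mathcal{T}_{\xi_i}f}{Sf}$ is bounded by $c\norm{\xi_i}_{W^{1,\infty}}^2\norm{f}^2$ directly from Lemma \ref{biggestcorcor}. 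For $\inner{\mathcal{L}_{\xi_i}f}{Sf}$ one uses that $S$ acts as multiplication by the matrix with entries $S^{lj}=\partial_l\xi_i^j + \partial_j\xi_i^l$, which is symmetric in $(l,j)$; this lets one rewrite the integrand $\xi_i^{j'}S^{lj}(\partial_{j'}f^l)f^j$ as $\tfrac12\xi_i^{j'}S^{lj}\partial_{j'}(f^lf^j)$, integrate by parts in $x_{j'}$ (the boundary term vanishing as $\xi_i$ has zero trace on $\partial\mathscr{O}$, and trivially on $\T$), and invoke $\mathrm{div}\,\xi_i = 0$ to kill the term where the derivative lands on $\xi_i^{j'}$. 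Both derivatives of $f$ are thereby removed at the cost of a second derivative of $\xi_i$, yielding $\abs{\inner{\mathcal{L}_{\xi_i}f}{Sf}}\leq c\norm{\xi_i}_{W^{2,\infty}}^2\norm{f}^2$ and hence the $k=0$ bound.

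For general $k$ I would expand $\inner{B_i^2 f}{f}_{W^{k,2}} + \norm{B_if}_{W^{k,2}}^2 = \sum_{\abs{\alpha}\leq k}\big(\inner{D^\alpha B_i^2 f}{D^\alpha f} + \norm{D^\alpha B_if}^2\big)$ and commute $D^\alpha$ past $B_i$ via $D^\alpha B_i^2 f = B_i^2 D^\alpha f + [D^\alpha,B_i^2]f$ and $D^\alpha B_if = B_iD^\alpha f + [D^\alpha,B_i]f$. The diagonal piece $\inner{B_i^2 D^\alpha f}{D^\alpha f} + \norm{B_iD^\alpha f}^2$ is precisely the $k=0$ estimate applied to $D^\alpha f\in W^{2,2}(\mathcal{O};\R^N)$ (since $f\in W^{k+2,2}(\mathcal{O};\R^N)$ and the $k=0$ argument never used divergence-freeness of its argument), so it is $\leq c\norm{\xi_i}_{W^{2,\infty}}^2\norm{f}_{W^{k,2}}^2$. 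By the Leibniz rule $[D^\alpha,B_i]g$ contains at most $\abs{\alpha}$ derivatives of $g$ and at most $\abs{\alpha}+1$ derivatives of $\xi_i$, whence $\norm{[D^\alpha,B_i]f}^2\leq c\norm{\xi_i}_{W^{k+1,\infty}}^2\norm{f}_{W^{k,2}}^2$ at once. The genuine difficulty is the remaining cross terms $2\inner{B_iD^\alpha f}{[D^\alpha,B_i]f} + \inner{[D^\alpha,B_i^2]f}{D^\alpha f}$, which a priori pair $k+1$ derivatives of $f$ against $k$; here I would write $[D^\alpha,B_i^2]=[D^\alpha,B_i]B_i + B_i[D^\alpha,B_i]$ and use the skew-adjointness of $\mathcal{L}_{\xi_i}$ (Lemma \ref{biggglemma}) to redistribute derivatives, splitting each term into a part with at most $k$ derivatives of $f$ — estimated directly by $c\norm{\xi_i}_{W^{k+1,\infty}}^2\norm{f}_{W^{k,2}}^2$ — and a top-order remainder. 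The top-order remainders should occur in conjugate pairs whose sum, after a single integration by parts in which $\mathrm{div}\,\xi_i = 0$ annihilates the dangerous contribution, is bounded by $c\norm{\xi_i}_{W^{k+2,\infty}}^2\norm{f}_{W^{k,2}}^2$; this is exactly the $k=0$ mechanism propagated through the highest derivative, and carefully tracking this cancellation through the full Leibniz expansion is the main obstacle and the computation-heavy heart of the proof.

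Finally, for (\ref{finalboundinderivativeproof}) I would expand $\inner{B_if}{f}_{W^{k,2}} = \sum_{\abs{\alpha}\leq k}\inner{D^\alpha B_if}{D^\alpha f}$ and commute as above. Then $\inner{B_iD^\alpha f}{D^\alpha f} = \inner{\mathcal{L}_{\xi_i}D^\alpha f}{D^\alpha f} + \inner{\mathcal{T}_{\xi_i}D^\alpha f}{D^\alpha f}$, where the first term vanishes by Corollary \ref{cancellationproperty} (valid since $D^\alpha f\in W^{1,2}(\mathcal{O};\R^N)$ and $\xi_i\in W^{1,2}_\sigma(\mathcal{O};\R^N)$) and the second is at most $c\norm{\xi_i}_{W^{1,\infty}}\norm{f}_{W^{k,2}}^2$ by Lemma \ref{biggestcorcor}; the commutator $\inner{[D^\alpha,B_i]f}{D^\alpha f}$ is bounded directly by $c\norm{\xi_i}_{W^{k+1,\infty}}\norm{f}_{W^{k,2}}^2$ from the Leibniz count, with no integration by parts required. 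Hence $\abs{\inner{B_if}{f}_{W^{k,2}}}\leq c\norm{\xi_i}_{W^{k+1,\infty}}\norm{f}_{W^{k,2}}^2$, and squaring gives (\ref{finalboundinderivativeproof}).
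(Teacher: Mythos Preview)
Your strategy is essentially the paper's: it too uses $B_i+B_i^*=\mathcal{T}_{\xi_i}+\mathcal{T}_{\xi_i}^*$ to kill the top-order part, expands $D^\alpha B_i$ and $D^\alpha B_i^2$ via Leibniz, and closes the dangerous cross terms through the explicit commutator $[B_{D^{\alpha-\alpha'}\xi_i},\mathcal{L}_{\xi_i}]$ (this is exactly the computation you flag as ``the computation-heavy heart''), while (\ref{finalboundinderivativeproof}) is proved in both cases by the same Leibniz-plus-Corollary~\ref{cancellationproperty} argument. The one cosmetic difference is that for the diagonal piece you exploit the symmetry of $\partial_l\xi_i^j+\partial_j\xi_i^l$ to write $\inner{\mathcal{L}_{\xi_i}g}{Sg}$ as a total $x_{j'}$-derivative and integrate by parts, whereas the paper gets the same bound by computing the commutator $[\mathcal{T}_{\xi_i},\mathcal{L}_{\xi_i}]$ directly---both routes rest on the skew-adjointness of $\mathcal{L}_{\xi_i}$ and are equivalent.
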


\begin{proof}
        See Appendix II, \ref{appendix ii}. 
\end{proof}

\begin{remark}
Proposition \ref{prop for conservation B_i} has been understood to hold in the case $\mathcal{O} = \T$, for example in the works [\cite{Crisan1}, \cite{Crisan2}], though perhaps not for the bounded domain.
\end{remark}

We will also use the corresponding result to Lemma \ref{APequalsA}
for the case of the operator $B_i$. This holds true only in the presence of the additional $\mathcal{T}_{\xi_i}$ term in the operator, highlighting the significance of considering a noise which is not purely transport.

\begin{lemma} \label{PBiequalsPBiP}
    We have that $$B_i:L^{2, \perp}_{\sigma}(\mathcal{O};\R^N) \cap W^{1,2}(\mathcal{O};\R^N) \rightarrow L^{2, \perp}_{\sigma}(\mathcal{O};\R^N)$$ and moreover that $\mathcal{P}B_i = \mathcal{P}B_i\mathcal{P}$ on $W^{1,2}(\mathcal{O};\R^N)$. 
\end{lemma}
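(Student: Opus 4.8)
The plan is to establish the mapping property first and then deduce the operator identity. For the mapping property, let $\psi \in L^{2,\perp}_\sigma(\mathcal{O};\R^N) \cap W^{1,2}(\mathcal{O};\R^N)$, so by definition $\psi = \nabla g$ for some $g \in W^{1,2}(\mathcal{O};\R)$; since $\psi \in W^{1,2}$, in fact $g \in W^{2,2}(\mathcal{O};\R)$. I would compute $B_i\psi = \mathcal{L}_{\xi_i}\nabla g + \mathcal{T}_{\xi_i}\nabla g$ componentwise. The key observation is that, writing $\nabla g$ with components $\partial_l g$, one has $(\mathcal{L}_{\xi_i}\nabla g)^l = \sum_j \xi_i^j \partial_j \partial_l g = \partial_l\big(\sum_j \xi_i^j \partial_j g\big) - \sum_j (\partial_l \xi_i^j)(\partial_j g)$, while $(\mathcal{T}_{\xi_i}\nabla g)^l = \sum_j (\partial_l g)(\partial_j \xi_i^j)\,$— wait, more carefully $\mathcal{T}_{g}f = \sum_j f^j \nabla g^j$ so $(\mathcal{T}_{\xi_i}\nabla g)^l = \sum_j (\partial_j g)(\partial_l \xi_i^j)$. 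Adding these, the two unwanted terms $-\sum_j(\partial_l\xi_i^j)(\partial_j g)$ and $\sum_j(\partial_l\xi_i^j)(\partial_j g)$ cancel exactly, leaving $(B_i\nabla g)^l = \partial_l\big(\sum_j \xi_i^j\partial_j g\big)$, i.e. $B_i\nabla g = \nabla h$ with $h := \sum_j \xi_i^j \partial_j g = \mathcal{L}_{\xi_i}g$ (scalar sense). One then checks $h \in W^{1,2}(\mathcal{O};\R)$: this follows from $\xi_i \in W^{m+2,\infty}$ with $m \geq 0$ (so $\xi_i \in W^{2,\infty}$) and $g \in W^{2,2}$, by a product-rule estimate analogous to Lemma \ref{boundsonL_xi}. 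Hence $B_i\psi = \nabla h \in L^{2,\perp}_\sigma(\mathcal{O};\R^N)$, proving the first claim.

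For the identity $\mathcal{P}B_i = \mathcal{P}B_i\mathcal{P}$ on $W^{1,2}(\mathcal{O};\R^N)$, take $f \in W^{1,2}(\mathcal{O};\R^N)$ and apply the Helmholtz–Weyl decomposition (Corollary \ref{HWDecomp2} or \ref{HWDecomp torus}): $f = \mathcal{P}f + \nabla g$ in the bounded-domain case, and $f = \mathcal{P}f + \nabla g + c$ with $c$ constant in the torus case. Here Proposition \ref{continuityofP} guarantees $\mathcal{P}f \in W^{1,2}$, hence $\nabla g \in W^{1,2}$ (and trivially $c \in W^{1,2}$), so each summand lies in the domain of $B_i$. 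Applying $B_i$ linearly and then $\mathcal{P}$: $\mathcal{P}B_i f = \mathcal{P}B_i\mathcal{P}f + \mathcal{P}B_i\nabla g \,(+\, \mathcal{P}B_i c)$. By the first part, $B_i\nabla g \in L^{2,\perp}_\sigma(\mathcal{O};\R^N)$, which $\mathcal{P}$ annihilates, so $\mathcal{P}B_i\nabla g = 0$. In the torus case one also needs $\mathcal{P}B_i c = 0$: since $c$ is constant, $\mathcal{L}_{\xi_i}c = 0$ and $\mathcal{T}_{\xi_i}c = \sum_j c^j \nabla\xi_i^j = \nabla\big(\sum_j c^j \xi_i^j\big)$, which is again a gradient in $L^{2,\perp}_\sigma$ (using $\xi_i \in W^{1,2}_\sigma \cap W^{m+2,\infty}$), hence killed by $\mathcal{P}$. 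This leaves $\mathcal{P}B_i f = \mathcal{P}B_i\mathcal{P}f$, as required.

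The only real subtlety — and the point the remark before the lemma emphasises — is that the cancellation in the first paragraph genuinely requires the $\mathcal{T}_{\xi_i}$ term: for a pure transport operator $\mathcal{L}_{\xi_i}$ alone, $\mathcal{L}_{\xi_i}\nabla g$ is not a gradient (the leftover term $-\sum_j(\partial_l\xi_i^j)(\partial_j g)$ survives), so $B_i$ would not map gradients to gradients. Thus the main "obstacle" is really just bookkeeping: carrying out the componentwise computation carefully enough to see the exact cancellation, and confirming the mild regularity needed so that $B_i\nabla g$ is genuinely an element of $L^{2,\perp}_\sigma(\mathcal{O};\R^N)$ (i.e. that the scalar potential $h$ lies in $W^{1,2}$). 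No hard analysis is involved beyond product-rule Sobolev bounds already recorded in Lemmas \ref{biggestcorcor}–\ref{boundsonL_xi}.
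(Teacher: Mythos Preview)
Your proposal is correct and follows essentially the same approach as the paper: compute $B_i\nabla g$ via the product rule to exhibit the cancellation yielding $B_i\nabla g = \nabla(\mathcal{L}_{\xi_i}g) \in L^{2,\perp}_\sigma$, then apply the Helmholtz--Weyl decomposition (with the extra constant handled identically on the torus) to deduce $\mathcal{P}B_i = \mathcal{P}B_i\mathcal{P}$. The only cosmetic difference is that the paper writes the gradient identity at the vector level rather than componentwise, but the content is the same.
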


\begin{proof}
      For $\nabla g \in L^{2, \perp}_{\sigma}(\mathcal{O};\R^N) \cap W^{1,2}(\mathcal{O};\R^N)$,
    \begin{align*}
    B_i(\nabla g) &= \mathcal{L}_{\xi_i}(\nabla g) + \mathcal{T}_{\xi_i}(\nabla g)\\
    &= \sum_{j=1}^N\xi_i^j \partial_j(\nabla g) + \sum_{j=1}^N \partial_jg \nabla \xi_i^j\\
    &= \sum_{j=1}^N\xi_i^j (\nabla \partial_jg) + \sum_{j=1}^N (\nabla \xi_i^j) \partial_jg\\
    &= \nabla \sum_{j=1}^N\xi_i^j\partial_jg\\
    &\in L^{2, \perp}_{\sigma}(\mathcal{O};\R^N)
\end{align*}
    using in the last line the assumption that $\nabla g \in W^{1,2}(\mathcal{O};\R^N)$ and that $\xi_i \in W^{1,\infty}(\mathcal{O};\R^N)$. Just as seen in Lemma \ref{APequalsA} we now make a distinction between the settings of $\T$ and $\mathscr{O}$. For the bounded domain $\mathscr{O}$, we take any $f \in W^{1,2}(\mathscr{O};\R^N)$ and use the representation (\ref{decomp}) to see that $$\mathcal{P}B_if = \mathcal{P}B_i\left(\mathcal{P}f + \nabla g\right) = \mathcal{P}B_i\mathcal{P}f + \mathcal{P}(B_i\nabla g) =  \mathcal{P}B_i\mathcal{P}f$$ as required, using again the $W^{1,2}(\mathscr{O};\R^N)$ regularity of both components of the decomposition (\ref{decomp}). In the case of the Torus we must address the constant term in the decomposition (\ref{decomp torus}), appreciating that $$B_ic = \mathcal{T}_{\xi_i}c = \sum_{j=1}^Nc^j\nabla \xi_i^j = \nabla \sum_{j=1}^Nc^j\xi_i^j \in L^{2, \perp}_{\sigma}(\T;\R^N)$$ so the result follows in the same manner.
\end{proof}

This relationship between $B_i$ and the Leray Projector sets up an analysis of the $B_i$ terms in the $\inner{\cdot}{\cdot}_m$ inner product spaces, to which we are further interested in the commutation relationship between $B_i$ and the Laplacian $\Delta$. This is addressed here.

\begin{proposition} \label{boundoncommutator}
    There exists a constant $c$ such that for every $f\in W^{3,2}(\mathcal{O};\R^N)$, $$\norm{[\Delta,B_i]f}^2 \leq c\norm{\xi_i}_{W^{3,\infty}}^2\norm{f}_{W^{2,2}}^2$$ where $[\Delta,B_i]$ is the commutator $$[\Delta,B_i]:= \Delta B_i - B_i\Delta.$$
\end{proposition}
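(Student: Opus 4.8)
The plan is to expand the commutator explicitly via the Leibniz rule for the Laplacian of a product, working componentwise in accordance with the convention (\ref{distinction}), and to observe that the contributions involving third-order derivatives of $f$ cancel. Since $B_i = \mathcal{L}_{\xi_i} + \mathcal{T}_{\xi_i}$, by the triangle inequality it is enough to bound $[\Delta,\mathcal{L}_{\xi_i}]f$ and $[\Delta,\mathcal{T}_{\xi_i}]f$ separately (the computations make sense because $f \in W^{3,2}(\mathcal{O};\R^N)$).

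For the transport part, $(\mathcal{L}_{\xi_i}f)^l = \sum_{j=1}^N \xi_i^j \partial_j f^l$, and expanding $\partial_k^2(\xi_i^j \partial_j f^l)$ and summing over $k,j$ gives, after cancelling the term $\sum_j \xi_i^j \partial_j \Delta f^l$ against $(\mathcal{L}_{\xi_i}\Delta f)^l$,
$$\left([\Delta,\mathcal{L}_{\xi_i}]f\right)^l = \sum_{j=1}^N (\Delta \xi_i^j)\,\partial_j f^l \;+\; 2\sum_{j,k=1}^N (\partial_k \xi_i^j)\,\partial_k\partial_j f^l.$$
For the $\mathcal{T}$ part, $(\mathcal{T}_{\xi_i}f)^l = \sum_{j=1}^N f^j \,\partial_l \xi_i^j$, and the analogous expansion of $\partial_k^2(f^j \partial_l \xi_i^j)$ leads, after cancelling the second-order term $\sum_j (\Delta f^j)\partial_l\xi_i^j$ against $(\mathcal{T}_{\xi_i}\Delta f)^l$, to
$$\left([\Delta,\mathcal{T}_{\xi_i}]f\right)^l = 2\sum_{j,k=1}^N (\partial_k f^j)\,\partial_k\partial_l\xi_i^j \;+\; \sum_{j=1}^N f^j\,\partial_l\Delta \xi_i^j.$$
The $\mathcal{T}$-part is in fact only first order in $f$ but costs up to three derivatives of $\xi_i$, which is precisely where the $\norm{\xi_i}_{W^{3,\infty}}$ on the right-hand side of the statement originates.

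It then remains to estimate the four families of terms in $L^2(\mathcal{O};\R^N)$ by H\"older's inequality, in each case putting the $\xi_i$-factor in $L^\infty$ and the $f$-factor in $L^2$: schematically $\norm{(\Delta\xi_i^j)\partial_j f^l} \leq \norm{\xi_i}_{W^{2,\infty}}\norm{f}_{W^{1,2}}$, $\norm{(\partial_k\xi_i^j)\partial_k\partial_j f^l} \leq \norm{\xi_i}_{W^{1,\infty}}\norm{f}_{W^{2,2}}$, $\norm{(\partial_k f^j)\partial_k\partial_l\xi_i^j} \leq \norm{\xi_i}_{W^{2,\infty}}\norm{f}_{W^{1,2}}$ and $\norm{f^j\,\partial_l\Delta\xi_i^j} \leq \norm{\xi_i}_{W^{3,\infty}}\norm{f}$. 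Summing over the finitely many index combinations $j,k,l \leq N$, using $\norm{f}_{W^{1,2}} \leq \norm{f}_{W^{2,2}}$ and $\norm{\xi_i}_{W^{k,\infty}} \leq \norm{\xi_i}_{W^{3,\infty}}$ for $k \leq 3$, and finally squaring, yields the claim with the factor of $2$ and the number of index terms absorbed into $c$. This argument mirrors those of Lemmas \ref{biggestcorcor} and \ref{boundsonL_xi} and could equally be recorded in Appendix II.

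The only real difficulty is bookkeeping: carrying out the Leibniz expansions carefully and checking that the genuinely problematic top-order-in-$f$ terms disappear --- in particular that the $\Delta f$ contribution in the $\mathcal{T}$-part cancels, which is exactly what keeps the required regularity of $f$ at $W^{2,2}$ rather than $W^{3,2}$. Beyond this there is no analytic subtlety past H\"older's inequality on $\mathcal{O}$.
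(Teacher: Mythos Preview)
Your proposal is correct and follows essentially the same route as the paper's proof: both compute the commutator explicitly via the Leibniz rule, obtain the same four-term identity (the paper writes it directly as $\sum_{k,j}(\partial_k^2\xi_i^j\partial_jf + 2\partial_k\xi_i^j\partial_k\partial_jf + 2\partial_kf^j\partial_k\nabla\xi_i^j + f^j\partial_k^2\nabla\xi_i^j)$ by expanding $\Delta B_if$ and $B_i\Delta f$ at once rather than splitting $B_i=\mathcal{L}_{\xi_i}+\mathcal{T}_{\xi_i}$ first), and then estimate each term by H\"older with the $\xi_i$-factor in $L^\infty$ and the $f$-factor in $L^2$. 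The only quibble is your closing remark: the cancellation that actually governs the $W^{2,2}$ versus $W^{3,2}$ threshold is the third-order one in the $\mathcal{L}_{\xi_i}$ part, not the $\Delta f$ cancellation in the $\mathcal{T}_{\xi_i}$ part (the latter only involves second derivatives of $f$ to begin with).
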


\begin{proof}
    See Appendix II, \ref{appendix ii}.
\end{proof}

\newpage
\section{Analysis of the Velocity Equation} \label{section velocity}

In this section we restrict ourselves to the Torus $\T$, leaving a treatment of the bounded domain to Section \ref{section vorticity form}. We also now fix our assumptions on the $(\xi_i)$, assuming that each $\xi_i \in W^{1,2}_{\sigma}(\T;\R^N) \cap W^{3,\infty}(\T;\R^N)$ and they collectively satisfy \begin{equation} \label{bounds on xi_i}
    \sum_{i=1}^\infty\norm{\xi_i}_{W^{3,\infty}}^2 < \infty.
\end{equation}

\subsection{The It\^{o} Form} \label{subsection ito}
To facilitate our analysis we work not with the equation (\ref{number2equation}), but instead one written in terms of an It\^{o} integral and projected by the Leray Projector as discussed at the start of Subsection \ref{functional framework subsection}. As a first step then we consider the new equation \begin{equation} \label{projected strato}
    u_t - u_0 + \int_0^t\mathcal{P}\mathcal{L}_{u_s}u_s\,ds + \nu\int_0^t A u_s\, ds + \int_0^t \mathcal{P}Bu_s \circ d\mathcal{W}_s = 0
\end{equation}
obtained at a heuristic level by projecting all terms of (\ref{number2equation}). Having not defined solutions of (\ref{number2equation}) we cannot be too formal here, but the idea is that we required solutions in $L^2_{\sigma}(\mathcal{O};\R^N)$ with initial condition also in this space so they are invariant under $\mathcal{P}$, and $\mathcal{P}$ is a bounded linear operator so can be taken through the integrals (see [\cite{goodair2022stochastic}] Corollary 1.6.9.1 for this result in stochastic integration). We then look to convert (\ref{projected strato}) into It\^{o} form via the abstract procedure used in [\cite{goodair2022stochastic}] Subsections 2.2 and 2.3, the result of which is stated in Appendix III, \ref{appendix iii}.\\

Towards this goal we define the quartet of spaces \begin{align*}
    V&:= W^{3,2}_{\sigma}(\T;\R^N), \qquad H:= W^{2,2}_{\sigma}(\T;\R^N),\\
    U&:= W^{1,2}_{\sigma}(\T;\R^N), \qquad X:= L^2_{\sigma}(\T;\R^N).
\end{align*}
We equip $L^2_{\sigma}(\T;\R^N)$ with the usual $\inner{\cdot}{\cdot}$ inner product, but then equip $W^{1,2}_{\sigma}(\T;\R^N)$ and $W^{2,2}_{\sigma}(\T;\R^N)$ with the $\inner{\cdot}{\cdot}_1$ and $\inner{\cdot}{\cdot}_2$ inner products respectively, recalling propositions \ref{A1} and \ref{A2}. In fact we also have that $D(A^{3/2}) = W^{3,2}_{\sigma}(\T;\R^N)$ and that the $\inner{\cdot}{\cdot}_3$ inner product is equivalent to the usual $\inner{\cdot}{\cdot}_{W^{3,2}}$ one on this space (see [\cite{Robinson}] Theorem 2.27), so we endow $W^{3,2}_{\sigma}(\T;\R^N)$ with $\inner{\cdot}{\cdot}_3$.\\

Our SPDE (\ref{projected strato}) takes the form of (\ref{stratoSPDE}) for the operators $$\mathcal{Q}:= -\left(\mathcal{P}\mathcal{L} + \nu A \right) $$ and $$\mathcal{G}:=-\mathcal{P}B.$$ We now check the Assumptions \ref{Qassumpt}, \ref{Gassumpt}. Starting with \ref{Qassumpt}, we first of all have that $\nu A$ is continuous from $W^{3,2}_{\sigma}(\T;\R^N)$ into $W^{1,2}_{\sigma}(\T;\R^N)$ as the Laplacian is continuous from $W^{3,2}(\T;\R^N)$ into $W^{1,2}(\T;\R^N)$, Proposition \ref{continuityofP} and the equivalence of norms. Therefore it is measurable and as a linear operator too satisfies the boundedness. As for $\mathcal{P}\mathcal{L}$, measurability is satisfied in the same way (recall Lemma \ref{continuityofnonlinear}) and for the boundedness we have that
    $$\norm{\mathcal{P}\mathcal{L}_{f}f}_{1} \leq c\norm{\mathcal{P}\mathcal{L}_ff}_{W^{1,2}} \leq c\norm{\mathcal{L}_ff}_{W^{1,2}} \leq c\norm{f}_{W^{1,2}}\norm{f}_{W^{3,2}} \leq c\norm{f}_{1}\norm{f}_{3}$$
for any $f\in W^{3,2}_{\sigma}(\T;\R^N)$ where $c$ is a generic constant, critically applying (\ref{second begin align}). This verifies Assumption \ref{Qassumpt} so we move on to \ref{Gassumpt}, which is immediate from (\ref{boundsonB_i}) and the linearity of $\mathcal{P}B$ to show continuity in all relevant spaces. We note now though that the Leray Projector does not preserve the space $W^{1,2}_0(\mathscr{O};\R^N)$, and so we cannot say that $\mathcal{P}B_i:W^{2,2}_{\sigma}(\mathscr{O};\R^N) \rightarrow W^{1,2}_{\sigma}(\mathscr{O};\R^N)$. The issues arising from this operator not satisfying the zero-trace property are fundamentally why we only treat the Torus in this context.\\

With Theorem \ref{theorem for ito strat conversion} in mind, we move instead to an analysis of the It\^{o} Form
\begin{equation} \label{projected Ito}
    u_t = u_0 - \int_0^t\mathcal{P}\mathcal{L}_{u_s}u_s\ ds - \nu\int_0^t A u_s\, ds + \frac{1}{2}\int_0^t\sum_{i=1}^\infty \mathcal{P}B_i^2u_s ds - \int_0^t \mathcal{P}Bu_s d\mathcal{W}_s 
\end{equation}
where we rewrite $(-\mathcal{P}B_i)^2$ as $\mathcal{P}B_i^2$ firstly from the linearity of $\mathcal{P}B_i$ to deal with the minus and secondly using Lemma \ref{PBiequalsPBiP}. It is worth appreciating here that we chose to project the equation and then convert it into It\^{o} Form, but we may equally have chosen to convert the unprojected Stratonovich Form and then project the resulting It\^{o} Equation. Without addressing the conversion of the unprojected equation in complete detail, we would directly arrive at (\ref{projected Ito}) taking this approach as our correction term would simply be $\mathcal{P}\sum_{i=1}^\infty B_i^2$ which is just $\sum_{i=1}^\infty \mathcal{P}B_i^2$ from the linearity and continuity. Therefore Lemma \ref{PBiequalsPBiP} is necessary in ensuring a consistency between these approaches.

\subsection{Existence, Uniqueness and Maximality} \label{subsection velocity existence}

We now state and prove the existence, uniqueness and maximality results for our Navier-Stokes Equation, the idea being to use the abstract results of [\cite{Goodair abs}] stated in Appendices IV and V, \ref{appendix iv} and \ref{appendix v}. We first give the result pertaining directly to the Stratonovich form.

\begin{theorem} \label{existence for NS Strat}
    For any given $\mathcal{F}_0-$measurable $u_0: \Omega \rightarrow W^{2,2}_{\sigma}(\T;\R^N)$ there exists a pair $(u,\tau)$ such that: $\tau$ is a $\mathbbm{P}-a.s.$ positive stopping time and $u$ is a process whereby for $\mathbbm{P}-a.e.$ $\omega$, $u_{\cdot}(\omega) \in C\left([0,T];W^{2,2}_{\sigma}(\T;\R^N)\right)$ and $u_{\cdot}(\omega)\mathbbm{1}_{\cdot \leq \tau(\omega)} \in L^2\left([0,T];W^{3,2}_{\sigma}(\T;\R^N)\right)$ for all $T>0$ with $u_{\cdot}\mathbbm{1}_{\cdot \leq \tau}$ progressively measurable in $W^{3,2}_{\sigma}(\T;\R^N)$, and moreover satisfying the identity
\begin{equation} \nonumber
  u_t = u_0 - \int_0^{t\wedge \tau}\mathcal{P}\mathcal{L}_{u_s}u_s\,ds -\nu\int_0^{t\wedge \tau} A u_s\, ds - \int_0^{t \wedge \tau} \mathcal{P}Bu_s \circ d\mathcal{W}_s 
\end{equation}
$\mathbbm{P}-a.s.$ in $L^2_{\sigma}(\T;\R^N)$ for all $t \geq 0$.
\end{theorem}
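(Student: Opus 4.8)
The plan is to deduce this from the abstract well-posedness theory of [\cite{Goodair abs}] (Appendices \ref{appendix iv}, \ref{appendix v}) applied to the It\^o equation (\ref{projected Ito}), and then transfer the conclusion back to the Stratonovich form via the conversion result Theorem \ref{theorem for ito strat conversion}. Concretely, a process with the asserted path regularity solving (\ref{projected Ito}) on a stochastic interval $[0,\tau]$ automatically satisfies the Stratonovich identity in the statement, with the correction $\tfrac12\sum_i\mathcal{P}B_i^2u$ being exactly the It\^o--Stratonovich cross-variation term; here Lemma \ref{PBiequalsPBiP} is what guarantees that projecting and then converting agrees with converting and then projecting, so no inconsistency arises. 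Thus the task reduces to producing a local solution of (\ref{projected Ito}) in the Gelfand-type quadruple $V = W^{3,2}_{\sigma}(\T;\R^N) \subset H = W^{2,2}_{\sigma}(\T;\R^N) \subset U = W^{1,2}_{\sigma}(\T;\R^N) \subset X = L^2_{\sigma}(\T;\R^N)$, carrying the $\inner{\cdot}{\cdot}_3$, $\inner{\cdot}{\cdot}_2$, $\inner{\cdot}{\cdot}_1$ and $\inner{\cdot}{\cdot}$ inner products respectively.

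Next I would verify, one by one, the hypotheses of the abstract existence theorem for $\mathcal{Q} = -(\mathcal{P}\mathcal{L} + \nu A)$ and $\mathcal{G} = -\mathcal{P}B$. Measurability, continuity and the polynomial-growth bounds on $\mathcal{Q}$ and $\mathcal{G}$ have essentially been recorded in Subsection \ref{subsection ito} (via Lemma \ref{continuityofnonlinear}, Proposition \ref{continuityofP}, the bound (\ref{second begin align}), the bound (\ref{boundsonB_i}) and the summability (\ref{bounds on xi_i})). The substantive content is the pair of energy inequalities at the $H$ and $U$ levels. At the $H$ level one formally tests (\ref{projected Ito}) against $u$ in $\inner{\cdot}{\cdot}_2$: by Proposition \ref{prop for moving stokes} the viscous term produces the coercive contribution $-\nu\norm{u}_3^2$; the nonlinear term is handled by the cancellation Corollary \ref{cancellationproperty} at lowest order together with Lemma \ref{basic nonlinear bound} and Proposition \ref{inequalityforcauchynonlinear} at higher order, the resulting powers of $\norm{u}_3$ being absorbed into $-\nu\norm{u}_3^2$ by Young's inequality; and the noise contributes $\tfrac12\sum_i\big(\inner{\mathcal{P}B_i^2u}{u}_2 + \norm{\mathcal{P}B_iu}_2^2\big)$, which after removing $\mathcal{P}$ (Lemma \ref{PBiequalsPBiP} together with self-adjointness of $\mathcal{P}$) and commuting $B_i$ past the Laplacian sitting inside $\inner{\cdot}{\cdot}_2$ — paying the cost through the commutator bound Proposition \ref{boundoncommutator} — is dominated via Proposition \ref{prop for conservation B_i} by $c\sum_i\norm{\xi_i}_{W^{3,\infty}}^2\norm{u}_H^2$, hence by $c\norm{u}_H^2$ thanks to (\ref{bounds on xi_i}). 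The required quadratic-variation control $\sum_i\inner{\mathcal{P}B_iu}{u}_2^2 \leq c(1+\norm{u}_H^2)^2$ follows from (\ref{finalboundinderivativeproof}). The analogous estimates at the $U = W^{1,2}_{\sigma}$ level, needed for the uniqueness and the approximation scheme inside the abstract machinery, are obtained in the same way but one derivative lower, using (\ref{combinedterminenergyinequality}) and (\ref{finalboundinderivativeproof}) with $k=1$ and now needing no commutator since $\inner{\cdot}{\cdot}_1$ only involves $A^{1/2}$.

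With these verifications in hand, the abstract existence theorem of [\cite{Goodair abs}] furnishes a $\mathbb{P}$-a.s.\ positive stopping time $\tau$ and a process $u$ with $u_{\cdot}(\omega) \in C([0,T];H)$, $u_{\cdot}(\omega)\mathbbm{1}_{\cdot\leq\tau(\omega)} \in L^2([0,T];V)$ progressively measurable in $V$, solving (\ref{projected Ito}) up to $\tau$; converting back via Theorem \ref{theorem for ito strat conversion} yields the Stratonovich identity claimed. I expect the main obstacle to be precisely the $H$-level noise estimate: unlike for lower-order or cancellation-type noises, $\mathcal{P}B$ is a genuine first-order operator and, on $\T$ in the $\inner{\cdot}{\cdot}_2$ norm, its dangerous top-order term is only controlled by the simultaneous appearance of the It\^o correction $\inner{\mathcal{P}B_i^2u}{u}_2$ and the square $\norm{\mathcal{P}B_iu}_2^2$ — this is the role of Proposition \ref{prop for conservation B_i} — and the non-transport component $\mathcal{T}_{\xi_i}$ of $B_i$, via Lemma \ref{PBiequalsPBiP}, is exactly what keeps this structure intact after projection by $\mathcal{P}$.
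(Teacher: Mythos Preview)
Your overall strategy is exactly the paper's: verify the assumptions of Appendices \ref{appendix iv} and \ref{appendix v} for the It\^o equation (\ref{projected Ito}) in the quadruple $V\subset H\subset U\subset X$, then invoke Theorem \ref{theorem for ito strat conversion} to pass back to the Stratonovich identity. The ingredients you list (Lemma \ref{PBiequalsPBiP}, Proposition \ref{prop for conservation B_i}, Proposition \ref{boundoncommutator}, the summability (\ref{bounds on xi_i})) are indeed the core of the argument.

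Two technical points in your sketch are off, however. First, at the $H=W^{2,2}_\sigma$ level the nonlinear term $\inner{\mathcal{P}\mathcal{L}_{f}f}{f}_2$ is not handled in the paper via the $L^2$ cancellation of Corollary \ref{cancellationproperty} or via Proposition \ref{inequalityforcauchynonlinear}; those are $L^2$- and $U$-level tools. The paper instead uses that $W^{2,2}(\T;\R)$ is an algebra (Lemma \ref{uniformdelanonlinear}) to obtain $\norm{\mathcal{L}_ff}_{W^{2,2}}\leq c\norm{f}_2\norm{f}_3$ directly, and the remark following that lemma stresses that this algebra property is precisely why one must take $H=W^{2,2}_\sigma$ rather than $W^{1,2}_\sigma$. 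Second, your claim that the $U=W^{1,2}_\sigma$ noise estimate ``needs no commutator since $\inner{\cdot}{\cdot}_1$ only involves $A^{1/2}$'' is incorrect: via Proposition \ref{prop for moving stokes} one writes $\inner{\mathcal{P}B_if}{\mathcal{P}B_if}_1=\inner{\mathcal{P}B_if}{A\mathcal{P}B_if}$, and the full Laplacian in $A$ must still be commuted past $B_i$ exactly as in Lemma \ref{preliminary bound for cauchy}. You also cannot apply (\ref{combinedterminenergyinequality}) directly with $k=1$, because that bound is for the $W^{k,2}$ inner product rather than for $\inner{\cdot}{\cdot}_k$, and a Leray projection sits between the two copies of $B_i$. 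These are not fatal gaps---the right tools are in the paper and the architecture of your argument is sound---but the specific lemmas you cite at each level need to be replaced by Lemmas \ref{uniformdelanonlinear}, \ref{preliminary bound for cauchy} and \ref{uniformdelanoise}.
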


The idea behind Theorem \ref{existence for NS Strat} is of course to analyse the It\^{o} Form and apply Theorem \ref{theorem for ito strat conversion} as justified in Subsection \ref{subsection ito}. Once we convert to the It\^{o} Form though starting with an initial condition in $W^{2,2}_{\sigma}(\T;\R^N)$ is not optimal in the sense that, at least according to the deterministic theory, we should be able to construct a solution (satisfying the identity in $L^2_{\sigma}(\T;\R^N)$ as is natural)
for only a $W^{1,2}_{\sigma}(\T;\R^N)$ initial condition. To this end we give the following definitions and the main result of this subsection. Definition \ref{definitionofirregularsolutionNS} is stated for an arbitrary $\mathcal{F}_0-$measurable $u_0:\Omega \rightarrow W^{1,2}_{\sigma}(\T;\R^N)$.

\begin{definition} \label{definitionofirregularsolutionNS}
A pair $(u,\tau)$ where $\tau$ is a $\mathbb{P}-a.s.$ positive stopping time and $u$ is a process such that for $\mathbb{P}-a.e.$ $\omega$, $u_{\cdot}(\omega) \in C\left([0,T];W^{1,2}_{\sigma}(\T;\R^N)\right)$ and $u_{\cdot}(\omega)\mathbf{1}_{\cdot \leq \tau(\omega)} \in L^2\left([0,T];W^{2,2}_{\sigma}(\T;\R^N)\right)$ for all $T>0$ with $u_{\cdot}\mathbf{1}_{\cdot \leq \tau}$ progressively measurable in $W^{2,2}_{\sigma}(\T;\R^N)$, is said to be a local strong solution of the equation (\ref{projected Ito}) if the identity
\begin{equation} \label{identityindefinitionoflocalsolutionHNS}
     u_t = u_0 - \int_0^{t\wedge \tau}\mathcal{P}\mathcal{L}_{u_s}u_s\ ds - \nu\int_0^{t\wedge \tau} A u_s\, ds + \frac{1}{2}\int_0^{t\wedge \tau}\sum_{i=1}^\infty \mathcal{P}B_i^2u_s ds - \int_0^{t\wedge \tau} \mathcal{P}Bu_s d\mathcal{W}_s
\end{equation}
holds $\mathbb{P}-a.s.$ in $L^2_{\sigma}(\T;\R^N)$ for all $t \geq 0$.
\end{definition}

\begin{remark}
If $(u,\tau)$ is a $V$-valued local strong solution of the equation (\ref{thespde}), then $u_\cdot = u_{
\cdot \wedge \tau}$. A justification that the integrals are well defined is given in the abstract case in [\cite{Goodair abs}]. 
\end{remark}

\begin{definition} \label{H valued maximal definition NS}
A pair $(u,\Theta)$ such that there exists a sequence of stopping times $(\theta_j)$ which are $\mathbb{P}-a.s.$ monotone increasing and convergent to $\Theta$, whereby $(u_{\cdot \wedge \theta_j},\theta_j)$ is a local strong solution of the equation (\ref{projected Ito}) for each $j$, is said to be a maximal strong solution of the equation (\ref{thespde}) if for any other pair $(v,\Gamma)$ with this property then $\Theta \leq \Gamma$ $\mathbb{P}-a.s.$ implies $\Theta = \Gamma$ $\mathbb{P}-a.s.$.
\end{definition}

\begin{definition} \label{definition unique NS}
A maximal strong solution $(u,\Theta)$ of the equation (\ref{projected Ito}) is said to be unique if for any other such solution $(v,\Gamma)$, then $\Theta = \Gamma$ $\mathbb{P}-a.s.$ and for all $t \in [0,\Theta)$, \begin{equation} \nonumber\mathbb{P}\left(\left\{\omega \in \Omega: u_{t}(\omega) =  v_{t}(\omega)  \right\} \right) = 1. \end{equation}
\end{definition}

\begin{theorem} \label{theorem2NS}
For any given $\mathcal{F}_0-$ measurable $u_0:\Omega \rightarrow W^{1,2}_{\sigma}(\T;\R^N)$, there exists a unique maximal strong solution $(u,\Theta)$ of the equation (\ref{projected Ito}). Moreover at $\mathbb{P}-a.e.$ $\omega$ for which $\Theta(\omega)<\infty$, we have that \begin{equation}\label{blowupproperty2NS}\sup_{r \in [0,\Theta(\omega))}\norm{u_r(\omega)}_{1}^2 + \int_0^{\Theta(\omega)}\norm{u_r(\omega)}_2^2dr = \infty.\end{equation}
\end{theorem}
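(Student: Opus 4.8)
The plan is to deduce Theorem \ref{theorem2NS} from the abstract well-posedness machinery of [\cite{Goodair abs}] (recalled in Appendices IV and V), by verifying that the It\^o system (\ref{projected Ito}), with the quartet $V = W^{3,2}_{\sigma}$, $H = W^{2,2}_{\sigma}$, $U = W^{1,2}_{\sigma}$, $X = L^2_{\sigma}$, satisfies the structural assumptions of that framework at the regularity level one step lower than in Theorem \ref{existence for NS Strat}. Concretely, one works with the triple $(U,X)$ playing the role of the ``solution space / identity space'' pair together with the intermediate space $H$ for the a priori bounds: a local strong solution lives in $C([0,T];U) \cap L^2([0,T];H)$ and the identity holds in $X$. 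The abstract theorems then deliver: (i) existence of a local strong solution; (ii) uniqueness in this class; (iii) existence of a maximal solution $(u,\Theta)$ obtained as the increasing limit of local solutions; and (iv) the blow-up alternative, which in the abstract statement reads that on $\{\Theta < \infty\}$ the quantity $\sup_{r<\Theta}\norm{u_r}_U^2 + \int_0^\Theta \norm{u_r}_H^2\,dr$ is infinite --- this is precisely (\ref{blowupproperty2NS}) once we recall that $\norm{\cdot}_U = \norm{\cdot}_1$ and $\norm{\cdot}_H \sim \norm{\cdot}_2$ by Propositions \ref{A1}, \ref{A2}.

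The work therefore reduces to checking hypotheses. First I would record the structural identities already assembled in Subsection \ref{subsection ito}: $\mathcal{P}\mathcal{L}$ and $A$ are continuous in the relevant scales, $\mathcal{P}B_i = \mathcal{P}B_i\mathcal{P}$ (Lemma \ref{PBiequalsPBiP}) so the correction term is well posed, and (\ref{boundsonB_i}) together with the summability (\ref{bounds on xi_i}) controls the noise in $\mathscr{L}^2(\mathfrak{U};\cdot)$ at each level. Next come the genuinely analytic inputs: the \emph{cancellation} and \emph{monotonicity-type} estimates that make the energy method close at the $U$ and $H$ levels. For the $X = L^2_\sigma$ estimate one uses Corollary \ref{cancellationproperty} to kill $\inner{\mathcal{P}\mathcal{L}_u u}{u}$ and Proposition \ref{prop for conservation B_i}, inequality (\ref{combinedterminenergyinequality}), to absorb the It\^o-correction against the It\^o-isometry contribution; for the $U = W^{1,2}_\sigma$ estimate one differentiates once, using $\inner{Au}{u}_1 = \norm{u}_2^2$-type coercivity, the nonlinear bound (\ref{nonlinear eq1}) (split by Young's inequality so the top-order $\norm{u}_2$ factor is absorbed by viscosity), the commutator bound Proposition \ref{boundoncommutator} to move $\Delta$ past $B_i$, and again (\ref{combinedterminenergyinequality}). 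The martingale part is handled by (\ref{finalboundinderivativeproof}), which gives the quadratic-variation bound needed for the Burkholder--Davis--Gundy step in the abstract scheme. One also checks the local monotonicity / local Lipschitz estimates on differences $u - v$ needed for uniqueness: here the key point is that the noise difference $\mathcal{P}B(u-v)$ is again transport-type, so the same cancellation structure applies, and the nonlinear difference is estimated exactly as in the deterministic $3$D theory using (\ref{nonlinear eq1}).

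The main obstacle is the top-order bookkeeping in the $U$-level a priori estimate, i.e. showing that when one tests the equation against $u$ in $\inner{\cdot}{\cdot}_1$, every term carrying two derivatives too many is either a perfect cancellation or can be absorbed into the viscous dissipation $\nu\norm{u}_2^2$ with a constant that does not destroy the Gr\"onwall argument. The nonlinear term contributes $\norm{u}_1\norm{u}_1^{1/2}\norm{u}_2^{1/2}\cdot\norm{u}_2$ (from (\ref{nonlinear eq1}) paired with the top-order part of $\inner{\cdot}{u}_1$), which after Young's inequality leaves a term $\varepsilon\norm{u}_2^2 + C\norm{u}_1^6$ --- the sixth power is the familiar $3$D signature and is exactly what forces the solution to be only \emph{local}, and what appears in the blow-up quantity. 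On the stochastic side, the dangerous term is the cross term between $B_i\Delta u$ and $\Delta B_i u$; Proposition \ref{boundoncommutator} shows the difference is lower order, and the remaining ``diagonal'' pieces are precisely the combination in (\ref{combinedterminenergyinequality}) that is designed to cancel to leading order. Once these estimates are in hand, everything else is an invocation of the abstract theorems of [\cite{Goodair abs}], with the final blow-up statement (\ref{blowupproperty2NS}) being a direct translation of their abstract blow-up alternative through the norm equivalences of Propositions \ref{A1} and \ref{A2}.
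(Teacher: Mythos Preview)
Your strategy is exactly that of the paper: invoke the abstract Theorem \ref{theorem2} of Appendix V for the quartet $(V,H,U,X)=(W^{3,2}_\sigma,W^{2,2}_\sigma,W^{1,2}_\sigma,L^2_\sigma)$, verify the hypotheses, and read off (\ref{blowupproperty2NS}) from the abstract blow-up alternative. The ingredients you list (Corollary \ref{cancellationproperty}, Proposition \ref{prop for conservation B_i}, Proposition \ref{boundoncommutator}, the nonlinear bound (\ref{nonlinear eq1})) are indeed the ones the paper uses.

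There is, however, one substantive omission. Theorem \ref{theorem2} is stated on the condition that the assumptions of Appendix IV \emph{and} Appendix V are met; in particular you must also verify Assumption \ref{assumptions for uniform bounds2}, the Galerkin uniform bound in the $\inner{\cdot}{\cdot}_2$ inner product on $H=W^{2,2}_\sigma$. This is not the ``$U$-level a priori estimate'' you identify as the main obstacle, but one regularity level higher. The paper treats the nonlinear term there via the $W^{2,2}$ \emph{algebra} property (Lemma \ref{uniformdelanonlinear}) and the noise via a more involved commutator manipulation (Lemma \ref{uniformdelanoise}) in which $[\Delta,B_i]$ appears twice. The Remark following Lemma \ref{uniformdelanonlinear} is the point: the algebra property at $k=2$ is essential, and one \emph{cannot} close the Galerkin bound directly at $H=W^{1,2}_\sigma$, which is precisely why the four-space extension of Appendix V (bootstrapping off the $W^{2,2}$ analysis of Appendix IV) is required at all. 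Your $U$- and $X$-level estimates correspond correctly to Assumptions \ref{therealcauchy assumptions}, \ref{assumption for prob in V} and \ref{uniqueness for H valued}, \ref{assumption for probability in H}, but without the $H$-level Galerkin bound the abstract theorem does not apply.
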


Definitions \ref{definitionofirregularsolutionNS}, \ref{H valued maximal definition NS}, \ref{definition unique NS} and Theorem \ref{theorem2NS} are precisely Definitions \ref{definitionofirregularsolution}, \ref{H valued maximal definition}, \ref{definition unique} and Theorem \ref{theorem2} for the equation (\ref{projected Ito}) with respect to the spaces $V,H,U,X$ as defined in Subsection \ref{subsection ito}. Indeed we would also prove Theorem \ref{existence for NS Strat} through Theorem \ref{theorem for ito strat conversion} by showing the existence of a local solution with the regularity of Definition \ref{definitionofregularsolution}. Therefore we prove both Theorem \ref{existence for NS Strat} and \ref{theorem2NS} by showing that the assumptions of Appendices IV and V, \ref{appendix iv} and \ref{appendix v}, are satisfied.\\ 

We work with the operators
\begin{align*}
    \mathcal{A}&:= -\left(\mathcal{P}\mathcal{L} + \nu A \right) + \frac{1}{2}\sum_{i=1}^\infty \mathcal{P}B_i^2\\
    \mathcal{G}&:= -\mathcal{P}B
\end{align*}
which were addressed to be measurable mappings into the required spaces in Subsection \ref{subsection ito}. We now proceed to justify the assumptions of Appendix IV. First note that the density of the spaces is immediately inherited from the density of the usual Sobolev Spaces and the equivalence of the norms. The bilinear form satisfying (\ref{bilinear formog}) is chosen to be $$\inner{f}{g}_{U \times V}:= \inner{A^{1/2}f}{A^{3/2}g}$$ which reduces to the $\inner{\cdot}{\cdot}_2$ inner product from Proposition \ref{prop for moving stokes}. In the following $c$ will represent a generic constant changing from line to line, $c(\varepsilon)$ will be a generic constant dependent on a fixed $\varepsilon$, $f$ and $g$ will be arbitrary elements of $W^{3,2}_{\sigma}(\T;\R^N)$ and $f_n \in \textnormal{span}\{a_1, \cdots, a_n\}$. 

\begin{proof}[Assumption \ref{assumption fin dim spaces}:]
    We use the system $(a_k)$ of eigenfunctions of the Stokes Oeprator given in Proposition \ref{basis of stokes}, satisfying (\ref{projectionsboundedonH}) and (\ref{mu2}) from Lemmas \ref{projectionsboundedlemma} and \ref{one over n projection lemma} respectively.
\end{proof}

\begin{proof}[Assumption \ref{new assumption 1}:]
    Once more (\ref{111}) follows from the discussion in Subsection \ref{subsection ito}. For (\ref{222}) we treat the different operators in $\mathcal{A}$ individually, starting from the nonlinear term:
    \begin{align*}
        \norm{\mathcal{P}\mathcal{L}_ff - \mathcal{P}\mathcal{L}_gg}_1 &= \norm{\mathcal{P}\mathcal{L}_f(f-g) + \mathcal{P}\mathcal{L}_{f-g}g}_1\\
        &\leq \norm{\mathcal{P}\mathcal{L}_f(f-g)}_1 + \norm{\mathcal{P}\mathcal{L}_{f-g}g}_1\\
        &\leq c\norm{\mathcal{L}_f(f-g)}_{W^{1,2}} + c\norm{\mathcal{L}_{f-g}g}_{W^{1,2}}\\
        &\leq c\norm{f}_{W^{1,2}}\norm{f-g}_{W^{3,2}} + c\norm{f-g}_{W^{1,2}}\norm{g}_{W^{3,2}}\\
        &\leq c\left(\norm{f}_{W^{1,2}} + \norm{g}_{W^{3,2}}\right)\norm{f-g}_{W^{3,2}}\\
        &\leq c\left(\norm{f}_{1} + \norm{g}_{3}\right)\norm{f-g}_{3}
    \end{align*}
having applied (\ref{second begin align}). From the linearity of $\nu A$ and $\frac{1}{2}\sum_{i=1}^\infty \mathcal{P}B_i^2$ then the corresponding result follows immediately from (\ref{111}) and this subsequently justifies (\ref{222}). Additionally (\ref{333}) follows immediately from the already justified Assumption \ref{Gassumpt}. 

\end{proof}

For the justification of Asusmption \ref{assumptions for uniform bounds2} we call upon some intermediary results. 

\begin{lemma} \label{uniformdelanonlinear}
    For any $\varepsilon > 0$, we have that 
    $$\left\vert\inner{\mathcal{P}_n\mathcal{P}\mathcal{L}_{f_n}f_n}{f_n}_2\right\vert \leq c(\varepsilon)\norm{f_n}^4_2 + \varepsilon \norm{f_n}_{3}^2. $$
\end{lemma}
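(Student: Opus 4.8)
The plan is to estimate $\inner{\mathcal{P}_n\mathcal{P}\mathcal{L}_{f_n}f_n}{f_n}_2$ by first moving the finite-dimensional projection $\mathcal{P}_n$ off the nonlinear term and onto $f_n$, which does nothing since $f_n \in \textnormal{span}\{a_1,\dots,a_n\}$ and $\mathcal{P}_n$ is self-adjoint for $\inner{\cdot}{\cdot}_2$ by Lemma \ref{projectionsboundedlemma}; so $\inner{\mathcal{P}_n\mathcal{P}\mathcal{L}_{f_n}f_n}{f_n}_2 = \inner{\mathcal{P}\mathcal{L}_{f_n}f_n}{f_n}_2$. Next, using Proposition \ref{prop for moving stokes} with $p=1$, $q=3$ (valid since $f_n$ is a finite linear combination of the smooth eigenfunctions $a_k$, hence in $D(A^{3/2})$), we rewrite this as $\inner{A^{1/2}\mathcal{P}\mathcal{L}_{f_n}f_n}{A^{3/2}f_n}$. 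Since $A\mathcal{P} = A$ on $W^{2,2}_\sigma$ (Lemma \ref{APequalsA}) and the relevant powers of $A$ interact well with $\mathcal{P}$, one then wants to pass to $\inner{A^{1/2}\mathcal{L}_{f_n}f_n}{A^{3/2}f_n}$ modulo harmless corrections; alternatively — and more cleanly — keep everything as $\inner{\mathcal{L}_{f_n}f_n}{f_n}_2$ after noting $\mathcal{P}$ is the $L^2$-orthogonal projection onto the space where $f_n$ lives, so it can be dropped against $f_n$ in the $\inner{\cdot}{\cdot}_2$ pairing.

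The heart of the argument is then a Sobolev/interpolation estimate on $\bigl|\inner{\mathcal{L}_{f_n}f_n}{f_n}_2\bigr|$. I would expand this inner product as a sum over multi-indices $|\alpha| \le 2$ of terms $\inner{D^\alpha(\mathcal{L}_{f_n}f_n)}{D^\alpha f_n}_{L^2}$, apply the Leibniz rule to $D^\alpha(\mathcal{L}_{f_n}f_n) = D^\alpha\sum_j f_n^j \partial_j f_n$, and bound each resulting product by Hölder's inequality combined with the Gagliardo–Nirenberg inequalities on $\T$ (in dimension $N \le 3$). The cancellation from Corollary \ref{cancellationproperty} handles the single top-order term where all derivatives land on the second factor (i.e. $\inner{\mathcal{L}_{f_n}D^\alpha f_n}{D^\alpha f_n}$-type contributions vanish), which is exactly what keeps the top-order term $\norm{f_n}_3^2$ from appearing with a non-absorbable coefficient. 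The remaining terms all carry at most one factor of $\norm{f_n}_3$ (equivalently $\norm{f_n}_{W^{3,2}}$, using the norm equivalence from Proposition \ref{A1} and Robinson's Theorem 2.27), multiplied by lower-order norms that interpolate between $\norm{f_n}_2$ and $\norm{f_n}_3$; Young's inequality $ab \le c(\varepsilon)a^{4/3}\cdot{}+\varepsilon b^2$ with the appropriate exponents then yields $c(\varepsilon)\norm{f_n}_2^4 + \varepsilon\norm{f_n}_3^2$.

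The main obstacle I anticipate is the careful bookkeeping of the top-order terms: after Leibniz expansion of $D^\alpha(f_n^j\partial_j f_n)$ with $|\alpha|=2$, there are several terms where two derivatives fall on $f_n^j$ and none on $\partial_j f_n$, or one-and-one, and only the genuinely "all derivatives on the advected factor" piece is killed by the antisymmetry of $\mathcal{L}_{f_n}$. For the surviving top-order pieces one must check that they never produce $\norm{f_n}_3^2$ but only $\norm{f_n}_3 \cdot (\text{something controlled by } \norm{f_n}_2^{1/2}\norm{f_n}_3^{1/2}$ or $\norm{f_n}_2)$ — this relies on the fact that in $N\le 3$ the embedding $W^{1,2}\hookrightarrow L^6$ and interpolation give $\norm{f_n}_{W^{2,4}} \lesssim \norm{f_n}_{W^{2,2}}^{1/2}\norm{f_n}_{W^{3,2}}^{1/2}$ and $\norm{f_n}_{L^\infty}\lesssim\norm{f_n}_{W^{2,2}}$, so that each problematic product splits as $\norm{f_n}_3$ times a quantity that, raised to the fourth power inside Young, gives only $\norm{f_n}_2^4$. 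Proposition \ref{inequalityforcauchynonlinear} provides the template for exactly this kind of estimate at one derivative lower, and the present lemma is its analogue adapted to the $\inner{\cdot}{\cdot}_2$ pairing; I would expect to reduce to that proposition or mimic its proof. This computation is deferred to the appendix in the same spirit as the earlier nonlinear bounds.
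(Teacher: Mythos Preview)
Your approach is viable but takes a genuinely different route from the paper. The paper's proof is much shorter: after removing $\mathcal{P}_n$ by self-adjointness it applies Cauchy--Schwarz directly,
\[
\left\vert\inner{\mathcal{P}\mathcal{L}_{f_n}f_n}{f_n}_2\right\vert \leq \norm{\mathcal{P}\mathcal{L}_{f_n}f_n}_2\norm{f_n}_2 \leq c\norm{\mathcal{L}_{f_n}f_n}_{W^{2,2}}\norm{f_n}_2,
\]
and then invokes the fact that $W^{2,2}(\T;\R)$ is an algebra in $N\le 3$ to obtain $\norm{\mathcal{L}_{f_n}f_n}_{W^{2,2}} \leq c\norm{f_n}_{W^{2,2}}\norm{f_n}_{W^{3,2}} \leq c\norm{f_n}_2\norm{f_n}_3$, after which Young's inequality finishes. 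There is no Leibniz expansion and no appeal to the cancellation of Corollary~\ref{cancellationproperty}.

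Your route --- expand, kill the top-order piece by $\inner{\mathcal{L}_{f_n}g}{g}=0$, and estimate the remainder by interpolation --- can be made to work, but two technical points in your write-up need repair. First, you cannot simply ``drop $\mathcal{P}$'' to form $\inner{\mathcal{L}_{f_n}f_n}{f_n}_2$: the nonlinear term $\mathcal{L}_{f_n}f_n$ is not divergence-free in general, so it does not lie in $D(A)$ and the $\inner{\cdot}{\cdot}_2$ pairing is not defined for it. The correct manoeuvre is $\inner{\mathcal{P}\mathcal{L}_{f_n}f_n}{f_n}_2 = \inner{A\mathcal{P}\mathcal{L}_{f_n}f_n}{Af_n} = \inner{A\mathcal{L}_{f_n}f_n}{Af_n}$ via Lemma~\ref{APequalsA}, which on the torus reduces to $\inner{\Delta\mathcal{L}_{f_n}f_n}{\Delta f_n}$. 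Second, this is \emph{not} the $W^{2,2}$ inner product $\sum_{|\alpha|\le 2}\inner{D^\alpha\cdot}{D^\alpha\cdot}$, so your Leibniz expansion should be applied to $\Delta\mathcal{L}_{f_n}f_n$ rather than to each $D^\alpha\mathcal{L}_{f_n}f_n$ separately. With those corrections your outline does go through: the commutator $[\Delta,\mathcal{L}_{f_n}]f_n$ contains only products of first and second derivatives of $f_n$, which against $\Delta f_n$ give at worst $\norm{f_n}_2^2\norm{f_n}_3$ after a H\"older/Sobolev step, and Young closes. The paper's approach buys a three-line proof by packaging all of this into the algebra property; yours buys an argument that does not rely on that property, though (as the paper's remark after the lemma notes) the obstruction to working one derivative lower is precisely that neither device is available there.
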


\begin{proof}
   We use the established result that for $k=2$ then the Sobolev Space $W^{k,2}(\mathcal{O};\R)$ is an algebra (a result first presented in [\cite{Strichartz}]), to deduce that
   \begin{align*}
       \norm{\mathcal{L}_{f_n}f_n}_{W^{2,2}} &= \left\Vert\sum_{j=1}^Nf_n^j\partial_jf_n \right\Vert_{W^{2,2}}\\
       &\leq \sum_{j=1}^N\left\Vert f_n^j\partial_jf_n \right\Vert_{W^{2,2}}\\
       &\leq \sum_{j=1}^N\sum_{l=1}^N\left\Vert f_n^j\partial_jf_n^l \right\Vert_{W^{2,2}(\T;\R)}\\
      &\leq \sum_{j=1}^N\sum_{l=1}^N\left\Vert f_n^j\right\Vert_{W^{2,2}(\T;\R)}\left\Vert\partial_jf_n^l \right\Vert_{W^{2,2}(\T;\R)}\\
      &\leq \left(\sum_{j=1}^N\sum_{l=1}^N\left\Vert f_n^j\right\Vert_{W^{2,2}(\T;\R)}\right)\left(\sum_{j=1}^N\sum_{l=1}^N\left\Vert\partial_jf_n^l \right\Vert_{W^{2,2}(\T;\R)}\right)\\
      &\leq c\norm{f_n}_{W^{2,2}}\norm{f_n}_{W^{3,2}}\\
      &\leq c\norm{f_n}_{2}\norm{f_n}_{3}.
   \end{align*}
   From here we simply use that $\mathcal{P}_n$ is self-adjoint (Lemma \ref{projectionsboundedlemma}) and Young's Inequality to see that 
    \begin{align*}
        \left\vert\inner{\mathcal{P}_n\mathcal{P}\mathcal{L}_{f_n}f_n}{f_n}_2\right\vert &= \left\vert\inner{\mathcal{P}\mathcal{L}_{f_n}f_n}{f_n}_2\right\vert\\
        &\leq \norm{\mathcal{P}\mathcal{L}_{f_n}f_n}_2\norm{f_n}_2\\
        &\leq c\norm{\mathcal{L}_{f_n}f_n}_{W^{2,2}}\norm{f_n}_{2}\\
        &\leq c\norm{f_n}_{2}\norm{f_n}_{3}\norm{f_n}_{2}\\
        &\leq c(\varepsilon)\norm{f_n}^4_2 + \varepsilon \norm{f_n}_{W^{3,2}}^2.
    \end{align*}
    
\end{proof}

\begin{remark}
The algebra property when $k=2$ is fundamental to this result; we are prevented from applying Theorem \ref{theorem1} in the case $H:=W^{1,2}_{\sigma}(\T;\R^N)$ as we would have no algebra property to apply the same method.
\end{remark}

\begin{lemma} \label{preliminary bound for cauchy}
    For any $\varepsilon > 0$ we have the bound $$\inner{\mathcal{P}_n\mathcal{P}B_i^2f_n}{f_n}_1 + \inner{\mathcal{P}_n\mathcal{P}B_if_n}{\mathcal{P}_n\mathcal{P}B_if_n}_1 \leq  c(\varepsilon)\norm{\xi_i}_{W^{3,\infty}}^2\norm{f_n}_1^2 + \varepsilon \norm{\xi_i}_{W^{3,\infty}}^2\norm{f_n}_2^2.$$
\end{lemma}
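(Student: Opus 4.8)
The strategy is to reduce both inner products, via the structure of the torus, to the combination controlled by Proposition \ref{prop for conservation B_i}. First, using that $\mathcal{P}_n$ is self-adjoint for $\inner{\cdot}{\cdot}_1$ (Lemma \ref{projectionsboundedlemma}) and $\mathcal{P}_nf_n = f_n$, the first term equals $\inner{\mathcal{P}B_i^2f_n}{f_n}_1$; since $\mathcal{P}_n$ is moreover idempotent it is an $\inner{\cdot}{\cdot}_1$-orthogonal projection, hence a contraction in $\norm{\cdot}_1$, so the second term satisfies $\inner{\mathcal{P}_n\mathcal{P}B_if_n}{\mathcal{P}_n\mathcal{P}B_if_n}_1 = \norm{\mathcal{P}_n\mathcal{P}B_if_n}_1^2 \le \norm{\mathcal{P}B_if_n}_1^2$. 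All of this is licit because the smoothness of the $a_k$ together with $\xi_i \in W^{3,\infty}$ give $B_if_n \in W^{2,2}(\T;\R^N)$ and $B_i^2f_n \in W^{1,2}(\T;\R^N)$, so that $\mathcal{P}B_if_n \in W^{2,2}_{\sigma}(\T;\R^N)$ and $\mathcal{P}B_i^2f_n \in W^{1,2}_{\sigma}(\T;\R^N) = D(A^{1/2})$.

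For the first term I would pass one copy of $A^{1/2}$ onto $f_n$ (self-adjointness of $A^{1/2}$, $f_n\in D(A)$) and invoke the torus identity $Af_n = -\mathcal{P}\Delta f_n = -\Delta f_n$, valid because $\Delta f_n$ is divergence-free and zero-average; as also $-\Delta f_n \in L^2_{\sigma}(\T;\R^N)$, the Leray projector may be moved off and absorbed, giving $\inner{\mathcal{P}B_i^2f_n}{f_n}_1 = -\inner{\Delta f_n}{B_i^2f_n}$. Integrating by parts on the torus, this is $\sum_l\inner{\partial_lf_n}{\partial_l(B_i^2f_n)} = \inner{B_i^2f_n}{f_n}_{W^{1,2}} - \inner{B_i^2f_n}{f_n}$. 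For the second term, the same torus identity gives $\norm{\mathcal{P}B_if_n}_1^2 = \sum_l\norm{\partial_l\mathcal{P}B_if_n}^2$; since $\mathcal{P}$ commutes with $\partial_l$ on $\T$ (both being Fourier multipliers) and is an $L^2$-contraction, $\norm{\partial_l\mathcal{P}B_if_n} = \norm{\mathcal{P}\partial_lB_if_n} \le \norm{\partial_lB_if_n}$, whence $\norm{\mathcal{P}B_if_n}_1^2 \le \sum_l\norm{\partial_lB_if_n}^2 \le \norm{B_if_n}_{W^{1,2}}^2$.

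Adding the two bounds, the $W^{1,2}$ contributions combine exactly to $\inner{B_i^2f_n}{f_n}_{W^{1,2}} + \norm{B_if_n}_{W^{1,2}}^2$, which is $\le c\norm{\xi_i}_{W^{3,\infty}}^2\norm{f_n}_{W^{1,2}}^2$ by Proposition \ref{prop for conservation B_i} with $k=1$. The leftover $-\inner{B_i^2f_n}{f_n}$ is a pure $L^2$ pairing, which I would bound by expanding $B_i^2 = (\mathcal{L}_{\xi_i}+\mathcal{T}_{\xi_i})B_i$ and using $\inner{\mathcal{L}_{\xi_i}B_if_n}{f_n} = -\inner{B_if_n}{\mathcal{L}_{\xi_i}f_n}$ (Lemma \ref{biggglemma}, $\xi_i \in W^{1,2}_{\sigma}$), the relation $\mathcal{L}_{\xi_i}f_n = B_if_n - \mathcal{T}_{\xi_i}f_n$, and the $L^2$-boundedness of $\mathcal{T}_{\xi_i}$ (Lemma \ref{biggestcorcor}) and of $B_i$ on $W^{1,2}$ (Corollary \ref{corollary for B_i adjoint}), so that $\left| \inner{B_i^2f_n}{f_n} \right| \le c\norm{\xi_i}_{W^{3,\infty}}^2\norm{f_n}_{W^{1,2}}^2$. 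The equivalence of $\norm{\cdot}_1$ and $\norm{\cdot}_{W^{1,2}}$ (Proposition \ref{A2}) then yields $\inner{\mathcal{P}_n\mathcal{P}B_i^2f_n}{f_n}_1 + \norm{\mathcal{P}_n\mathcal{P}B_if_n}_1^2 \le c\norm{\xi_i}_{W^{3,\infty}}^2\norm{f_n}_1^2$, which contains the stated inequality for any $\varepsilon>0$ (the $\varepsilon\norm{f_n}_2^2$ term being only slack).

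The one delicate point is the cancellation. Taken separately, $\inner{\mathcal{P}B_i^2f_n}{f_n}_1$ is of size $\norm{f_n}_1\norm{f_n}_3$ and $\norm{\mathcal{P}B_if_n}_1^2$ of size $\norm{\xi_i}_{W^{2,\infty}}^2\norm{f_n}_2^2$, so neither is individually acceptable; one must retain the coefficient $1$ on $\norm{B_if_n}_{W^{1,2}}^2$ on both sides so that these pieces cancel after applying Proposition \ref{prop for conservation B_i}. This is precisely why each reduction above has to be carried out through the honest $W^{1,2}$ inner product — exploiting $A=-\Delta$ on divergence-free torus fields and the fact that $\mathcal{P}$ commutes with differentiation there — rather than through the continuity constant of $\mathcal{P}$ on $W^{1,2}$ or through norm equivalences, which would leave an uncancelled fixed multiple of $\norm{f_n}_2^2$ and destroy the estimate.
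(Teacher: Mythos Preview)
Your argument is correct and actually yields a sharper bound than the one stated: you obtain $\inner{\mathcal{P}_n\mathcal{P}B_i^2f_n}{f_n}_1 + \norm{\mathcal{P}_n\mathcal{P}B_if_n}_1^2 \le c\norm{\xi_i}_{W^{3,\infty}}^2\norm{f_n}_1^2$ with no $\varepsilon\norm{f_n}_2^2$ slack. The key identities you exploit --- $A=-\Delta$ on $W^{2,2}_\sigma(\T;\R^N)$ and the commutation of $\mathcal{P}$ with $\partial_l$ as Fourier multipliers --- are indeed valid on the torus, and the reduction to $\inner{B_i^2f_n}{f_n}_{W^{1,2}} + \norm{B_if_n}_{W^{1,2}}^2$ with the $L^2$ remainder is clean. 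Your handling of the leftover $\inner{B_i^2f_n}{f_n}$ via the adjoint of $\mathcal{L}_{\xi_i}$ is also fine; alternatively one could just invoke Proposition~\ref{prop for conservation B_i} at $k=0$ together with $\norm{B_if_n}^2 \le c\norm{\xi_i}^2_{W^{1,\infty}}\norm{f_n}_{W^{1,2}}^2$ to bound $|\inner{B_i^2f_n}{f_n}|$.

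The paper takes a different route. After the same $\mathcal{P}_n$ reduction it writes the sum as $\inner{\mathcal{P}B_if_n}{B_i^*Af_n} - \inner{\mathcal{P}B_if_n}{\Delta B_if_n}$, then inserts the commutator $[\Delta,B_i]$ (Proposition~\ref{boundoncommutator}) and uses Lemmas~\ref{APequalsA} and~\ref{PBiequalsPBiP} to arrive at $\inner{\mathcal{P}B_if_n}{(\mathcal{T}_{\xi_i}+\mathcal{T}_{\xi_i}^*)Af_n - [\Delta,B_i]f_n}$, which is bounded by Cauchy--Schwarz as $c\norm{\xi_i}_{W^{3,\infty}}^2\norm{f_n}_1\norm{f_n}_2$ and then split by Young's inequality. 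This is why the $\varepsilon\norm{f_n}_2^2$ term appears there. The paper's approach never invokes the torus-specific facts that $A=-\Delta$ or that $\mathcal{P}$ commutes with derivatives; it relies instead on the structural identities $A\mathcal{P}=A$ and $\mathcal{P}B_i=\mathcal{P}B_i\mathcal{P}$, which are proved for both domains and are the machinery reused verbatim in the vorticity analysis on $\mathscr{O}$. Your proof is shorter and sharper on $\T$, but its two main shortcuts --- $A=-\Delta$ on divergence-free fields and $[\mathcal{P},\partial_l]=0$ --- fail on a bounded domain, so it does not transplant to Section~\ref{section vorticity form} the way the paper's commutator argument does.
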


\begin{proof}
     From Lemma \ref{projectionsboundedlemma} we can readily justify the inequality $$\inner{\mathcal{P}_n\mathcal{P}B_i^2f_n}{f_n}_1 + \inner{\mathcal{P}_n\mathcal{P}B_if_n}{\mathcal{P}_n\mathcal{P}B_if_n}_1 \leq  \inner{\mathcal{P}B_i^2f_n}{f_n}_1 + \inner{\mathcal{P}B_if_n}{\mathcal{P}B_if_n}_1$$ and moreover from Proposition \ref{prop for moving stokes} that this is just $$\inner{\mathcal{P}B_i^2f_n}{Af_n} + \inner{\mathcal{P}B_if_n}{A\mathcal{P}B_if_n}.$$
     Using Lemmas \ref{PBiequalsPBiP} and \ref{APequalsA}, we rewrite this as $$\inner{(\mathcal{P}B_i)^2f_n}{Af_n} + \inner{\mathcal{P}B_if_n}{AB_if_n} $$ and further as \begin{equation}\label{furtheras}\inner{\mathcal{P}B_if_n}{B_i^*Af_n} - \inner{\mathcal{P}B_if_n}{\Delta B_if_n}\end{equation} for the adjoint $B_i^*$ specified in Corollary \ref{corollary for B_i adjoint}. We look to commute the Laplacian and $B_i$, using Proposition \ref{boundoncommutator} and subsequently the cancellation of the derivative in $B_i$ when considered with the adjoint. Indeed,
     \begin{align*}
         - \inner{\mathcal{P}B_if_n}{\Delta B_if_n} &= - \inner{\mathcal{P}B_if_n}{([\Delta, B_i] + B_i\Delta )f_n}\\
         &= - \inner{\mathcal{P}B_if_n}{[\Delta, B_i]f_n} - \inner{\mathcal{P}B_if_n}{ B_i\Delta f_n}\\
         &= - \inner{\mathcal{P}B_if_n}{[\Delta, B_i]f_n} - \inner{\mathcal{P}B_if_n}{ \mathcal{P}B_i\Delta f_n}\\
         &= - \inner{\mathcal{P}B_if_n}{[\Delta, B_i]f_n} + \inner{\mathcal{P}B_if_n}{ \mathcal{P}B_iAf_n}\\
         &= - \inner{\mathcal{P}B_if_n}{[\Delta, B_i]f_n} + \inner{\mathcal{P}B_if_n}{B_iAf_n}
     \end{align*}
     using Lemma \ref{PBiequalsPBiP} again. Thus (\ref{furtheras}) becomes 
    $$\inner{\mathcal{P}B_if_n}{B_i^*Af_n} - \inner{\mathcal{P}B_if_n}{[\Delta, B_i]f_n} + \inner{\mathcal{P}B_if_n}{B_iAf_n}$$ or simply \begin{equation}\label{orsimply}\inner{\mathcal{P}B_if_n}{(\mathcal{T}_{\xi_i} + \mathcal{T}_{\xi_i}^*)Af_n - [\Delta, B_i]f_n}\end{equation}
     which we look to bound through Cauchy-Schwartz and the results of (\ref{boundsonB_i}), Lemma \ref{biggestcorcor} and Proposition \ref{boundoncommutator} to see that \begin{align*}
         (\ref{orsimply}) &\leq\norm{\mathcal{P}B_if_n}\left( \norm{(\mathcal{T}_{\xi_i} + \mathcal{T}_{\xi_i}^*)Af_n} +  \norm{[\Delta, B_i]f_n}\right)\\ &\leq c\norm{\xi_i}_{W^{1,\infty}}\norm{f_n}_{W^{1,2}}\left(\norm{\xi_i}_{W^{1,\infty}}\norm{Af_n} + \norm{\xi_i}_{W^{3,\infty}}\norm{f_n}_{W^{2,2}}\right)\\
         &\leq c\norm{\xi_i}_{W^{1,\infty}}\norm{f_n}_{1}\left(\norm{\xi_i}_{W^{1,\infty}}\norm{f_n}_{2} + \norm{\xi_i}_{W^{3,\infty}}\norm{f_n}_{2}\right)\\
         &\leq c\norm{\xi_i}_{W^{3,\infty}}^2\norm{f_n}_{1}\norm{f_n}_{2}\\
         &\leq c(\varepsilon)\norm{\xi_i}_{W^{3,\infty}}^2\norm{f_n}_{1}^2 + \varepsilon \norm{\xi_i}_{W^{3,\infty}}^2\norm{f_n}_{2}^2
     \end{align*}
     as required.

\end{proof}

\begin{lemma} \label{uniformdelanoise}
    For any $\varepsilon > 0$, we have that $$\inner{\mathcal{P}_n\mathcal{P}B_i^2f_n}{f_n}_2 + \inner{\mathcal{P}_n\mathcal{P}B_if_n}{\mathcal{P}_n\mathcal{P}B_if_n}_2 \leq c(\varepsilon)\norm{\xi_i}^2_{W^{3,\infty}}\norm{f_n}_2^2 + \varepsilon \norm{\xi_i}_{W^{3,\infty}}^2\norm{f_n}_{3}^2. $$
\end{lemma}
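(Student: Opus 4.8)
The plan is to mirror the proof of Lemma \ref{preliminary bound for cauchy}, carried out one derivative level higher, so that the cancellation of the transport derivative in $B_i + B_i^*$ is exploited only after the Laplacian has been commuted past $B_i$ the requisite number of times. First I would use Lemma \ref{projectionsboundedlemma}, which makes $\mathcal{P}_n$ self-adjoint for $\inner{\cdot}{\cdot}_2$ and hence (being idempotent) an orthogonal projection for that inner product: since $\mathcal{P}_n f_n = f_n$, this discards the outer $\mathcal{P}_n$'s and reduces the claim to bounding $\inner{\mathcal{P}B_i^2 f_n}{f_n}_2 + \inner{\mathcal{P}B_i f_n}{\mathcal{P}B_i f_n}_2$. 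Every manipulation below is legitimate because $f_n \in \mathrm{span}\{a_1,\dots,a_n\}$ with each $a_k \in C^\infty(\overbar{\mathcal{O}};\R^N)$ by Proposition \ref{basis of stokes} and $\xi_i \in W^{3,\infty}(\T;\R^N)$, so all compositions of $B_i$'s and $\Delta$'s with $f_n$ land in the Sobolev spaces required; the final inequality only sees $\norm{f_n}_2$ and $\norm{f_n}_3$.

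Next I would apply Proposition \ref{prop for moving stokes} with $p=q=2$ to write $\inner{\cdot}{\cdot}_2 = \inner{A\cdot}{A\cdot}$, then use Lemma \ref{APequalsA} and Lemma \ref{PBiequalsPBiP} to pass $A=-\mathcal{P}\Delta$ through the projections, obtaining $AB_i f_n = \mathcal{P}B_i A f_n - \mathcal{P}[\Delta,B_i]f_n$ and, iterating, $AB_i^2 f_n = (\mathcal{P}B_i)^2 A f_n - \mathcal{P}B_i\mathcal{P}[\Delta,B_i]f_n - \mathcal{P}[\Delta,B_i]B_i f_n$, with $[\Delta,B_i]$ the commutator of Proposition \ref{boundoncommutator}. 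Substituting, the target quantity splits into a \emph{principal part} $\inner{(\mathcal{P}B_i)^2 A f_n}{A f_n} + \norm{\mathcal{P}B_i A f_n}^2$ plus \emph{commutator remainders} built from $[\Delta,B_i]f_n$, $[\Delta,B_i]B_i f_n$ and cross terms. For the principal part, mimicking the end of the proof of Lemma \ref{preliminary bound for cauchy} applied to $g := A f_n \in L^2_{\sigma}(\T;\R^N)\cap W^{1,2}(\T;\R^N)$ (which is again in $\mathrm{span}\{a_k\}$), the adjoint $B_i^* = -\mathcal{L}_{\xi_i}+\mathcal{T}_{\xi_i}^*$ of Corollary \ref{corollary for B_i adjoint} gives $\inner{(\mathcal{P}B_i)^2 g}{g} + \norm{\mathcal{P}B_i g}^2 = \inner{\mathcal{P}B_i g}{(\mathcal{T}_{\xi_i}+\mathcal{T}_{\xi_i}^*)g}$, and Cauchy--Schwarz with (\ref{boundsonB_i}) at $k=0$, Lemma \ref{biggestcorcor} at $k=0$, $\norm{A f_n}_{W^{1,2}}\le c\norm{f_n}_3$ and $\norm{A f_n}\le c\norm{f_n}_2$ bounds it by $c\norm{\xi_i}_{W^{3,\infty}}^2\norm{f_n}_2\norm{f_n}_3$.

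Each commutator remainder is then handled by Cauchy--Schwarz, moving $\mathcal{P}B_i$ onto $B_i^*$ where it helps, and invoking Proposition \ref{boundoncommutator} for the $[\Delta,B_i]$ factors, Corollary \ref{corollary for B_i adjoint} at $k=2$ for $\norm{B_i f_n}_{W^{2,2}}\le c\norm{\xi_i}_{W^{3,\infty}}\norm{f_n}_3$, Lemma \ref{boundsonL_xi} and Lemma \ref{biggestcorcor} for the $\mathcal{L}_{\xi_i}$ and $\mathcal{T}_{\xi_i}^*$ pieces of $B_i^* A f_n$, and the comparison $\norm{f_n}_2 \le c\norm{f_n}_3$ (valid since $\lambda_1>0$); every remainder is thereby dominated by $c\norm{\xi_i}_{W^{3,\infty}}^2\norm{f_n}_2\norm{f_n}_3$, except $\norm{\mathcal{P}[\Delta,B_i]f_n}^2$ which is directly $\le c\norm{\xi_i}_{W^{3,\infty}}^2\norm{f_n}_2^2$. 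Summing and using Young's inequality $ab\le c(\varepsilon)a^2+\varepsilon b^2$ with $a=\norm{f_n}_2$, $b=\norm{f_n}_3$ gives the stated bound. I expect the main obstacle to be purely the bookkeeping: one must verify that no remainder needs more than three derivatives on $f_n$ or more than $W^{3,\infty}$ control on $\xi_i$. The tight spot is the remainder carrying $[\Delta,B_i](B_i f_n)$, where Proposition \ref{boundoncommutator} forces $B_i f_n\in W^{3,2}$ and then $\norm{B_i f_n}_{W^{2,2}}$ must be estimated through Corollary \ref{corollary for B_i adjoint} at exactly $k=2$ --- this is precisely why $\xi_i\in W^{3,\infty}$ suffices, whereas a naive use of Proposition \ref{prop for conservation B_i} at $k=2$ would have demanded $W^{4,\infty}$.
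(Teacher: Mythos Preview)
Your proposal is correct and follows essentially the same approach as the paper: reduce via the self-adjointness of $\mathcal{P}_n$ in $\inner{\cdot}{\cdot}_2$, pass to $\inner{A\cdot}{A\cdot}$, commute $\Delta$ past $B_i$ using Proposition~\ref{boundoncommutator}, and exploit $B_i+B_i^*=\mathcal{T}_{\xi_i}+\mathcal{T}_{\xi_i}^*$ from Corollary~\ref{corollary for B_i adjoint} to kill the top-order transport term before closing with Cauchy--Schwarz and Young. The only organisational difference is that you iterate the identity $AB_i=\mathcal{P}B_iA-\mathcal{P}[\Delta,B_i]$ fully, making $\mathcal{P}B_iAf_n$ the principal object and generating four remainder terms, whereas the paper commutes just once in each factor, keeps $AB_if_n$ as the principal object, and ends with only three terms $\inner{AB_if_n}{(\mathcal{T}_{\xi_i}+\mathcal{T}_{\xi_i}^*)Af_n}$, $-\inner{\mathcal{P}[\Delta,B_i]B_if_n}{Af_n}$, $-\inner{AB_if_n}{\mathcal{P}[\Delta,B_i]f_n}$; the extra remainders in your version are handled by exactly the same estimates, so neither arrangement buys anything over the other.
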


\begin{proof}
    As with Lemma \ref{preliminary bound for cauchy} we can immediately say that \begin{align}\nonumber \inner{\mathcal{P}_n\mathcal{P}B_i^2f_n}{f_n}_2 &+ \inner{\mathcal{P}_n\mathcal{P}B_if_n}{\mathcal{P}_n\mathcal{P}B_if_n}_2 \\ & \qquad \qquad \qquad \qquad \qquad  \leq  \inner{\mathcal{P}B_i^2f_n}{f_n}_2 + \inner{\mathcal{P}B_if_n}{\mathcal{P}B_if_n}_2 \label{boundybound}
    \end{align}
    which we again manipulate with Lemmas \ref{APequalsA} and \ref{PBiequalsPBiP} to give
    \begin{align*}
        (\ref{boundybound}) &= \inner{A\mathcal{P}B_i^2f_n}{Af_n} + \inner{A\mathcal{P}B_if_n}{A\mathcal{P}B_if_n}\\
        & = \inner{AB_i^2f_n}{Af_n} + \inner{AB_if_n}{AB_if_n}\\
        & = -\inner{\mathcal{P} \Delta B_i^2f_n}{Af_n} - \inner{AB_if_n}{\mathcal{P}\Delta B_if_n}\\
        &= - \inner{\mathcal{P} [\Delta, B_i]B_if_n + \mathcal{P}B_i\Delta B_i f_n}{Af_n} - \inner{AB_if_n}{\mathcal{P}[\Delta, B_i]f_n + \mathcal{P}B_i\Delta f_n}\\
        &= - \inner{\mathcal{P} [\Delta, B_i]B_if_n}{Af_n} + \inner{\mathcal{P}B_i A B_i f_n}{Af_n} - \inner{AB_if_n}{\mathcal{P}[\Delta, B_i]f_n} + \inner{AB_if_n}{ \mathcal{P}B_iA f_n}\\
        &= \inner{B_i A B_i f_n}{Af_n} + \inner{AB_if_n}{B_iA f_n} - \inner{\mathcal{P} [\Delta, B_i]B_if_n}{Af_n} - \inner{AB_if_n}{\mathcal{P}[\Delta, B_i]f_n}\\
        &= \inner{AB_if_n}{(B_i + B_i^*)A f_n} - \inner{\mathcal{P} [\Delta, B_i]B_if_n}{Af_n} - \inner{AB_if_n}{\mathcal{P}[\Delta, B_i]f_n}\\
        &= \inner{AB_if_n}{(\mathcal{T}_{\xi_i} + \mathcal{T}_{\xi_i}^*)A f_n} - \inner{\mathcal{P} [\Delta, B_i]B_if_n}{Af_n} - \inner{AB_if_n}{\mathcal{P}[\Delta, B_i]f_n}.
    \end{align*}
    We shall treat each term individually using Cauchy-Schwartz and Young's Inequality in conjunction with the relevant bounds as seen in Lemma \ref{preliminary bound for cauchy}:
    \begin{align*}
    \inner{AB_if_n}{(\mathcal{T}_{\xi_i}+ \mathcal{T}_{\xi_i}^*)A f_n} &\leq \norm{AB_if_n}\norm{(\mathcal{T}_{\xi_i}+ \mathcal{T}_{\xi_i}^*)A f_n}\\
    &\leq c\norm{\xi_i}_{W^{1,\infty}}\norm{B_if_n}_{W^{2,2}}\norm{Af_n}\\ &\leq c\norm{\xi_i}_{W^{1,\infty}}\norm{\xi_i}_{W^{3,\infty}}\norm{f_n}_{W^{3,2}}\norm{f_n}_2\\
    &\leq c(\varepsilon)\norm{\xi_i}^2_{W^{3,\infty}}\norm{f_n}_2^2 + \frac{\varepsilon}{3} \norm{\xi_i}_{W^{3,\infty}}^2\norm{f_n}_{3}^2
    \end{align*}
    as well as 
    \begin{align*}
       - \inner{\mathcal{P} [\Delta, B_i]B_if_n}{Af_n} &\leq \norm{\mathcal{P} [\Delta, B_i]B_if_n}\norm{Af_n}\\
       &\leq c\norm{[\Delta, B_i]B_if_n}\norm{f_n}_2\\
       &\leq c\norm{\xi_i}_{W^{3,\infty}}\norm{B_if_n}_{W^{2,2}}\norm{f_n}_2\\
       &\leq c\norm{\xi_i}_{W^{3,\infty}}^2\norm{f_n}_{W^{3,2}}\norm{f_n}_2\\
       &\leq c(\varepsilon)\norm{\xi_i}^2_{W^{3,\infty}}\norm{f_n}_2^2 + \frac{\varepsilon}{3} \norm{\xi_i}_{W^{3,\infty}}^2\norm{f_n}_{3}^2
    \end{align*}
    and finally 
    \begin{align*}
        - \inner{AB_if_n}{\mathcal{P}[\Delta, B_i]f_n} &\leq \norm{AB_if_n}\norm{\mathcal{P}[\Delta, B_i]f_n}\\
        &\leq c\norm{B_if_n}_{W^{2,2}}\norm{[\Delta, B_i]f_n}\\
        &\leq c\norm{\xi_i}_{W^{3,\infty}}^2\norm{f_n}_{W^{3,2}}\norm{f_n}_{W^{2,2}}\\
       &\leq c(\varepsilon)\norm{\xi_i}^2_{W^{3,\infty}}\norm{f_n}_2^2 + \frac{\varepsilon}{3} \norm{\xi_i}_{W^{3,\infty}}^2\norm{f_n}_{3}^2
    \end{align*}
    Summing these up completes the proof.

\end{proof}

\begin{proof}[Asumption \ref{assumptions for uniform bounds2}:]
Lemmas \ref{uniformdelanonlinear} and \ref{uniformdelanoise} will be our basis of showing (\ref{uniformboundsassumpt1}). The task is to control $$2\left \langle \mathcal{P}_n\left(-\mathcal{P}\mathcal{L} -\nu A +\frac{1}{2}\sum_{i=1}^\infty \mathcal{P}B_i^2\right) f_n , f_n      \right \rangle_2 + \sum_{i=1}^\infty \norm{\mathcal{P}_n\mathcal{P}B_if_n}_2^2$$ 
    which we rewrite as
    \begin{equation} \label{the rewrite}-2\inner{\mathcal{P}_n\mathcal{P}\mathcal{L}_{f_n}f_n}{f_n}_2 - 2\nu\inner{\mathcal{P}_nAf_n}{f_n}_2 + \sum_{i=1}^\infty\left(\inner{\mathcal{P}_n\mathcal{P}B_i^2f_n}{f_n}_2 + \norm{\mathcal{P}_n\mathcal{P}B_if_n}_2^2 \right).\end{equation}
    Recalling the assumption (\ref{bounds on xi_i}) and Lemmas \ref{uniformdelanonlinear}, \ref{uniformdelanoise}, we have that for any $\varepsilon > 0$,
    \begin{align*}(\ref{the rewrite}) &\leq - 2\nu\inner{\mathcal{P}_nAf_n}{f_n}_2 + c(\varepsilon)\norm{f_n}^4_2 + \varepsilon \norm{f_n}_{3}^2 + \sum_{i=1}^\infty\left(c(\varepsilon)\norm{\xi_i}^2_{W^{3,\infty}}\norm{f_n}_2^2 + \varepsilon \norm{\xi_i}_{W^{3,\infty}}^2\norm{f_n}_{3}^2 \right)\\
    &= - 2\nu\inner{Af_n}{f_n}_2 + \left[ c(\varepsilon)\norm{f_n}_2^4 + c(\varepsilon)\sum_{i=1}^\infty \norm{\xi_i}_{W^{3,\infty}}^2\norm{f_n}_2^2\right] + \varepsilon\left[1 + \sum_{i=1}^\infty \norm{\xi_i}_{W^{3,\infty}}^2\right]\norm{f_n}_3^2\\
    &\leq - 2\nu\inner{A^{1/2}Af_n}{A^{3/2}f_n}_2 + c(\varepsilon)\left[1 + \norm{f_n}_2^2\right]\norm{f_n}_2^2 + \varepsilon\left[1 + \sum_{i=1}^\infty \norm{\xi_i}_{W^{3,\infty}}^2\right]\norm{f_n}_3^2\\
    &= - 2\nu\norm{f_n}_3^2 + c(\varepsilon)\left[1 + \norm{f_n}_2^2\right]\norm{f_n}_2^2 + \varepsilon\left[1 + \sum_{i=1}^\infty \norm{\xi_i}_{W^{3,\infty}}^2\right]\norm{f_n}_3^2
    \end{align*}
    where we have embedded the $\sum_{i=1}^\infty \norm{\xi_i}_{W^{3,\infty}}^2$ into the constant $c(\varepsilon)$. Therefore by choosing $$\varepsilon:= \frac{\nu}{1 + \sum_{i=1}^\infty \norm{\xi_i}_{W^{3,\infty}}^2}$$ 
    then (\ref{uniformboundsassumpt1}) is satisfied for $\kappa:= \nu$. Moving on to (\ref{uniformboundsassumpt2}), we are interested in the term $$\sum_{i=1}^\infty \inner{\mathcal{P}_n\mathcal{P}B_if_n}{f_n}_2^2.$$ Using Lemmas \ref{APequalsA} and \ref{PBiequalsPBiP} once more, observe that
    \begin{align*}
        \inner{\mathcal{P}_n\mathcal{P}B_if_n}{f_n}_2^2 &= \inner{\mathcal{P}B_if_n}{f_n}_2^2\\
        &= \inner{A\mathcal{P}B_if_n}{Af_n}^2\\
        &= \inner{A B_if_n}{Af_n}^2\\
        &=  \inner{\mathcal{P}[\Delta, B_i]f_n + \mathcal{P}B_i \Delta f_n}{Af_n}^2\\
        &\leq 2\inner{\mathcal{P}[\Delta, B_i]f_n}{Af_n}^2 + 2\inner{\mathcal{P}B_i \Delta f_n}{Af_n}^2\\
        &= 2\inner{\mathcal{P}[\Delta, B_i]f_n}{Af_n}^2 + 2\inner{\mathcal{P}B_i A f_n}{Af_n}^2\\
        &= 2\inner{[\Delta, B_i]f_n}{Af_n}^2 + 2\inner{B_i A f_n}{Af_n}^2.
    \end{align*}
    The first of these terms is dealt with through a simple Cauchy-Schwartz, as
    \begin{align*}
        \inner{[\Delta, B_i]f_n}{Af_n}^2 &\leq \norm{[\Delta, B_i]f_n}^2\norm{Af_n}^2\\
        &\leq c\norm{\xi_i}^2_{W^{3,\infty}}\norm{f_n}^2_{W^{2,2}}\norm{f_n}^2_2\\
        &\leq c\norm{\xi_i}^2_{W^{3,\infty}}\norm{f_n}^4_{2}
    \end{align*}
    using Proposition \ref{boundoncommutator}, and the second comes directly from (\ref{finalboundinderivativeproof}) as
    \begin{align*}
        \inner{B_i A f_n}{Af_n}^2
        \leq c\norm{\xi_i}^2_{W^{1,\infty}}\norm{Af_n}^4
        \leq c\norm{\xi_i}^2_{W^{3,\infty}}\norm{f_n}_2^4.
    \end{align*}
    Summing up the two terms and over all $i$ gives that
    $$\sum_{i=1}^\infty \inner{\mathcal{P}_n\mathcal{P}B_if_n}{f_n}_2^2 \leq \left(c\sum_{i=1}^\infty\norm{\xi_i}^2_{W^{3,\infty}} \right)\norm{f_n}_2^4 $$
    which justifies (\ref{uniformboundsassumpt2}) and Assumption \ref{assumptions for uniform bounds2}.
\end{proof}

Towards Assumption \ref{therealcauchy assumptions} we again state an intermediary lemma. 

\begin{lemma} \label{thecauchynonlinear}
    For any $\varepsilon > 0$, we have that
    \begin{align*}\left\vert\inner{\mathcal{P}\mathcal{L}_{f}f - \mathcal{P}\mathcal{L}_{g}g}{f-g}_1\right\vert \leq c(\varepsilon)\left(\norm{g}^4_1 + \norm{f}_2^2\right) \norm{f-g}_1^2  + \varepsilon \norm{f-g}_2^2 
    \end{align*}
\end{lemma}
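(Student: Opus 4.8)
The plan is to split the difference bilinearly, writing $\eta:=f-g\in W^{3,2}_{\sigma}(\T;\R^N)$ and
$$\mathcal{P}\mathcal{L}_ff - \mathcal{P}\mathcal{L}_gg = \mathcal{P}\mathcal{L}_{\eta}f + \mathcal{P}\mathcal{L}_{g}\eta,$$
then to convert the $\inner{\cdot}{\cdot}_1$ pairing into an $L^2$ pairing against $A\eta$ and treat the two terms separately. For the reduction, note that $\mathcal{L}_{\eta}f$ and $\mathcal{L}_g\eta$ lie in $W^{1,2}(\T;\R^N)$ by Lemma \ref{basic nonlinear bound} (e.g. (\ref{second begin align})), so their Leray projections lie in $W^{1,2}_{\sigma}(\T;\R^N)=D(A^{1/2})$ by Proposition \ref{continuityofP}; since also $\eta\in D(A)$, Proposition \ref{prop for moving stokes} with $m=1,\,p=0,\,q=2$ turns each $\inner{\mathcal{P}\mathcal{L}_\bullet\bullet}{\eta}_1$ into $\inner{\mathcal{P}\mathcal{L}_\bullet\bullet}{A\eta}$, and because $A\eta\in L^2_{\sigma}(\T;\R^N)$ the projector may be discarded (self-adjointness of $\mathcal{P}$). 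Thus the quantity to estimate is $\inner{\mathcal{L}_{\eta}f}{A\eta} + \inner{\mathcal{L}_{g}\eta}{A\eta}$.

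The term $\inner{\mathcal{L}_{\eta}f}{A\eta}$ is routine: Cauchy--Schwarz, the identity $\norm{A\eta}=\norm{\eta}_2$, the bound $\norm{\mathcal{L}_{\eta}f}\le c\norm{\eta}_{W^{1,2}}\norm{f}_{W^{2,2}}$ from (\ref{first begin align}), and the norm equivalences of Propositions \ref{A1}, \ref{A2} give $\lvert\inner{\mathcal{L}_{\eta}f}{A\eta}\rvert \le c\norm{\eta}_1\norm{f}_2\norm{\eta}_2 \le c(\varepsilon)\norm{f}_2^2\norm{\eta}_1^2 + \varepsilon\norm{\eta}_2^2$ by Young's inequality.

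The term $\inner{\mathcal{L}_{g}\eta}{A\eta}$ is the main obstacle, since the same crude estimate would only yield $c\norm{g}_1\norm{\eta}_2^2$, which cannot be absorbed into $\varepsilon\norm{\eta}_2^2$. Here one exploits that on the torus $\Delta\eta$ is divergence free and of zero average, hence already in $L^2_{\sigma}(\T;\R^N)$, so $A\eta=-\Delta\eta$ and an integration by parts gives $\inner{\mathcal{L}_{g}\eta}{A\eta} = -\inner{\mathcal{L}_{g}\eta}{\Delta\eta} = \sum_k \inner{\partial_k(\mathcal{L}_g\eta)}{\partial_k\eta}$. Expanding $\partial_k(g^j\partial_j\eta)=(\partial_k g^j)\partial_j\eta + g^j\partial_j(\partial_k\eta)$, the second contribution is $\sum_k\inner{\mathcal{L}_g(\partial_k\eta)}{\partial_k\eta}=0$ by Corollary \ref{cancellationproperty} (using $g\in W^{1,2}_{\sigma}$ and $\partial_k\eta\in W^{1,2}$), which disposes of the dangerous term carrying two derivatives on $\eta$; the surviving term is estimated by H\"older as $c\norm{\nabla g}_{L^2}\norm{\nabla\eta}_{L^4}^2$ and then, via the Gagliardo--Nirenberg inequality $\norm{\nabla\eta}_{L^4}^2\le c\norm{\eta}_{W^{1,2}}^{1/2}\norm{\eta}_{W^{2,2}}^{3/2}$ in three dimensions, bounded by $c\norm{g}_1\norm{\eta}_1^{1/2}\norm{\eta}_2^{3/2}$; Young's inequality with exponents $4$ and $4/3$ then produces $c(\varepsilon)\norm{g}_1^4\norm{\eta}_1^2 + \varepsilon\norm{\eta}_2^2$.

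Summing the two contributions and relabelling $\varepsilon$ yields the assertion. The whole difficulty is concentrated in the $\mathcal{L}_g\eta$ term: recognising that one must integrate by parts rather than estimate directly, use the divergence-free cancellation of Corollary \ref{cancellationproperty} to kill the doubly-differentiated piece, and deploy the $L^4$-interpolation precisely so as to keep $g$ only at the $W^{1,2}$ level (at the cost of the fourth power); this is also why the statement is confined to the torus, where $A=-\Delta$ on divergence free fields makes the integration by parts clean.
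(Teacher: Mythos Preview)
Your proof is correct and uses the same bilinear splitting as the paper, reducing to the two terms $\inner{\mathcal{L}_{f-g}f}{A(f-g)}$ and $\inner{\mathcal{L}_g(f-g)}{A(f-g)}$ and treating the first identically. The difference lies in the second term. The paper does \emph{not} integrate by parts or use $A=-\Delta$; it simply applies Cauchy--Schwarz together with Proposition~\ref{inequalityforcauchynonlinear}, which already contains the needed interpolation: $\norm{\mathcal{L}_g(f-g)}\le c\norm{g}_1\norm{f-g}_1^{1/2}\norm{f-g}_2^{1/2}$, and then $\norm{A(f-g)}=\norm{f-g}_2$ gives the same product $c\norm{g}_1\norm{f-g}_1^{1/2}\norm{f-g}_2^{3/2}$ you reach. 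Your route---integrating by parts, using Corollary~\ref{cancellationproperty} to kill the top-order piece, and applying Gagliardo--Nirenberg at the $L^4$ level---arrives at exactly the same intermediate estimate but with more machinery; in particular your remark that the argument is ``confined to the torus'' through $A=-\Delta$ is an artefact of your method, since the paper's version needs only $\norm{A\eta}=\norm{\eta}_2$ and the torus-specific input is buried inside Proposition~\ref{inequalityforcauchynonlinear}. Both approaches are valid; the paper's is shorter given the tools already established.
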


\begin{proof}
    Observe that
    \begin{align*}
         \inner{\mathcal{P}\mathcal{L}_{f}f - \mathcal{P}\mathcal{L}_{g}g}{f-g}_1
        &= \inner{\mathcal{P}\mathcal{L}_{f}f - \mathcal{L}_{g}g}{A(f-g)}\\
        &= \inner{\mathcal{L}_{f}f - \mathcal{L}_{g}g}{A(f-g)}\\
        &= \inner{\mathcal{L}_{f}f - \mathcal{L}_{f}g + \mathcal{L}_{f}g - \mathcal{L}_{g}g}{A(f-g)}\\
        &= \inner{\mathcal{L}_{f - g}f +  \mathcal{L}_{g}(f - g)}{A(f-g)}
    \end{align*}
    and so it is sufficient to control the terms
    \begin{align}
        &\left\vert\inner{\mathcal{L}_{f - g}f}{A(f-g)}\right\vert \label{cauchynonlinear1},\\
          &\left\vert\inner{\mathcal{L}_{g}(f - g)}{A(f-g)}\right\vert \label{cauchynonlinear2}
    \end{align}
    We bound each item individually, using (\ref{nonlinear eq1}):
    \begin{align*}
        (\ref{cauchynonlinear1}) &\leq \norm{\mathcal{L}_{f - g}f}\norm{A(f-g)}\\
        &\leq c\norm{f-g}_1\norm{f}_1^{1/2}\norm{f}_2^{1/2}\norm{f-g}_2\\
        &\leq c\norm{f-g}_1\norm{f}_2\norm{f-g}_2\\
        &\leq c(\varepsilon)\norm{f}_2^2\norm{f-g}_1^2 + \frac{\varepsilon}{2} \norm{f-g}_2^2
    \end{align*}
    and
    \begin{align*}
        (\ref{cauchynonlinear2}) &\leq \norm{\mathcal{L}_{g}(f - g)}\norm{A(f-g)}\\
        &\leq c\norm{g}_1\norm{f - g}_1^{1/2}\norm{f - g}_2^{1/2}\norm{f - g}_2\\
        &= c\norm{g}_1\norm{f - g}_1^{1/2}\norm{f - g}_2^{3/2}\\
        &\leq c(\varepsilon)\norm{g}^4_1\norm{f - g}_1^{2} + \frac{\varepsilon}{2}\norm{f - g}_2^{2}
    \end{align*}
    using Young's Inequality with conjugate exponents $4$ and $4/3$.

\end{proof}

\begin{proof}[Assumption \ref{therealcauchy assumptions}:]
    For (\ref{therealcauchy1}) we must bound the term
    \begin{align} 
        \nonumber 2\left \langle\left(-\mathcal{P}\mathcal{L} -\nu A +\frac{1}{2}\sum_{i=1}^\infty \mathcal{P}B_i^2\right) f - \left(-\mathcal{P}\mathcal{L} -\nu A +\frac{1}{2}\sum_{i=1}^\infty \mathcal{P}B_i^2\right) g, f-g  \right \rangle_1  + \sum_{i=1}^\infty \norm{\mathcal{P}B_if - \mathcal{P}B_ig}_1^2
    \end{align}
    which we simply rewrite as
    \begin{align*}
        -2\inner{\mathcal{P}\mathcal{L}_ff - \mathcal{P}\mathcal{L}_gg}{f-g}_1 -2\nu \inner{A(f-g)}{f-g}_1 + \sum_{i=1}^\infty \left(\inner{\mathcal{P}B_i^2(f-g)}{f-g}_1 + \norm{\mathcal{P}B_i(f-g)}_1^2 \right)
    \end{align*}
    and inspect the distinct items individually. Firstly from Lemma \ref{thecauchynonlinear} we have that for any $\varepsilon >0$,
    \begin{equation}\label{mr1}-2\inner{\mathcal{P}\mathcal{L}_ff - \mathcal{P}\mathcal{L}_gg}{f-g}_1 \leq c(\varepsilon)\left(\norm{g}^4_1 + \norm{f}_2^2\right) \norm{f-g}_1^2  + \varepsilon \norm{f-g}_2^2.\end{equation}
    Similarly to the justification of Assumption \ref{assumptions for uniform bounds2} we also see that
    \begin{equation}\label{mr2}-2\nu \inner{A(f-g)}{f-g}_1 = -2\nu \norm{f-g}_2^2. \end{equation}
    Shifting focus to the final term, note that in Lemma \ref{preliminary bound for cauchy} we in fact showed that
    $$\inner{\mathcal{P}B_i^2f_n}{f_n}_1 + \inner{\mathcal{P}B_if_n}{\mathcal{P}B_if_n}_1 \leq  c(\varepsilon)\norm{\xi_i}_{W^{3,\infty}}^2\norm{f_n}_1^2 + \varepsilon \norm{\xi_i}_{W^{3,\infty}}^2\norm{f_n}_2^2$$ and scanning the proof we see that all arguments hold for arbitrary $f_n \in W^{3,2}_{\sigma}(\T;\R^N)$ so we can deduce directly the bound
    \begin{equation}\label{mr3}\inner{\mathcal{P}B_i^2(f-g)}{f-g}_1 + \norm{\mathcal{P}B_i(f-g)}_1^2 \leq  c(\varepsilon)\norm{\xi_i}_{W^{3,\infty}}^2\norm{f-g}_1^2 + \varepsilon \norm{\xi_i}_{W^{3,\infty}}^2\norm{f-g}_2^2.\end{equation}
    Summing over (\ref{mr1}), (\ref{mr2}) and all $i$ in (\ref{mr3}), we deduce a bound by
    $$-2\nu \norm{f-g}_2^2 + c(\varepsilon)\left[\norm{g}^4_1 + \norm{f}_2^2 + \sum_{i=1}^\infty\norm{\xi_i}_{W^{3,\infty}}^2 \right]\norm{f-g}_1^2 + \varepsilon\left[1 + \sum_{i=1}^\infty\norm{\xi_i}_{W^{3,\infty}}^2 \right]\norm{f-g}_2^2$$
    so again a choice of \begin{equation}\label{familiar choice}\varepsilon:= \frac{\nu}{1 + \sum_{i=1}^\infty \norm{\xi_i}_{W^{3,\infty}}^2}\end{equation}
    ensures (\ref{therealcauchy1}) is satisfied for $\kappa:= \nu$. Moving on to (\ref{therealcauchy2}), we are interested in the term \begin{equation} \label{of interest1} \sum_{i=1}^\infty \inner{\mathcal{P}B_i(f-g)}{f-g}_1^2, \end{equation}
    noting that
       \begin{align*}
        \inner{\mathcal{P}B_i(f-g)}{f-g}_1^2 &= \inner{A\mathcal{P}B_i(f-g)}{f-g}^2\\
        &= \inner{A B_i(f-g)}{f-g}^2\\
        &= \inner{\Delta  B_i(f-g)}{f-g}^2\\
        &= \left\langle \sum_{k=1}^N \partial_k^2 B_i(f-g),f-g \right \rangle^2\\
        &= \left(\sum_{k=1}^N\inner{\partial_k B_i(f-g)}{\partial_k(f - g)} \right)^2\\
        &\leq N\sum_{k=1}^N\inner{\partial_k B_i(f-g)}{\partial_k(f - g)}^2
    \end{align*}
    using Lemma \ref{APequalsA}. Combining the ideas of Lemmas \ref{biggestcorcor} and \ref{boundsonL_xi}, we have $$\partial_kB_i(f-g) = B_{\partial_k\xi_i}(f-g) + B_i\partial_k(f-g)$$ so 
    $$\inner{\partial_k B_i(f-g)}{\partial_k(f - g)}^2 \leq 2\inner{B_{\partial_k\xi_i}(f-g)}{\partial_k(f - g)}^2+ 2\inner{B_i\partial_k(f-g)}{\partial_k(f-g)}^2.$$
    Now from (\ref{boundsonB_i}),
    \begin{align*}
        \inner{B_{\partial_k\xi_i}(f-g)}{\partial_k(f - g)}^2 &\leq \norm{B_{\partial_k\xi_i}(f-g)}^2\norm{\partial_k(f - g)}^2\\
        &\leq c\norm{\partial_k\xi_i}^2_{W^{1,\infty}}\norm{f-g}^2_{W^{1,2}}\norm{\partial_k(f - g)}^2\\
        &\leq c\norm{\xi_i}^2_{W^{3,\infty}}\norm{f-g}^2_1\norm{f - g}^2_1\\
        &= c\norm{\xi_i}^2_{W^{3,\infty}}\norm{f-g}^4_1
    \end{align*}
    and from Corollary \ref{cancellationproperty},
\begin{align*}
    \inner{B_i\partial_k(f-g)}{\partial_k(f-g)}^2 &= \inner{\mathcal{T}_{\xi_i}\partial_k(f-g)}{\partial_k(f-g)}^2\\
    &\leq \norm{\mathcal{T}_{\xi_i}\partial_k(f-g)}^2\norm{\partial_k(f-g)}^2\\
    &\leq c\norm{\xi_i}^2_{W^{1,\infty}}\norm{\partial_k(f-g)}^2\norm{\partial_k(f-g)}^2\\
    &\leq c\norm{\xi_i}^2_{W^{3,\infty}}\norm{f-g}^4_1.
\end{align*}
    By summing both terms, over all $k =1, \dots, N$ and $i \in \N$, we have shown that
    \begin{equation}\label{have shown that}\sum_{i=1}^\infty\inner{\mathcal{P}B_i(f-g)}{f-g}_1^2 \leq \left(c\sum_{i=1}^\infty \norm{\xi_i}^2_{W^{3,\infty}}\right)\norm{f-g}^4_1\end{equation}
    demonstrating (\ref{therealcauchy2}) and hence Assumption \ref{therealcauchy assumptions}.
\end{proof}

\begin{proof}[Assumption \ref{assumption for prob in V}:]
    For (\ref{probability first}) we must bound the term    \begin{align} 
        \nonumber 2\left \langle\left(-\mathcal{P}\mathcal{L} -\nu A +\frac{1}{2}\sum_{i=1}^\infty \mathcal{P}B_i^2\right) f, f  \right \rangle_1  + \sum_{i=1}^\infty \norm{\mathcal{P}B_if}_1^2
    \end{align}
    which we simply rewrite as
    \begin{align}
        -2\inner{\mathcal{P}\mathcal{L}_ff}{f}_1 -2\nu \inner{Af}{f}_1 + \sum_{i=1}^\infty \left(\inner{\mathcal{P}B_i^2f}{f}_1 + \norm{\mathcal{P}B_if}_1^2 \right). \label{align no star}
    \end{align}
    The nonlinear term can be controlled precisely as done for (\ref{cauchynonlinear2}) to deduce that for any $\varepsilon > 0$, 
    $$\left\vert\inner{\mathcal{P}\mathcal{L}_ff}{f}_1\right\vert \leq c(\varepsilon)\norm{f}_1^6 + \varepsilon \norm{f}_2^2.$$
    Meanwhile across (\ref{mr2}) and (\ref{mr3}) we have that
    \begin{align*}-2\nu \inner{Af}{f}_1 &+ \sum_{i=1}^\infty \left(\inner{\mathcal{P}B_i^2f}{f}_1 + \norm{\mathcal{P}B_if}_1^2 \right)\\ & \qquad \qquad \leq -2\nu\norm{f}_2^2 + c(\varepsilon)\left[\sum_{i=1}^N\norm{\xi_i}_{W^{3,\infty}}^2\right]\norm{f}_1^2 + \varepsilon\left[\sum_{i=1}^\infty \norm{\xi_i}_{W^{3,\infty}}^2\right]\norm{f}_2^2
    \end{align*}
    so with the familiar choice of $\varepsilon$ (\ref{familiar choice}) we see that
    $$(\ref{align no star}) \leq  c\left(1 + \norm{f}_1^6 \right) - \nu\norm{f}_2^2$$
    which is more than sufficient to show (\ref{probability first}). (\ref{probability second}) follows immediately from $(\ref{have shown that})$, concluding the proof.
\end{proof}

\begin{proof}[Assumption \ref{finally the last assumption}:]
    For any $\eta \in W^{2,2}_{\sigma}(\T;\R^N)$ we must bound the term \begin{align} 
        \nonumber \left \langle\left(-\mathcal{P}\mathcal{L} -\nu A +\frac{1}{2}\sum_{i=1}^\infty \mathcal{P}B_i^2\right) f - \left(-\mathcal{P}\mathcal{L} -\nu A +\frac{1}{2}\sum_{i=1}^\infty \mathcal{P}B_i^2\right) g, \eta  \right \rangle_1
    \end{align}
    which we simply rewrite as
    \begin{align*}
        -2\inner{\mathcal{P}\mathcal{L}_ff - \mathcal{P}\mathcal{L}_gg}{\eta}_1 -2\nu \inner{A(f-g)}{\eta}_1 + \sum_{i=1}^\infty \inner{\mathcal{P}B_i^2(f-g)}{\eta}_1
    \end{align*}
    and further by 
      \begin{align*}
        -2\inner{\mathcal{P}\mathcal{L}_ff - \mathcal{P}\mathcal{L}_gg}{A\eta} -2\nu \inner{A(f-g)}{A\eta} + \sum_{i=1}^\infty \inner{\mathcal{P}B_i^2(f-g)}{A\eta}.
    \end{align*}
    Through Cauchy-Schwartz this is controlled by 
      \begin{align*}
       \norm{\eta}_2\left( 2\norm{\mathcal{P}\mathcal{L}_ff - \mathcal{P}\mathcal{L}_gg} + 2\nu \norm{A(f-g)} + \sum_{i=1}^\infty \norm{\mathcal{P}B_i^2(f-g)}\right).
    \end{align*}
    so our problem is reduced to bounding the bracketed terms. The linear terms are trivial when recalling (\ref{boundsonB_i}), and for the nonlinear term we revert back to (\ref{first begin align}) to see that
    \begin{align*}
        \norm{\mathcal{P}\mathcal{L}_ff - \mathcal{P}\mathcal{L}_gg} &\leq \norm{\mathcal{L}_{f-g}f}+\norm{\mathcal{L}_g(f-g)}\\
        &\leq c\left(\norm{f-g}_1\norm{f}_2 +  \norm{g}_1\norm{f-g}_2\right)\\
        &\leq c\left(\norm{f}_2+\norm{g}_1\right)\norm{f-g}_2
    \end{align*}
    comfortably justifying the assumption.
\end{proof}

Moving on now to the setting of Appendix V, \ref{appendix v}, the new space $X$ will again be $L^2_{\sigma}(\T;\R^N)$ as laid out in Subsection \ref{subsection ito}. We choose the bilinear form $\inner{\cdot}{\cdot}_{X \times H}$ to be given by \begin{equation}\label{given bilinear form}\inner{f}{g}_{X \times H}:= \inner{f}{Ag}\end{equation} noting that the property (\ref{bilinear form}) follows from Proposition \ref{prop for moving stokes}. Noting also that the system $(a_k)$ given in Proposition \ref{basis of stokes} is an orthogonal basis of $W^{1,2}_{\sigma}(\T;\R^N)$, and that the operators were shown to be measurable into the relevant spaces in Subsection \ref{subsection ito}, we are in the setting of Appendix V.  We now proceed to justify the assumptions of this appendix.

\begin{proof}[Assumption \ref{gg3}:]
    This follows identically to Assumption \ref{new assumption 1}, referring again to Subsection \ref{subsection ito} and Lemma \ref{basic nonlinear bound}.
\end{proof}

The process to justify Assumption \ref{uniqueness for H valued} is the same as for \ref{therealcauchy assumptions}, and we introduce the corresponding Lemma to \ref{thecauchynonlinear}.

\begin{lemma} \label{lemmalemma}
    For any $\varepsilon > 0$, we have that \begin{align*}\left\vert\inner{\mathcal{P}\mathcal{L}_{f}f - \mathcal{P}\mathcal{L}_{g}g}{f-g}\right\vert \leq c(\varepsilon)\norm{f}_2^2 \norm{f-g}^2  + \varepsilon \norm{f-g}_1^2 .
    \end{align*}
\end{lemma}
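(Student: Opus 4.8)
The plan is to mirror the argument of Lemma \ref{thecauchynonlinear}, but adapted to the plain $L^2_\sigma(\T;\R^N)$ pairing rather than the $\inner{\cdot}{\cdot}_1$ pairing. First I would remove the Leray projector: since $f-g \in L^2_\sigma(\T;\R^N)$ and $\mathcal{P}$ is the orthogonal projection onto this space, $\inner{\mathcal{P}\mathcal{L}_{f}f - \mathcal{P}\mathcal{L}_{g}g}{f-g} = \inner{\mathcal{L}_{f}f - \mathcal{L}_{g}g}{f-g}$. Then I would write the standard bilinear decomposition
\[
\mathcal{L}_{f}f - \mathcal{L}_{g}g = \mathcal{L}_{f-g}f + \mathcal{L}_{g}(f-g),
\]
so that it suffices to control $\inner{\mathcal{L}_{f-g}f}{f-g}$ and $\inner{\mathcal{L}_{g}(f-g)}{f-g}$ separately.

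The second term vanishes: applying Corollary \ref{cancellationproperty} with $\phi = g \in W^{1,2}_{\sigma}(\T;\R^N)$ and the transported field $f-g \in W^{1,2}(\T;\R^N)$ gives $\inner{\mathcal{L}_{g}(f-g)}{f-g} = 0$. For the first term I would use Cauchy--Schwarz followed by the basic nonlinear bound (\ref{first begin align}) (with $f$ playing the role of the $W^{2,2}$ argument and $f-g$ the $W^{1,2}$ argument):
\[
\left\vert\inner{\mathcal{L}_{f-g}f}{f-g}\right\vert \leq \norm{\mathcal{L}_{f-g}f}\,\norm{f-g} \leq c\,\norm{f-g}_{W^{1,2}}\,\norm{f}_{W^{2,2}}\,\norm{f-g}.
\]
Invoking the norm equivalences $\norm{\cdot}_{W^{1,2}} \simeq \norm{\cdot}_1$ on $W^{1,2}_{\sigma}$ (Proposition \ref{A2}) and $\norm{\cdot}_{W^{2,2}} \simeq \norm{\cdot}_2$ on $W^{2,2}_{\sigma}$ (Proposition \ref{A1}), and then Young's inequality to split $\norm{f-g}_1 \cdot \big(\norm{f}_2\norm{f-g}\big)$, I obtain the asserted bound $c(\varepsilon)\norm{f}_2^2\norm{f-g}^2 + \varepsilon\norm{f-g}_1^2$.

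There is no serious obstacle here; the only point requiring a little care is that the estimate must retain the low-order norm $\norm{f-g}$ (the $L^2_\sigma$ norm) in front of the coefficient, with only one factor of $\norm{f-g}_1$ available to absorb into $\varepsilon\norm{f-g}_2^2$ — this is exactly why the cancellation of $\inner{\mathcal{L}_{g}(f-g)}{f-g}$ and the use of Cauchy--Schwarz on $\inner{\mathcal{L}_{f-g}f}{f-g}$ (rather than a cruder estimate) are needed, since in the latter the difference $f-g$ occurs once as the advecting field (yielding the single $W^{1,2}$ factor) and once in the plain $L^2$ slot.
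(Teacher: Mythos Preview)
Your proposal is correct and follows essentially the same approach as the paper's own proof: remove $\mathcal{P}$ using that $f-g\in L^2_\sigma$, decompose $\mathcal{L}_ff-\mathcal{L}_gg=\mathcal{L}_{f-g}f+\mathcal{L}_g(f-g)$, kill the second term via Corollary~\ref{cancellationproperty}, and bound the first via Cauchy--Schwarz, (\ref{first begin align}), and Young's inequality. (Minor slip in your closing commentary: you wrote ``absorb into $\varepsilon\norm{f-g}_2^2$'' where you meant $\varepsilon\norm{f-g}_1^2$, but the argument itself is stated correctly.)
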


\begin{proof}
    As in Lemma \ref{thecauchynonlinear}, we use the inequality
    $$\left\vert\inner{\mathcal{P}\mathcal{L}_{f}f - \mathcal{P}\mathcal{L}_{g}g}{f-g}\right\vert \leq \left\vert\inner{\mathcal{L}_{f-g}f}{f-g}\right\vert + \left\vert\inner{\mathcal{L}_{g}(f-g)}{f-g}\right\vert.$$
    For the first term, appealing to (\ref{first begin align}), observe that
    \begin{align*}
        \left\vert\inner{\mathcal{L}_{f-g}f}{f-g}\right\vert &\leq \norm{\mathcal{L}_{f-g}f}\norm{f-g}\\
        &\leq \norm{f-g}_1\norm{f}_2\norm{f-g}\\
        &\leq c(\varepsilon)\norm{f}_2^2 \norm{f-g}^2  + \varepsilon \norm{f-g}_1^2.
    \end{align*}
    In fact the second term is null due to Corollary \ref{cancellationproperty}, which concludes the proof.
\end{proof}

\begin{proof}[Assumption \ref{uniqueness for H valued}:]
    Continuing to consider the distinct terms, we have that
    $$-2\nu \inner{A(f-g)}{f-g} = -2\nu\norm{f-g}_1^2$$ and
    \begin{align*}\inner{\mathcal{P}B_i^2(f-g)}{f-g} + \norm{\mathcal{P}B_i(f-g)}^2 & \leq \inner{B_i^2(f-g)}{f-g} + \norm{B_i(f-g)}^2\\&\leq  c\norm{\xi_i}_{W^{2,\infty}}^2\norm{f-g}^2.\end{align*}
    from (\ref{combinedterminenergyinequality}). With these components in place, the proof of (\ref{therealcauchy1*}) then follows identically to that of (\ref{therealcauchy1}). (\ref{therealcauchy2*}) is a direct consequence of (\ref{finalboundinderivativeproof}), concluding the justification.
\end{proof}

 \begin{proof}[Assumption \ref{assumption for probability in H}:]
    This stronger Assumption was in fact already verified in the address of Assumption \ref{assumption for prob in V}.
\end{proof}

 \newpage 

\section{Analysis of the Vorticity Equation} \label{section vorticity form}

In order to address the well-posedness problem of the SALT Navier-Stokes Equation on the bounded domain, we now pose it in vorticity form. The analysis conducted in Subsection \ref{subsection velocity existence} was done with reference to the properties derived across Subsections \ref{functional framework subsection} and \ref{subsection the salt operator}, applicable to the bounded domain as well as the torus. The issue in studying the velocity form is that our operators do not map into the correct spaces in order to use these properties: in particular, the Leray Projector does not preserve the zero trace property and so the operators do not map into the necessary $W^{k,2}_{\sigma}(\mathscr{O};\R^N)$ spaces. The motivation behind the vorticity form is to circumvent the necessity of Leray Projection.\\

Throughout this section we shall specifically work in the case $N=3$, understanding that the results similarly hold for $N=2$. We do this to work with an explicit form of the curl as seen below. Similarly our attentions shall be decidedly on the bounded domain $\mathscr{O}$, though the results carry over seamlessly to the torus $\T$. For this section we impose new constraints on the $\xi_i$, which are such that for each $i \in \N$, $\xi_i \in W^{1,2}_{\sigma}(\mathscr{O};\R^3) \cap W^{2,2}_{0}(\mathscr{O};\R^3) \cap W^{3,\infty}(\mathscr{O};\R^3)$ and they collectively satisfy
\begin{equation} \label{new xi i bound}
    \sum_{i=1}^\infty \norm{\xi_i}_{W^{3,\infty}}^2 < \infty.
\end{equation}

\subsection{Deriving the Equation} \label{subsection deriving the equation}

The vorticity form of the equation is derived through taking the curl of the velocity form, where the curl operator is defined for $f \in W^{1,2}(\mathscr{O};\R^3)$ by $$\textnormal{curl}f :=
\begin{pmatrix}
\partial_2 f^3 - \partial_3 f^2\\
\partial_3f^1 - \partial_1f^3\\
\partial_1f^2 - \partial_2f^1
\end{pmatrix}.
$$
We introduce the operator $\mathscr{L}$ defined on sufficiently regular functions $f,g: \mathscr{O} \rightarrow \R^3$ by  \begin{equation} \label{new vorticity operator}
    \mathscr{L}_fg := \mathcal{L}_fg - \mathcal{L}_gf.
\end{equation}
In [\cite{Robinson}] Subesction 12.1 it is shown that, with notation $\phi:= \textnormal{curl}f$, $$\textnormal{curl}\left(\mathcal{L}_ff\right) = \mathscr{L}_f \phi$$ where it is also observed that the curl of elements of $W^{1,2}(\mathscr{O};\R^3) \cap L^{2,\perp}_{\sigma}(\mathscr{O};\R^3)$ is null. It is clear that the Laplacian commutes with the curl operation, and so in looking to take the curl of equation (\ref{number2equation}) it only remains to consider the SALT Operator $B$.

\begin{proposition}
   We have that \begin{equation} \label{vorticity prop}
        \textnormal{curl}(B_if) = \mathscr{L}_{\xi_i}\phi
    \end{equation}
    where once more $\phi := \textnormal{curl}f.$
\end{proposition}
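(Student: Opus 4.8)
The plan is to recast $B_if$ in a form to which the curl applies transparently, in direct analogy with the passage $\textnormal{curl}(\mathcal{L}_ff)=\mathscr{L}_f\phi$ recalled above from [\cite{Robinson}] Subsection 12.1. The key preliminary step is to establish, for sufficiently regular $f$ and with $\phi:=\textnormal{curl}f$, the pointwise identity
\[
    B_if \;=\; \nabla(\xi_i\cdot f)\;-\;\xi_i\times\phi .
\]
Since $\mathcal{L}_{\xi_i}f=(\xi_i\cdot\nabla)f$ in the notation of the excerpt, this reduces to showing that $\mathcal{T}_{\xi_i}f=(f\cdot\nabla)\xi_i+f\times\textnormal{curl}\,\xi_i$, which is a short componentwise check against the definition $\mathcal{T}_gf=\sum_{j}f^j\nabla g^j$, combined with the standard vector calculus identity $\nabla(a\cdot b)=(a\cdot\nabla)b+(b\cdot\nabla)a+a\times(\nabla\times b)+b\times(\nabla\times a)$ with $a=\xi_i$, $b=f$. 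Conceptually, the displayed identity is just the statement that $B_i$ acts on $f$ as the Lie derivative along $\xi_i$ of the one-form metrically dual to $f$, which is the defining structural property of the SALT operator; a reader who prefers differential forms can read the whole proposition off this observation.

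With the decomposition in hand, I would apply $\textnormal{curl}$ to both sides. The term $\textnormal{curl}\,\nabla(\xi_i\cdot f)$ vanishes — exactly as the pressure gradient dropped out when curling (\ref{number2equation}) — leaving $\textnormal{curl}(B_if)=-\textnormal{curl}(\xi_i\times\phi)$. Expanding with $\textnormal{curl}(a\times b)=a(\nabla\cdot b)-b(\nabla\cdot a)+(b\cdot\nabla)a-(a\cdot\nabla)b$ and using that $\nabla\cdot\phi=\nabla\cdot\textnormal{curl}f=0$ automatically, while $\nabla\cdot\xi_i=0$ because $\xi_i\in W^{1,2}_{\sigma}(\mathscr{O};\R^3)$, the first two terms disappear and one is left with
\[
    \textnormal{curl}(\xi_i\times\phi)=(\phi\cdot\nabla)\xi_i-(\xi_i\cdot\nabla)\phi=\mathcal{L}_\phi\xi_i-\mathcal{L}_{\xi_i}\phi=-\mathscr{L}_{\xi_i}\phi ,
\]
recalling (\ref{new vorticity operator}). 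Hence $\textnormal{curl}(B_if)=\mathscr{L}_{\xi_i}\phi$, which is (\ref{vorticity prop}).

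I do not expect a genuine obstacle here; the entire content is the cross-product/gradient decomposition of $B_i$, after which the argument runs on the same two classical identities already used for the nonlinear term, and the two divergence-free facts are invoked at a single point. The only thing requiring a little care is regularity bookkeeping: the vector-calculus manipulations are legitimate for smooth $f$, and the identity then propagates to the relevant Sobolev classes by the continuity of $\textnormal{curl}$ together with the boundedness estimates already recorded for $B_i$ in Corollary \ref{corollary for B_i adjoint} and for the constituent operators $\mathcal{L}_{\xi_i}$, $\mathcal{T}_{\xi_i}$ and $\mathscr{L}_{\xi_i}$.
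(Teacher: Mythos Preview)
Your proof is correct but takes a genuinely different route from the paper. The paper proceeds by direct componentwise expansion: it writes out $[\textnormal{curl}(B_if)]^1=\partial_2[B_if]^3-\partial_3[B_if]^2$ explicitly, isolates the $[\mathcal{L}_{\xi_i}\phi]^1$ piece, and then grinds through the remaining twelve terms by hand to show they collapse to $-[\mathcal{L}_\phi\xi_i]^1$, with the divergence-free condition on $\xi_i$ entering only at the very last line. Your argument instead compresses everything into the structural identity $B_if=\nabla(\xi_i\cdot f)-\xi_i\times\phi$ and then dispatches the curl with two standard vector-calculus formulae. Your route is shorter and more conceptual --- the Lie-derivative interpretation of $B_i$ makes the result almost inevitable, and the parallel with the nonlinear term $\textnormal{curl}(\mathcal{L}_ff)=\mathscr{L}_f\phi$ is visible rather than coincidental --- whereas the paper's brute-force computation is entirely self-contained and asks nothing of the reader beyond patience. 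Both proofs invoke $\nabla\cdot\xi_i=0$ at exactly one point.
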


\begin{proof}
    We shall show only that the identity (\ref{vorticity prop}) holds in its first component, with the others following similarly. To this end we calculate the first component of the left hand side of (\ref{vorticity prop}):
    \begin{align*}
        [\textnormal{curl}(B_if)]^1 &= \partial_2[B_if]^3 - \partial_3[B_if]^2 \\
        &= \partial_2\left(\sum_{j=1}^3\xi_i^j\partial_jf^3 + f^j\partial_3\xi_i^j\right) - \partial_3\left(\sum_{j=1}^3\xi_i^j\partial_jf^2 + f^j\partial_2\xi_i^j\right)\\
        &= \sum_{j=1}^3\left(\partial_2\xi_i^j\partial_jf^3 + \xi_i^j\partial_2\partial_jf^3 + \partial_2f^j\partial_3\xi_i^j  + f^j\partial_2\partial_3\xi_i^j\right)\\ & \qquad \qquad - \sum_{j=1}^3\left( \partial_3\xi_i^j\partial_jf^2 + \xi_i^j\partial_3\partial_jf^2 + \partial_3f^j\partial_2\xi_i^j  + f^j\partial_3\partial_2\xi_i^j\right)\\
        &= \sum_{j=1}^3\left(\partial_2\xi_i^j\partial_jf^3 + \xi_i^j\partial_2\partial_jf^3 + \partial_2f^j\partial_3\xi_i^j - \partial_3\xi_i^j\partial_jf^2 - \xi_i^j\partial_3\partial_jf^2 - \partial_3f^j\partial_2\xi_i^j  \right)\\
        &= \sum_{j=1}^3\xi_i^j\partial_j(\partial_2f^3 - \partial_3f^2) + \sum_{j=1}^3\left(\partial_2\xi_i^j\partial_jf^3 + \partial_2f^j\partial_3\xi_i^j - \partial_3\xi_i^j\partial_jf^2 - \partial_3f^j\partial_2\xi_i^j  \right)\\
        &= [\mathcal{L}_{\xi_i}\phi]^1 + \sum_{j=1}^3\left(\partial_2\xi_i^j\partial_jf^3 + \partial_2f^j\partial_3\xi_i^j - \partial_3\xi_i^j\partial_jf^2 - \partial_3f^j\partial_2\xi_i^j  \right).
    \end{align*}
    Therefore it remains to show that \begin{equation} \label{vort id to show}
        \sum_{j=1}^3\left(\partial_2\xi_i^j\partial_jf^3 + \partial_2f^j\partial_3\xi_i^j - \partial_3\xi_i^j\partial_jf^2 - \partial_3f^j\partial_2\xi_i^j  \right) = -[\mathcal{L}_{\phi}\xi_i]^1.
    \end{equation}
    We expand the sum in (\ref{vort id to show}) to 
    \begin{align*}
        &\left(\partial_2\xi_i^1\partial_1f^3 + \partial_2f^1\partial_3\xi_i^1 - \partial_3\xi_i^1\partial_1f^2 - \partial_3f^1\partial_2\xi_i^1\right) + \left(\partial_2\xi_i^2\partial_2f^3 + \partial_2f^2\partial_3\xi_i^2 - \partial_3\xi_i^2\partial_2f^2 - \partial_3f^2\partial_2\xi_i^2 \right)\\ &+ \left(\partial_2\xi_i^3\partial_3f^3 + \partial_2f^3\partial_3\xi_i^3 - \partial_3\xi_i^3\partial_3f^2 - \partial_3f^3\partial_2\xi_i^3 \right)
    \end{align*}
    achieving some immediate cancellation in the second two brackets to
        \begin{align*}
        \left(\partial_2\xi_i^1\partial_1f^3 + \partial_2f^1\partial_3\xi_i^1 - \partial_3\xi_i^1\partial_1f^2 - \partial_3f^1\partial_2\xi_i^1\right) + \left(\partial_2\xi_i^2\partial_2f^3 - \partial_3f^2\partial_2\xi_i^2 \right)+ \left( \partial_2f^3\partial_3\xi_i^3 - \partial_3\xi_i^3\partial_3f^2 \right).
    \end{align*}
    We now simply rewrite the above by combining like terms, into
    \begin{equation} \nonumber
        \partial_2\xi_i^1(\partial_1f^3 - \partial_3f^1) + \partial_3\xi_i^1(\partial_2f^1 - \partial_1f^2) + (\partial_2\xi_i^2 + \partial_3\xi_i^3)(\partial_2f^3 - \partial_3f^2)
    \end{equation}
    or more succinctly as
    $$-\partial_2\xi_i^1\phi^2 - \partial_3\xi_i^1\phi^3 + (\partial_2\xi_i^2 + \partial_3\xi_i^3)\phi^1$$
    to which we add and subtract $\partial_1\xi_i^1\phi^1$ to arrive at $$ -\sum_{j=1}^3 \phi^j\partial_j\xi_i^1 +\sum_{j=1}^3\left(\partial_j\xi_i^j \right)\phi^1.$$
    The first term is precisely $-[\mathcal{L}_{\phi}\xi_i]^1$ as we wished to show, appreciating that the second term is zero given the divergence free condition on $\xi_i$. 
    
\end{proof}
From this point forwards we adopt familiar notation of $\mathscr{L}_i := \mathscr{L}_{\xi_i}$. Writing the Stratonovich integral of (\ref{number2equation}) in its component form over the basis vectors of $\mathfrak{U}$, and introducing the notation $w:= \textnormal{curl}u$, at a heuristic level we can take the curl of (\ref{number2equation}) to obtain \begin{equation}\label{vorticity strat} w_t - w_0 + \int_0^t\mathscr{L}_{u_s}w_s\,ds - \nu\int_0^t \Delta w_s\, ds + \sum_{i=1}^\infty \int_0^t \mathscr{L}_iw \circ dW^i_s = 0.\end{equation}
Having already braved the rigorous It\^{o} conversion of (\ref{projected strato}) we shall make the conversion without explicit reference to the conditions of Appendix III (\ref{appendix iii}), for reasons elucidated at the start of Subsection \ref{subsection vorticity existence}. So at least again then at the heuristic level, the It\^{o} form is 
\begin{equation}\nonumber w_t = w_0 - \int_0^t\mathscr{L}_{u_s}w_s\,ds + \nu\int_0^t \Delta w_s\, ds + \frac{1}{2}\int_0^t\sum_{i=1}^\infty \mathscr{L}_i^2w_sds- \sum_{i=1}^\infty \int_0^t \mathscr{L}_iw \, dW^i_s\end{equation}
which can again be projected to the equation
\begin{equation}\label{vorticity ito} w_t = w_0 - \int_0^t\mathcal{P}\mathscr{L}_{u_s}w_s\,ds - \nu\int_0^t A w_s\, ds + \frac{1}{2}\int_0^t\sum_{i=1}^\infty\mathcal{P} \mathscr{L}_i^2w_sds- \sum_{i=1}^\infty \int_0^t \mathcal{P}\mathscr{L}_iw \, dW^i_s.\end{equation}
Having motivated this section by an avoidance of the Leray Projection this may seem counter intuitive, however we shall shortly show that the projection is not felt in the noise (where it becomes problematic in velocity form). The goal is to deduce the existence of a unique maximal solution of (\ref{vorticity ito}), a task which requires some clarification. Having reached (\ref{vorticity ito}) from the velocity form, we now look to solve the equation for vorticity which demands a representation of the velocity $u$ in terms of the vorticity $w$. For this we quote Theorem 1 of [\cite{Biot}] (or in fact, a slightly relaxed version).

\begin{theorem} \label{Biot theorem}
There exists a mapping $K: \mathscr{O} \times \mathscr{O} \rightarrow \R^3$ such that for every $\phi \in W^{1,2}_{\sigma}(\mathscr{O};\R^3) \cap W^{k,p}(\mathscr{O};\R^3)$ where $k \in \N \cup \{0\}$, $1 < p <\infty$, the function $f:\mathscr{O} \rightarrow \R^3$ defined $\lambda-a.e.$ by \begin{equation}\label{defining the function}f(x) = \int_{\mathscr{O}}K(x,y)\phi(y)dy\end{equation} is such that:
\begin{enumerate}
    \item $f \in L^{2}_{\sigma}(\mathscr{O};\R^3) \cap W^{k+1,p}(\mathscr{O};\R^3)$;
    \item $\textnormal{curl}f = \phi$;
    \item \label{regularity biot} There exists a constant $C$ independent of $\phi$ (but dependent on $k,p$) such that $$\norm{f}_{W^{k+1,p}} \leq C\norm{\phi}_{W^{k,p}}.$$
\end{enumerate}
\end{theorem}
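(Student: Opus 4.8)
The statement is a Biot--Savart law on the bounded domain $\mathscr{O}$, and since it is precisely Theorem~1 of [\cite{Biot}] one may simply invoke that result; but let me indicate the construction one would carry out. The plan is to realise $f$ as the curl of a vector potential. Given $\phi$, solve the componentwise Poisson problem $-\Delta\Psi=\phi$ on $\mathscr{O}$ with a suitable homogeneous boundary condition, and set $f:=\textnormal{curl}\,\Psi$. The kernel is then read off as $K(x,y):=\textnormal{curl}_x G(x,y)$, where $G$ is the Green's tensor of the Laplacian on $\mathscr{O}$ for that boundary condition (i.e.\ the bounded-domain replacement of the Newtonian potential in the classical $\R^3$ Biot--Savart formula); differentiating under the integral in $\Psi(x)=\int_{\mathscr{O}}G(x,y)\phi(y)\,dy$ produces the representation (\ref{defining the function}).

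For properties 1 and 2 I would use the identity $\textnormal{curl}\,\textnormal{curl}\,\Psi=-\Delta\Psi+\nabla\,\textnormal{div}\,\Psi$, so that $\textnormal{curl}\,f=\phi+\nabla\,\textnormal{div}\,\Psi$. The boundary condition on $\Psi$ is chosen precisely so that $q:=\textnormal{div}\,\Psi$ solves a homogeneous problem for $\Delta$, and since $\textnormal{div}\,\phi=0$ (recall that $\phi\in W^{1,2}_\sigma(\mathscr{O};\R^3)$ is divergence-free by Lemma~\ref{W12sigmacharacter}) this forces $q\equiv 0$, hence $\textnormal{curl}\,f=\phi$. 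The same choice of boundary data yields $f\cdot n=0$ on $\partial\mathscr{O}$, and combined with $\textnormal{div}\,f=0$ this places $f$ in $L^2_\sigma(\mathscr{O};\R^3)$ via the weak characterisation of divergence-free fields referenced in the proof of Lemma~\ref{W12sigmacharacter} (cf.\ [\cite{Robinson}]). Boundary and interior elliptic regularity for $G$ give $\Psi\in W^{k+2,p}(\mathscr{O};\R^3)$, hence $f=\textnormal{curl}\,\Psi\in W^{k+1,p}(\mathscr{O};\R^3)$.

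For property 3 the estimate is the Calder\'on--Zygmund / Agmon--Douglis--Nirenberg $L^p$ regularity estimate for the Laplacian on the smooth bounded domain $\mathscr{O}$, namely $\norm{\Psi}_{W^{k+2,p}}\leq C\norm{\phi}_{W^{k,p}}$ with $C$ depending only on $k$, $p$ and $\mathscr{O}$; applying $\textnormal{curl}$, which costs one derivative, gives $\norm{f}_{W^{k+1,p}}\leq C\norm{\phi}_{W^{k,p}}$. Independence of $C$ from $\phi$ is automatic since $\phi\mapsto f$ is linear.

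The delicate point — and the reason the cited paper is needed rather than a one-line argument — is the boundary-condition bookkeeping for the potential $\Psi$: one must pick boundary data so that \emph{simultaneously} $\textnormal{div}\,\Psi$ is killed (to remove the spurious gradient in $\textnormal{curl}\,\textnormal{curl}\,\Psi$), the reconstructed field satisfies $f\cdot n=0$, and the $W^{k+2,p}$ elliptic estimate still holds with a uniform constant; on domains that are not simply connected this additionally requires correcting $f$ by a harmonic vector field to keep $\textnormal{curl}\,f=\phi$. We do not reproduce this and instead take Theorem~\ref{Biot theorem} directly from [\cite{Biot}].
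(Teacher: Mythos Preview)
Your proposal is correct and takes the same approach as the paper: the paper does not prove this theorem at all but simply quotes it as ``Theorem 1 of [\cite{Biot}] (or in fact, a slightly relaxed version)'', and you do the same, ultimately deferring to that reference. Your additional expository sketch of the vector-potential construction and the boundary-condition subtleties goes beyond what the paper provides, but the substance of the argument --- invoke [\cite{Biot}] --- is identical.
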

It should immediately be noted that such a $K$ is not claimed to be unique, and in [\cite{Biot}] is explicitly shown to be non-unique, however it does allow us to identify \textit{a} velocity from a given vorticity satisfying the divergence-free and boundary conditions (\ref{boundary condition}). From this point forwards we fix a specific $K$ from the class of admissable integral kernels postulatied in Theorem \ref{Biot theorem}. We thus understand the nonlinear term as a mapping $$\phi \mapsto \mathscr{L}_f \phi$$ where $f$ is defined as in (\ref{defining the function}). This mapping shall at times be simply denoted by $\mathscr{L}$. The equation (\ref{vorticity ito}) is thus closed in $w$.

\subsection{Existence, Uniqueness and Maximality} \label{subsection vorticity existence}

We now state and prove the existence, uniqueness and maximality results for (\ref{vorticity ito}). We recall that to solve the velocity form (\ref{projected Ito}) we used the extended criterion of Appendix V (\ref{appendix v}), requiring the space $V:=W^{3,2}_{\sigma}(\T;\R^N)$ to prove Theorem \ref{theorem2NS}. This arose naturally in first showing Theorem \ref{existence for NS Strat}, where we considered solutions explicitly in terms of the original Stratonovich form. For (\ref{vorticity ito}), however, solutions can be obtained for the natural choice of $w_0 \in W^{1,2}_{\sigma}(\mathscr{O};\R^3)$ (so the equation satisfies its identity in $L^2(\mathscr{O};\R^3)$) with an application only of Theorem \ref{theorem1} in Appendix IV (\ref{appendix iv}). We thus do not take the detour of considering a fourth Hilbert Space to rigorously define solutions of the Stratonovich form (\ref{vorticity strat}), although this can be done similarly.

\begin{definition} \label{definitionofirregularsolutionNSw}
A pair $(w,\tau)$ where $\tau$ is a $\mathbb{P}-a.s.$ positive stopping time and $w$ is a process such that for $\mathbb{P}-a.e.$ $\omega$, $w_{\cdot}(\omega) \in C\left([0,T];W^{1,2}_{\sigma}\left(\mathscr{O};\R^3\right)\right)$ and $w_{\cdot}(\omega)\mathbbm{1}_{\cdot \leq \tau(\omega)} \in L^2\left([0,T];W^{2,2}_{\sigma}\left(\mathscr{O};\R^3\right)\right)$ for all $T>0$ with $w_{\cdot}\mathbbm{1}_{\cdot \leq \tau}$ progressively measurable in $W^{2,2}_{\sigma}\left(\mathscr{O};\R^3\right)$, is said to be a local strong solution of the equation (\ref{vorticity ito}) if the identity
\begin{equation} \nonumber
     w_t = w_0 - \int_0^{t\wedge \tau}\mathcal{P}\mathscr{L}_{u_s}w_s\,ds - \nu \int_0^{t \wedge \tau} A w_s\, ds + \frac{1}{2}\int_0^{t \wedge \tau}\sum_{i=1}^\infty\mathcal{P}\mathscr{L}^2_iw_s\,ds - \sum_{i=1}^\infty\int_0^{t\wedge \tau}\mathcal{P}\mathscr{L}_iw_s \, dW^i_s 
\end{equation}
holds $\mathbb{P}-a.s.$ in $L^2_{\sigma}\left(\mathscr{O};\R^N\right)$ for all $t \geq 0$.
\end{definition}

\begin{definition} \label{H valued maximal definition NSw}
A pair $(w,\Theta)$ such that there exists a sequence of stopping times $(\theta_j)$ which are $\mathbb{P}-a.s.$ monotone increasing and convergent to $\Theta$, whereby $(w_{\cdot \wedge \theta_j},\theta_j)$ is a local strong solution of the equation (\ref{vorticity ito}) for each $j$, is said to be a maximal strong solution of the equation (\ref{vorticity ito}) if for any other pair $(\eta,\Gamma)$ with this property then $\Theta \leq \Gamma$ $\mathbb{P}-a.s.$ implies $\Theta = \Gamma$ $\mathbb{P}-a.s.$.
\end{definition}

\begin{definition} \label{definition unique NSw}
A maximal strong solution $(w,\Theta)$ of the equation (\ref{vorticity ito}) is said to be unique if for any other such solution $(\eta,\Gamma)$, then $\Theta = \Gamma$ $\mathbb{P}-a.s.$ and for all $t \in [0,\Theta)$, \begin{equation} \nonumber\mathbb{P}\left(\left\{\omega \in \Omega: w_{t}(\omega) =  \eta_{t}(\omega)  \right\} \right) = 1. \end{equation}
\end{definition}

\begin{theorem} \label{theorem2NSw}
For any given $\mathcal{F}_0-$ measurable $w_0:\Omega \rightarrow W^{1,2}_{\sigma}(\mathscr{O};\R^N)$, there exists a unique maximal strong solution $(w,\Theta)$ of the equation (\ref{vorticity ito}). Moreover at $\mathbb{P}-a.e.$ $\omega$ for which $\Theta(\omega)<\infty$, we have that \begin{equation}\nonumber \sup_{r \in [0,\Theta(\omega))}\norm{w_r(\omega)}_{1}^2 + \int_0^{\Theta(\omega)}\norm{w_r(\omega)}_2^2dr = \infty.\end{equation}
\end{theorem}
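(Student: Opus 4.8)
The plan is to place equation (\ref{vorticity ito}) within the abstract framework of Appendix \ref{appendix iv} and conclude by an application of Theorem \ref{theorem1}. I would take
$$V := W^{2,2}_{\sigma}(\mathscr{O};\R^3), \qquad H := W^{1,2}_{\sigma}(\mathscr{O};\R^3), \qquad U := L^2_{\sigma}(\mathscr{O};\R^3),$$
equipped respectively with the inner products $\inner{\cdot}{\cdot}_2$, $\inner{\cdot}{\cdot}_1$ (Propositions \ref{A1}, \ref{A2}) and the standard one of $L^2$, and the bilinear form $\inner{f}{g}_{U \times V} := \inner{f}{Ag}$, which by Proposition \ref{prop for moving stokes} restricts to $\inner{\cdot}{\cdot}_1$ on $H \times V$ and so fulfils the compatibility requirement (\ref{bilinear formog}). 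The operators would be
$$\mathcal{A} := -\mathcal{P}\mathscr{L} - \nu A + \frac{1}{2}\sum_{i=1}^\infty \mathcal{P}\mathscr{L}_i^2, \qquad \mathcal{G}_i := -\mathcal{P}\mathscr{L}_i,$$
with $\mathscr{L}$ the closed nonlinearity $w \mapsto \mathscr{L}_{Kw}w$ of Subsection \ref{subsection deriving the equation}, $Kw$ denoting the Biot-Savart velocity (\ref{defining the function}). Since $H$ carries $\norm{\cdot}_1$ and $V$ carries $\norm{\cdot}_2$, the output of Theorem \ref{theorem1} --- a unique maximal strong solution $(w,\Theta)$ together with the exclusion of $\sup_{[0,\Theta)}\norm{w}_1^2 + \int_0^\Theta\norm{w}_2^2 < \infty$ on $\{\Theta<\infty\}$ --- is precisely the present assertion.

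The first task would be to transcribe the operator estimates of Subsections \ref{functional framework subsection} and \ref{subsection the salt operator} to the vorticity setting. For the nonlinearity the decisive feature is that the Biot-Savart map gains one derivative (the regularity estimate of Theorem \ref{Biot theorem}); combining this with Lemma \ref{basic nonlinear bound} and Proposition \ref{inequalityforcauchynonlinear} gives, for $w \in W^{2,2}_{\sigma}(\mathscr{O};\R^3)$, bounds of the schematic form $\norm{\mathscr{L}_{Kw}w} \leq c\norm{w}_1^2$ and $\norm{\mathscr{L}_{Kw}w}_{W^{1,2}} \leq c\norm{w}_1\norm{w}_2$, together with their continuity and local-Lipschitz counterparts. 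For the noise one records that $\mathscr{L}_i = \mathcal{L}_{\xi_i} - \mathcal{L}_{(\cdot)}\xi_i$ shares the ``transport plus bounded zeroth-order'' structure of $B_i = \mathcal{L}_{\xi_i} + \mathcal{T}_{\xi_i}$, so the analogues of Corollary \ref{corollary for B_i adjoint}, Proposition \ref{prop for conservation B_i} and Proposition \ref{boundoncommutator} go through: $\mathscr{L}_i + \mathscr{L}_i^*$ extends to a bounded operator on $L^2(\mathscr{O};\R^3)$ and on $W^{1,2}(\mathscr{O};\R^3)$, the corresponding energy identities hold, and $\norm{[\Delta,\mathscr{L}_i]\phi} \leq c\norm{\xi_i}_{W^{3,\infty}}\norm{\phi}_{W^{2,2}}$, all with constants summable against (\ref{new xi i bound}). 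Crucially one proves the analogue of Lemma \ref{PBiequalsPBiP}: since $\xi_i \in W^{2,2}_0(\mathscr{O};\R^3)$ vanishes on $\partial\mathscr{O}$ and the unknown takes values in the zero-trace class $W^{1,2}_{\sigma}(\mathscr{O};\R^3)$, each summand of $\mathscr{L}_i w = (\xi_i\cdot\nabla)w - (w\cdot\nabla)\xi_i$ carries a factor vanishing on $\partial\mathscr{O}$, so that $\mathscr{L}_i$ maps $W^{k,2}_{\sigma}(\mathscr{O};\R^3)$ into $W^{k-1,2}_{\sigma}(\mathscr{O};\R^3)$ for $k=1,2$ (using also that $\mathscr{L}_i w$ is divergence-free and Lemma \ref{W12sigmacharacter}); consequently $\mathcal{P}\mathscr{L}_i = \mathscr{L}_i$ on these spaces --- ``the projection is not felt in the noise'' --- and $(\mathcal{P}\mathscr{L}_i)^2 = \mathcal{P}\mathscr{L}_i^2$, which legitimises the It\^o form (\ref{vorticity ito}). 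This is the sole place the new hypothesis $\xi_i \in W^{2,2}_0$ is used, and it is the structural reason the bounded domain becomes accessible.

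With these in place I would verify the assumptions of Appendix \ref{appendix iv} by mirroring Subsection \ref{subsection velocity existence} essentially line by line, shifted down one derivative. Assumption \ref{assumption fin dim spaces} follows from the Stokes eigenbasis on $\mathscr{O}$ (Proposition \ref{basis of stokes}) together with Lemmas \ref{projectionsboundedlemma}, \ref{one over n projection lemma}; Assumption \ref{new assumption 1} from the continuity and local-Lipschitz bounds above. For the uniform energy estimate (Assumption \ref{assumptions for uniform bounds2}), carried out in $\inner{\cdot}{\cdot}_1$, the nonlinear contribution is controlled by
$$\left\lvert\inner{\mathcal{P}_n\mathcal{P}\mathscr{L}_{Kw_n}w_n}{w_n}_1\right\rvert = \left\lvert\inner{\mathscr{L}_{Kw_n}w_n}{Aw_n}\right\rvert \leq \norm{\mathscr{L}_{Kw_n}w_n}\norm{w_n}_2 \leq c\norm{w_n}_1^2\norm{w_n}_2 \leq c(\varepsilon)\norm{w_n}_1^4 + \varepsilon\norm{w_n}_2^2,$$
so the absence of a $W^{1,2}$ algebra property (the obstruction flagged in the remark after Lemma \ref{uniformdelanonlinear}) is circumvented precisely by the Biot-Savart gain, while the noise contribution is obtained by repeating the commutator manipulation of Lemma \ref{preliminary bound for cauchy} with $(\mathscr{L}_i,\mathscr{L}_i^*)$ in place of $(B_i,B_i^*)$ and with $-\nu A$ supplying the good $-\nu\norm{w_n}_2^2$ term. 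For the $L^2$-level Cauchy estimate (Assumption \ref{therealcauchy assumptions}), after the split $\mathscr{L}_{Kf}f - \mathscr{L}_{Kg}g = \mathscr{L}_{Kf}(f-g) + \mathscr{L}_{K(f-g)}g$ the transport part of the first term vanishes by the cancellation property (Corollary \ref{cancellationproperty}, which applies to the divergence-free, normally-vanishing field $Kf$), and every remaining term is estimated via Cauchy--Schwarz using the Biot-Savart bound $\norm{K\psi}_{W^{2,2}} \leq c\norm{\psi}_1$, the three-dimensional embeddings $W^{2,2}\hookrightarrow L^\infty$ and $W^{1,2}\hookrightarrow L^6$, interpolation and Young's inequality, producing the required bound of the form $c(\varepsilon)\left(1+\norm{f}_1^4+\norm{g}_1^4\right)\norm{f-g}^2 + \varepsilon\norm{f-g}_1^2$; the noise term again transcribes the velocity case. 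Assumptions \ref{assumption for prob in V} and \ref{finally the last assumption} follow exactly as in Subsection \ref{subsection velocity existence}, and Theorem \ref{theorem1} then yields the claim.

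The principal obstacle is that, unlike in velocity form, the vorticity nonlinearity $w \mapsto \mathscr{L}_{Kw}w$ is not an antisymmetric bilinear form in the unknown alone but couples $w$ to the strictly smoother Biot-Savart velocity $Kw$; the energy and --- above all --- the $L^2$ uniqueness estimates therefore hinge on balancing this regularity gain against three-dimensional Sobolev embeddings. A secondary point is to confirm that the Leray projection, reinstated so that the dissipative term reads as the Stokes operator, leaves the noise coefficients unchanged, which is exactly where the $W^{2,2}_0$ hypothesis on the $\xi_i$ is consumed.
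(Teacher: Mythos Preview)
Your proposal is correct and matches the paper's approach essentially line by line: the same spaces $V,H,U$ and bilinear form, the same transcription of the $B_i$ estimates to $\mathscr{L}_i$ via its ``transport plus bounded zeroth-order'' structure, the same exploitation of the Biot--Savart regularity gain to close the $\inner{\cdot}{\cdot}_1$ energy estimate for the nonlinearity, and the same identification of the $W^{2,2}_0$ hypothesis on $\xi_i$ as what forces $\mathcal{P}\mathscr{L}_i = \mathscr{L}_i$. The only point to tighten is your zero-trace argument for $\mathscr{L}_i\phi$: the paper makes your ``each summand carries a factor vanishing on $\partial\mathscr{O}$'' heuristic rigorous by approximating $\xi_i$ in $W^{2,2}$ by $\gamma_n \in C^\infty_0$ and invoking (\ref{third begin align}) to obtain $\mathscr{L}_{\gamma_n}\phi \rightarrow \mathscr{L}_i\phi$ in $W^{1,2}$ (and only needs this for $\phi \in W^{2,2}_\sigma$, not for $k=1$).
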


As discussed the idea is to apply Theorem \ref{theorem1}, which we look to do for the spaces $$V:= W^{2,2}_{\sigma}(\mathscr{O};\R^3), \qquad H:= W^{1,2}_{\sigma}(\mathscr{O};\R^3), \qquad U:= L^{2}_{\sigma}(\mathscr{O};\R^3).$$ The density relations are clear as $C^{\infty}_{0,\sigma}(\mathscr{O};\R^3) \subset W^{2,2}_{\sigma}(\mathscr{O};\R^3)$ is dense in both $W^{1,2}_{\sigma}(\mathscr{O};\R^3)$ and $L^{2}_{\sigma}(\mathscr{O};\R^3)$. The bilinear form (\ref{bilinear formog}) is simply again (\ref{given bilinear form}). Now we shift attentions to checking that the operators are measurable into the correct spaces. We note that $\mathscr{L}$ has improved regularity properties over $\mathcal{L}$ given item \ref{regularity biot} of Theorem \ref{Biot theorem}, so retains the continuity observed in Lemma \ref{continuityofnonlinear} with measurability following. There is no change to the Stokes Operator from Section \ref{section velocity}. As for $\mathcal{P}\mathscr{L}_i$, we in fact first show that for $\mathscr{L}_i \in C\left(W^{2,2}_{\sigma}(\mathscr{O};\R^3);W^{1,2}_{\sigma}(\mathscr{O};\R^3)\right)$ (and hence is invariant under $\mathcal{P}$). This consists of three parts: showing that it is continuous as a mapping into $W^{1,2}(\mathscr{O};\R^3)$, showing the divergence free property and then the zero trace property. In fact with the appropriate regularity, it follows identically to Corollary \ref{corollary for B_i adjoint} that we again have \begin{equation} \label{curly l bounded result}\norm{\mathscr{L}_i\phi}_{W^{k,2}}^2 \leq c\norm{\xi_i}_{W^{k+1,\infty}}^2\norm{\phi}_{W^{k+1,2}}^2\end{equation} which addresses the continuity. The fact that $\mathscr{L}_i \phi$ is divergence free comes immediately from the relation $\mathscr{L}_i \phi = \textnormal{curl}\left(B_if\right)$ and the well established fact the divergence of a curl is zero. For the zero trace property it is sufficient to show the existence of a sequence of compactly supported $\eta_n \in W^{1,2}(\mathscr{O};\R^3)$ which converge to $\mathscr{L}_i\phi$ in $W^{1,2}(\mathscr{O};\R^3)$. By definition of the property that $\xi_i \in W^{2,2}_0(\mathscr{O}\R^3)$ there is a sequence $(\gamma_n)$, $\gamma_n \in C^{\infty}_0(\mathscr{O};\R^3)$ such that $\gamma_n \rightarrow \xi_i$ in $W^{2,2}(\mathscr{O}\R^3)$. Evidently $\eta_n:=\mathscr{L}_{\gamma_n}\phi$ is again compactly supported, and observe that
\begin{align*}
    \norm{\mathscr{L}_{\gamma_n}\phi - \mathscr{L}_i\phi }_{W^{1,2}} = \norm{\mathscr{L}_{\gamma_n-\xi_i}\phi }_{W^{1,2}} \leq c\norm{\gamma_n-\xi_i}_{W^{2,2}}\norm{\phi}_{W^{2,2}}
\end{align*}
from (\ref{third begin align}), which converges to zero as required to justify the zero trace property. 

\begin{remark}
As $\mathcal{P}\mathscr{L}_i$ is equal to $\mathscr{L}_i$ on $W^{2,2}_{\sigma}(\mathscr{O};\R^3)$, then $\left(\mathcal{P}\mathscr{L}_i\right)^2$ is equal to $\mathcal{P}\mathscr{L}_i^2$ on this space too, justifying the consistency between taking the Leray Projector and then converting from Stratonovich to It\^{o} and vice versa as seen for (\ref{projected Ito}). 
\end{remark}

The fact that $\mathcal{P}\mathscr{L}_i^2 \in C\left(W^{2,2}_{\sigma}(\mathscr{O};\R^3); L^2_{\sigma}(\mathscr{O};\R^3)\right)$ again follows from the linearity, (\ref{curly l bounded result}) and Proposition \ref{continuityofP}. We now proceed to justify the assumptions of Appendix IV, \ref{appendix iv}.

\begin{proof}[Assumption \ref{assumption fin dim spaces}:]  We use the system $(a_k)$ of eigenfunctions of the Stokes Oeprator given in Proposition \ref{basis of stokes}, satisfying (\ref{projectionsboundedonH}) and (\ref{mu2}) from Lemmas \ref{projectionsboundedlemma} and \ref{one over n projection lemma} respectively.
\end{proof}

\begin{proof}[Assumption \ref{new assumption 1}:] Items (\ref{111}), (\ref{222}) follow identically to the justification of (\ref{wellposedinX}), (\ref{222*}) for (\ref{projected Ito}) given the increased regularity of $\mathscr{L}$ over $\mathcal{L}$ and the corresponding boundedness of the noise term (\ref{curly l bounded result}). With the linearity of $\mathscr{L}_i$ then (\ref{333}) follows trivially from (\ref{curly l bounded result}).
\end{proof}

In the following $c$ will represent a generic constant changing from line to line, $c(\varepsilon)$ will be a generic constant dependent on a fixed $\varepsilon$, $\phi$ and $\psi$ will be arbitrary elements of $W^{2,2}_{\sigma}(\mathscr{O};\R^3)$ and $\phi_n \in \textnormal{span}\{a_1, \cdots, a_n\}$. 
Towards Assumption \ref{assumptions for uniform bounds2}, we appreciate some further properties of $\mathscr{L}_i$ which were previously shown for $B_i$. Of course the mapping $\mathscr{L}_i$, given by $\phi \mapsto \mathcal{L}_i\phi - \mathcal{L}_{\phi}\xi_i$, is the sum of $\mathcal{L}_i$ and a bounded linear operator on any $W^{k,2}(\mathscr{O};\R^3)$ as was the case for $B_i$ (where the bounded linear operator was $\mathcal{T}_{\xi_i}$). Due to this structure, the results of Corollary \ref{corollary for B_i adjoint}, Proposition \ref{prop for conservation B_i} and Proposition \ref{boundoncommutator} continue to hold for $\mathscr{L}_i$. Deduced from this is the following lemma.

\begin{lemma} \label{preliminary bound for cauchy*}
    For any $\varepsilon > 0$ we have the bound $$\inner{\mathcal{P}_n\mathcal{P}\mathscr{L}_i^2\phi_n}{\phi_n}_1 + \inner{\mathcal{P}_n\mathcal{P}\mathscr{L}_i\phi_n}{\mathcal{P}_n\mathcal{P}\mathscr{L}_i\phi_n}_1 \leq  c(\varepsilon)\norm{\xi_i}_{W^{3,\infty}}^2\norm{\phi_n}_1^2 + \varepsilon \norm{\xi_i}_{W^{3,\infty}}^2\norm{\phi_n}_2^2.$$
\end{lemma}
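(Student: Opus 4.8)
The plan is to follow the proof of Lemma~\ref{preliminary bound for cauchy} almost verbatim, the point being that for the vorticity operator $\mathscr{L}_i$ the Leray projector is essentially inert on the space where the argument takes place.

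First I would discard the finite-dimensional projections. Since $\phi_n\in\textnormal{span}\{a_1,\dots,a_n\}$ and $\mathcal{P}_n$ is self-adjoint for $\inner{\cdot}{\cdot}_1$ (Lemma~\ref{projectionsboundedlemma}), while $(a_k)$ is an orthogonal basis of $W^{1,2}_{\sigma}(\mathscr{O};\R^3)$ in that inner product so that $\mathcal{P}_n$ is a $\norm{\cdot}_1$-contraction, one gets
$$\inner{\mathcal{P}_n\mathcal{P}\mathscr{L}_i^2\phi_n}{\phi_n}_1 + \norm{\mathcal{P}_n\mathcal{P}\mathscr{L}_i\phi_n}_1^2 \leq \inner{\mathcal{P}\mathscr{L}_i^2\phi_n}{\phi_n}_1 + \norm{\mathcal{P}\mathscr{L}_i\phi_n}_1^2.$$
Next I would note that $\phi_n$ is smooth and that $\mathscr{L}_i\phi_n$ and $\mathscr{L}_i^2\phi_n$ are curls, hence divergence-free, and --- by the zero-trace argument given just before the statement (which uses $\xi_i\in W^{2,2}_0(\mathscr{O};\R^3)$) together with (\ref{curly l bounded result}) for $W^{2,2}$-regularity --- lie in $W^{2,2}_{\sigma}(\mathscr{O};\R^3)$. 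Since $\mathcal{P}\mathscr{L}_i=\mathscr{L}_i$ on this space, the right-hand side above collapses to $\inner{\mathscr{L}_i^2\phi_n}{\phi_n}_1 + \norm{\mathscr{L}_i\phi_n}_1^2$, with every Leray projector gone.

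From here the computation is identical to the one inside Lemma~\ref{preliminary bound for cauchy} with $B_i$ replaced by $\mathscr{L}_i$: use Proposition~\ref{prop for moving stokes} to pass from $\inner{\cdot}{\cdot}_1$ to $\inner{\cdot}{A\,\cdot}$, write $A=-\mathcal{P}\Delta$, introduce the adjoint $\mathscr{L}_i^*$ (the analogue of Corollary~\ref{corollary for B_i adjoint}; note $\mathscr{L}_i=\mathcal{L}_{\xi_i}-\mathcal{L}_{(\cdot)}\xi_i$ and $\mathcal{L}_{\xi_i}^*=-\mathcal{L}_{\xi_i}$, so $\mathscr{L}_i+\mathscr{L}_i^*$ is a bounded order-zero operator), and use Proposition~\ref{boundoncommutator} for $\mathscr{L}_i$ to commute $\Delta$ past $\mathscr{L}_i$. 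The genuinely second-order contributions cancel, leaving an expression of the shape $\inner{\mathscr{L}_i\phi_n}{(\mathscr{L}_i+\mathscr{L}_i^*)A\phi_n-[\Delta,\mathscr{L}_i]\phi_n}$; Cauchy--Schwarz together with (\ref{curly l bounded result}), the order-zero bound on $\mathscr{L}_i+\mathscr{L}_i^*$ and Proposition~\ref{boundoncommutator} bound it by $c\norm{\xi_i}_{W^{3,\infty}}^2\norm{\phi_n}_1\norm{\phi_n}_2$, and Young's inequality delivers the stated estimate. Because all the structural inputs (adjoint formula, commutator bound, order-zero perturbation) are asserted to transfer verbatim from $B_i$ to $\mathscr{L}_i$, this last part can legitimately be presented as "identical to Lemma~\ref{preliminary bound for cauchy}".

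The main obstacle is the regularity/projector bookkeeping of the second step: one must be certain that $\mathscr{L}_i\phi_n\in W^{2,2}_{\sigma}(\mathscr{O};\R^3)$ --- divergence-free, zero-trace, and $W^{2,2}$-regular (the last needing $\xi_i\in W^{3,\infty}$) --- so that $\mathcal{P}\mathscr{L}_i\phi_n=\mathscr{L}_i\phi_n$ and so that Propositions~\ref{prop for moving stokes} and~\ref{boundoncommutator} apply to $\mathscr{L}_i\phi_n$ and $\mathscr{L}_i^2\phi_n$; and one must rely on the earlier claim that Corollary~\ref{corollary for B_i adjoint}, Proposition~\ref{prop for conservation B_i} and Proposition~\ref{boundoncommutator} hold for $\mathscr{L}_i$ with the stated constants. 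The commutator algebra itself is word-for-word the velocity computation and presents no difficulty.
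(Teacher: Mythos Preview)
Your proposal is correct and matches the paper's approach: the paper's proof is the single line ``This now follows precisely as for Lemma~\ref{preliminary bound for cauchy}'', relying on the sentence just before the lemma that Corollary~\ref{corollary for B_i adjoint}, Proposition~\ref{prop for conservation B_i} and Proposition~\ref{boundoncommutator} carry over verbatim to $\mathscr{L}_i$. Your write-up is simply a fleshed-out version of that one line, with the additional (correct) observation that because $\mathscr{L}_i$ already maps $W^{2,2}_{\sigma}$ into $W^{1,2}_{\sigma}$ one may drop the Leray projector at the outset rather than carrying it through and invoking Lemma~\ref{PBiequalsPBiP}; this is a mild simplification relative to the velocity computation but not a different argument.
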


\begin{proof}
    This now follows precisely as for Lemma \ref{preliminary bound for cauchy}.
\end{proof}

\begin{lemma} \label{kelele}
    For any $\varepsilon > 0$ we have the bound $$\left\vert \inner{\mathcal{P}_n\mathcal{P}\mathscr{L}_{f_n}\phi_n}{\phi_n}_1\right\vert \leq c(\varepsilon)\norm{\phi_n}_1^4 + \varepsilon\norm{\phi_n}_2^2.$$
\end{lemma}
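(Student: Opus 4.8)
The plan is to follow the template of Lemma \ref{uniformdelanonlinear}, but since here the working inner product is $\inner{\cdot}{\cdot}_1$ rather than $\inner{\cdot}{\cdot}_2$ I will first transfer the Stokes operator onto $\phi_n$ so that the boundary-condition-sensitive object $\mathscr{L}_{f_n}\phi_n$ is only ever handled in $L^2$. Concretely, $\mathcal{P}_n\mathcal{P}\mathscr{L}_{f_n}\phi_n$ is a finite linear combination of the $(a_k)$, hence lies in $D(A^{1/2})$, while $\phi_n \in D(A)$, so Proposition \ref{prop for moving stokes} (with $m=1$, $p=0$, $q=2$) gives
$$\inner{\mathcal{P}_n\mathcal{P}\mathscr{L}_{f_n}\phi_n}{\phi_n}_1 = \inner{\mathcal{P}_n\mathcal{P}\mathscr{L}_{f_n}\phi_n}{A\phi_n}.$$
Since $\phi_n \in \textnormal{span}\{a_1,\dots,a_n\}$ and each $a_k$ is an eigenfunction of $A$, we have $A\phi_n \in \textnormal{span}\{a_1,\dots,a_n\} \cap L^2_{\sigma}(\mathscr{O};\R^3)$; using that $\mathcal{P}_n$ and $\mathcal{P}$ are self-adjoint in $L^2$ and fix $A\phi_n$, the right-hand side collapses to $\inner{\mathscr{L}_{f_n}\phi_n}{A\phi_n}$. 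A Cauchy--Schwarz bound then leaves $\norm{\mathscr{L}_{f_n}\phi_n}\norm{A\phi_n} = \norm{\mathscr{L}_{f_n}\phi_n}\norm{\phi_n}_2$.

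It remains to estimate $\norm{\mathscr{L}_{f_n}\phi_n}$ in $L^2$. Writing $\mathscr{L}_{f_n}\phi_n = \mathcal{L}_{f_n}\phi_n - \mathcal{L}_{\phi_n}f_n$ and invoking item \ref{regularity biot} of Theorem \ref{Biot theorem} with $k=1$, $p=2$ to get $\norm{f_n}_{W^{2,2}} \le C\norm{\phi_n}_{W^{1,2}}$, I can apply the bound (\ref{first begin align}) of Lemma \ref{basic nonlinear bound} to both $\mathcal{L}_{f_n}\phi_n$ and $\mathcal{L}_{\phi_n}f_n$, taking the $W^{2,2}$-slot to be $f_n$ and the $W^{1,2}$-slot to be $\phi_n$, which yields $\norm{\mathscr{L}_{f_n}\phi_n} \le c\norm{\phi_n}_{W^{1,2}}\norm{f_n}_{W^{2,2}} \le c\norm{\phi_n}_{W^{1,2}}^2 \le c\norm{\phi_n}_1^2$, the last step by the norm equivalence on $W^{1,2}_{\sigma}(\mathscr{O};\R^3)$ from Proposition \ref{A2}. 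Combining with the previous paragraph and applying Young's inequality with conjugate exponents $2$ and $2$ gives $\left\vert\inner{\mathcal{P}_n\mathcal{P}\mathscr{L}_{f_n}\phi_n}{\phi_n}_1\right\vert \le c\norm{\phi_n}_1^2\norm{\phi_n}_2 \le c(\varepsilon)\norm{\phi_n}_1^4 + \varepsilon\norm{\phi_n}_2^2$, as claimed.

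The computation is essentially routine; the two points that need care are the following. First, the projection manipulations must be carried out in an order that never requires $\mathscr{L}_{f_n}\phi_n$ to satisfy any boundary condition — recall $\mathcal{P}$ does not preserve the zero-trace spaces, which is exactly why one should move $A$ onto the smooth object $\phi_n$ at the outset. Second, one must select the estimate for $\norm{\mathscr{L}_{f_n}\phi_n}$ that produces precisely the power $\norm{\phi_n}_1^4$: the crude product bound (\ref{first begin align}) together with the one-derivative gain supplied by the Biot--Savart estimate achieves this, whereas the interpolation bound (\ref{nonlinear eq2}) — which in any case is stated for $f \in W^{1,2}_{\sigma}$, a space $f_n$ need not belong to since it satisfies only $f_n \cdot n = 0$ — would instead yield $\norm{\phi_n}_1^6$.
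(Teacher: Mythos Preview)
Your proof is correct and follows essentially the same approach as the paper: move the Stokes operator onto $\phi_n$ via Proposition \ref{prop for moving stokes}, dispose of $\mathcal{P}_n\mathcal{P}$, apply Cauchy--Schwarz and Young, and bound $\norm{\mathscr{L}_{f_n}\phi_n}$ by $c\norm{\phi_n}_1^2$ using the Biot--Savart regularity gain. The only cosmetic difference is that the paper re-derives the $L^\infty\times L^2$ and $L^4\times L^4$ product estimates inline, whereas you invoke (\ref{first begin align}) of Lemma \ref{basic nonlinear bound} directly --- which is in fact the cleaner route, since that lemma already contains exactly these two H\"older/Sobolev computations.
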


\begin{proof}
    Writing $f_n$ corresponding to $\phi_n$ as in Theorem \ref{Biot theorem}, note that
    $$\left\vert \inner{\mathcal{P}_n\mathcal{P}\mathscr{L}_{f_n}\phi_n}{\phi_n}_1\right\vert = \left\vert \inner{\mathcal{P}_n\mathcal{P}\mathscr{L}_{f_n}\phi_n}{A \phi_n}\right\vert \leq c(\varepsilon)\norm{\mathscr{L}_{f_n}\phi_n}^2 + \varepsilon\norm{\phi_n}_2^2$$
    and 
    $$\norm{\mathscr{L}_{f_n}\phi_n}^2 \leq 2\left(\norm{\mathcal{L}_{f_n}\phi_n}^2 + \norm{\mathcal{L}_{\phi_n}f_n}^2\right)$$
    so we look to control these two terms. Indeed,
    \begin{align*}
        \norm{\mathcal{L}_{f_n}\phi_n}^2 &\leq c\sum_{j=1}^3\sum_{k=1}^3\norm{f_n^j\partial_j\phi_n^k}_{L^2(\mathscr{O};\R)}^2\\
        &\leq c\sum_{j=1}^3\sum_{k=1}^3\norm{f_n^j}_{L^\infty(\mathscr{O};\R)}^2\norm{\partial_j\phi_n^k}_{L^2(\mathscr{O};\R)}^2\\
        &\leq c\sum_{j=1}^3\sum_{k=1}^3\norm{f_n^j}_{W^{2,2}(\mathscr{O};\R)}^2\norm{\partial_j\phi_n^k}_{L^2(\mathscr{O};\R)}^2\\
        &\leq c\norm{f_n}_{W^{2,2}}^2\norm{\phi_n}_{W^{1,2}}^2\\
        &\leq c\norm{\phi_n}_{W^{1,2}}^4\\
        &\leq c\norm{\phi_n}_{1}^4
    \end{align*}
    using the Sobolev Embedding of $W^{2,2}(\mathscr{O};\R) \xhookrightarrow{} L^\infty(\mathscr{O};\R)$ and item (\ref{regularity biot}) of Theorem \ref{Biot theorem}. Likewise observe that 
    \begin{align*}
        \norm{\mathcal{L}_{\phi_n}f_n}^2 &\leq c\sum_{j=1}^3\sum_{k=1}^3\norm{\phi_n^j\partial_jf_n^k}_{L^2(\mathscr{O};\R)}^2\\
        &\leq c\sum_{j=1}^3\sum_{k=1}^3\norm{\phi_n^j}_{L^4(\mathscr{O};\R)}^2\norm{\partial_jf_n^k}_{L^4(\mathscr{O};\R)}^2\\
        &\leq c\sum_{j=1}^3\sum_{k=1}^3\norm{\phi_n^j}_{W^{1,2}(\mathscr{O};\R)}^2\norm{\partial_jf_n^k}_{W^{1,2}(\mathscr{O};\R)}^2\\
        &\leq c\norm{\phi_n}_{W^{1,2}}^2\norm{f_n}_{W^{2,2}}^2\\
        &\leq c\norm{\phi_n}_{W^{1,2}}^4\\
        &\leq c\norm{\phi_n}_{1}^4.
    \end{align*}
    Summing these terms completes the proof. 
\end{proof}

\begin{proof}[Assumption \ref{assumptions for uniform bounds2}:] (\ref{uniformboundsassumpt1}) now follows from Lemmas \ref{preliminary bound for cauchy*} and \ref{kelele} in the same manner as for the velocity equation. (\ref{uniformboundsassumpt2}) is shown exactly as (\ref{probability second}) was for the velocity equation, using that the $\mathcal{P}_n$ are orthogonal projections in $W^{1,2}_{\sigma}(\mathscr{O};\R^3)$.
\end{proof}

In the following $g$ is defined by $$g(x) = \int_{\mathscr{O}}K(x,y)\psi(y)dy $$ as in Theorem \ref{Biot theorem}. The subsequent lemma is in analogy with Lemma \ref{lemmalemma}. 

\begin{lemma} \label{lemmalemmalemma}
    For any $\varepsilon > 0$, we have that
    $$\left\vert \inner{\mathcal{P}\mathscr{L}_f\phi - \mathcal{P}\mathscr{L}_{g}\psi}{\phi - \psi} \right\vert \leq c(\varepsilon)\left[\norm{\phi}_1^2 +\norm{\psi}_1^2\right]\norm{\phi-\psi}^2 + \varepsilon \norm{\phi-\psi}^2_1$$
\end{lemma}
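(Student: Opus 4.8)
The plan is to follow the pattern of Lemma \ref{lemmalemma}, the new feature being that the two slots of $\mathscr{L}$ play asymmetric roles: via the Biot--Savart estimate of Theorem \ref{Biot theorem} the advecting field carries one more derivative than the transported one, and this gain is precisely what keeps the norm $\norm{\cdot}_2$ off the right-hand side. First I would note that $\phi-\psi\in L^2_\sigma(\mathscr{O};\R^3)$, so $\mathcal{P}$ is self-adjoint against it and drops out, leaving $\inner{\mathscr{L}_f\phi - \mathscr{L}_g\psi}{\phi-\psi}$. Since the kernel $K$ enters linearly, $f-g$ is the Biot--Savart lift of $\phi-\psi$, so Theorem \ref{Biot theorem} supplies $\norm{f}_{W^{2,2}}\le c\norm{\phi}_{W^{1,2}}$, $\norm{g}_{W^{2,2}}\le c\norm{\psi}_{W^{1,2}}$, $\norm{f-g}_{W^{1,2}}\le c\norm{\phi-\psi}$ and $\norm{f-g}_{W^{2,2}}\le c\norm{\phi-\psi}_{W^{1,2}}$, which we combine with the equivalence of $\norm{\cdot}_1$ and $\norm{\cdot}_{W^{1,2}}$ on $W^{1,2}_\sigma(\mathscr{O};\R^3)$.

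Next I would write $\mathscr{L}_f\phi - \mathscr{L}_g\psi = \mathscr{L}_f(\phi-\psi) + \mathscr{L}_{f-g}\psi$ and expand each term through $\mathscr{L}_a b = \mathcal{L}_a b - \mathcal{L}_b a$, so that the quantity to estimate splits into four inner products: $\inner{\mathcal{L}_f(\phi-\psi)}{\phi-\psi}$, $\inner{\mathcal{L}_{\phi-\psi}f}{\phi-\psi}$, $\inner{\mathcal{L}_{f-g}\psi}{\phi-\psi}$ and $\inner{\mathcal{L}_\psi(f-g)}{\phi-\psi}$. The first vanishes by the cancellation property: the identity $\inner{\mathcal{L}_h v}{v}=0$ of Corollary \ref{cancellationproperty} only uses that $h$ is divergence-free with vanishing normal trace, which holds for $h=f\in L^2_\sigma(\mathscr{O};\R^3)\cap W^{1,2}(\mathscr{O};\R^3)$; if one prefers to avoid this point it can instead be bounded crudely by $\norm{f}_{L^\infty}\norm{\phi-\psi}_{W^{1,2}}\norm{\phi-\psi}$, which is already of the claimed shape after Young's inequality.

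For the three remaining inner products I would use nothing beyond Hölder's inequality and the three-dimensional Sobolev embeddings $W^{1,2}(\mathscr{O};\R)\hookrightarrow L^4(\mathscr{O};\R)$ and $W^{2,2}(\mathscr{O};\R)\hookrightarrow W^{1,4}(\mathscr{O};\R)\hookrightarrow L^\infty(\mathscr{O};\R)$, distributing the integrability so that every factor carrying a derivative of $\phi-\psi$ occurs to the first power --- to be absorbed into $\varepsilon\norm{\phi-\psi}_1^2$ by Young --- while the derivative falling on $f$, $g$ or $f-g$ is paid for by Theorem \ref{Biot theorem}. For instance $\norm{\mathcal{L}_{\phi-\psi}f}\le c\norm{\phi-\psi}_{L^4}\norm{f}_{W^{1,4}}\le c\norm{\phi-\psi}_1\norm{\phi}_1$, and $\norm{\mathcal{L}_{f-g}\psi}\le c\norm{f-g}_{L^\infty}\norm{\psi}_{W^{1,2}}\le c\norm{\phi-\psi}_1\norm{\psi}_1$, with $\inner{\mathcal{L}_\psi(f-g)}{\phi-\psi}$ treated identically using $\norm{\mathcal{L}_\psi(f-g)}\le c\norm{\psi}_{W^{1,2}}\norm{f-g}_{W^{2,2}}\le c\norm{\psi}_1\norm{\phi-\psi}_1$. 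Pairing each $L^2$ bound with $\norm{\phi-\psi}$ through Cauchy--Schwarz and then invoking Young's inequality (sending the single surviving factor of $\norm{\phi-\psi}_1$ into the $\varepsilon$ term) produces a contribution of the form $c(\varepsilon)[\norm{\phi}_1^2+\norm{\psi}_1^2]\norm{\phi-\psi}^2 + \varepsilon\norm{\phi-\psi}_1^2$; summing the four estimates and relabelling $\varepsilon$ completes the proof. The only genuine difficulty is the bookkeeping in this last step: one must choose the Hölder triples so that $\norm{\phi-\psi}_2$ never surfaces and only the harmless $\norm{\phi-\psi}$ together with one power of $\norm{\phi-\psi}_1$ survives --- this is exactly the place where the one-derivative gain of the Biot--Savart operator is indispensable, and the reason the estimate closes in vorticity form but could not be run directly in velocity form.
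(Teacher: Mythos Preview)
Your proposal is correct and follows essentially the same route as the paper. The paper also drops $\mathcal{P}$, expands $\mathscr{L}$ into its two $\mathcal{L}$ pieces, splits the bilinear difference into four terms, kills one of them with the cancellation identity, and controls the remaining three by Cauchy--Schwarz together with (\ref{first begin align}) and the Biot--Savart gain of Theorem \ref{Biot theorem}, finishing with Young's inequality. The only cosmetic difference is the add-and-subtract: the paper writes $\mathcal{L}_{f-g}\phi + \mathcal{L}_g(\phi-\psi) - \mathcal{L}_{\phi-\psi}f - \mathcal{L}_\psi(f-g)$ and uses cancellation on $\inner{\mathcal{L}_g(\phi-\psi)}{\phi-\psi}$, whereas you write $\mathcal{L}_f(\phi-\psi) - \mathcal{L}_{\phi-\psi}f + \mathcal{L}_{f-g}\psi - \mathcal{L}_\psi(f-g)$ and cancel $\inner{\mathcal{L}_f(\phi-\psi)}{\phi-\psi}$; the three surviving terms are handled identically in both arguments. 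Your explicit remark that the cancellation only needs $f$ (respectively $g$) to be divergence-free with vanishing normal trace, together with your crude fallback bound, is in fact more careful than the paper, which simply cites Corollary \ref{cancellationproperty}.
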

 \begin{proof}
     We write out the left hand side of the above in full, :
     \begin{align*}
         \left\vert \inner{\mathcal{P}\mathscr{L}_f\phi - \mathcal{P}\mathscr{L}_{g}\psi}{\phi - \psi} \right\vert &=  \left\vert \inner{\mathcal{L}_f\phi - \mathcal{L}_{\phi}f  -  \mathcal{L}_{g}\psi + \mathcal{L}_{\psi}g}{\phi - \psi} \right\vert\\
         &=  \left\vert \inner{\mathcal{L}_{f-g}\phi +  \mathcal{L}_{g}(\phi - \psi)  -  \mathcal{L}_{\phi - \psi}f - \mathcal{L}_{\psi}(f-g)}{\phi - \psi} \right\vert
     \end{align*}
     from which we shall split up the terms and control them individually. Firstly,
     \begin{align*}\left\vert \inner{\mathcal{L}_{f-g}\phi}{\phi - \psi} \right\vert &\leq \norm{\mathcal{L}_{f-g}\phi}\norm{\phi - \psi}\\ &\leq c\norm{f-g}_2\norm{\phi}_1\norm{\phi-\psi}\\ &\leq c\norm{\phi-\psi}_1\norm{\phi}_1\norm{\phi-\psi}\\ &\leq c(\varepsilon)\norm{\phi}_1^2\norm{\phi-\psi}^2 + \frac{\varepsilon}{3}\norm{\phi-\psi}_1^2
     \end{align*}
     using (\ref{first begin align}) and that $[f-g](x) = \int_{\mathscr{O}}K(x,y)[\phi-\psi](y)dy$ is the solution specified in Theorem \ref{Biot theorem} for $\phi-\psi$. Even more directly we have that $$\inner{\mathcal{L}_{g}(\phi - \psi)}{\phi - \psi} = 0$$ owing to (\ref{cancellationproperty}), and for the final two terms the bounds
     $$\left\vert \inner{\mathcal{L}_{\phi - \psi}f}{\phi - \psi} \right\vert \leq c\norm{\phi-\psi}_1\norm{f}_2\norm{\phi-\psi} \leq c(\varepsilon)\norm{\phi}_1^2\norm{\phi-\psi}^2 + \frac{\varepsilon}{3}\norm{\phi-\psi}_1^2 $$
     and 
     $$ \left\vert \inner{\mathcal{L}_{\psi}(f-g)}{\phi - \psi} \right\vert \leq c\norm{\psi}_1\norm{f-g}_2\norm{\phi-\psi} \leq c(\varepsilon)\norm{\psi}_1^2\norm{\phi-\psi}^2 + \frac{\varepsilon}{3}\norm{\phi-\psi}_1^2.$$
     Summing these terms concludes the proof.
 \end{proof}

\begin{proof}[Assumption \ref{therealcauchy assumptions}:]
The justification now comes together exactly as in the proof for Assumption \ref{uniqueness for H valued} in the velocity case, noting again that (\ref{combinedterminenergyinequality}) holds for $\mathscr{L}_i$ as well, and using Lemma \ref{lemmalemmalemma}. 
\end{proof}

\begin{proof}[Assumption \ref{assumption for prob in V}:]
    There is very little to demonstrate here, as the linear terms follow from Assumption \ref{therealcauchy assumptions} so we just briefly address the nonlinear term. Through the same process as in Lemma \ref{lemmalemmalemma}, we have that 
    \begin{align*}
         \left\vert \inner{\mathcal{P}\mathscr{L}_f\phi}{\phi} \right\vert &\leq   \left\vert \inner{\mathcal{L}_f\phi}{\phi} + \inner{\mathcal{L}_{\phi}f}{\phi} \right\vert\\
         &\leq c\left[\norm{f}_2\norm{\phi}_1 + \norm{\phi}_1\norm{f}_2 \right]\norm{\phi}\\
         &\leq c\norm{\phi}\norm{\phi}_1^2
     \end{align*}
     where the rest simply follows as in Assumption \ref{therealcauchy assumptions}. 
\end{proof}

\begin{proof}[Assumption \ref{finally the last assumption}:]
    We consider the different operators in turn, starting with the nonlinear term and using that
  \begin{align*}
         \left\vert \inner{\mathcal{P}\mathscr{L}_f\phi - \mathcal{P}\mathscr{L}_{g}\psi}{\eta} \right\vert &=  \left\vert \inner{\mathcal{L}_f\phi - \mathcal{L}_{\phi}f  -  \mathcal{L}_{g}\psi + \mathcal{L}_{\psi}g}{\eta} \right\vert\\
         &=  \left\vert \inner{\mathcal{L}_{f-g}\phi +  \mathcal{L}_{g}(\phi - \psi)  -  \mathcal{L}_{\phi - \psi}f - \mathcal{L}_{\psi}(f-g)}{\eta} \right\vert
     \end{align*}
where exactly as in Lemma \ref{lemmalemmalemma} we have that
$$\norm{\mathcal{L}_{f-g}\phi +  \mathcal{L}_{g}(\phi - \psi)  -  \mathcal{L}_{\phi - \psi}f - \mathcal{L}_{\psi}(f-g) } \leq c\left[\norm{\phi}_1 +\norm{\psi}_1\right]\norm{\phi-\psi}_1$$
so in particular
\begin{equation} \label{kele}
    \left\vert \inner{\mathcal{P}\mathscr{L}_f\phi - \mathcal{P}\mathscr{L}_{g}\psi}{\eta} \right\vert \leq \norm{\eta}\left(c\left[\norm{\phi}_1 +\norm{\psi}_1\right]\norm{\phi-\psi}_1 \right).
\end{equation}
For the Stokes Operator we simply apply Proposition $\ref{prop for moving stokes}$ to see that
\begin{equation} \label{kelele*}
\abs{\inner{A\phi -A\psi}{\eta}} = \abs{\inner{A(\phi -\psi)}{\eta}} = \inner{\phi - \psi}{\eta}_1 \leq \norm{\phi-\psi}_1\norm{\eta}_1
\end{equation}
and for the $\mathscr{L}_i^2$ term we use Corollary \ref{corollary for B_i adjoint} to observe that
\begin{equation} \label{kelelele}
    \abs{\inner{\mathscr{L}_i^2\phi -\mathscr{L}_i^2\psi}{\eta}} = \abs{\inner{\mathscr{L}_i^2(\phi -\psi)}{\eta}} = \abs{\inner{\mathscr{L}_i(\phi -\psi)}{\mathscr{L}_i^*\eta}} \leq c\norm{\xi_i}^2_{W^{1,\infty}}\norm{\phi-\psi}_1\norm{\eta}_1.
\end{equation}
Combining (\ref{kele}),(\ref{kelele*}) and (\ref{kelelele}) gives the result.

\end{proof}

\section{Appendices} \label{section appendices}
\subsection{Appendix I: Proofs from Subsection \ref{functional framework subsection} } \label{Appendix I}

Before proving the promised results, we state the Sobolev Embeddings which are vital for our analysis. These can be found in [\cite{adams}] Section 4.

\begin{theorem} \label{SobolevEmbeds}
    Letting $k \in \{0\} \cup \N$ be arbitrary and $'\xhookrightarrow{}'$ denote 'embeds into', we have the following results:
    
    \begin{itemize}
        \item For $mp > N$, or $m=N$ and $p =1$, \begin{equation} \label{embedintocontinuous} W^{m + k, p}(\mathcal{O};\R) \xhookrightarrow{} C^k_b(\mathcal{O};\R)\end{equation} and further if $p \leq r \leq \infty$, then \begin{equation} \label{embed} W^{m + k,p}(\mathcal{O};\R) \xhookrightarrow{} W^{k,r}(\mathcal{O};\R)\end{equation} and in particular \begin{equation} \label{embed2} W^{m,p}(\mathcal{O};\R) \xhookrightarrow{} L^r(\mathcal{O};\R).\end{equation}
        
        \item For $mp = N$, if $p \leq r < \infty$ then (\ref{embed}) and (\ref{embed2}) again hold.
        
        \item For $mp < N$, if $p \leq r \leq Np/(N -mp)$ then (\ref{embed}) and (\ref{embed2}) again hold.
        
    \end{itemize}
    
The embedding is a continuous linear operator, where the boundedness constant is dependent only on the choices of $m,p,N,r,k$.
\end{theorem}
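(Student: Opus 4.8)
The plan is to reduce the statement to the classical Sobolev inequalities on $\R^N$ via an extension argument, since on a smooth bounded domain $\mathscr{O}$ (and a fortiori on the torus) all three regimes descend from the corresponding statements on the whole space. First I would record the two building blocks on $\R^N$: the Gagliardo--Nirenberg--Sobolev inequality $\norm{u}_{L^{Np/(N-p)}(\R^N;\R)} \leq C\norm{\nabla u}_{L^p(\R^N;\R)}$ for $1 \leq p < N$ and $u \in C^\infty_0(\R^N;\R)$, proved by integrating $\abs{u}$ along each coordinate axis and combining the resulting one-dimensional bounds through iterated H\"{o}lder; and Morrey's inequality controlling the H\"{o}lder seminorm of exponent $1 - N/p$ by $\norm{u}_{W^{1,p}(\R^N;\R)}$ when $p > N$, proved by estimating the oscillation of $u$ over balls by the $L^p$ norm of $\nabla u$. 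The critical case $p = N$ is then obtained by interpolation: from $u \in W^{1,N}$ one deduces $u \in W^{1,q}_{\mathrm{loc}}$ for every $q < N$, hence $u \in L^r$ for every $r < \infty$ by the subcritical estimate.

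Second I would bootstrap in the order of differentiation. For $m+k$ derivatives one applies the first-order results to $D^\alpha u$ with $\abs{\alpha} = k$ and then successively to lower-order derivatives, each step either lowering the integrability exponent according to $\tfrac1q \mapsto \tfrac1q - \tfrac1N$ in the subcritical regime, passing through the ``$L^r$ for all finite $r$'' regime in the critical case, or landing in $C_b$ in the supercritical case and thus, after applying this to the top-order derivatives, yielding $C^k_b$. Careful bookkeeping of the exponent arithmetic $\tfrac1r = \tfrac1p - \tfrac mN$ produces exactly the stated ranges of $r$ and the stated thresholds $mp>N$, $mp=N$, $mp<N$ (with the endpoint $m=N$, $p=1$ falling into the first regime by the endpoint version of the $p<N$ inequality).

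Third, to transfer from $\R^N$ to $\mathcal{O}$: on the torus no extension is needed, since periodic functions are already global, and one runs the integration-along-axes argument adapted to periodicity (or, equally, works from the Fourier representation \eqref{fourier rep}); on the bounded domain one invokes a bounded linear extension operator $E : W^{m,p}(\mathscr{O};\R) \to W^{m,p}(\R^N;\R)$, which exists because $\partial\mathscr{O}$ is smooth, applies the whole-space embedding to $Eu$, and restricts to $\mathscr{O}$, so that the constant depends only on $m,p,N,r,k$ and the fixed domain. Density of $C^\infty$ functions in $W^{m,p}$ upgrades the smooth-case inequalities, and the vector-valued statements follow componentwise from Definition \ref{definition of spaces}.

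The main obstacle is not any individual estimate but the conjunction of (i) correctly invoking the extension operator for the bounded smooth domain --- this is the one place the geometric hypothesis on $\partial\mathscr{O}$ is genuinely exploited --- and (ii) the borderline regime $mp=N$, which fails to embed into $L^\infty$ and must be routed through the interpolation detour rather than a direct inequality. Since all of this is entirely classical, the route actually taken is simply to cite [\cite{adams}], Section 4; the outline above is the structure underpinning that reference.
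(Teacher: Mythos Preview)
Your proposal is correct and matches the paper's approach exactly: the paper does not prove this theorem but simply states that it ``can be found in [\cite{adams}] Section 4,'' which is precisely what you conclude after your (accurate) sketch of the underlying argument. The outline you give is the standard content of that reference, so there is nothing to add.
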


\begin{corollary}
    We have the embeddings
    \begin{align*}
        W^{2,2}(\mathcal{O};\R) &\xhookrightarrow{} L^{\infty}(\mathcal{O};\R)\\
        W^{1,2}(\mathcal{O};\R) &\xhookrightarrow{} L^6(\mathcal{O};\R).
    \end{align*}
\end{corollary}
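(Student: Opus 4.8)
The plan is to read both embeddings off directly from Theorem \ref{SobolevEmbeds}, using only that the spatial dimension satisfies $N \in \{2,3\}$; there is essentially nothing to prove beyond checking the arithmetic conditions on $m,p,N,r$.

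First I would treat $W^{2,2}(\mathcal{O};\R) \hookrightarrow L^{\infty}(\mathcal{O};\R)$. Apply Theorem \ref{SobolevEmbeds} with $m = 2$, $p = 2$, $k = 0$: since $mp = 4 > 3 \geq N$, we are in the first regime, so (\ref{embedintocontinuous}) gives $W^{2,2}(\mathcal{O};\R) \hookrightarrow C^0_b(\mathcal{O};\R)$ as a continuous linear embedding. It then remains to note that $C^0_b(\mathcal{O};\R) \hookrightarrow L^{\infty}(\mathcal{O};\R)$ continuously, which is immediate since the essential supremum of a bounded continuous function is dominated by its supremum; composing the two continuous embeddings yields the claim.

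Second, for $W^{1,2}(\mathcal{O};\R) \hookrightarrow L^{6}(\mathcal{O};\R)$ I would apply Theorem \ref{SobolevEmbeds} with $m = 1$, $p = 2$, $k = 0$, $r = 6$, splitting into the two admissible dimensions. When $N = 3$ we have $mp = 2 < 3 = N$, so the subcritical regime applies and the critical exponent is $Np/(N - mp) = 6$; since $2 = p \leq r = 6 \leq 6$, (\ref{embed2}) gives the embedding. When $N = 2$ we have $mp = 2 = N$, the critical regime, in which (\ref{embed2}) holds for every finite $r \geq p$, in particular $r = 6$. In both cases the embedding is a continuous linear operator by the final assertion of Theorem \ref{SobolevEmbeds}.

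There is no substantive obstacle; the only point requiring a moment's attention is that the second embedding sits at the junction of two different cases of the Sobolev theorem according to whether $N = 2$ or $N = 3$, so both must be verified separately, while everything else is routine bookkeeping of exponents.
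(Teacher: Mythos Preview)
Your proof is correct and follows essentially the same route as the paper, which simply records that both embeddings are immediate from (\ref{embed2}) via the first and third bullet points of Theorem \ref{SobolevEmbeds}. Your version is in fact slightly more careful: for the first embedding you detour through $C^0_b$ via (\ref{embedintocontinuous}) where the paper invokes (\ref{embed2}) with $r=\infty$ directly, and for the second you explicitly separate the cases $N=2$ (critical, second bullet) and $N=3$ (subcritical, third bullet), whereas the paper cites only the third bullet point.
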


\begin{proof}
Immediate from (\ref{embed2}), from the first and third bullet points respectively.
\end{proof}

We also state the following classical result.

\begin{theorem}[Gagliardo-Nirenberg Inequality] \label{gagliardonirenberginequality}
    Let $p,q,\alpha \in \R$, $m \in \N$ be such that $p > q \geq 1$, $m > N(\frac{1}{2} - \frac{1}{p})$ and $\frac{1}{p} = \frac{\alpha}{q} + (1-\alpha)(\frac{1}{2} - \frac{m}{N})$. Then there exists a constant $c$ (dependent on the given parameters) such that for any $f \in L^p(\T;\R) \cap W^{m,2}(\T;\R)$, we have \begin{equation}\label{gag torus}\norm{f}_{L^p(\T;\R)} \leq c\norm{f}^{\alpha}_{L^q(\T;\R)}\norm{f}^{1-\alpha}_{W^{m,2}(\T;\R)}.\end{equation} In the case $f \in L^p(\mathscr{O};\R) \cap W^{m,2}(\mathscr{O};\R)$ we have instead \begin{equation}\label{gag bounded domain}\norm{f}_{L^p(\mathscr{O};\R)} \leq c\left(\norm{f}^{\alpha}_{L^q(\mathscr{O};\R)}\norm{f}^{1-\alpha}_{W^{m,2}(\mathscr{O};\R)} + \norm{f}_{L^2(\mathscr{O};\R)} \right).\end{equation}
\end{theorem}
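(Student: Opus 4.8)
The plan is to recognise this as the classical Gagliardo--Nirenberg interpolation inequality and to prove it by reduction to the model space $\R^N$, combined with the Sobolev embeddings of Theorem \ref{SobolevEmbeds} and a transference argument. First observe that the three hypotheses are equivalent to the single chain $\tfrac12-\tfrac mN < \tfrac1p < \tfrac1q$ (with $q\ge 1$) together with the statement that $\alpha\in(0,1)$ is the corresponding interpolation exponent; in particular $\tfrac1p>\tfrac12-\tfrac mN$ is exactly the condition that places $p$ in the Sobolev-admissible range for $W^{m,2}$. The heart of the matter is the Euclidean homogeneous estimate $\norm{f}_{L^p(\R^N)} \le c\,\norm{f}_{L^q(\R^N)}^{\alpha}\,\norm{D^m f}_{L^2(\R^N)}^{1-\alpha}$ for $f\in C_c^\infty(\R^N)$, with $D^m f$ the collection of all order-$m$ derivatives. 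I would establish this by a Littlewood--Paley decomposition $f=\sum_{j\ge 0}\Delta_j f$: Bernstein's inequalities give $\norm{\Delta_j f}_{L^p}\lesssim 2^{jN(1/s-1/p)}\norm{\Delta_j f}_{L^s}$ for $s\le p$ and $\norm{\Delta_j f}_{L^2}\lesssim 2^{-jm}\norm{D^m\Delta_j f}_{L^2}$; splitting the sum at a frequency $2^{j_0}$, bounding the low frequencies against $\norm{f}_{L^q}$ and the high frequencies against $\norm{D^m f}_{L^2}$, and optimising over $j_0$ produces precisely the stated powers --- the exponent identity is exactly what makes the two resulting geometric series balance. (An equivalent Fourier-free route is Gagliardo's one-dimensional lemma iterated over coordinates and applied to powers of $\abs{f}$.)

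For the torus, the zero Fourier mode (the mean) is directly bounded by $\norm{f}_{L^q(\T)}$ using the finite measure of $\T$ together with the Sobolev embedding $W^{m,2}(\T)\hookrightarrow L^q(\T)$, while the nonzero modes are handled by the same Littlewood--Paley argument carried out over $\Z^N$; the presence of the mean is precisely why the full norm $\norm{f}_{W^{m,2}(\T)}$, rather than the homogeneous seminorm, appears on the right of \eqref{gag torus}. For the smooth bounded domain $\mathscr{O}$, fix a bounded linear extension operator $E$ with $Ef|_{\mathscr{O}}=f$ that is bounded simultaneously $W^{m,2}(\mathscr{O})\to W^{m,2}(\R^N)$, $L^q(\mathscr{O})\to L^q(\R^N)$ and $L^2(\mathscr{O})\to L^2(\R^N)$; applying the $\R^N$ estimate to $Ef$ and restricting gives $\norm{f}_{L^p(\mathscr{O})}\le\norm{Ef}_{L^p(\R^N)}\lesssim \norm{Ef}_{L^q(\R^N)}^{\alpha}\norm{Ef}_{W^{m,2}(\R^N)}^{1-\alpha}\lesssim\norm{f}_{L^q(\mathscr{O})}^\alpha\norm{f}_{W^{m,2}(\mathscr{O})}^{1-\alpha}$, and the additional $\norm{f}_{L^2(\mathscr{O})}$ term in \eqref{gag bounded domain} is then harmless (it is what one must keep if one prefers the homogeneous seminorm on the right, which on a bounded domain is not a norm).

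The only genuinely non-trivial step is the Euclidean homogeneous estimate in the first paragraph --- the Littlewood--Paley/Bernstein bookkeeping and the verification that the prescribed relation between $p,q,\alpha,m,N$ is exactly the one balancing the low- and high-frequency contributions. Everything after that (finite-measure Hölder, the Sobolev embeddings of Theorem \ref{SobolevEmbeds}, and the extension operator) is standard soft analysis, so in the paper itself it would be reasonable simply to cite a classical reference for the full statement.
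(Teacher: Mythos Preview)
Your proposal is correct, and indeed you anticipated the paper's approach in your final sentence: the paper's proof consists solely of the citation ``See [\cite{Gagliardo}] pp.125--126'', i.e.\ Nirenberg's original article, with no argument given. Your Littlewood--Paley sketch for $\R^N$, zero-mode handling on $\T$, and extension-operator transfer to $\mathscr{O}$ constitute a genuine proof outline that goes well beyond what the paper provides; the only comment is that since the paper treats this as a black-box classical result, your level of detail here would be out of proportion with the surrounding text.
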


\begin{proof}
See [\cite{Gagliardo}] pp.125-126. 
\end{proof}

\begin{proof}[Proof of \ref{continuityofnonlinear}:]
    We shall directly jump to the continuity of the mapping before showing it is in fact well defined, as the latter argument will be contained in the former. We proceed by sequential continuity, so we take an arbitrary sequence $(f_n)$ convergent to some $f$ in $W^{m+1,2}(\mathcal{O};\R^N)$ and deduce the convergence of $(\mathcal{L}_{f_n}f_n)$ to $\mathcal{L}_ff$. We have for any $n$
    \begin{align}
        \nonumber \norm{\mathcal{L}_{f_n}f_n - \mathcal{L}_ff}_{W^{m,2}(\mathcal{O};\R^N)} &= \norm{\mathcal{L}_{f_n}f_n - \mathcal{L}_{f_n}f + \mathcal{L}_{f_n}f - \mathcal{L}_{f}f}_{W^{m,2}(\mathcal{O};\R^N)}\\ \nonumber
        &\leq \norm{\mathcal{L}_{f_n}f_n - \mathcal{L}_{f_n}f}_{W^{m,2}(\mathcal{O};\R^N)} + \norm{\mathcal{L}_{f_n}f - \mathcal{L}_{f}f}_{W^{m,2}(\mathcal{O};\R^N)}\\ \label{labelalign2}
        &= \bigg \vert \bigg \vert \sum_{j=1}^Nf_n^j (\partial_jf_n-\partial_jf) \bigg \vert \bigg \vert_{W^{m,2}(\mathcal{O};\R^N)} + \bigg \vert \bigg \vert \sum_{j=1}^N (f_n^j-f^j)\partial_j f \bigg \vert \bigg \vert_{W^{m,2}(\mathcal{O};\R^N)}
    \end{align}
The norms are defined by the $W^{m,2}(\mathcal{O};\R)$ norms of the component mappings, so we consider the $i^\textnormal{th}$ component. Looking at the first term, we have \begin{align*} \label{yoyoyo} \bigg \vert \bigg \vert \sum_{j=1}^Nf_n^j (\partial_jf_n^i-\partial_jf^i) \bigg \vert \bigg \vert_{W^{m,2}(\mathcal{O};\R)}^2 &= \sum_{0 \leq \abs{\alpha} \leq m} \bigg \vert \bigg \vert \sum_{j=1}^N D^\alpha\big( f_n^j(\partial_jf_n^i-\partial_jf^i)\big)\bigg \vert \bigg \vert_{L^2(\mathcal{O};\R)}^2\end{align*}
which we further split up by considering each $\alpha$ in the sum, introducing the notation for a multi-index $\alpha'$ whereby $\alpha' \leq \alpha$ we mean $\alpha'_l \leq \alpha_l$ all $1 \leq l \leq N$ ($\alpha'_l \in \N \cup \{0\})$. Further by $\alpha - \alpha'$ we mean the multi-index with components $(\alpha - \alpha')_l = \alpha_l - \alpha'_l$. Then by the Leibniz Rule, for each $j$,
\begin{equation} \label{byleibniz}
    D^\alpha\big( f_n^j(\partial_jf_n^i-\partial_jf^i)\big) = \sum_{\alpha' \leq \alpha}D^{\alpha - \alpha'}f_n^j\big(D^{\alpha'}\partial_jf_n^i-D^{\alpha'}\partial_jf^i\big)
\end{equation}
so
\begin{align*} \norm{(\ref{byleibniz})}^2_{L^2(\mathcal{O};\R)} &\leq 2^{\abs{\alpha}}\sum_{\alpha' \leq \alpha}\norm{D^{\alpha - \alpha'}f_n^j\big(D^{\alpha'}\partial_jf_n^i-D^{\alpha'}\partial_jf^i\big)}_{L^2(\mathcal{O};\R)}^2\\
&\leq 2^m\sum_{\alpha' \leq \alpha}\norm{D^{\alpha - \alpha'}f_n^j\big(D^{\alpha'}\partial_jf_n^i-D^{\alpha'}\partial_jf^i\big)}_{L^2(\mathcal{O};\R)}^2\\
&= 2^m\bigg(\sum_{\alpha' < \alpha}\norm{D^{\alpha - \alpha'}f_n^j\big(D^{\alpha'}\partial_jf_n^i-D^{\alpha'}\partial_jf^i\big)}_{L^2(\mathcal{O};\R)}^2 + \norm{f_n^j\big(D^\alpha \partial_jf_n^i - D^\alpha \partial_jf^i\big)}_{L^2(\mathcal{O};\R)}^2\bigg)
\\ &\leq 2^m\bigg(\sum_{\alpha' < \alpha}\norm{ D^{\alpha-\alpha'}f_n^j}^2_{L^4(\mathcal{O};\R)}\norm{D^{\alpha'}\partial_jf_n^i-D^{\alpha'}\partial_jf^i}^2_{L^4(\mathcal{O};\R)} \\& \qquad \qquad \qquad \qquad \qquad \qquad  + \norm{f_n^j}^2_{L^\infty(\mathcal{O};\R)}\norm{(D^{\alpha}\partial_jf_n^i-D^{\alpha}\partial_jf^i)}^2_{L^2(\mathcal{O};\R)}\bigg)
\end{align*} 
having applied H\"{o}lder type inequalities and invoking two different Sobolev embeddings; the first is the embedding of $W^{1,2}(\mathcal{O};\R)$ into $L^4(\mathcal{O};\R)$, noting of course that the operators $D^{\alpha - \alpha'}$ and $D^{\alpha'}\partial_j$ are of order no greater than $m$ so the terms are indeed in $W^{1,2}(\mathcal{O};\R)$. The second is the embedding of $W^{2,2}(\mathcal{O};\R)$ into $L^{\infty}(\mathcal{O};\R)$ (and hence the embedding of $W^{m+1,2}(\mathcal{O};\R)$ into $L^{\infty}(\mathcal{O};\R)$). Returning now to (\ref{labelalign2}),
\begin{align*}
    \bigg \vert \bigg \vert \sum_{j=1}^Nf_n^j (\partial_jf_n-\partial_jf) \bigg \vert \bigg \vert_{W^{m,2}(\mathcal{O};\R^N)}^2 &= \sum_{i=1}^N\sum_{ \abs{\alpha} \leq m} \bigg \vert \bigg \vert \sum_{j=1}^N D^\alpha\big( f_n^j(\partial_jf_n^i-\partial_jf^i)\big)\bigg \vert \bigg \vert_{L^2(\mathcal{O};\R)}^2\\
    &\leq N \sum_{i=1}^N\sum_{\abs{\alpha} \leq m}\sum_{j=1}^N\bigg \vert \bigg \vert D^\alpha\big( f_n^j(\partial_jf_n^i-\partial_jf^i)\big)\bigg \vert \bigg \vert_{L^2(\mathcal{O};\R)}^2
\end{align*}
at which point we use the bound derived on (\ref{byleibniz}) to deduce further that this is 
\begin{align*}&\leq 2^mN\sum_{i=1}^N\sum_{ \abs{\alpha} \leq m}\sum_{j=1}^N\bigg(\sum_{\alpha' < \alpha}\norm{ D^{\alpha-\alpha'}f_n^j}^2_{L^4(\mathcal{O};\R)}\norm{D^{\alpha'}\partial_jf_n^i-D^{\alpha'}\partial_jf^i}^2_{L^4(\mathcal{O};\R)}\\ & \qquad \qquad \qquad \qquad \qquad \qquad \qquad \qquad \qquad \qquad  + \norm{f_n^j}^2_{L^\infty(\mathcal{O};\R)}\norm{(D^{\alpha}\partial_jf_n^i-D^{\alpha}\partial_jf^i)}^2_{L^2(\mathcal{O};\R)}\bigg).\end{align*}
Now the quoted embeddings are continuous, so we can deduce convergence of $(D^{\alpha'} \partial_jf_n^i)_n$ to $D^{\alpha'}\partial_jf^i$ in $L^4(\mathcal{O};\R)$ from the convergence in $W^{1,2}(\mathcal{O};\R)$, which is itself inherited from the assumed convergence of $(f_n)$ to $f$ in $W^{m+1,2}(\mathcal{O};\R^N)$. A similar argument affords us control of $\norm{ D^{\alpha-\alpha'}f_n^j}^2_{L^4(\mathcal{O};\R)}$, as the sequence $(D^{\alpha- \alpha'}f_n^j)_n$ is convergent in $W^{1,2}(\mathcal{O};\R)$ and hence $L^4(\mathcal{O};\R)$. Thus we can bound this term uniformly, say by $\norm{D^{\alpha-\alpha'}f^j}^2_{L^4(\mathcal{O};\R)} + 1$ for sufficiently large $n$. We implore the same idea to bound $\norm{f^j_n}^2_{L^{\infty}(\mathcal{O};\R)}$, noting also that $(D^{\alpha}\partial_jf_n^i)_n$ converges to $D^{\alpha}\partial_jf^i$ in $L^2(\mathcal{O};\R)$ from the assumed $W^{m+1,2}(\mathcal{O};\R^N)$ convergence. Thus all terms in the finite sum above converge to $0$ as $n \rightarrow \infty$, which deals with the first term in (\ref{labelalign2}). In fact the second term is treated near identically so we conclude the proof here, noting of course that these H\"{o}lder type inequalities are what justifies the mapping to be well defined.

\end{proof}

\begin{proof}[Proof of \ref{basic nonlinear bound}:]
We use the same ideas as seen in the proof of Lemma \ref{continuityofnonlinear}. For (\ref{first begin align})  starting with the term $\mathcal{L}_fg$:
\begin{align*}
    \norm{\mathcal{L}_fg} &\leq \sum_{j=1}^N\norm{f^j\partial_jg}\\
    &\leq \sum_{j=1}^N\sum_{l=1}^N\norm{f^j\partial_jg^l}_{L^2(\mathcal{O};\R)}\\
    &\leq \sum_{j=1}^N\sum_{l=1}^N\norm{f^j}_{L^{\infty}(\mathcal{O};\R)}\norm{\partial_jg^l}_{L^2(\mathcal{O};\R)}\\
    &\leq c\sum_{j=1}^N\sum_{l=1}^N\norm{f^j}_{W^{2,2}(\mathcal{O};\R)}\norm{\partial_jg^l}_{L^2(\mathcal{O};\R)}\\
    &\leq c\norm{f}_{W^{2,2}}\norm{g}_{W^{1,2}}
\end{align*}
\end{proof}
using that $\left(\sum_{l=1}^N\abs{a_l}^2\right)^{1/2} \leq \sum_{l=1}^N\abs{a_l}$ and the Sobolev Embedding $W^{2,2}(\mathcal{O};\R) \xhookrightarrow{} L^{\infty}(\mathcal{O};\R)$. Similarly comes the bound
\begin{align*}
    \norm{\mathcal{L}_gf} & \leq \sum_{j=1}^N\sum_{l=1}^N\norm{g^j\partial_jf^l}_{L^2(\mathcal{O};\R)}\\
    & \leq \sum_{j=1}^N\sum_{l=1}^N\norm{g^j}_{L^{4}(\mathcal{O};\R)}\norm{\partial_jf^l}_{L^4(\mathcal{O};\R)}\\
    & \leq c\sum_{j=1}^N\sum_{l=1}^N\norm{g^j}_{W^{1,2}(\mathcal{O};\R)}\norm{\partial_jf^l}_{W^{1,2}(\mathcal{O};\R)}\\
    & \leq c\sum_{j=1}^N\sum_{l=1}^N\norm{g^j}_{W^{1,2}(\mathcal{O};\R)}\norm{f^l}_{W^{2,2}(\mathcal{O};\R)}\\
    &\leq c\norm{g}_{W^{1,2}}\norm{f}_{W^{2,2}}
\end{align*}
applying the embedding of $W^{1,2}(\mathcal{O};\R) \xhookrightarrow{} L^4(\mathcal{O};\R)$. This justifies (\ref{first begin align}). As for (\ref{second begin align}),
$$  \norm{\mathcal{L}_gf}_{W^{1,2}}^2 =  \norm{\mathcal{L}_gf}^2 +  \sum_{k=1}^N\norm{\partial_k\mathcal{L}_{g}f}^2$$
where $$\norm{\mathcal{L}_gf}^2 \leq c\norm{g}_{W^{1,2}}^2\norm{f}_{W^{2,2}}^2 \leq c\norm{g}_{W^{1,2}}^2\norm{f}_{W^{3,2}}^2$$ having applied (\ref{first begin align}), and
\begin{align*}
   \sum_{k=1}^N\norm{\partial_k\mathcal{L}_{g}f}^2
        &\leq c\sum_{k=1}^N\sum_{j=1}^N\left(\norm{\partial_kg^j\partial_jf}^2 + \norm{g^j\partial_k\partial_jf}^2\right)\\
        &= c\sum_{k=1}^N\sum_{j=1}^N\sum_{l=1}^N\left(\norm{\partial_kg^j\partial_jf^l}_{L^2(\mathcal{O};\R)}^2 + \norm{g^j\partial_k\partial_jf^l}_{L^2(\mathcal{O};\R)}^2\right)\\
        &\leq c\sum_{k=1}^N\sum_{j=1}^N\sum_{l=1}^N\left(\norm{\partial_kg^j}_{L^{2}(\mathcal{O};\R)}^2\norm{\partial_jf^l}_{L^{\infty}(\mathcal{O};\R)}^2 + \norm{g^j}_{L^4(\mathcal{O};\R)}^2\norm{\partial_k\partial_jf^l}_{L^4(\mathcal{O};\R)}^2\right)\\
        &\leq c\sum_{k=1}^N\sum_{j=1}^N\sum_{l=1}^N\left(\norm{\partial_kg^j}_{L^2(\mathcal{O};\R)}^2\norm{\partial_jf^l}_{W^{2,2}(\mathcal{O};\R)}^2 + \norm{g^j}^2_{W^{1,2}(\mathcal{O};\R)}\norm{\partial_k\partial_jf^l}_{W^{1,2}(\mathcal{O};\R)}^2\right)\\
        &\leq c\norm{g}_{W^{1,2}}^2\norm{f}_{W^{3,2}}^2.
\end{align*}
Combining these justifies (\ref{second begin align}). Indeed (\ref{third begin align}) is attained by the same procedure, just instead passing to the bounds $$ \norm{\partial_kg^j\partial_jf^l}_{L^2(\mathcal{O};\R)}^2 + \norm{g^j\partial_k\partial_jf^l}_{L^2(\mathcal{O};\R)}^2
        \leq \norm{\partial_kg^j}_{L^{4}(\mathcal{O};\R)}^2\norm{\partial_jf^l}_{L^{4}(\mathcal{O};\R)}^2 + \norm{g^j}_{L^{\infty}(\mathcal{O};\R)}^2\norm{\partial_k\partial_jf^l}_{L^2(\mathcal{O};\R)}^2$$
and proceeding in the same manner.

\begin{proof}[Proof of \ref{biggglemma}:]
We prove this result in the case of the bounded domain $\mathscr{O}$, noting that the Torus follows in the same way just without the detour to compactly supported functions. The first task is to show that these inner products are well defined, which immediately prompts clarification as to how the $L^2(\mathscr{O};\R^N)$ inner product is being used here. Formally this should be understood as the duality bracket between $L^{6/5}(\mathscr{O};\R^N)$ and $L^6(\mathscr{O};\R^N)$, noting that the reciprocal of these components sum to $1$ from which the duality follows. Of course, this duality is prescribed by the $L^2(\mathscr{O};\R^N)$ inner product. Without loss of generality we treat the left hand side of (\ref{wloglhs}) for arbitrarily chosen $\phi, f, g$, and consider the $l^{\textnormal{th}}$ component. We have that 
\begin{align*}
    \norm{(\mathcal{L}_{\phi}f)^l}_{L^{6/5}(\mathscr{O};\R)} &\leq \sum_{j=1}^N \norm{\phi^j\partial_jf^l}_{L^{6/5}(\mathscr{O};\R)}\\
    &\leq \sum_{j=1}^N \norm{\phi^j}_{L^3(\mathscr{O};\R)}\norm{\partial_jf^l}_{L^2(\mathscr{O};\R)}
\end{align*}
having applied a general H\"{o}lder Inequality, noting that $$\frac{5}{6} = \frac{1}{3} + \frac{1}{2}.$$ Indeed this expression is finite from the embedding $W^{1,2}(\mathscr{O};\R) \xhookrightarrow{} L^3(\mathscr{O};\R)$. Subsequently $$\norm{\mathcal{L}_{\phi}f}_{L^{6/5}(\mathscr{O};\R^N)} = \bigg(\sum_{l=1}^N\norm{(\mathcal{L}_{\phi}f)^l}_{L^{6/5}(\mathscr{O};\R)}^{6/5}\bigg)^{5/6} < \infty$$ as required. Showing that $g \in L^6(\mathscr{O};\R^N)$ is simply immediate from the stronger embedding $W^{1,2}(\mathscr{O};\R) \xhookrightarrow{} L^6(\mathscr{O};\R)$. To show the desired equality (\ref{wloglhs}) consider a sequence $(\phi_n)$ in $C^\infty_{0,\sigma}(\mathscr{O};\R^N)$ convergent to $\phi$ in $W^{1,2}(\mathscr{O};\R^N)$, which certainly exists from Lemma \ref{W12sigmacharacter}. Then 
    \begin{align*}
       \inner{\mathcal{L}_{\phi_n}f}{g} &= \Big\langle\sum_{j=1}^N\phi^j_n\partial_jf,g\Big\rangle\\
       &= \sum_{l=1}^N\sum_{j=1}^N\inner{\phi^j_n\partial_jf^l}{g^l}_{L^2(\mathscr{O};\R)}\\
       &= -\sum_{l=1}^N\sum_{j=1}^N\Big(\inner{f^l}{\partial_j\phi^j_ng^l}_{L^2(\mathscr{O};\R)} + \inner{f^l}{\phi^j_n\partial_jg^l}_{L^2(\mathscr{O};\R)}\Big)
    \end{align*}
having applied integration by parts and calling upon the compact support of $\phi_n$. Now note that upon summation over $j$, the first inner product is nullified thanks to the divergence free property of $\phi_n$. We can now collapse the second inner product back: $$-\sum_{l=1}^N\sum_{j=1}^N\inner{f^l}{\phi^j_n\partial_jg^l}_{L^2(\mathscr{O};\R)} = -\sum_{j=1}^N\inner{f}{\phi^j_n\partial_jg} = -\inner{f}{\mathcal{L}_{\phi_n}g}.$$ It only remains to show that the equality holds in the limit, so let's again treat the LHS of (\ref{wloglhs}) for the $l^{\textnormal{th}}$ component. Using the H\"{o}lder Inequality which defines the duality bracket,
\begin{align*}
    \abs{\inner{(\mathcal{L}_{\phi}f -\mathcal{L}_{\phi_n}f)^l}{g^l}_{L^2(\mathscr{O};\R)}} &\leq \norm{(\mathcal{L}_{\phi}f -\mathcal{L}_{\phi_n}f)^l}_{L^{6/5}(\mathscr{O};\R)}\norm{g^l}_{L^6(\mathscr{O};\R)}\\ &\leq \bigg(\sum_{j=1}^N \norm{(\phi-\phi_n)^j}_{L^3(\mathscr{O};\R)}\norm{\partial_jf^l}_{L^2(\mathscr{O};\R)}\bigg)\norm{g^l}_{L^6(\mathscr{O};\R)}
\end{align*}
demonstrating the required convergence in the limit from the continuity of the embedding $W^{1,2}(\mathscr{O};\R)$ into $L^3(\mathscr{O};\R)$. Thus we have the convergence for each component in the $L^2(\mathscr{O};\R^N)$ inner product, from which we conclude the result.

\end{proof}

\begin{proof}[Proof of \ref{inequalityforcauchynonlinear}:]
We prove this in the more involved case of the bounded domain $\mathscr{O}$, calling upon Theorem \ref{gagliardonirenberginequality} and using (\ref{gag bounded domain}) in this case instead of the simpler (\ref{gag torus}). For such $f$ and $g$ observe that
\begin{align*}
    \norm{\mathcal{L}_fg} &= \left\Vert \sum_{j=1}^N f^j\partial_jg \right\Vert\\
    &\leq\sum_{j=1}^N  \left\Vert  f^j\partial_jg \right\Vert\\
    &\leq\sum_{j=1}^N \sum_{l=1}^N \left\Vert  f^j\partial_jg^l \right\Vert_{L^2(\mathscr{O};\R)}\\
    &\leq\sum_{j=1}^N \sum_{l=1}^N \norm{ f^j}_{L^6(\mathscr{O};\R)}\norm{\partial_jg^l}_{L^3(\mathscr{O};\R)}
\end{align*}
having just used a H\'{o}lder Inequality. We now apply Theorem \ref{gagliardonirenberginequality} for the values $p=3,q=2,m=1$ and $\alpha = \frac{1}{2}$ to bound this further by $$\sum_{j=1}^N \sum_{l=1}^N c\norm{ f^j}_{L^6(\mathscr{O};\R)}\left(\norm{\partial_jg^l}^{1/2}_{L^2(\mathscr{O};\R)}\norm{\partial_jg^l}^{1/2}_{W^{1,2}(\mathscr{O};\R)} + \norm{\partial_jg^l}_{L^2(\mathscr{O};\R)}\right).$$ We now use the Sobolev Embedding $W^{1,2}(\mathscr{O};\R) \xhookrightarrow{} L^6(\mathscr{O};\R)$ from the third bullet point of Theorem \ref{SobolevEmbeds} and some coarse bounds to control this further by
\begin{align*}
    &\sum_{j=1}^N \sum_{l=1}^N c\norm{ f^j}_{W^{1,2}(\mathscr{O};\R)}\left(\norm{\partial_jg^l}^{1/2}_{L^2(\mathscr{O};\R)}\norm{\partial_jg^l}^{1/2}_{W^{1,2}(\mathscr{O};\R)} + \norm{\partial_jg^l}_{L^2(\mathscr{O};\R)}\right)\\ &\leq \sum_{j=1}^N \sum_{l=1}^N c\norm{ f}_{W^{1,2}}\left(\norm{g}^{1/2}_{W^{1,2}}\norm{g}^{1/2}_{W^{2,2}} + \norm{g}_{W^{1,2}}\right)\\
    &\leq c\norm{ f}_{W^{1,2}}\left(\norm{g}^{1/2}_{W^{1,2}}\norm{g}^{1/2}_{W^{2,2}} + \norm{g}_{W^{1,2}}\right)\\
    &\leq c\norm{f}_{1}\left(\norm{g}_{1}^{1/2}\norm{g}_{2}^{1/2} + \norm{g}_1\right)
\end{align*}
using the norm equivalences from Proposition \ref{A2}.

\end{proof}

\newpage
\subsection{Appendix II: Proofs from Subsection \ref{subsection the salt operator}} \label{appendix ii}

In the following proofs we will track the constant $c$ stated in the results, though we make no attempt to optimise the tracked constant that we give. $\eta_k$ will represent the number of distinct multi-indices $\alpha$ such that $\abs{\alpha} \leq k$.

\begin{proof}[Proof of \ref{biggestcorcor}:]
    We fix any such $k$, and consider differential operators $D^\alpha$ for $\abs{\alpha}\leq k$. From the definition of the operator $\mathcal{T}_{\xi_i}$ and the Leibniz Rule, it is clear that
    $$D^\alpha \mathcal{T}_{\xi_i}f = \sum_{\alpha' \leq \alpha}\mathcal{T}_{D^{\alpha - \alpha'}\xi_i}D^{\alpha'}f.$$
    Now
    \begin{align*}
    \norm{D^\alpha \mathcal{T}_{\xi_i}f}^2 &\leq 2^{\abs{\alpha}}\sum_{\alpha' \leq \alpha}\norm{\mathcal{T}_{D^{\alpha - \alpha'} \xi_i}D^{\alpha'}f}^2\\
    &\leq 2^{k}\sum_{\alpha' \leq \alpha}\norm{\mathcal{T}_{D^{\alpha - \alpha'} \xi_i}D^{\alpha'}f}^2\\
    &= 2^{k}\sum_{\alpha' \leq \alpha}\Big \vert \Big \vert \sum_{j=1}^N D^{\alpha'}f^j\big(\nabla D^{\alpha - \alpha'} \xi_i^j\big) \Big \vert \Big \vert^2\\
    &\leq 2^{k}\sum_{\alpha' \leq \alpha}\norm{D^{\alpha - \alpha'}\xi_i}^2_{W^{1,\infty}}\Big \vert \Big \vert \sum_{j=1}^ND^{\alpha'}f^j \Big \vert \Big \vert^2_{L^2(\mathcal{O};\R)}\\
    &\leq 2^{k}N\norm{\xi_i}^2_{W^{k+1,\infty}}\sum_{\alpha' \leq \alpha}\sum_{j=1}^N\norm{D^{\alpha'}f^j}_{L^2(\mathcal{O};\R)}^2\\
    &= 2^{k}N\norm{\xi_i}^2_{W^{k+1,\infty}}\sum_{\alpha' \leq \alpha}\norm{D^{\alpha'}f}^2\\
    &\leq 2^{k}N\norm{\xi_i}^2_{W^{k+1,\infty}}\norm{f}^2_{W^{k,2}}
\end{align*}
    so ultimately
    \begin{align*}
        \norm{\mathcal{T}_{\xi_i}f}_{W^{k,2}}^2 &= \sum_{\abs{\alpha} \leq k}\norm{D^\alpha \mathcal{T}_{\xi_i}f}^2\\
        &\leq 2^{k}N\eta_k \norm{\xi_i}^2_{W^{k+1,\infty}}\norm{f}^2_{W^{k,2}}.
    \end{align*}
    
\end{proof}

\begin{proof}[Proof of \ref{boundsonL_xi}, (\ref{L_ibound}):]
        We fix any such $k$, and similarly to \ref{biggestcorcor} it is clear that  $$D^\alpha \mathcal{L}_{\xi_i}f = \sum_{\alpha' \leq \alpha}\mathcal{L}_{D^{\alpha - \alpha'}\xi_i}D^{\alpha'}f$$ again for any $\abs{\alpha} \leq k$. Looking at each term in this sum,
    \begin{align*}
    \norm{\mathcal{L}_{D^{\alpha - \alpha'}\xi_i}D^{\alpha'}f}^2 &= \Big \vert \Big\vert\sum_{j=1}^ND^{\alpha - \alpha'}\xi_i^j \partial_j D^{\alpha'}f\Big\vert\Big\vert^2\\
    &\leq N\sum_{j=1}^N\sum_{l=1}^N\norm{D^{\alpha - \alpha'}\xi_i^j \partial_jD^{\alpha'}f^l}^2_{L^2(\mathcal{O};\R)}\\
    &\leq N\norm{D^{\alpha - \alpha'}\xi_i}^2_{L^{\infty}}\sum_{j=1}^N\sum_{l=1}^N\norm{ \partial_jD^{\alpha'}f^l}^2_{L^2(\mathcal{O};\R)}\\
    &\leq N\norm{\xi_i}^2_{W^{k,\infty}}\sum_{j=1}^N\norm{ \partial_jD^{\alpha'}f}^2\\
    &\leq N\norm{\xi_i}^2_{W^{k,\infty}}\norm{f}^2_{W^{k+1,2}}
\end{align*}
    moreover
    \begin{align*}
        \norm{\mathcal{L}_{\xi_i}f}_{W^{k,2}}^2 &= \sum_{\abs{\alpha}\leq k}\norm{D^\alpha \mathcal{L}_{\xi_i}f}^2\\
        &= \sum_{\abs{\alpha}\leq k} \bigg\vert\bigg\vert \sum_{\alpha' \leq \alpha}\mathcal{L}_{D^{\alpha - \alpha'}\xi_i}D^{\alpha'}f\bigg\vert\bigg\vert^2\\
        &\leq \sum_{\abs{\alpha}\leq k}\sum_{\alpha' \leq \alpha}\norm{\mathcal{L}_{D^{\alpha - \alpha'}\xi_i}D^{\alpha'}f}^2\\
        &\leq 2^kN\eta_k\norm{\xi_i}^2_{W^{k,\infty}}\norm{f}^2_{W^{k+1,2}}.
    \end{align*}

\end{proof}
    
    In the following proofs we generalise the definition of $B$ to a mapping $$B_fg:=\mathcal{L}_{f}g + \mathcal{T}_fg.$$ In particular, $B_{i}:=B_{\xi_i}$.

\begin{proof}[Proof of \ref{prop for conservation B_i}, (\ref{combinedterminenergyinequality}):]
We consider terms \begin{equation} \label{commandv}\inner{B_i^2f}{f}_{W^{k,2}} +  \norm{B_if}_{W^{k,2}}^2\end{equation}
and each derivative in the sum for the inner product: that is, we are looking at \begin{equation}\label{thatiswearelookingat}\inner{D^\alpha B_i^2f}{D^\alpha f} +  \inner{D^\alpha B_if}{D^\alpha B_if}\end{equation} where the only starting place here is to simplify these derivatives. Combining the arguments in \ref{biggestcorcor} and \ref{boundsonL_xi} then evidently \begin{align}\nonumber D^\alpha B_{\xi_i}f &= \label{thenevidently}\sum_{\alpha' \leq \alpha}B_{D^{\alpha-\alpha'} \xi_i}D^{\alpha'}f\\
&= \sum_{\alpha' < \alpha}B_{D^{\alpha-\alpha'} \xi_i}D^{\alpha'}f + B_{\xi_i}D^{\alpha}f.\end{align}
Plugging this result in, we also see that
\begin{align*}D^\alpha B^2_{\xi_i}f &= D^\alpha B_{\xi_i}\big(B_{\xi_i}f\big)\\
&= \sum_{\alpha' < \alpha}B_{D^{\alpha-\alpha'} \xi_i}D^{\alpha'}B_{\xi_i}f + B_{ \xi_i}D^{\alpha}B_{\xi_i}f
\end{align*}
which will use in our analysis of (\ref{thatiswearelookingat}), reducing the expression to $$\Big\langle \sum_{\alpha' < \alpha}B_{D^{\alpha-\alpha'} \xi_i}D^{\alpha'}B_{\xi_i}f + B_{\xi_i}D^\alpha B_{\xi_i}f,D^\alpha f\Big\rangle +  \inner{D^\alpha B_{\xi_i}f}{D^\alpha B_{\xi_i}f}$$ which we further break up in terms of the $L^2(\mathcal{O};\R^N)$ adjoint $B_{\xi_i}^*$: \begin{equation}\label{equitino}\Big\langle \sum_{\alpha' < \alpha} B_{D^{\alpha-\alpha'} \xi_i}D^{\alpha'}B_{\xi_i}f,D^\alpha f\Big\rangle + \inner{D^\alpha B_{\xi_i}f}{B^*_{\xi_i} D^\alpha f} +  \inner{D^\alpha B_{\xi_i}f}{D^\alpha B_{\xi_i}f}\end{equation} requiring that $D^\alpha f \in W^{1,2}$, which is satisfied by the assumption $f \in W^{k+2,2}$. By combining the second and third inner products and using (\ref{thenevidently}), this becomes $$\Big\langle \sum_{\alpha' < \alpha} B_{D^{\alpha-\alpha'} \xi_i}D^{\alpha'}B_{\xi_i}f,D^\alpha f\Big\rangle + \Big\langle D^\alpha B_{\xi_i}f,B^*_{\xi_i} D^\alpha f + \sum_{\alpha' < \alpha}B_{D^{\alpha-\alpha'} \xi_i}D^{\alpha'}f + B_{\xi_i}D^\alpha f\Big\rangle$$
which we look to simplify by combining $B_{\xi_i}^*$ and $B_{\xi_i}$, noting that $$B_i^*+B_i= \mathcal{L}_{\xi_i}^* + \mathcal{T}_i^* + \mathcal{L}_{\xi_i} + \mathcal{T}_i = \mathcal{T}_i^* + \mathcal{T}_i.$$ Indeed this arrives us at the expression
$$\Big\langle \sum_{\alpha' < \alpha}B_{D^{\alpha-\alpha'} \xi_i}D^{\alpha'}B_{\xi_i}f,D^\alpha f\Big\rangle + \Big\langle D^\alpha B_{\xi_i}f,\big(\mathcal{T}_{\xi_i} + \mathcal{T}_{\xi_i}^*\big) D^\alpha f + \sum_{\alpha' < \alpha}B_{D^{\alpha-\alpha'} \xi_i}D^{\alpha'}f\Big\rangle.$$ As we are looking to achieve control with respect to the $W^{k,2}(\mathcal{O};\R^N)$ norm of $f$, then it is the terms with differential operators of order greater than $k$ that concern us. Of course this was the motivating factor behind combining $B_{\xi_i}$ and its adjoint, nullifying the additional derivative coming from $\mathcal{L}_{\xi_i}$. There are more higher order terms to go though, and the strategy will be to write these in terms of commutators with a differential operator of controllable order. This will involve considering different aspects of our sum in tandem, which will be helped by calling (\ref{thenevidently}) into action once more to reduce our expression again to  $$\Big\langle \sum_{\alpha' < \alpha}B_{D^{\alpha-\alpha'} \xi_i}D^{\alpha'}B_{\xi_i}f,D^\alpha f\Big\rangle + \Big\langle \sum_{\beta < \alpha} B_{D^{\alpha-\beta}\xi_i}D^\beta f + B_{\xi_i}D^{\alpha}f,\big(\mathcal{T}_{\xi_i} + \mathcal{T}_{\xi_i}^*\big) D^\alpha f + \sum_{\alpha' < \alpha}B_{D^{\alpha-\alpha'} \xi_i}D^{\alpha'}f\Big\rangle.$$ Ultimately the terms in the summand are split up into
\begin{align}
    \label{one}& \inner{B_{\xi_i}D^\alpha f}{\big(\mathcal{T}_{\xi_i}+\mathcal{T}^*_{\xi_i}\big) D^\alpha f}&\\
    \label{two}+ &\Big\langle \sum_{\beta < \alpha} B_{D^{\alpha-\beta} \xi_i}D^{\beta}f ,\big(\mathcal{T}_{\xi_i} + \mathcal{T}_{\xi_i}^*\big) D^\alpha f + \sum_{\alpha' < \alpha} B_{D^{\alpha-\alpha'} \xi_i}D^{\alpha'}f\Big\rangle\\
    \label{three}+ & \sum_{\alpha' < \alpha}\bigg(\inner{B_{D^{\alpha-\alpha'} \xi_i}D^{\alpha'}B_{\xi_i}f}{D^\alpha f} + \inner{B_{\xi_i}D^\alpha f}{B_{D^{\alpha-\alpha'} \xi_i}D^{\alpha'}f}\bigg)
\end{align}
with the intention of controlling each one individually. Firstly for a treatment of (\ref{one}),
    \begin{align*}(\ref{one}) &=\inner{(\mathcal{L}_{\xi_i} + \mathcal{T}_{\xi_i})D^{\alpha}f}{(\mathcal{T}_{\xi_i}^* + \mathcal{T}_{\xi_i})D^{\alpha}f}\\ &= \inner{\mathcal{L}_{\xi_i}D^{\alpha}f}{\mathcal{T}_{\xi_i}^*D^{\alpha}f} + \inner{\mathcal{L}_{\xi_i}D^{\alpha}f}{\mathcal{T}_{\xi_i}D^{\alpha}f} + \inner{\mathcal{T}_{\xi_i}D^{\alpha}f}{\mathcal{T}_{\xi_i}^*D^{\alpha}f} + \inner{\mathcal{T}_{\xi_i}D^{\alpha}f}{\mathcal{T}_{\xi_i}D^{\alpha}f}\\
    &= \Big(\inner{\mathcal{T}_{\xi_i}\mathcal{L}_{\xi_i}D^{\alpha}f}{D^{\alpha}f} + \inner{\mathcal{L}_{\xi_i}D^{\alpha}f}{\mathcal{T}_{\xi_i}D^{\alpha}f}\Big) + \Big(\inner{\mathcal{T}_{\xi_i}^2D^{\alpha}f}{D^{\alpha}f} + \inner{\mathcal{T}_{\xi_i}D^{\alpha}f}{\mathcal{T}_{\xi_i}D^{\alpha}f}\Big).
    \end{align*}
    We now bound the brackets in terms of $\norm{D^{\alpha}f}^2$ separately, starting with the latter one as
    \begin{align*}\inner{\mathcal{T}_{\xi_i}^2D^{\alpha}f}{D^{\alpha}f} &\leq \norm{\mathcal{T}_{\xi_i}^2D^{\alpha}f}\norm{D^{\alpha}f}\\ &\leq \sqrt{N}\norm{\xi_i}_{W^{1,\infty}}\norm{\mathcal{T}_{\xi_i}D^{\alpha}f}\norm{D^{\alpha}f}\\ &\leq N\norm{\xi_i}^2_{W^{1,\infty}}\norm{D^{\alpha}f}^2
    \end{align*}
    from \ref{biggestcorcor} for the result in $L^2(\mathcal{O};\R^N)$, and similarly $$\inner{\mathcal{T}_{\xi_i}D^{\alpha}f}{\mathcal{T}_{\xi_i}D^{\alpha}f} \leq \norm{\mathcal{T}_{\xi_i}D^{\alpha}f}\norm{\mathcal{T}_{\xi_i}D^{\alpha}f} \leq N\norm{\xi_i}^2_{W^{1,\infty}}\norm{D^{\alpha}f}^2.$$ Now for the first bracket, we add and subtract a term to have an expression through the commutator of the operators:
    \begin{align*}
    &\inner{\mathcal{T}_{\xi_i}\mathcal{L}_{\xi_i}D^{\alpha}f}{D^{\alpha}f} + \inner{\mathcal{L}_{\xi_i}D^{\alpha}f}{\mathcal{T}_{\xi_i}D^{\alpha}f}\\ = & \inner{(\mathcal{T}_{\xi_i}\mathcal{L}_{\xi_i} - \mathcal{L}_{\xi_i}\mathcal{T}_{\xi_i}) D^{\alpha}f}{D^{\alpha}f} + \inner{\mathcal{L}_{\xi_i}\mathcal{T}_{\xi_i}D^{\alpha}f}{D^{\alpha}f} + \inner{\mathcal{L}_{\xi_i}D^{\alpha}f}{\mathcal{T}_{\xi_i}D^{\alpha}f}\\
    = & \inner{(\mathcal{T}_{\xi_i}\mathcal{L}_{\xi_i} - \mathcal{L}_{\xi_i}\mathcal{T}_{\xi_i}) D^{\alpha}f}{D^{\alpha}f} + \inner{\mathcal{T}_{\xi_i}D^{\alpha}f}{\mathcal{L}_{\xi_i}^*D^{\alpha}f} + \inner{\mathcal{L}_{\xi_i}D^{\alpha}f}{\mathcal{T}_{\xi_i}D^{\alpha}f}\\
    = & \inner{(\mathcal{T}_{\xi_i}\mathcal{L}_{\xi_i} - \mathcal{L}_{\xi_i}\mathcal{T}_{\xi_i}) D^{\alpha}f}{D^{\alpha}f}.
    \end{align*}
    The commutator term is given explicitly through
    \begin{align*}
        \mathcal{T}_{\xi_i}\mathcal{L}_{\xi_i}D^{\alpha}f &= \mathcal{T}_{\xi_i}\Big(\sum_{j=1}^N\xi_i^j\partial_jD^{\alpha}f\Big)\\
        &= \sum_{k=1}^N\Big(\sum_{j=1}^N\xi_i^j\partial_jD^{\alpha}f\Big)^k\nabla \xi_i^k\\
        &= \sum_{k=1}^N\sum_{j=1}^N\xi_i^j\partial_jD^{\alpha}f^k\nabla \xi_i^k
    \end{align*}
    and
    \begin{align*}
        \mathcal{L}_{\xi_i}\mathcal{T}_{\xi_i}D^{\alpha}f &= \mathcal{L}_{\xi_i}\Big(\sum_{k=1}^ND^{\alpha}f^k\nabla \xi_i^k\Big)\\
        &= \sum_{j=1}^N\xi_i^j\partial_j\Big(\sum_{k=1}^ND^{\alpha}f^k\nabla \xi_i^k\Big)\\
        &= \sum_{j=1}^N\sum_{k=1}^N\xi_i^j\partial_j \big(D^{\alpha}f^k\nabla \xi_i^k\big)\\
        &= \sum_{j=1}^N\sum_{k=1}^N\Big(\xi_i^j\partial_jD^{\alpha}f^k \nabla \xi_i^k + \xi_i^jD^{\alpha}f^k\partial_j\nabla\xi_i^k\Big)
    \end{align*}
    such that $$(\mathcal{T}_{\xi_i}\mathcal{L}_{\xi_i} - \mathcal{L}_{\xi_i}\mathcal{T}_{\xi_i}) D^{\alpha}f = \sum_{j=1}^N\sum_{k=1}^N\xi_i^jD^{\alpha}f^k\partial_j\nabla\xi_i^k$$
    therefore
    \begin{align*}
        \norm{(\mathcal{T}_{\xi_i}\mathcal{L}_{\xi_i} - \mathcal{L}_{\xi_i}\mathcal{T}_{\xi_i}) D^{\alpha}f}^2 &\leq N^2\sum_{j=1}^N\sum_{k=1}^N\norm{\xi_i^jD^{\alpha}f^k\partial_j\nabla\xi_i^k}^2\\
        &= N^2\sum_{j=1}^N\sum_{k=1}^N\sum_{l=1}^N\norm{\xi_i^jD^{\alpha}f^k\partial_j\partial_l\xi_i^k}^2_{L^2(\mathcal{O};\R)}\\
        &\leq N^2\sum_{j=1}^N\sum_{k=1}^N\sum_{l=1}^N\norm{\xi_i^j\partial_j\partial_l\xi_i^k}^2_{L^\infty(\mathcal{O};\R)}\norm{D^{\alpha}f^k}^2_{L^2(\mathcal{O};\R)}\\
        &\leq N^2\norm{\xi_i}^4_{W^{2,\infty}}\sum_{j=1}^N\sum_{k=1}^N\sum_{l=1}^N\norm{D^{\alpha}f^k}^2_{L^2(\mathcal{O};\R)}\\
        &= N^4\norm{\xi_i}^4_{W^{2,\infty}}\norm{D^{\alpha}f}^2
    \end{align*}
    and
    $$\inner{(\mathcal{T}_{\xi_i}\mathcal{L}_{\xi_i} - \mathcal{L}_{\xi_i}\mathcal{T}_{\xi_i}) D^{\alpha}f}{D^{\alpha}f} \leq \norm{(\mathcal{T}_{\xi_i}\mathcal{L}_{\xi_i} - \mathcal{L}_{\xi_i}\mathcal{T}_{\xi_i}) D^{\alpha}f}\norm{D^{\alpha}f} \leq N^2\norm{\xi_i}^2_{W^{2,\infty}}\norm{D^{\alpha}f}^2.$$
    Combining these inequalities we determine the bound
\begin{align*}(\ref{one}) &\leq \big(N^2\norm{\xi_i}^2_{W^{2,\infty}} + 2N\norm{\xi_i}^2_{W^{2,\infty}}\big)\norm{D^{\alpha}f}^2\\
&\leq 3N^2\norm{\xi_i}^2_{W^{2,\infty}}\norm{D^\alpha f}^2.
\end{align*}
As for (\ref{two}) we look to use Cauchy-Schwartz and bound each item in the inner product. Indeed straight from (\ref{boundsonB_i}) in the $L^2(\mathcal{O};\R^N)$ setting, by simply replacing $\xi_i$ with $D^{\alpha-\beta}\xi_i$, we have that 
\begin{align*}\norm{B_{D^{\alpha-\beta}\xi_i}D^{\beta}f}^2 &\leq 4N\norm{D^{\alpha-\beta}\xi_i}^2_{W^{1,\infty}}\norm{D^\beta f}^2_{W^{1,2}}\\
&\leq 4N\norm{\xi_i}^2_{W^{k+1,\infty}}\norm{ f}^2_{W^{k,2}}\end{align*}
Moreover,
\begin{align*}
    \bigg\vert\bigg\vert \sum_{\beta < \alpha}B_{D^{\alpha-\beta}\xi_i}D^{\beta}f\bigg\vert\bigg\vert^2 &\leq 2^{k-1}\sum_{\beta < \alpha}\norm{B_{D^{\alpha-\beta}\xi_i}D^{\beta}f}^2\\
    &\leq 2^{k+1}N\norm{\xi_i}^2_{W^{k+1,\infty}}\norm{f}^2_{W^{k,2}}.
\end{align*}
In addition to this, we have again the result from \ref{biggestcorcor} in the $L^2(\mathcal{O};\R^N)$ setting that
$$\norm{\big(\mathcal{T}_{\xi_i}+\mathcal{T}^*_{\xi_i}\big) D^\alpha f} \leq \norm{\mathcal{T}_{\xi_i} D^\alpha f} + \norm{\mathcal{T}^*_{\xi_i} D^\alpha f} \leq 2\sqrt{N}\norm{\xi_i}_{W^{k+1,\infty}}\norm{D^\alpha f}$$
using the equivalence in operator norm of the adjoint. Together this amounts to
\begin{align*}
    (\ref{two}) &\leq \bigg\vert\bigg\vert  \sum_{\beta < \alpha}B_{D^{\alpha-\beta}\xi_i}D^{\beta}f\bigg\vert \bigg\vert \cdot \bigg\vert\bigg\vert \big(\mathcal{T}_{\xi_i}+\mathcal{T}^*_{\xi_i}\big) D^\alpha f +  \sum_{\alpha' < \alpha} B_{D^{\alpha-\alpha'}\xi_i}D^{\alpha'}f\bigg\vert\bigg\vert\\ &\leq 
    \bigg\vert\bigg\vert  \sum_{\beta < \alpha}B_{D^{\alpha-\beta}\xi_i}D^{\beta}f\bigg\vert\bigg\vert \Bigg(\norm{\big(\mathcal{T}_{\xi_i}+\mathcal{T}^*_{\xi_i}\big) D^\alpha f} + \bigg\vert\bigg\vert \sum_{\alpha' < \alpha} B_{D^{\alpha-\alpha'}\xi_i}D^{\alpha'}f\bigg\vert\bigg\vert\Bigg)\\
    &\leq \sqrt{2^{k+1}N}\norm{\xi_i}_{W^{k+1,\infty}}\norm{f}_{W^{k,2}}\Big(2\sqrt{N}\norm{\xi_i}_{W^{k+1,\infty}}\norm{D^\alpha f} + \sqrt{2^{k+1}N}\norm{\xi_i}_{W^{k+1,\infty}}\norm{f}_{W^{k,2}}\Big)\\
    &\leq \sqrt{2^{k+2}N}\norm{\xi_i}_{W^{k+1,\infty}}\norm{f}_{W^{k,2}}\Big(\sqrt{2^{k+2}N}\norm{\xi_i}_{W^{k+1,\infty}}\norm{D^\alpha f} + \sqrt{2^{k+2}N}\norm{\xi_i}_{W^{k+1,\infty}}\norm{f}_{W^{k,2}}\Big)\\
    &\leq 2^{k+3}N\norm{\xi_i}_{W^{k+1,\infty}}^2\norm{f}_{W^{k,2}}^2.\\
\end{align*}
Let's now turn our attentions to (\ref{three}), which for each $\alpha'$ in the sum we rewrite as \begin{equation} \label{rewriteofprime}\inner{D^\alpha f}{B_{D^{\alpha-\alpha'}\xi_i}D^{\alpha'}B_{\xi_i}f + B^*_{\xi_i}B_{D^{\alpha-\alpha'} \xi_i}D^{\alpha'}f}\end{equation} and employing (\ref{thenevidently}) again we see this becomes \begin{align*}& \Big\langle D^\alpha f,B_{D^{\alpha-\alpha'}\xi_i}\bigg(\sum_{\beta < \alpha'}B_{D^{\alpha'-\beta}\xi_i}D^\beta f + B_{\xi_i}D^{\alpha'}f\bigg)+ B^*_{\xi_i}B_{D^{\alpha-\alpha'} \xi_i}D^{\alpha'}f\Big\rangle\\
= & \Big\langle D^\alpha f,\sum_{\beta < \alpha'}B_{D^{\alpha-\alpha'}\xi_i}B_{D^{\alpha'-\beta}\xi_i}D^\beta f\Big\rangle + \inner{D^\alpha f}{B_{D^{\alpha-\alpha'}\xi_i}B_{\xi_i}D^{\alpha'}f + B^*_{\xi_i}B_{D^{\alpha-\alpha'} \xi_i}D^{\alpha'}f}.  \end{align*}
We have split up these terms to make our approach clearer, as the two right hand sides of the inner products will be considered separately. For the first inner product, two applications of (\ref{boundsonB_i}) give that
\begin{align*}
    \norm{B_{D^{\alpha-\alpha'}\xi_i}B_{D^{\alpha'-\beta}\xi_i}D^\beta f}^2 &\leq 4N\norm{D^{\alpha-\alpha'}\xi_i}^2_{W^{1,\infty}}\norm{B_{D^{\alpha'-\beta}\xi_i}D^\beta f}^2_{W^{1,2}}\\
    &\leq 4N\norm{D^{\alpha-\alpha'}\xi_i}^2_{W^{1,\infty}}\Big(8N(N+1)\norm{D^{\alpha'-\beta}\xi_i}^2_{W^{2,\infty}}\norm{D^\beta f}^2_{W^{2,2}}\Big)\\
    &\leq 32N^2(N+1)\norm{\xi_i}_{W^{k+1,\infty}}^4\norm{f}^2_{W^{k,2}}
\end{align*}
noting that the $N+1$ comes from the number of distinct multi-indices of order up to $1$. Moreover,
\begin{align*}
    \bigg\vert\bigg\vert \sum_{\beta < \alpha'}B_{D^{\alpha-\alpha'}\xi_i}B_{D^{\alpha'-\beta}\xi_i}D^\beta f\bigg\vert\bigg\vert^2 &\leq 2^{k-2}\norm{B_{D^{\alpha-\alpha'}\xi_i}B_{D^{\alpha'-\beta}\xi_i}D^\beta f}^2\\
    &\leq 2^{k+3}N^2(N+1)\norm{\xi_i}_{W^{k+1,\infty}}^4\norm{f}^2_{W^{k,2}}.
\end{align*}
As for the second inner product, we rewrite the right side as $$B_{D^{\alpha-\alpha'} \xi_i}\big((\mathcal{L}_{\xi_i} + \mathcal{T}_{\xi_i})D^{\alpha'}f\big) + (\mathcal{L}^*_{\xi_i} + \mathcal{T}^*_{\xi_i})B_{D^{\alpha-\alpha'} \xi_i}D^{\alpha'}f$$ and further \begin{equation}\label{theonewiththecommutator}\big(B_{D^{\alpha -\alpha'} \xi_i}\mathcal{L}_{\xi_i} - \mathcal{L}_{\xi_i}B_{D^{\alpha-\alpha'} \xi_i}\big)D^{\alpha'}f + B_{D^{\alpha-\alpha'} \xi_i}\mathcal{T}_{\xi_i}D^{\alpha'}f + \mathcal{T}_{\xi_i}^*B_{D^{\alpha-\alpha'} \xi_i}D^{\alpha'}f.\end{equation}
Starting with the latter two terms, we use the familiar (\ref{boundsonB_i}) and \ref{biggestcorcor} for the bound
\begin{align*}
    \norm{B_{D^{\alpha-\alpha'} \xi_i}\mathcal{T}_{\xi_i}D^{\alpha'}f}^2 &\leq 4N\norm{D^{\alpha-\alpha'} \xi_i}^2_{W^{1,\infty}}\norm{\mathcal{T}_{\xi_i}D^{\alpha'}f}^2_{W^{1,2}}\\
    &\leq 4N\norm{D^{\alpha-\alpha'} \xi_i}^2_{W^{1,\infty}} \Big(2N(N+1)\norm{\xi_i}^2_{W^2,\infty}\norm{D^{\alpha'}f}^2_{W^{1,2}}\Big)\\
    &\leq 8N^2(N+1)\norm{\xi_i}^4_{W^{k+1,\infty}}\norm{f}^2_{W^{k,2}}
\end{align*}
and likewise
\begin{align*}
\norm{\mathcal{T}_{\xi_i}^*B_{D^{\alpha-\alpha'} \xi_i}D^{\alpha'}f}^2 &\leq N\norm{\xi_i}^2_{W^{1,\infty}}\norm{B_{D^{\alpha-\alpha'} \xi_i}D^{\alpha'}f}^2\\
&\leq N\norm{\xi_i}^2_{W^{1,\infty}}\Big(4N\norm{D^{\alpha-\alpha'} \xi_i}^2_{W^{1,\infty}}\norm{D^{\alpha'}f}^2_{W^{1,2}}\Big)\\
&\leq 4N^2\norm{\xi_i}^4_{W^{k+1,\infty}}\norm{f}^2_{W^{k,2}}
\end{align*}
Now we show explicitly that the commutator
\begin{equation} \label{theactualcommutator}
    \big(B_{D^{\alpha -\alpha'} \xi_i}\mathcal{L}_{\xi_i} - \mathcal{L}_{\xi_i}B_{D^{\alpha-\alpha'} \xi_i}\big)D^{\alpha'}f
\end{equation}
given in (\ref{theonewiththecommutator}) is of first order (so of $k^{\textnormal{th}}$ order when composed with $D^{\alpha'}$), through the expressions
\begin{align*}
    B_{D^{\alpha-\alpha'} \xi_i}\mathcal{L}_{\xi_i}D^{\alpha'}f &= \sum_{j=1}^N\bigg(D^{\alpha-\alpha'}\xi_i^j \partial_j\Big(\sum_{k=1}^N\xi_i^k\partial_k D^{\alpha'}f\Big) + \Big(\sum_{k=1}^N\xi_i^k\partial_k D^{\alpha'}f\Big)^j\nabla D^{\alpha-\alpha'} \xi_i^j\bigg)\\
    &= \sum_{j=1}^N\sum_{k=1}^N\bigg(D^{\alpha-\alpha'}\xi_i^j\partial_j\xi_i^k\partial_kD^{\alpha'}f + D^{\alpha-\alpha'}\xi_i^j\xi_i^k\partial_j\partial_kD^{\alpha'}f + \xi_i^k\partial_k D^{\alpha'}f^j \nabla D^{\alpha-\alpha'} \xi_i^j\bigg) 
\end{align*}
and
\begin{align*}
    \mathcal{L}_{\xi_i}B_{D^{\alpha-\alpha'} \xi_i}D^{\alpha'}f &= \sum_{k=1}^N\xi_i^k\partial_k\bigg(\sum_{j=1}^ND^{\alpha-\alpha'} \xi_i^j\partial_jD^{\alpha'}f + D^{\alpha'}f^j \nabla D^{\alpha-\alpha'} \xi_i^j\bigg)\\
    &= \sum_{j=1}^N\sum_{k=1}^N\bigg(\xi_i^k\partial_kD^{\alpha-\alpha'} \xi_i^j \partial_jD^{\alpha'}f + \xi_i^kD^{\alpha-\alpha'}\xi_i^j\partial_k\partial_jD^{\alpha'}f \\& \qquad \qquad \qquad \qquad \qquad + \xi_i^k\partial_kD^{\alpha'}f^j \nabla D^{\alpha-\alpha'} \xi_i^j + \xi_i^kD^{\alpha'}f^j\partial_k \nabla D^{\alpha-\alpha'} \xi_i^j\bigg)
\end{align*}
such that
\begin{equation} \nonumber (\ref{theactualcommutator}) = \sum_{j=1}^N\sum_{k=1}^N\bigg(D^{\alpha-\alpha'}\xi_i^j\partial_j\xi_i^k\partial_kD^{\alpha'}f - \xi_i^k\partial_kD^{\alpha-\alpha'} \xi_i^j \partial_jD^{\alpha'}f - \xi_i^kD^{\alpha'}f^j\partial_k \nabla D^{\alpha-\alpha'} \xi_i^j\bigg)\end{equation}
and in particular
\begin{align*}
    \norm{(\ref{theactualcommutator})}^2 &\leq 3N^2\sum_{j=1}^N\sum_{k=1}^N\bigg(\norm{D^{\alpha-\alpha'}\xi_i^j\partial_j\xi_i^k\partial_kD^{\alpha'}f}^2 + \norm{\xi_i^k\partial_kD^{\alpha-\alpha'} \xi_i^j \partial_jD^{\alpha'}f}^2 + \norm{\xi_i^kD^{\alpha'}f^j\partial_k \nabla D^{\alpha-\alpha'} \xi_i^j}^2\bigg)\\
    &= 3N^2\sum_{j=1}^N\sum_{k=1}^N\sum_{l=1}^N\bigg(\norm{D^{\alpha-\alpha'}\xi_i^j\partial_j\xi_i^k\partial_kD^{\alpha'}f^l}^2_{L^2(\mathcal{O};\R)} \\ & \qquad \qquad \qquad  \qquad \qquad + \norm{\xi_i^k\partial_kD^{\alpha-\alpha'} \xi_i^j \partial_jD^{\alpha'}f^l}^2_{L^2(\mathcal{O};\R)} + \norm{\xi_i^kD^{\alpha'}f^j\partial_k \partial_l D^{\alpha-\alpha'} \xi_i^j}^2_{L^2(\mathcal{O};\R)}\bigg)\\
    &\leq 3N^2\norm{\xi_i}^4_{W^{k+2,\infty}}\sum_{j=1}^N\sum_{k=1}^N\sum_{l=1}^N\bigg(\norm{\partial_k D^{\alpha'}f^l}^2_{L^2(\mathcal{O};\R)} + \norm{\partial_j D^{\alpha'}f^l}^2_{L^2(\mathcal{O};\R)} + \norm{D^{\alpha'}f^l}^2_{L^2(\mathcal{O};\R)}\bigg)\\
    &\leq 3N^2\norm{\xi_i}^4_{W^{k+2,\infty}}\bigg(\sum_{j=1}^N\norm{f}^2_{W^{k,2}} + \sum_{k=1}^N\norm{f}^2_{W^{k,2}} + \sum_{j=1}^N\sum_{k=1}^N\norm{D^{\alpha'}f}^2\bigg)\\
    &\leq 9N^4\norm{\xi_i}^4_{W^{k+2,\infty}}\norm{f}^2_{W^{k,2}}.\\
\end{align*}
Finally now we can piece these four inequalities together to produce a bound on (\ref{rewriteofprime}):
\begin{align*}
    (\ref{rewriteofprime}) &\leq \norm{D^\alpha f}\bigg\vert\bigg\vert\sum_{\beta < \alpha'}B_{D^{\alpha-\alpha'}\xi_i}B_{D^{\alpha'-\beta}\xi_i}D^\beta f + (\ref{theonewiththecommutator})\bigg\vert\bigg\vert\\
    &\leq \norm{D^\alpha f}\bigg(\bigg\vert\bigg\vert\sum_{\beta < \alpha'}B_{D^{\alpha-\alpha'}\xi_i}B_{D^{\alpha'-\beta}\xi_i}D^\beta f\bigg\vert\bigg\vert + \norm{B_{D^{\alpha-\alpha'} \xi_i}\mathcal{T}_{\xi_i}D^{\alpha'}f} + \norm{\mathcal{T}_{\xi_i}^*B_{D^{\alpha-\alpha'} \xi_i}D^{\alpha'}f} + \norm{(\ref{theactualcommutator})}\bigg)\\
    &\leq \norm{D^\alpha f}\bigg(\sqrt{2^{k+3}(N+1)}N\norm{\xi_i}_{W^{k+1,\infty}}^2\norm{f}_{W^{k,2}} + \sqrt{8(N+1)}N\norm{\xi_i}^2_{W^{k+1,\infty}}\norm{f}_{W^{k,2}}\\& \qquad \qquad \qquad \qquad \qquad \qquad \qquad \qquad \qquad + 2N\norm{\xi_i}^2_{W^{k+1,\infty}}\norm{f}_{W^{k,2}} + 3N^2\norm{\xi_i}^2_{W^{k+2,\infty}}\norm{f}_{W^{k,2}}\bigg)\\
    &\leq \norm{D^\alpha f} \Big((2^{\frac{k+8}{2}}+3)N^2\norm{\xi_i}_{W^{k+2,\infty}}^2\norm{f}_{W^{k,2}}\Big)\\
    &\leq (2^{\frac{k+8}{2}}+3)N^2\norm{\xi_i}_{W^{k+2,\infty}}^2\norm{f}_{W^{k,2}}^2
\end{align*}
using that for $N \geq 2$ we have $\sqrt{N+1} \leq N$, and subsequently of (\ref{three}):
\begin{align*}
    (\ref{three}) &= \sum_{\alpha' < \alpha}(\ref{rewriteofprime})\\
    &\leq \sum_{\alpha' < \alpha}(2^{\frac{k+8}{2}}+3)N^2\norm{\xi_i}_{W^{k+2,\infty}}^2\norm{f}_{W^{k,2}}^2\\
    &\leq 2^k(2^{\frac{k+8}{2}}+3)N^2\norm{\xi_i}_{W^{k+2,\infty}}^2\norm{f}_{W^{k,2}}^2.
\end{align*}
We can now conclude the proof by observing that
\begin{align*}
    (\ref{commandv}) &= \sum_{\abs{\alpha}\leq k} (\ref{thatiswearelookingat})\\
    &= \sum_{\abs{\alpha}\leq k} (\ref{one}) + (\ref{two}) + (\ref{three})\\
    &\leq \sum_{\abs{\alpha}\leq k} \bigg(3N^2\norm{\xi_i}^2_{W^{2,\infty}}\norm{D^\alpha f}^2 + 2^{k+3}N\norm{\xi_i}_{W^{k+1,\infty}}^2\norm{f}_{W^{k,2}}^2\\ & \qquad \qquad\qquad\qquad\qquad\qquad\qquad\qquad + 2^k(2^{\frac{k+8}{2}}+3)N^2\norm{\xi_i}_{W^{k+2,\infty}}^2\norm{f}_{W^{k,2}}^2\bigg)\\
    &\leq \sum_{\abs{\alpha}\leq k} \bigg(3N^2\norm{\xi_i}^2_{W^{k+2,\infty}}\norm{f}_{W^{k,2}}^2 + 2^{k+3}N\norm{\xi_i}_{W^{k+2,\infty}}^2\norm{f}_{W^{k,2}}^2\\ & \qquad \qquad\qquad\qquad\qquad\qquad\qquad\qquad + (2^{2k+4}+2^k\cdot 3)N^2\norm{\xi_i}_{W^{k+2,\infty}}^2\norm{f}_{W^{k,2}}^2\bigg)\\
    &\leq \sum_{\abs{\alpha}\leq k} \bigg(3\cdot2^{2k+4}N^2\norm{\xi_i}^2_{W^{k+2,\infty}}\norm{f}_{W^{k,2}}^2 + 2^{2k+4}N^2\norm{\xi_i}_{W^{k+2,\infty}}^2\norm{f}_{W^{k,2}}^2\\ & \qquad \qquad\qquad\qquad\qquad\qquad\qquad\qquad + (2^{2k+4}+2^{2k+4}\cdot 3)N^2\norm{\xi_i}_{W^{k+2,\infty}}^2\norm{f}_{W^{k,2}}^2\bigg)\\
    &= \sum_{\abs{\alpha}\leq k} 2^{2k+7}N^2\norm{\xi_i}_{W^{k+2,\infty}}^2\norm{f}_{W^{k,2}}^2\\
    &= 2^{2k+7}N^2\eta_k\norm{\xi_i}_{W^{k+2,\infty}}^2\norm{f}_{W^{k,2}}^2.
\end{align*}
\end{proof}

\begin{proof}[Proof of \ref{prop for conservation B_i}, (\ref{finalboundinderivativeproof}):]
    Using (\ref{thenevidently}) once more, we see that for each $\alpha$ in the sum for the inner product,
    \begin{align*}
        \abs{\inner{D^\alpha B_i f}{D^\alpha f}} &= \Big\vert\Big \langle \sum_{\alpha' < \alpha}B_{D^{\alpha-\alpha'}\xi_i}D^{\alpha'}f + B_{\xi_i}D^\alpha f, D^\alpha f \Big\rangle\Big\vert\\
        &= \Big\vert\Big \langle \sum_{\alpha' < \alpha}B_{D^{\alpha-\alpha'}\xi_i}D^{\alpha'}f , D^\alpha f \Big\rangle + \inner{B_{\xi_i}D^\alpha f}{D^\alpha f}\Big\vert\\
        &\leq \Big\vert\Big \langle \sum_{\alpha' < \alpha}B_{D^{\alpha-\alpha'}\xi_i}D^{\alpha'}f , D^\alpha f \Big\rangle\Big\vert + \abs{\inner{\mathcal{T}_{\xi_i}D^\alpha f}{D^\alpha f}}
    \end{align*}
    using the cancellation from \ref{biggglemma} to dispose of the order $k+1$ term. In our treatment of (\ref{two}) in \ref{combinedterminenergyinequality}, we showed the bound
    \begin{align*}
    \bigg\vert\bigg\vert \sum_{\beta < \alpha}B_{D^{\alpha-\beta}\xi_i}D^{\beta}f\bigg\vert\bigg\vert
    &\leq \sqrt{2^{k+1}N}\norm{\xi_i}_{W^{k+1,\infty}}\norm{f}_{W^{k,2}}
\end{align*}
and therefore
\begin{align*}\Big\vert\Big \langle \sum_{\alpha' < \alpha}B_{D^{\alpha-\alpha'}\xi_i}D^{\alpha'}f , D^\alpha f \Big\rangle\Big\vert &\leq \sqrt{2^{k+1}N}\norm{\xi_i}_{W^{k+1,\infty}}\norm{f}_{W^{k,2}}\norm{D^\alpha f}\\
&\leq \sqrt{2^{k+1}N}\norm{\xi_i}_{W^{k+1,\infty}}\norm{f}_{W^{k,2}}^2\end{align*}
whilst in the treatment of (\ref{one}) we noted $$\inner{\mathcal{T}_{\xi_i}D^{\alpha}f}{\mathcal{T}_{\xi_i}D^{\alpha}f}\leq N\norm{\xi_i}^2_{W^{1,\infty}}\norm{D^{\alpha}f}^2 \leq N\norm{\xi_i}^2_{W^{k+1,\infty}}\norm{f}_{W^{k,2}}^2.$$ Combining these terms, we loosen to the bound $$\abs{\inner{D^\alpha B_i f}{f}} \leq (\norm{\xi_i}_{W^{k+1,\infty}}+ \sqrt{2^{k+1}})N\norm{\xi_i}_{W^{k+1,\infty}}\norm{f}^2_{W^{k,2}}$$
and see therefore that $$\abs{\inner{B_if}{f}_{W^{k,2}}} \leq  (\norm{\xi_i}_{W^{k+1,\infty}}+ \sqrt{2^{k+1}})N\eta_k\norm{\xi_i}_{W^{k+1,\infty}}\norm{f}^2_{W^{k,2}}$$ which readily gives the result.
\end{proof}

\begin{proof}[Proof of \ref{boundoncommutator}]
    We fix any such $f$ and first show that \begin{equation}\label{identity of commutator}[\Delta, B_i]f = \sum_{k=1}^N\sum_{j=1}^N\left(\partial_k^2\xi_i^j\partial_jf + 2 \partial_k\xi_i^j\partial_k\partial_jf + 2\partial_kf^j \partial_k\nabla \xi_i^j + f^j \partial_k^2\nabla \xi_i^j \right).\end{equation}
        Indeed
    \begin{align*}
        \Delta B_i f &= \sum_{k=1}^N\partial_k^2\left( \sum_{j=1}^N\left(
    \xi_i^j\partial_jf + f^j \nabla \xi_i^j\right)\right)\\
        &= \sum_{k=1}^N\partial_k\left(\sum_{j=1}^N\left(\partial_k\xi_i^j\partial_jf + \xi_i^j\partial_k\partial_jf + \partial_kf^j \nabla \xi_i^j + f^j \partial_k \nabla \xi_i^j\right)\right)\\
        &= \sum_{k=1}^N\sum_{j=1}^N\left(\partial_k^2\xi_i^j\partial_jf + 2 \partial_k\xi_i^j\partial_k\partial_jf + \xi_i^j\partial_k^2\partial_jf + \partial_k^2f^j \nabla \xi_i^j + 2\partial_kf^j \partial_k\nabla \xi_i^j + f^j \partial_k^2\nabla \xi_i^j\right)
    \end{align*}
    
    and 
    
    \begin{align*}
        B_i\Delta f &= \sum_{j=1}^N\left(\xi_i^j\partial_j\left(\sum_{k=1}^N\partial_k^2f\right) + \left(\sum_{k=1}^N\partial_k^2f\right)^j\nabla \xi_i^j\right)\\
        &= \sum_{k=1}^N\sum_{j=1}^N\left(\xi_i^j\partial_k^2\partial_jf + \partial_k^2f^j\nabla \xi_i^j \right)
    \end{align*}
    
    therefore
    
    \begin{align*}
        [\Delta, B_i]f &= \Delta B_i f - B_i \Delta f\\
        &= \sum_{k=1}^N\sum_{j=1}^N\left(\partial_k^2\xi_i^j\partial_jf + 2 \partial_k\xi_i^j\partial_k\partial_jf + 2\partial_kf^j \partial_k\nabla \xi_i^j + f^j \partial_k^2\nabla \xi_i^j\right) 
    \end{align*}
    justifying (\ref{identity of commutator}). The result then follows from a direct proof:
         \begin{align*}
         \left \Vert [\Delta, B_i] f\right \Vert ^2 &= \left \Vert \sum_{k=1}^N\sum_{j=1}^N\left(\partial_k^2\xi_i^j\partial_jf + 2 \partial_k\xi_i^j\partial_k\partial_jf + 2\partial_kf^j \partial_k\nabla \xi_i^j + f^j \partial_k^2\nabla \xi_i^j\right) \right \Vert^2\\
         &\leq N^2 \sum_{k=1}^N\sum_{j=1}^N \left \Vert\partial_k^2\xi_i^j\partial_jf + 2 \partial_k\xi_i^j\partial_k\partial_jf + 2\partial_kf^j \partial_k\nabla \xi_i^j + f^j \partial_k^2\nabla \xi_i^j \right \Vert ^2\\
         &\leq N^2 \sum_{k=1}^N\sum_{j=1}^N\sum_{l=1}^N \left \Vert\partial_k^2\xi_i^j\partial_jf^l + 2 \partial_k\xi_i^j\partial_k\partial_jf^l + 2\partial_kf^j \partial_k \partial_l \xi_i^j + f^j \partial_k^2\partial_l\xi_i^j \right \Vert_{L^2(\mathcal{O};\R)} ^2\\
         &\leq 4N^2\sum_{k=1}^N\sum_{j=1}^N\sum_{l=1}^N \Big(\norm{\partial_k^2\xi_i^j\partial_jf^l}_{L^2(\mathcal{O};\R)}^2 + \norm{2 \partial_k\xi_i^j\partial_k\partial_jf^l}_{L^2(\mathcal{O};\R)}^2\\& \qquad \qquad \qquad \qquad \qquad \qquad \qquad \qquad  + \norm{2\partial_kf^j \partial_k \partial_l \xi_i^j}_{L^2(\mathcal{O};\R)}^2 + \norm{f^j \partial_k^2\partial_l\xi_i^j}_{L^2(\mathcal{O};\R)}^2\Big)\\
         &\leq 16N^2\norm{\xi_i}_{W^{3,\infty}}^2\sum_{k=1}^N\sum_{j=1}^N\sum_{l=1}^N\Big(\norm{\partial_jf^l}_{L^2(\mathcal{O};\R)}^2 + \norm{\partial_k\partial_jf^l}_{L^2(\mathcal{O};\R)}^2\\& \qquad \qquad \qquad \qquad \qquad \qquad \qquad \qquad  + \norm{\partial_kf^j}_{L^2(\mathcal{O};\R)}^2 + \norm{f^j}_{L^2(\mathcal{O};\R)}^2\Big)\\
         &\leq 64N^4\norm{\xi_i}_{W^{3,\infty}}^2\norm{f}^2_{W^{2,2}}.
     \end{align*}
\end{proof}

\subsection{Appendix III: A Conversion from Stratonovich to It\^{o}} \label{appendix iii}

This theory is taken from [\cite{goodair2022stochastic}] Subsections 2.2 and 2.3, and is provided here for simplicity to apply in Subsection \ref{subsection ito}. We work with a quartet of embedded Hilbert Spaces $$V \hookrightarrow H \hookrightarrow U \hookrightarrow X$$ where the embedding is assumed as a continuous linear injection. We start from an SPDE \begin{equation} \label{stratoSPDE}
    \sy_t = \sy_0 + \int_0^t \mathcal{Q}\sy_sds + \int_0^t\mathcal{G}\sy_s \circ d\mathcal{W}_s.
\end{equation}
where the operators $\mathcal{Q}$, $\mathcal{G}$ satisfy the following conditions, with the general operator $\tilde{K}:H \rightarrow \R$ defined by $$\tilde{K}(\phi):= c\left(1 + \norm{\phi}_U^p + \norm{\phi}_H^q\right)$$ for any constants $c,p,q$ independent of $\phi$.
 \begin{assumption} \label{Qassumpt}
$\mathcal{Q}: V \rightarrow U$ is measurable and for any $\phi \in V$, $$\norm{\mathcal{Q}\phi}_U \leq \tilde{K}(\phi)[1 + \norm{\phi}_V^2].$$
 \end{assumption}
 
\begin{assumption} \label{Gassumpt}
$\mathcal{G}$ is understood as a measurable operator \begin{align*}
    \mathcal{G}&: V \rightarrow \mathscr{L}^2(\mathfrak{U};H)\\
     \mathcal{G}&: H \rightarrow \mathscr{L}^2(\mathfrak{U};U)\\
       \mathcal{G}&: U \rightarrow \mathscr{L}^2(\mathfrak{U};X)
\end{align*}
defined over $\mathfrak{U}$ by its action on the basis vectors $$\mathcal{G}(\cdot, e_i):= \mathcal{G}_i(\cdot).$$ In addition each $\mathcal{G}_i$ is linear and there exists constants $c_i$ such that for all $\phi \in V$, $\psi \in H$, $\eta \in U$:
\begin{align*}
    \norm{\mathcal{G}_i\phi}_{H} &\leq c_i \norm{\phi}_V\\
    \norm{\mathcal{G}_i\psi}_{U} &\leq c_i \norm{\psi}_H\\
    \norm{\mathcal{G}_i\eta}_{X} &\leq c_i \norm{\eta}_U\\
    \sum_{i=1}^\infty c_i^2 &< \infty.
\end{align*}
\end{assumption}

In this setting, we have the following result ([\cite{goodair2022stochastic}] Theorem 2.3.1 and Corollary 2.3.1.1).

\begin{theorem} \label{theorem for ito strat conversion}
    Suppose that $(\sy,\tau)$ are such that: $\tau$ is a $\mathbbm{P}-a.s.$ positive stopping time and $\sy$ is a process whereby for $\mathbbm{P}-a.e.$ $\omega$, $\sy_{\cdot}(\omega) \in C\left([0,T];H\right)$ and $\sy_{\cdot}(\omega)\mathbbm{1}_{\cdot \leq \tau(\omega)} \in L^2\left([0,T];V\right)$ for all $T>0$ with $\sy_{\cdot}\mathbbm{1}_{\cdot \leq \tau}$ progressively measurable in $V$, and moreover satisfy the identity
\begin{equation} \nonumber
    \sy_{t} = \sy_0 + \int_0^{t\wedge \tau} \left(\mathcal{Q} + \frac{1}{2}\sum_{i=1}^\infty \mathcal{G}_i^2\right)\sy_sds + \int_0^{t \wedge \tau}\mathcal{G}\sy_s d\mathcal{W}_s
\end{equation}
$\mathbbm{P}-a.s.$ in $U$ for all $t \geq 0$. Then the pair $(\sy,\tau)$ satisfies the identity $$\sy_{t} = \sy_0 + \int_0^{t\wedge \tau} \mathcal{Q}\sy_sds + \int_0^{t \wedge \tau}\mathcal{G}\sy_s \circ d\mathcal{W}_s $$
$\mathbbm{P}-a.s.$ in $X$ for all $t \geq 0$.
\end{theorem}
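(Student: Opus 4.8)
The plan is to unwind the definition of the Stratonovich integral on the right-hand side of the claimed identity and to show that the correction term it produces is precisely the $\frac{1}{2}\sum_{i=1}^\infty \mathcal{G}_i^2$ drift appearing in the It\^{o} form. Recall that for a process $Y$ which is (component-wise) a semimartingale, its Stratonovich integral against the cylindrical noise is defined by $\int_0^t Y_s \circ d\mathcal{W}_s := \int_0^t Y_s\, d\mathcal{W}_s + \frac{1}{2}\sum_{i=1}^\infty [Y(e_i), W^i]_t$, where $[\cdot,\cdot]$ denotes the cross-quadratic variation; so the whole argument reduces to identifying the semimartingale decomposition of $Y_s := \mathcal{G}\sy_s$ in $X$ and showing that $\sum_{i=1}^\infty [\mathcal{G}_i\sy, W^i]_t = \sum_{i=1}^\infty \int_0^{t\wedge\tau}\mathcal{G}_i^2\sy_s\, ds$.

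First I would establish that each $\mathcal{G}_i\sy$ is an $X$-valued semimartingale with an explicit decomposition, by applying the bounded linear operator $\mathcal{G}_i : U \to X$ from Assumption \ref{Gassumpt} to the It\^{o} identity hypothesised for $\sy$ (which holds in $U$). Taking $\mathcal{G}_i$ through the Bochner integral, and through the stochastic integral --- legitimate for a bounded linear operator, see [\cite{goodair2022stochastic}] Corollary 1.6.9.1 --- gives, in $X$,
\begin{equation} \nonumber
\mathcal{G}_i\sy_t = \mathcal{G}_i\sy_0 + \int_0^{t\wedge\tau}\mathcal{G}_i\Big(\mathcal{Q} + \tfrac{1}{2}\sum_{j=1}^\infty\mathcal{G}_j^2\Big)\sy_s\, ds + \sum_{j=1}^\infty\int_0^{t\wedge\tau}\mathcal{G}_i\mathcal{G}_j\sy_s\, dW^j_s ,
\end{equation}
all the series converging by the chain of bounds $\norm{\mathcal{G}_i\mathcal{G}_j\sy_s}_X \le c_ic_j\norm{\sy_s}_H$ and $\norm{\mathcal{G}_i\mathcal{G}_j^2\sy_s}_X \le c_ic_j^2\norm{\sy_s}_V$ (using $\sy_\cdot\mathbf{1}_{\cdot\le\tau}\in L^2([0,T];V)$) together with $\sum_i c_i^2 <\infty$. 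Thus the martingale part of $\mathcal{G}_i\sy$ is $M^i := \sum_{j=1}^\infty\int_0^{\cdot\wedge\tau}\mathcal{G}_i\mathcal{G}_j\sy_s\, dW^j_s$ and the remainder has finite variation.

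Next I would compute the cross-variation $[\mathcal{G}_i\sy, W^i]$. The finite-variation part contributes nothing; by bilinearity of the covariation and the independence of the $(W^j)$, only the $j=i$ term of $M^i$ survives, so $[\mathcal{G}_i\sy, W^i]_t = \big[\int_0^{\cdot\wedge\tau}\mathcal{G}_i^2\sy_s\, dW^i_s,\, W^i\big]_t = \int_0^{t\wedge\tau}\mathcal{G}_i^2\sy_s\, ds$, the last step being the standard quadratic-covariation identity for an It\^{o} integral against its driving Brownian motion, here with the $X$-valued integrand $\mathcal{G}_i^2\sy$. Summing over $i$ (the series converging in $X$ since $\norm{\mathcal{G}_i^2\sy_s}_X \le c_i^2\norm{\sy_s}_H$) and substituting into the definition of the Stratonovich integral yields $\int_0^{t\wedge\tau}\mathcal{G}\sy_s\circ d\mathcal{W}_s = \int_0^{t\wedge\tau}\mathcal{G}\sy_s\, d\mathcal{W}_s + \frac{1}{2}\sum_{i=1}^\infty\int_0^{t\wedge\tau}\mathcal{G}_i^2\sy_s\, ds$. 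Inserting this into the hypothesised It\^{o} identity and cancelling the $\frac{1}{2}\sum_i\mathcal{G}_i^2$ drift against the Stratonovich correction leaves exactly $\sy_t = \sy_0 + \int_0^{t\wedge\tau}\mathcal{Q}\sy_s\, ds + \int_0^{t\wedge\tau}\mathcal{G}\sy_s\circ d\mathcal{W}_s$ in $X$.

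I expect the main obstacle to be bookkeeping rather than anything conceptual: making the manipulations of the infinite series rigorous at the level of $\mathscr{L}^2(\mathfrak{U};\cdot)$-valued stochastic integrals --- in particular that $s\mapsto \mathcal{G}_i\circ(\mathcal{G}\sy_s)$ is a genuine $\mathscr{L}^2(\mathfrak{U};X)$-valued integrand and that $\mathcal{G}_i$ may be interchanged with the $\mathcal{W}$-integral --- and justifying the covariation identity in the infinite-dimensional, localised-up-to-$\tau$ setting. All of this is established in detail in [\cite{goodair2022stochastic}] Subsections 2.2--2.3, from which the present statement is quoted.
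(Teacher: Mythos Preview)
Your proposal is correct and follows the standard route for Stratonovich--It\^{o} conversion: push each bounded linear $\mathcal{G}_i$ through the It\^{o} identity to obtain the semimartingale decomposition of $\mathcal{G}_i\sy$ in $X$, compute $[\mathcal{G}_i\sy,W^i]$, and sum. Note however that the paper itself does not give a proof of this theorem --- it is simply quoted from [\cite{goodair2022stochastic}] Theorem 2.3.1 and Corollary 2.3.1.1 --- so there is nothing to compare against beyond confirming that your sketch is the argument one would expect to find in that reference, which it is.
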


The operator $\frac{1}{2}\sum_{i=1}^\infty \mathcal{G}_i^2$ is understood as a pointwise limit, which is justified in [\cite{goodair2022stochastic}] Subsection 2.3.

\subsection{Appendix IV: Abstract Solution Criterion, Part I} \label{appendix iv}

The result is given in the context of an It\^{o} SPDE 
\begin{equation} \label{thespde}
    \sy_t = \sy_0 + \int_0^t \mathcal{A}(s,\sy_s)ds + \int_0^t\mathcal{G} (s,\sy_s) d\mathcal{W}_s.
\end{equation}
We state the assumptions for a triplet of embedded Hilbert Spaces $$V \hookrightarrow H \hookrightarrow U$$ and ask that there is a continuous bilinear form $\inner{\cdot}{\cdot}_{U \times V}: U \times V \rightarrow \R$ such that for $\phi \in H$ and $\psi \in V$, \begin{equation} \label{bilinear formog}
    \inner{\phi}{\psi}_{U \times V} =  \inner{\phi}{\psi}_{H}.
\end{equation}
The operators $\mathcal{A},\mathcal{G}$ are such that for any $T>0$,
    $\mathcal{A}:[0,T] \times V \rightarrow U,
    \mathcal{G}:[0,T] \times V \rightarrow \mathscr{L}^2(\mathfrak{U};H)$ are measurable. We assume that $V$ is dense in $H$ which is dense in $U$. 

\begin{assumption} \label{assumption fin dim spaces}
There exists a system $(a_n)$ of elements of $V$ which are such that, defining the spaces $V_n:= \textnormal{span}\left\{a_1, \dots, a_n \right\}$ and $\mathcal{P}_n$ as the orthogonal projection to $V_n$ in $U$, then:
\begin{enumerate}
    \item There exists some constant $c$ independent of $n$ such that for all $\phi\in H$,
\begin{equation} \label{projectionsboundedonH}
    \norm{\mathcal{P}_n \phi}_H^2 \leq c\norm{\phi}_H^2.
\end{equation}
\item There exists a real valued sequence $(\mu_n)$ with $\mu_n \rightarrow \infty$ such that for any $\phi \in H$, \begin{align}
     \label{mu2}
    \norm{(I - \mathcal{P}_n)\phi}_U \leq \frac{1}{\mu_n}\norm{\phi}_H
\end{align}
where $I$ represents the identity operator in $U$.
\end{enumerate}
\end{assumption}

 These assumptions are of course supplemented by a series of assumptions on the operators. We shall use general notation $c_t$ to represent a function $c_\cdot:[0,\infty) \rightarrow \R$ bounded on $[0,T]$ for any $T > 0$, evaluated at the time $t$. Moreover we define functions $K$, $\tilde{K}$ relative to some non-negative constants $p,\tilde{p},q,\tilde{q}$. We use a generic notation to define the functions $K: U \rightarrow \R$, $K: U \times U \rightarrow \R$, $\tilde{K}: H \rightarrow \R$ and $\tilde{K}: H \times H \rightarrow \R$ by
\begin{align*}
    K(\phi)&:= 1 + \norm{\phi}_U^{p},\\
    K(\phi,\psi)&:= 1+\norm{\phi}_U^{p} + \norm{\psi}_U^{q},\\
    \tilde{K}(\phi) &:= K(\phi) + \norm{\phi}_H^{\tilde{p}},\\
    \tilde{K}(\phi,\psi) &:= K(\phi,\psi) + \norm{\phi}_H^{\tilde{p}} + \norm{\psi}_H^{\tilde{q}}
\end{align*}
 Distinct use of the function $K$ will depend on different constants but in no meaningful way in our applications, hence no explicit reference to them shall be made. In the case of $\tilde{K}$, when $\tilde{p}, \tilde{q} = 2$ then we shall denote the general $\tilde{K}$ by $\tilde{K}_2$. In this case no further assumptions are made on the $p,q$. That is, $\tilde{K}_2$ has the general representation \begin{equation}\label{Ktilde2}\tilde{K}_2(\phi,\psi) = K(\phi,\psi) + \norm{\phi}_H^2 + \norm{\psi}_H^2\end{equation} and similarly as a function of one variable.\\
 
 We state the subsequent assumptions for arbitrary elements $\phi,\psi \in V$, $\phi^n \in V_n$, $\eta \in H$ and $t \in [0,\infty)$, and a fixed $\kappa > 0$. Understanding $\mathcal{G}$ as an operator $\mathcal{G}: [0,\infty) \times V \times \mathfrak{U} \rightarrow H$, we introduce the notation $\mathcal{G}_i(\cdot,\cdot):= \mathcal{G}(\cdot,\cdot,e_i)$.
 
% \begin{assumption} \label{measurabilityassumption}
%For any $T>0$, $\mathcal{A}:[0,T] \times V \rightarrow U$ and  $\mathcal{G}:[0,T] \times V \rightarrow \mathscr{L}^2(\mathfrak{U};H)$ are measurable. 
%\end{assumption}
 
  \begin{assumption} \label{new assumption 1} \begin{align}
     \label{111} \norm{\mathcal{A}(t,\boldsymbol{\phi})}^2_U +\sum_{i=1}^\infty \norm{\mathcal{G}_i(t,\boldsymbol{\phi})}^2_H &\leq c_t K(\boldsymbol{\phi})\left[1 + \norm{\boldsymbol{\phi}}_V^2\right],\\ \label{222}
     \norm{\mathcal{A}(t,\boldsymbol{\phi}) - \mathcal{A}(t,\boldsymbol{\psi})}_U^2 &\leq  c_t\left[K(\phi,\psi) + \norm{\phi}_V^2 + \norm{\psi}_V^2\right]\norm{\phi-\psi}_V^2,\\ \label{333}
    \sum_{i=1}^\infty \norm{\mathcal{G}_i(t,\boldsymbol{\phi}) - \mathcal{G}_i(t,\boldsymbol{\psi})}_U^2 &\leq c_tK(\phi,\psi)\norm{\phi-\psi}_H^2.
 \end{align}
 \end{assumption}

\begin{assumption} \label{assumptions for uniform bounds2}
 \begin{align}
   \label{uniformboundsassumpt1}  2\inner{\mathcal{P}_n\mathcal{A}(t,\boldsymbol{\phi}^n)}{\boldsymbol{\phi}^n}_H + \sum_{i=1}^\infty\norm{\mathcal{P}_n\mathcal{G}_i(t,\boldsymbol{\phi}^n)}_H^2 &\leq c_t\tilde{K}_2(\boldsymbol{\phi}^n)\left[1 + \norm{\boldsymbol{\phi}^n}_H^2\right] - \kappa\norm{\boldsymbol{\phi}^n}_V^2,\\  \label{uniformboundsassumpt2}
    \sum_{i=1}^\infty \inner{\mathcal{P}_n\mathcal{G}_i(t,\boldsymbol{\phi}^n)}{\boldsymbol{\phi}^n}^2_H &\leq c_t\tilde{K}_2(\boldsymbol{\phi}^n)\left[1 + \norm{\boldsymbol{\phi}^n}_H^4\right].
\end{align}
\end{assumption}

\begin{assumption} \label{therealcauchy assumptions}
\begin{align}
  \nonumber 2\inner{\mathcal{A}(t,\boldsymbol{\phi}) - \mathcal{A}(t,\boldsymbol{\psi})}{\boldsymbol{\phi} - \boldsymbol{\psi}}_U &+ \sum_{i=1}^\infty\norm{\mathcal{G}_i(t,\boldsymbol{\phi}) - \mathcal{G}_i(t,\boldsymbol{\psi})}_U^2\\ \label{therealcauchy1} &\leq  c_{t}\tilde{K}_2(\boldsymbol{\phi},\boldsymbol{\psi}) \norm{\boldsymbol{\phi}-\boldsymbol{\psi}}_U^2 - \kappa\norm{\boldsymbol{\phi}-\boldsymbol{\psi}}_H^2,\\ \label{therealcauchy2}
    \sum_{i=1}^\infty \inner{\mathcal{G}_i(t,\boldsymbol{\phi}) - \mathcal{G}_i(t,\boldsymbol{\psi})}{\boldsymbol{\phi}-\boldsymbol{\psi}}^2_U & \leq c_{t} \tilde{K}_2(\boldsymbol{\phi},\boldsymbol{\psi}) \norm{\boldsymbol{\phi}-\boldsymbol{\psi}}_U^4.
\end{align}
\end{assumption}

\begin{assumption} \label{assumption for prob in V}
\begin{align}
   \label{probability first} 2\inner{\mathcal{A}(t,\boldsymbol{\phi})}{\boldsymbol{\phi}}_U + \sum_{i=1}^\infty\norm{\mathcal{G}_i(t,\boldsymbol{\phi})}_U^2 &\leq c_tK(\boldsymbol{\phi})\left[1 +  \norm{\boldsymbol{\phi}}_H^2\right],\\\label{probability second}
    \sum_{i=1}^\infty \inner{\mathcal{G}_i(t,\boldsymbol{\phi})}{\boldsymbol{\phi}}^2_U &\leq c_tK(\boldsymbol{\phi})\left[1 + \norm{\boldsymbol{\phi}}_H^4\right].
\end{align}
\end{assumption}

\begin{assumption} \label{finally the last assumption}
 \begin{equation} \label{lastlast assumption}
    \inner{\mathcal{A}(t,\phi)-\mathcal{A}(t,\psi)}{\eta}_U \leq c_t(1+\norm{\eta}_H)\left[K(\phi,\psi) + \norm{\phi}_V + \norm{\psi}_V\right]\norm{\phi-\psi}_H.
    \end{equation}
\end{assumption}

With these assumptions in place we state the relevant definitions and results, first announced in [\cite{Goodair conference}] and proven in [\cite{Goodair abs}]. Definition \ref{definitionofregularsolution} is stated for an $\mathcal{F}_0-$ measurable $\sy_0:\Omega \rightarrow H$.

\begin{definition} \label{definitionofregularsolution}
A pair $(\sy,\tau)$ where $\tau$ is a $\mathbb{P}-a.s.$ positive stopping time and $\sy$ is a process such that for $\mathbb{P}-a.e.$ $\omega$, $\sy_{\cdot}(\omega) \in C\left([0,T];H\right)$ and $\sy_{\cdot}(\omega)\mathbf{1}_{\cdot \leq \tau(\omega)} \in L^2\left([0,T];V\right)$ for all $T>0$ with $\sy_{\cdot}\mathbf{1}_{\cdot \leq \tau}$ progressively measurable in $V$, is said to be an $H$-valued local strong solution of the equation (\ref{thespde}) if the identity
\begin{equation} \label{identityindefinitionoflocalsolution}
    \sy_{t} = \sy_0 + \int_0^{t\wedge \tau} \mathcal{A}(s,\sy_s)ds + \int_0^{t \wedge \tau}\mathcal{G} (s,\sy_s) d\mathcal{W}_s
\end{equation}
holds $\mathbb{P}-a.s.$ in $U$ for all $t \geq 0$.
\end{definition}

\begin{definition} \label{V valued maximal definition}
A pair $(\sy,\Theta)$ such that there exists a sequence of stopping times $(\theta_j)$ which are $\mathbb{P}-a.s.$ monotone increasing and convergent to $\Theta$, whereby $(\sy_{\cdot \wedge \theta_j},\theta_j)$ is a $V-$valued local strong solution of the equation (\ref{thespde}) for each $j$, is said to be an $H-$valued maximal strong solution of the equation (\ref{thespde}) if for any other pair $(\py,\Gamma)$ with this property then $\Theta \leq \Gamma$ $\mathbb{P}-a.s.$ implies $\Theta = \Gamma$ $\mathbb{P}-a.s.$.
\end{definition}

\begin{definition}
An $H-$valued maximal strong solution $(\sy,\Theta)$ of the equation (\ref{thespde}) is said to be unique if for any other such solution $(\py,\Gamma)$, then $\Theta = \Gamma$ $\mathbb{P}-a.s.$ and for all $t \in [0,\Theta)$, \begin{equation} \nonumber\mathbb{P}\left(\left\{\omega \in \Omega: \sy_{t}(\omega) =  \py_{t}(\omega)  \right\} \right) = 1. \end{equation}
\end{definition}

The theorem below is stated on the condition that the assumptions of this subsection are met.

\begin{theorem} \label{theorem1}
For any given $\mathcal{F}_0-$ measurable $\sy_0:\Omega \rightarrow H$, there exists a unique $H-$valued maximal strong solution $(\sy,\Theta)$ of the equation (\ref{thespde}). Moreover at $\mathbb{P}-a.e.$ $\omega$ for which $\Theta(\omega)<\infty$, we have that \begin{equation}\label{blowupproperty}\sup_{r \in [0,\Theta(\omega))}\norm{\sy_r(\omega)}_H^2 + \int_0^{\Theta(\omega)}\norm{\sy_r(\omega)}_V^2dr = \infty.\end{equation}
\end{theorem}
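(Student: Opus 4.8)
The plan is to obtain the maximal solution by a Galerkin approximation localised in time so as to tame the merely \emph{local} (Riccati-type, locally monotone) structure expressed in Assumptions \ref{assumptions for uniform bounds2}--\ref{finally the last assumption}. First I would pass to the finite-dimensional problem: projecting (\ref{thespde}) onto $V_n=\textnormal{span}\{a_1,\dots,a_n\}$ with the $\mathcal{P}_n$ of Assumption \ref{assumption fin dim spaces} yields an SDE for a $V_n$-valued process $\sy^n$ with data $\mathcal{P}_n\sy_0$; since all norms are equivalent on $V_n$ and Assumption \ref{new assumption 1} supplies local Lipschitz and growth control on $\mathcal{P}_n\mathcal{A}$ and $\mathcal{P}_n\mathcal{G}$, classical SDE theory produces a unique maximal $\sy^n$ up to an explosion time.

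Next comes the energy estimate. Applying the finite-dimensional It\^o formula to $\norm{\sy^n_t}_H^2$ --- legitimate because $\sy^n$ is $V$-valued and, by (\ref{bilinear formog}), the pairing $\inner{\cdot}{\cdot}_{U \times V}$ restricts to $\inner{\cdot}{\cdot}_H$ --- and using (\ref{uniformboundsassumpt1}) for the drift-plus-trace contribution together with (\ref{uniformboundsassumpt2}) and Burkholder--Davis--Gundy for the martingale part, one arrives at a Gronwall inequality whose right-hand side is superlinear in $\norm{\sy^n}_H^2$. Only \emph{local} bounds are therefore available: for $R>0$ set $\tau_R^n:=\inf\{t\ge 0:\ \norm{\sy^n_t}_H^2+\int_0^t\norm{\sy^n_s}_V^2\,ds\ge R\}$ and derive bounds for $\mathbb{E}\sup_{t\le T\wedge\tau_R^n}\norm{\sy^n_t}_H^2+\mathbb{E}\int_0^{T\wedge\tau_R^n}\norm{\sy^n_s}_V^2\,ds$ uniform in $n$ (not in $R$); this simultaneously rules out explosion of $\sy^n$ before $\tau_R^n$. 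Assumption \ref{assumption for prob in V} is then invoked, via an analogous estimate in the $U$-norm, to ensure the limiting stopping time below is $\mathbb{P}$-a.s.\ strictly positive, i.e.\ that the solution instantaneously acquires $L^2_{\textnormal{loc}}$-regularity in $V$ even though $\sy_0$ is only $H$-valued.

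The heart of the argument is passage to the limit on a stopped interval. From the uniform bounds extract, after a further localisation keeping the constants $\tilde K_2$ bounded, a subsequence with $\sy^n\rightharpoonup\sy$ weakly in $L^2(\Omega\times[0,T];V)$ and weak-$*$ in $L^2(\Omega;L^\infty([0,T];H))$, together with weak limits for the drift and the stochastic integral. To identify the weak limit of $\mathcal{P}_n\mathcal{A}(\cdot,\sy^n)$ with $\mathcal{A}(\cdot,\sy)$ I would run the Minty--Browder monotonicity trick: the local monotonicity bound (\ref{therealcauchy1}) controls $\inner{\mathcal{A}(\cdot,\sy^n)-\mathcal{A}(\cdot,\psi)}{\sy^n-\psi}_U$, while the hemicontinuity estimate (\ref{lastlast assumption}) of Assumption \ref{finally the last assumption} permits taking the limit after testing against $\sy^n-\psi$ for arbitrary $V$-valued $\psi$, which on the event where $\tilde K_2$ is bounded pins down the limit and shows $(\sy,\tau_R)$ satisfies (\ref{identityindefinitionoflocalsolution}). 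Pathwise uniqueness follows by applying It\^o to the $U$-norm of the difference of two solutions and invoking (\ref{therealcauchy1})--(\ref{therealcauchy2}) with Gronwall on the stopping times where $\tilde K_2$ of both solutions is bounded; uniqueness in turn upgrades the subsequential convergence to convergence of the whole Galerkin scheme (alternatively, a Gy\"ongy--Krylov argument).

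Finally, maximality. Letting $R\uparrow\infty$, the local solutions on $[0,\tau_R]$ are consistent by uniqueness and glue to a solution on $[0,\Theta)$ with $\Theta:=\lim_R\tau_R$; take $\theta_j:=\tau_j$. The blow-up alternative (\ref{blowupproperty}) is proved by contradiction: if on a set of positive probability $\Theta<\infty$ while $\sup_{r<\Theta}\norm{\sy_r}_H^2+\int_0^\Theta\norm{\sy_r}_V^2\,dr<\infty$, then $\sy_r$ has a limit in $H$ as $r\uparrow\Theta$, and restarting the construction from that limit produces a strictly longer solution, contradicting the definition of $\Theta$; maximality in the sense of Definition \ref{V valued maximal definition} is obtained the same way. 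The main difficulty I anticipate is exactly this double localisation: because Assumptions \ref{assumptions for uniform bounds2} and \ref{therealcauchy assumptions} are only local, the whole compactness/identification/uniqueness machinery must be run inside nested stopping times controlling both $\norm{\sy^n}_H$ and $\int\norm{\sy^n}_V^2$, one must check the truncated problems retain enough coercivity and monotonicity for Minty's trick, and one must keep careful track of adaptedness and of the strict positivity of the final stopping time.
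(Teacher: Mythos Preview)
This theorem is not proved in the present paper. It is stated in Appendix IV as an abstract result from the companion work [\cite{Goodair abs}] (see the sentence immediately preceding Definition \ref{definitionofregularsolution}: ``first announced in [\cite{Goodair conference}] and proven in [\cite{Goodair abs}]''); the current paper only \emph{applies} it by verifying Assumptions \ref{assumption fin dim spaces}--\ref{finally the last assumption} for the SALT Navier--Stokes operators. There is therefore no proof here to compare your proposal against.

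That said, your sketch is a coherent outline of one standard route to such results (finite-dimensional approximation, local-in-time a priori bounds, weak compactness, Minty--Browder identification, pathwise uniqueness, gluing to a maximal solution). Two remarks on how it sits relative to the framework the paper imports. First, the very naming of Assumption \ref{therealcauchy assumptions} and the presence of the approximation rate (\ref{mu2}) strongly indicate that the intended argument shows the Galerkin sequence is \emph{Cauchy} in $C([0,T];U)\cap L^2([0,T];H)$ (after localisation), obtaining strong convergence directly and identifying the limit without a monotonicity trick; your weak-compactness-plus-Minty route is a genuine alternative, closer in spirit to the Liu--R\"ockner locally monotone theory, and would need Assumption \ref{finally the last assumption} precisely for the hemicontinuity step you describe. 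Second, your use of Assumption \ref{assumption for prob in V} is not quite right: (\ref{probability first})--(\ref{probability second}) do not give the coercive $-\kappa\norm{\phi}_H^2$ needed for a smoothing estimate from $H$-data; rather, they are the ingredients for showing that the first-hitting times $\tau_R$ (defined via the $H$-norm and the $L^2_tV$-integral) are $\mathbb{P}$-a.s.\ positive, by controlling the $U$-energy uniformly and then transferring to $H$ through the already-obtained $H$-bounds on the stopped interval. Your instantaneous-regularisation reading would require the stronger (\ref{probability first H}) of Assumption \ref{assumption for probability in H}, which belongs to the extended framework of Appendix V, not to Theorem \ref{theorem1}.
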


\subsection{Appendix V: Abstract Solution Criterion, Part II} \label{appendix v}

We extend the framework of Appendix IV, \ref{appendix iv}, introducing now another Hilbert Space $X$ which is such that $U \xhookrightarrow{} X$.  We ask that there is a continuous bilinear form $\inner{\cdot}{\cdot}_{X \times H}: X \times H \rightarrow \R$ such that for $\phi \in U$ and $\psi \in H$, \begin{equation} \label{bilinear form}
    \inner{\phi}{\psi}_{X \times H} =  \inner{\phi}{\psi}_{U}.
\end{equation}
Moreover it is now necessary that the system $(a_n)$ forms an orthogonal basis of $U$. We state the remaining assumptions now for arbitrary elements $\phi,\psi \in H$ and $t \in [0,\infty)$, and continue to use the $c,K,\tilde{K}, \kappa$ notation of Assumption Set 1. We now further assume that for any $T>0$, $\mathcal{A}:[0,T] \times H \rightarrow X$ and  $\mathcal{G}:[0,T] \times H \rightarrow \mathscr{L}^2(\mathfrak{U};U)$ are measurable. 

\begin{assumption} \label{gg3}
\begin{align}
    \label{wellposedinX}
    \norm{\mathcal{A}(t,\boldsymbol{\phi})}^2_X +  \sum_{i=1}^\infty \norm{\mathcal{G}_i(t,\boldsymbol{\phi})}^2_U &\leq c_tK(\boldsymbol{\phi})\left[1 + \norm{\boldsymbol{\phi}}_H^2\right],\\ \label{222*} \norm{\mathcal{A}(t,\boldsymbol{\phi}) - \mathcal{A}(t,\boldsymbol{\psi})}_X &\leq  c_t\left[K(\phi,\psi) + \norm{\phi}_H + \norm{\psi}_H \right]\norm{\phi-\psi}_H
\end{align}

\end{assumption}

\begin{assumption} \label{uniqueness for H valued}
\begin{align}
    2\inner{\mathcal{A}(t,\boldsymbol{\phi}) - \mathcal{A}(t,\boldsymbol{\psi})}{\boldsymbol{\phi} - \boldsymbol{\psi}}_X + \sum_{i=1}^\infty\norm{\mathcal{G}_i(t,\boldsymbol{\phi}) - \mathcal{G}_i(t,\boldsymbol{\psi})}_X^2 &\leq \label{therealcauchy1*} c_{t}\tilde{K}_2(\boldsymbol{\phi},\boldsymbol{\psi}) \norm{\boldsymbol{\phi}-\boldsymbol{\psi}}_X^2,\\
    \sum_{i=1}^\infty \inner{\mathcal{G}_i(t,\boldsymbol{\phi}) - \mathcal{G}_i(t,\boldsymbol{\psi})}{\boldsymbol{\phi}-\boldsymbol{\psi}}^2_X & \leq \label{therealcauchy2*} c_{t} \tilde{K}_2(\boldsymbol{\phi},\boldsymbol{\psi}) \norm{\boldsymbol{\phi}-\boldsymbol{\psi}}_X^4
\end{align}
\end{assumption}

\begin{assumption} \label{assumption for probability in H}
With the stricter requirement that $\phi\in V$ then 
\begin{align}
   \label{probability first H} 2\inner{\mathcal{A}(t,\boldsymbol{\phi})}{\boldsymbol{\phi}}_U + \sum_{i=1}^\infty\norm{\mathcal{G}_i(t,\boldsymbol{\phi})}_U^2 &\leq c_tK(\boldsymbol{\phi}) -  \kappa\norm{\boldsymbol{\phi}}_H^2,\\\label{probability second H}
    \sum_{i=1}^\infty \inner{\mathcal{G}_i(t,\boldsymbol{\phi})}{\boldsymbol{\phi}}^2_U &\leq c_tK(\boldsymbol{\phi}).
\end{align}

\begin{remark}
This is a stronger assumption than Assumption \ref{assumption for prob in V}.
\end{remark}
\end{assumption}

Analagously to Appendix IV, \ref{appendix iv}, we state the relevant definitions and the resulting theorem in this context (again proved in [\cite{Goodair abs}]). Definition \ref{definitionofirregularsolution} is stated for an $\mathcal{F}_0-$ measurable $\sy_0:\Omega \rightarrow U$.

\begin{definition} \label{definitionofirregularsolution}
A pair $(\sy,\tau)$ where $\tau$ is a $\mathbb{P}-a.s.$ positive stopping time and $\sy$ is a process such that for $\mathbb{P}-a.e.$ $\omega$, $\sy_{\cdot}(\omega) \in C\left([0,T];U\right)$ and $\sy_{\cdot}(\omega)\mathbf{1}_{\cdot \leq \tau(\omega)} \in L^2\left([0,T];H\right)$ for all $T>0$ with $\sy_{\cdot}\mathbf{1}_{\cdot \leq \tau}$ progressively measurable in $H$, is said to be a $U$-valued local strong solution of the equation (\ref{thespde}) if the identity
\begin{equation} \label{identityindefinitionoflocalsolutionH}
    \sy_{t} = \sy_0 + \int_0^{t\wedge \tau} \mathcal{A}(s,\sy_s)ds + \int_0^{t \wedge \tau}\mathcal{G} (s,\sy_s) d\mathcal{W}_s
\end{equation}
holds $\mathbb{P}-a.s.$ in $X$ for all $t \geq 0$.
\end{definition}

\begin{definition} \label{H valued maximal definition}
A pair $(\sy,\Theta)$ such that there exists a sequence of stopping times $(\theta_j)$ which are $\mathbb{P}-a.s.$ monotone increasing and convergent to $\Theta$, whereby $(\sy_{\cdot \wedge \theta_j},\theta_j)$ is a $U-$valued local strong solution of the equation (\ref{thespde}) for each $j$, is said to be an $H-$valued maximal strong solution of the equation (\ref{thespde}) if for any other pair $(\py,\Gamma)$ with this property then $\Theta \leq \Gamma$ $\mathbb{P}-a.s.$ implies $\Theta = \Gamma$ $\mathbb{P}-a.s.$.
\end{definition}

\begin{definition} \label{definition unique}
A $U-$valued maximal strong solution $(\sy,\Theta)$ of the equation (\ref{thespde}) is said to be unique if for any other such solution $(\py,\Gamma)$, then $\Theta = \Gamma$ $\mathbb{P}-a.s.$ and for all $t \in [0,\Theta)$, \begin{equation} \nonumber\mathbb{P}\left(\left\{\omega \in \Omega: \sy_{t}(\omega) =  \py_{t}(\omega)  \right\} \right) = 1. \end{equation}
\end{definition}

The theorem below is stated on the condition that the assumptions of this subsection and \ref{appendix iv} are met.

\begin{theorem} \label{theorem2}
For any given $\mathcal{F}_0-$ measurable $\sy_0:\Omega \rightarrow U$, there exists a unique $U-$valued maximal strong solution $(\sy,\Theta)$ of the equation (\ref{thespde}). Moreover at $\mathbb{P}-a.e.$ $\omega$ for which $\Theta(\omega)<\infty$, we have that \begin{equation}\label{blowupproperty2}\sup_{r \in [0,\Theta(\omega))}\norm{\sy_r(\omega)}_U^2 + \int_0^{\Theta(\omega)}\norm{\sy_r(\omega)}_H^2dr = \infty.\end{equation}
\end{theorem}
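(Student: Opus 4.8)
The plan is to derive the theorem from Theorem~\ref{theorem1} by relaxing the initial condition from $H$-valued to $U$-valued, exploiting the additional hypotheses of Appendix~\ref{appendix v}: that $(a_n)$ is now an orthogonal basis of $U$, the $X$-valued form of the equation together with the bilinear form (\ref{bilinear form}), and the dissipative term $-\kappa\norm{\cdot}_H^2$ in Assumption~\ref{assumption for probability in H}. First I would set $\sy_0^n:=\mathcal{P}_n\sy_0$, which is $\mathcal{F}_0$-measurable, takes values in $V_n\subset V\subset H$, and satisfies $\sy_0^n\to\sy_0$ in $U$ $\mathbb{P}$-a.s.\ (here the new basis assumption is used). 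For each $n$, Theorem~\ref{theorem1} furnishes a unique $H$-valued maximal strong solution $(\sy^n,\Theta^n)$ of (\ref{thespde}) started from $\sy_0^n$, with its approximating stopping times; the identity (\ref{identityindefinitionoflocalsolution}) holds in $U$, hence \emph{a fortiori} in $X$, so by (\ref{bilinear form}) an It\^{o} formula for $\norm{\sy^n_{\cdot\wedge\theta}}_U^2$ is available along any localising stopping time $\theta$.

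Second I would run two families of estimates, all localised by stopping times $\tau^n_R:=\inf\{r:\norm{\sy^n_r}_U^2+\int_0^r\norm{\sy^n_s}_H^2\,ds\geq R\}\wedge\Theta^n$, reflecting that the superlinear growth of $K(\phi)=1+\norm{\phi}_U^p$ only permits local bounds. \textbf{A priori estimate:} It\^{o}'s formula for $\norm{\sy^n}_U^2$ together with (\ref{probability first H}) bounds the drift by $c_tK(\sy^n)-\kappa\norm{\sy^n}_H^2$, and a Burkholder--Davis--Gundy bound on the martingale, controlled via (\ref{probability second H}), yields --- \emph{uniformly in $n$}, since $\norm{\sy_0^n}_U\leq\norm{\sy_0}_U$ --- control of $\sup_{r\leq T\wedge\tau^n_R}\norm{\sy^n_r}_U^2+\int_0^{T\wedge\tau^n_R}\norm{\sy^n_s}_H^2\,ds$. \textbf{Cauchy estimate:} It\^{o}'s formula for $\norm{\sy^n-\sy^m}_X^2$ with (\ref{therealcauchy1*}) bounds the drift by $c_t\tilde{K}_2(\sy^n,\sy^m)\norm{\sy^n-\sy^m}_X^2$, whose time integral over $[0,T\wedge\tau^n_R\wedge\tau^m_R]$ is tamed by the energies just controlled, so a stochastic Gronwall argument gives $\sup_{r\leq T\wedge\tau^n_R\wedge\tau^m_R}\norm{\sy^n_r-\sy^m_r}_X^2\to0$ driven by $\norm{\sy_0^n-\sy_0^m}_X^2\to0$. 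Passing to the limit --- after the usual adjustment of the stopping times so that they become comparable across $n$, using (\ref{wellposedinX}) and (\ref{222*}) to identify every term of the equation, and upgrading the $C([0,T];X)$-convergence to $C([0,T];U)$-regularity by interpolation against the uniform $L^2([0,T];H)$ bound plus a standard criterion (weak $U$-continuity with right-continuity of $r\mapsto\norm{\sy_r}_U$ gives strong $U$-continuity) --- I obtain a $U$-valued local strong solution in the sense of Definition~\ref{definitionofirregularsolution}.

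The maximal solution is then produced by gluing local solutions: let $\theta_j$ be the first time $\sup_{r\leq t}\norm{\sy_r}_U^2+\int_0^t\norm{\sy_r}_H^2\,dr$ reaches $j$ and set $\Theta:=\lim_j\theta_j$, so that $(\sy_{\cdot\wedge\theta_j},\theta_j)$ is a local strong solution for each $j$. Uniqueness and the blow-up alternative (\ref{blowupproperty2}) follow together: for a second maximal solution $(\py,\Gamma)$ with approximating times $\gamma_k$, It\^{o}'s formula for $\norm{\sy-\py}_X^2$ on $[0,\theta_j\wedge\gamma_k\wedge\sigma_R]$ (with $\sigma_R$ bounding the relevant $H$-energies so that $\tilde{K}_2$ is controlled), combined with (\ref{therealcauchy1*}) and Gronwall, forces $\sy=\py$ on $[0,\Theta\wedge\Gamma)$; and were $\Theta<\Gamma$ on a set of positive probability, then there $\sup_{r\in[0,\Theta)}\norm{\py_r}_U^2+\int_0^\Theta\norm{\py_r}_H^2\,dr<\infty$, hence the same for $\sy$, so $\sy$ could be restarted past $\Theta$ by the local existence already established --- contradicting the definition of $\Theta$. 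This forces $\Theta=\Gamma$ and, at $\mathbb{P}$-a.e.\ $\omega$ with $\Theta(\omega)<\infty$, the blow-up (\ref{blowupproperty2}).

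\textbf{Main obstacle.} The crux is the second paragraph. Because $\mathcal{G}$ is a first-order operator, $\sum_i\norm{\mathcal{G}_i(t,\sy)}_U^2$ genuinely loses a derivative relative to $\norm{\sy}_U^2$, so the $U$-energy balance can be closed only by absorbing the dangerous contributions into the dissipative term $-\kappa\norm{\cdot}_H^2$ of Assumption~\ref{assumption for probability in H}, and the difference of two solutions is controllable only in the weaker norm of $X$ (Assumption~\ref{uniqueness for H valued}); converting that $C([0,T];X)$-convergence back into the $C([0,T];U)$-regularity and continuity required by Definition~\ref{definitionofirregularsolution}, while respecting the superlinear growth of $K$ (hence the unavoidable stopping-time bookkeeping), is the delicate point where the full strength of the Appendix~\ref{appendix v} hypotheses, beyond Theorem~\ref{theorem1}, is needed. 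A direct Galerkin scheme in $\textnormal{span}\{a_1,\dots,a_n\}$, with $U$-bounds from Appendix~\ref{appendix v} and $H$-bounds from Appendix~\ref{appendix iv}, is an equivalent route meeting the same obstruction at the limit.
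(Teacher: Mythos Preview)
The paper does not contain a proof of Theorem~\ref{theorem2}. This result is stated in Appendix~\ref{appendix v} as an abstract black-box imported from the companion paper~[\cite{Goodair abs}]: the text reads ``we state the relevant definitions and the resulting theorem in this context (again proved in [\cite{Goodair abs}])''. The present paper's contribution is to \emph{verify the hypotheses} (Assumptions~\ref{gg3}--\ref{assumption for probability in H}) for the specific SALT Navier--Stokes operators, not to prove the abstract theorem itself. Consequently there is no proof here to compare your proposal against.

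That said, your outline is a plausible sketch of the kind of argument one expects in~[\cite{Goodair abs}]: approximate $\sy_0\in U$ by $\mathcal{P}_n\sy_0\in H$, invoke Theorem~\ref{theorem1} for each approximant, derive uniform $U$-energy estimates from Assumption~\ref{assumption for probability in H} (which supplies the crucial $-\kappa\norm{\cdot}_H^2$ dissipation), establish Cauchy-in-$X$ via Assumption~\ref{uniqueness for H valued}, and then upgrade the limit's regularity. Your identification of the ``main obstacle'' --- recovering $C([0,T];U)$-regularity from $C([0,T];X)$-convergence under merely $L^2([0,T];H)$-bounds and superlinear $K$ --- is on point. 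One technical wrinkle you pass over: the inequality $\norm{\mathcal{P}_n\sy_0}_U\leq\norm{\sy_0}_U$ requires that $\mathcal{P}_n$ be an orthogonal projection in $U$, which is exactly why Appendix~\ref{appendix v} strengthens the basis assumption to demand that $(a_n)$ be orthogonal in $U$ (not just in $X$ as in Appendix~\ref{appendix iv}). Whether your proposal matches the actual argument in~[\cite{Goodair abs}] cannot be assessed from this paper alone.
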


\newpage

\end{document}